\newtheorem{theorem}{Theorem}[section]
\newtheorem{definition}{Definition}[section]
\newtheorem{lemma}{Lemma}[section]
\newtheorem{remark}{Remark}[section]
\newtheorem{proposition}{Proposition}[section]
\newtheorem{corollary}{Corollary}[section]
\numberwithin{equation}{section}
\newcommand{\A}{{\Phi}}
\newcommand{\B}{{\Psi}}
\newcommand{\R}{{\mathbb R}}
\newcommand{\supp}{{\mathrm {supp}}}
\begin{document}
\title[Vanishing Viscosity Limit]{On the global-in-time  inviscid  limit  of the 3D isentropic  compressible Navier-Stokes equations with degenerate viscosities and  vacuum}

\author{Yongcai Geng}
\address[Y. C. Geng]{School of Science,
         Shanghai Institute of Technology,
         Shanghai 200235, China}
\email{\tt yongcaigeng@alumni.sjtu.edu.cn}

\author{Yachun Li}
\address[Y. C. Li]{School of Mathematical Sciences,  MOE-LSC, and SHL-MAC, Shanghai Jiao Tong University, Shanghai 200240, P.R.China.} \email{\tt ycli@sjtu.edu.cn}

\author{Shengguo Zhu}
\address[S. G.  Zhu]{Mathematical Institute,
University of Oxford, Oxford,  OX2 6GG, UK.}
\email{\tt zhus@maths.ox.ac.uk}

\begin{abstract}

In the recent  paper, the global-in-time inviscid limit   of the  three-dimensional (3D)  isentropic compressible
Navier-Stokes equations  is considered.   First, when viscosity coefficients are given as a constant multiple of density's power ($(\rho^\epsilon)^\delta$ with $\delta>1$),  for regular  solutions to the corresponding Cauchy problem,  via introducing one  `` quasi-symmetric hyperbolic"--``degenerate elliptic" coupled structure to control the behavior of the velocity near the vacuum,    we establish the uniform energy estimates for  the local sound speed  in $H^3$  and $(\rho^\epsilon)^{\frac{\delta-1}{2}}$ in $H^2$  with respect to the viscosity coefficients for arbitrarily large time under some  smallness assumption on the initial  density.  Second, by  making full use of this structure's  quasi-symmetric property and the weak smooth effect on solutions,   we prove the strong convergence  of the regular solutions of the  degenerate  viscous flow  to that of  the  inviscid flow  with vacuum in $H^2$  for arbitrarily large time.   The result here applies to a class of degenerate density-dependent viscosity coefficients, is independent of the B-D relation for viscosities, and seems to be the first  on the global-in-time  inviscid limit  of smooth solutions which have large velocities and contain vacuum state for compressible flow in three space dimensions without any symmetric assumption.

\end{abstract}

\date{Nov. 05, 2019}
\subjclass[2010]{Primary:  35Q30, 35B40,  35A09; Secondary:  35M10, 35D40}  \keywords{Compressible Navier-Stokes equations, three-dimensions, global-in-time,  regular solutions, vanishing viscosity limit, vacuum, density-dependent  viscosities.
}

\maketitle

\tableofcontents

\section{Introduction}

The time evolution of the  density $\rho^\epsilon \geq 0$ and the velocity $u^\epsilon=\left((u^\epsilon)^{(1)},(u^\epsilon)^{(2)},(u^\epsilon)^{(3)}\right)^\top$ $\in \mathbb{R}^3$ of a general viscous isentropic
compressible  fluid occupying a spatial domain $\Omega\subset \mathbb{R}^3$ is governed by the following isentropic  compressible  Navier-Stokes equations (\textbf{ICNS}):
\begin{equation}
\label{eq:1.1}
\begin{cases}
\rho^\epsilon_t+\text{div}(\rho^\epsilon u^\epsilon)=0,\\[8pt]
(\rho^\epsilon u^\epsilon)_t+\text{div}(\rho^\epsilon u^\epsilon\otimes u^\epsilon)
  +\nabla
   P^\epsilon =\text{div} \mathbb{T}.
\end{cases}
\end{equation}
Here, $x=(x_1,x_2,x_3)^\top\in \Omega$, $t\geq 0$ are the space and time variables, respectively. In considering the polytropic gases, the constitutive relation is given by
\begin{equation}
\label{eq:1.2}
P^\epsilon=A(\rho^\epsilon)^{\gamma}, \quad A>0,\quad  \gamma> 1,
\end{equation}
where $A$ is  an entropy  constant and  $\gamma$ is the adiabatic exponent. $\mathbb{T}$ denotes the viscous stress tensor with the  form
\begin{equation}
\label{eq:1.3}
\begin{split}
&\mathbb{T}=2\mu(\rho^\epsilon)D(u^\epsilon)+\lambda(\rho^\epsilon)\text{div}u^\epsilon\,\mathbb{I}_3,
\end{split}
\end{equation}
 where $D(u^\epsilon)=\frac{1}{2}\big(\nabla u^\epsilon+(\nabla u^\epsilon)^\top\big)$ is the deformation tensor, $\mathbb{I}_3$ is the $3\times 3$ identity matrix,
\begin{equation}
\label{fandan}
\mu(\rho^\epsilon)=\epsilon \alpha  (\rho^\epsilon)^\delta,\quad \lambda(\rho^\epsilon)=\epsilon \beta  (\rho^\epsilon)^\delta,
\end{equation}
for some  constants $\epsilon \in (0,1]$ and $\delta\geq 0$,
 $\mu(\rho^\epsilon)$ is the shear viscosity coefficient, $\lambda(\rho^\epsilon)+\frac{2}{3}\mu(\rho^\epsilon)$ is the bulk viscosity coefficient,  $\alpha$ and $\beta$ are both constants satisfying
 \begin{equation}\label{10000}\alpha>0 \quad \text{and} \quad   2\alpha+3\beta\geq 0.
 \end{equation}


  In addition,  when $\epsilon=0$, from
\eqref{eq:1.1}, one naturally has the compressible isentropic Euler
equations for  the  inviscid flow:
\begin{equation}
\label{eq:1.1E}
\begin{cases}
\displaystyle
\rho_t+\text{div}(\rho u)=0,\\[8pt]
\displaystyle
(\rho u)_t+\text{div}(\rho u\otimes u)
  +\nabla
   P =0,
\end{cases}
\end{equation}
where $\rho$, $u$ and $P=A\rho^\gamma$ are the mass density, velocity and pressure of the inviscid fluid.

Let $\Omega=\mathbb{R}^3$ and  $\delta>1$. In this paper,  we will consider the global-in-time  asymptotic behavior as $\epsilon \rightarrow 0$  of  the smooth solution
$(\rho^\epsilon,u^\epsilon)$   to  the Cauchy problem  (\ref{eq:1.1})-(\ref{10000}) with the following  initial data and far field behavior:
\begin{align}
&(\rho^\epsilon,u^\epsilon)|_{t=0}=(\rho^\epsilon_0(x)\geq 0,  u^\epsilon_0(x)) \ \ \ \ \ \ \ \  \  \ \  \  \text{for} \ \ \   x\in \mathbb{R}^3,\label{initial}\\[4pt]
&(\rho^\epsilon,u^\epsilon)(t,x)\rightarrow  (0,0) \ \ \   \text{as}\ \ \ |x|\rightarrow \infty \ \ \ \    \ \  \text{for} \ \ \  \ t\geq 0.  \label{far}
\end{align}
Our conclusion obtained in this paper will show that,   for arbitrarily large time,  the inviscid flow (\ref{eq:1.1E}) can be regarded as viscous flow (\ref{eq:1.1}) with vanishing real physical viscosities in the sense of the regular solution (see Definition \ref{d1}) with  vacuum.
Generally,  the above far field behavior cannot be avoided when some physical requirements are imposed, such as finite total mass
 in  $\mathbb{R}^3$, because at least we need $\rho^\epsilon\rightarrow 0$ as $ |x|\rightarrow \infty$.

In the theory of gas dynamics, the \textbf{CNS} can be derived  from the Boltzmann equations through the Chapman-Enskog expansion, cf. Chapman-Cowling \cite{chap} and Li-Qin  \cite{tlt}. Under some proper physical assumptions,  the viscosity coefficients and heat conductivity coefficient $\kappa$ are not constants but functions of absolute temperature $\theta^\epsilon$ (see \cite{chap}) such as:
\begin{equation}
\label{eq:1.5g}
\begin{split}
\mu(\theta^\epsilon)=&a_1 (\theta^\epsilon)^{\frac{1}{2}}F(\theta^\epsilon),\quad  \lambda(\theta^\epsilon)=a_2 (\theta^\epsilon)^{\frac{1}{2}}F(\theta^\epsilon), \quad \kappa(\theta^\epsilon)=a_3 (\theta^\epsilon)^{\frac{1}{2}}F(\theta^\epsilon)
\end{split}
\end{equation}
for some   constants $a_i$ $(i=1,2,3)$.  Actually for the cut-off inverse power force model, if the intermolecular potential varies as $r^{-a}$ ($ r$: intermolecular distance),  then in (\ref{eq:1.5g}):
 $$F(\theta^\epsilon)=(\theta^\epsilon)^{b}\quad  \text{with}\quad   b=\frac{2}{a} \in [0,+\infty).$$
  In particular  (see \S 10 of \cite{chap}), for  ionized gas,
 $$a=1\quad \text{and} \quad b=2;$$
  for Maxwellian molecules,
  $$a=4\quad \text{and} \quad b=\frac{1}{2};$$
while for rigid elastic spherical molecules,
$$a=\infty\quad \text{and} \quad b=0.$$
According to Liu-Xin-Yang \cite{taiping}, for  isentropic and polytropic fluids, such dependence is inherited through the laws of Boyle and Gay-Lussac:
$$
P=R\rho^\epsilon \theta^\epsilon=A(\rho^\epsilon)^\gamma,  \quad \text{for \ \ constant} \quad R>0,
$$
i.e.,
$\theta^\epsilon=AR^{-1}(\rho^\epsilon)^{\gamma-1}$,  and one finds that the viscosity coefficients are functions of the density.
Actually, the  similar  assumption that  viscosity coefficients depend on the density  can be seen in a lot of fluid models, such as  Korteweg system,  shallow water equations, lake  equations  and quantum Navier-Stokes system (see \cite{bd2,  bd, BN2,   gp,  ansgar, lions,   Mar,  vy}).

In the three-dimensional space, due to the complicated mathematical structures of the system \eqref{eq:1.1} and   \eqref{eq:1.1E},   there are few  literatures on the global  existence  of solutions with vacuum and their behaviors to the corresponding Cauchy  problems.  For the inviscid flow,   by extracting a dispersive effect after some invariant transformation,  the global well-posedness of the smooth solution with small data   in some homogeneous Sobolev spaces  has been proven  by Grassin and Serre  \cite{grassin, Serre}.
For the constant viscous flow ($\delta=0$ in \eqref{fandan}),
the first  breakthrough for the well-posedness of  solutions
 with generic initial data including vacuum  is due to Lions \cite{lions},  where he established the global existence of weak solutions with finite energy  provided that $\gamma >\frac{9}{5}$ (see also Feireisl-Novotn\'{y}-Petzeltov\'{a}  \cite{ fu2, fu3,fu1} for $\gamma>\frac{3}{2}$ and the corresponding existence of the variational solutions  for the non-isentropic flow).  However, the
uniqueness problem of these weak solutions  is widely open due to their fairly low regularities.   Later, based on the uniform estimate on the upper bound of the density,  the global well-posedenss of the smooth solution with small energy  has been established by  Huang-Li-Xin \cite{HX1}. For the degenerate viscous flow ($\delta>0$ in \eqref{fandan}), under the well-known B-D relation for viscosities:
\begin{equation}\label{bd}
\lambda(\rho^\epsilon)=2(\mu'(\rho^\epsilon)\rho^\epsilon-\mu(\rho^\epsilon)),
\end{equation}
which is introduced by Bresch-Desjardins  and their collaborators in \cite{bd6,bd7, bd2,bd},
the global existences of the multi-dimensional weak solutions with finite energy for some kind of $\mu(\rho^\epsilon)$ have been given  by Li-Xin \cite{lz} and  Vasseur-Yu \cite{vayu}.
Recently, for the case  $\delta>1$ in \eqref{fandan},  the  global well-posedness of smooth solutions with small density but possibly large velocity   in some homogeneous Sobolev spaces  has been established  by Xin-Zhu \cite{zz}. Readers are referred to \cite{cons, guo, Has,  HX2, JWX,  hailiang, sz3,sz333, lions,   vassu, zz2, ty1, zyj, tyc2} for some other related  interesting results.

Based on the well-posedness theory mentioned above,  naturally  there is a very important and interesting   question that:    for arbitrarily large time,   can we regard the regular solution  of inviscid flow in  \cite{grassin, Serre}  as  those of viscous flow for $\delta\geq 0$ in \cite{HX1, zz} with vanishing real physical viscosities?
Actually, such kind of idea  can date back to     \cite{guy, H1, R1, R2, stoke}. However, due to the high degeneracy of  hydrodynamics equations near the vacuum,   existences of strong solutions  to the viscous flow  and the inviscid flow are  established in totally different frameworks (even for local-in-time or 1-D space). The   arguments   used  in  the current well-posedness theory with vacuum  of viscous flow  usually rely on the  uniform (resp. weak) ellipticity of the Lame operator (resp.  some related elliptic operators),
and  both the   a priori estimates on the solutions
 and their life spans  obtained in the related references   strictly depend on the coefficients of   real physical viscosities. For example,
  when $\delta=0$   in \cite{HX1},  one finds $T^{\epsilon}=O(\epsilon)$ and
\begin{equation*}
\begin{split}
\|\nabla^{k+2}u^\epsilon\|_{L^2(\mathbb{R}^3)}\leq C\epsilon^{-1}\big(\|\nabla^k(\rho^\epsilon u^\epsilon_t+\rho^\epsilon u^\epsilon\cdot \nabla u^\epsilon+\nabla P^\epsilon)\|_{L^2(\mathbb{R}^3)}\big)
\end{split}
\end{equation*}
for  $k=0$, $1$, $2$  and  some constant $C>0$    independent of $\epsilon$,
which tells us  that the current frameworks do not seem to work for verifying the expected  limit relation  in multi-dimensional space.
Some new observations  need to be introduced for dealing with the difficulties caused by the vacuum.

 Actually, for the double degenerate system    ($\delta>0$ in \eqref{fandan}):
   $$
\displaystyle
 \underbrace{\rho^\epsilon(u^\epsilon_t+u^\epsilon\cdot \nabla u^\epsilon)}_{Degeneracy \ of  \ time \ evolution}+\nabla P^\epsilon= \underbrace{\text{div}(2\mu(\rho^\epsilon)D(u^\epsilon)+\lambda(\rho^\epsilon)\text{div}u^\epsilon \mathbb{I}_3)}_{Degeneracy\ of \ viscosities},
$$
it is very hard to control the behavior of the velocity near the vacuum for arbitrarily large time.
Recently, via introducing the following structures:
\begin{itemize}
 \item  a  ``quasi-symmetric hyperbolic''--``singular elliptic" coupled structure with strongly singular source term  for $0<\delta<1$ in \cite{Geng2};\\
\item   a ``quasi-symmetric hyperbolic''--``elliptic"  coupled structure with weakly singular source term  for $\delta=1$ in \cite{Ding};\\
\item  a  ``quasi-symmetric hyperbolic''--``degenerate elliptic" coupled structure with \\ strongly nonlinear source term  for $\delta>1$ in \cite{Geng};
\end{itemize}
when  the initial density vanishes in some open domain or  decays to zero in the far field,
the uniform local-in-time   estimates (with respect to the viscosity coefficients) on
\begin{itemize}
\item the local sound speed $\sqrt{(P^\epsilon)'(\rho)}$ in $H^3(\mathbb{R}^3)$ for $\delta>0$ and $\gamma>1$;\\
\item $\nabla^2 \log \rho^\epsilon $ in $H^1(\mathbb{R}^2)$  for $\delta=1$ and $\gamma>1$;\\
\item   $(\rho^\epsilon)^{\frac{\delta-1}{2}}$ in $H^2(\mathbb{R}^3)$  for   $\delta>1$ and $\gamma>1$
\end{itemize}
  to the corresponding Cauchy problem of  the system (\ref{eq:1.1})   have been given,
which
lead to the strong convergences  of the   local-in-time regular solution of  the viscous flow to
that of the  inviscid flow  in  some Sobolev space. We also  refer readers  to \cite{bianbu, chen3, chen4,  Germain,  gilbarg, OG, HL,  huang, HPWWZ,KM,  Masmoudi} for some important progress on the  local/global-in-time  inviscid limit  results for 1-D, spherical symmetric case  or the multi-dimensional case away from the vacuum.
However, as far as we know,  for the multi-dimensional problem without any symmetric assumption, there are indeed few results on the global-in-time  inviscid limit with non-negative initial density,   no matter in the bounded domain or the whole space.  The result obtained in this paper is independent of the well-known B-D relations for viscosities,    and seems to be the first one on such kind of study. Moreover, our arguments   can  also apply to a class of systems with degenerate  viscosity coefficients.  We hope that  the methodology developed in this work could give us a good understanding for  some other related vacuum problems  of  the  degenerate viscous flow  in a more general framework, such as the inviscid limit problem for multi-dimensional entropy weak solutions in the whole space.

\section{Main results}
In this section, we will  state our main results and give  the outline  of  this paper.
Throughout this paper,  we adopt the following simplified notations, most of them are for the standard homogeneous or non-homogeneous  Sobolev spaces:
\begin{equation*}\begin{split}
 & \|f\|_s=\|f\|_{H^s(\mathbb{R}^3)},\quad |f|_p=\|f\|_{L^p(\mathbb{R}^3)},\quad \|f\|_{m,p}=\|f\|_{W^{m,p}(\mathbb{R}^3)},\\[10pt]
& |f|_{C^k}=\|f\|_{C^k(\mathbb{R}^3)},\quad \|f\|_{L^pL^q_t}=\| f\|_{L^p([0,t]; L^q(\mathbb{R}^3))},\\[10pt]
 & D^{k,r}=\{f\in L^1_{loc}(\mathbb{R}^3): |f|_{D^{k,r}}=|\nabla^kf|_{r}<+\infty\},\quad D^k=D^{k,2},  \\[10pt] 
& \Xi=\{f\in L^1_{loc}(\mathbb{R}^3): |\nabla f|_\infty+\|\nabla^2 f\|_2<+\infty\},\quad \\[10pt]
& \Xi^*=\{f\in L^1_{loc}(\mathbb{R}^3): |\nabla f|_\infty+\|\nabla^2 f\|_3<+\infty\},\\[10pt]
& \|f\|_{\Xi}=|\nabla f|_\infty+\|\nabla^2 f\|_2,\quad \|f\|_{\Xi^*}=|\nabla f|_\infty+\|\nabla^2 f\|_3,\\[10pt]
&  \|f(t,x)\|_{T,\Xi}=\|\nabla f(t,x)\|_{L^\infty([0,T]\times\mathbb{R}^3)}+\|\nabla^2 f(t,x)\|_{L^\infty([0,T]; H^2(\mathbb{R}^3))},\\[10pt]
&  \|f\|_{X_1 \cap X_2}=\|f\|_{X_1}+\|f\|_{X_2},\quad 
  \int_{\mathbb{R}^3}  f \text{d}x  =\int f, \\[10pt]
 & \|(f,g)\|_X=\|f\|_X+\|g\|_X, \quad   X([0,T]; Y)=X([0,T]; Y(\mathbb{R}^3)).
\end{split}
\end{equation*}
 A detailed study of homogeneous Sobolev spaces  can be found in \cite{gandi}. We also denote  $C(a)$ as the constant  $C$ only depending  on the parameter $a$.

\subsection{Global-in-time uniform energy estimates}
Let $ \widehat{u}^\epsilon=((\widehat{u}^\epsilon)^{(1)},(\widehat{u}^\epsilon)^{(2)}, (\widehat{u}^\epsilon)^{(3)})^\top$ be the unique smooth  solution in $ [0,T]\times \mathbb{R}^3$  of the following  Cauchy problem:
\begin{equation} \label{eq:approximation}
\begin{cases}
\displaystyle
\widehat{u}^\epsilon_t+ \widehat{u}^\epsilon\cdot \nabla  \widehat{u}^\epsilon=0 \quad \text{in} \quad [0,T]\times \mathbb{R}^3,  \\[10pt]
 \displaystyle
\widehat{u}^\epsilon(t=0,x)=  \widehat{u}^\epsilon_0(x) \in \Xi \quad \text{for} \quad x\in \mathbb{R}^3.
 \end{cases}
\end{equation}

We first  introduce the following definition of regular solutions with vacuum  to the  Cauchy problem  (\ref{eq:1.1})-(\ref{10000}) with (\ref{initial})-(\ref{far}).

\begin{definition}\label{d1}{\bf(Regular solution of viscous  flow)}  Let $T> 0$ be a finite constant.
A solution $(\rho^\epsilon,u^\epsilon)$ to the  Cauchy problem  (\ref{eq:1.1})-(\ref{10000}) with (\ref{initial})-(\ref{far}) is called a regular solution in $ [0,T]\times \mathbb{R}^3$ if $(\rho^\epsilon,u^\epsilon)$ satisfies this problem in the sense of distributions and:
\begin{equation*}\begin{split}
&(\textrm{D})\quad  \rho^\epsilon\geq 0, \  \Big((\rho^\epsilon)^{\frac{\delta-1}{2}},(\rho^\epsilon)^{\frac{\gamma-1}{2}}\Big)\in C([0,T]; H^{s'}_{loc})\cap L^\infty([0,T]; H^3);\\[4pt]
& (\textrm{E})\quad u^\epsilon-  \widehat{u}^\epsilon\in C([0,T]; H^{s'}_{loc})\cap L^\infty([0,T]; H^{3}),\\[4pt]
&\quad \quad \  (\rho^\epsilon)^{\frac{\delta-1}{2}}\nabla^4 u^\epsilon \in L^2([0,T]; L^2),\quad u^\epsilon(0,x)=  \widehat{u}^\epsilon_0(x)=u^\epsilon_0(x);\\[4pt]
&(\textrm{F})\quad u^\epsilon_t+u^\epsilon\cdot\nabla u^\epsilon =0\quad  \text{as } \quad  \rho^\epsilon(t,x)=0,
\end{split}
\end{equation*}
where $s'\in[2,3)$ is an arbitrary constant.
\end{definition}
\begin{remark}
The regular solution defined above  has finite energy if the initial density is compactly supported,  and also  is locally momentum conserving {\rm (see page 2 of  Hoff \cite{HP})}.
\end{remark}

For simplicity, we also denote 
\begin{equation*}\begin{split}
M_1=&\frac{2\alpha+3\beta}{2\alpha+\beta},\quad
M_2=-3\delta+1+\frac{1}{2}M_3,\\
M_3=&\frac{(\delta-1)^2}{4(2\alpha+\beta)}+\frac{4\delta^2(2\alpha+\beta)}{(\delta-1)^2}M^2_1+2M_1\delta,\\
M_4=&\frac{1}{2}\min\Big\{ \frac{3\gamma-3}{2},\frac{-M_2-1}{2},1\Big\}+M_2,\\
\end{split}
\end{equation*}
Based on the  above definition and notations, now we can   give the global-in-time uniform energy estimates with respect to $\epsilon$.

\begin{theorem}\label{tha}{\bf(Uniform energy estimates)}  Let parameters  $(\gamma,\delta, \alpha,\beta)$ satisfy
\begin{equation}\label{canshu}
\gamma>1,\quad \delta>1, \quad \alpha>0, \quad 2\alpha+3\beta\geq 0,
\end{equation}
and any one of the following  conditions $(P_1)$-$(P_4)$:
\begin{itemize}

\item[$(\rm P_1)$] $0<M_1<\frac{3}{2}-\frac{1}{\delta}$ and 
$
M_2<-1
$;\\
\item[$(\rm P_2)$] $2\alpha+3\beta=0$;\quad $(\rm P_3)$ $\delta\geq 2\gamma-1$; \quad $(\rm P_4)$ $\delta=\gamma$.
\end{itemize}
If the   initial data $( \rho^\epsilon_0, u^\epsilon_0)$ satisfy
\begin{itemize}

\item[$(\rm A_1)$] $u^\epsilon_0\in \Xi$, $\|u^\epsilon_0\|_{\Xi}\leq C$ for some constant $C>0$ that is independent of $\epsilon$,
and  there exists a constant  $\kappa>0$ that is also independent of $\epsilon$ such that,
$$
\text{Dist}\big(\text{Sp}( \nabla u^\epsilon_0(x)), \mathbb{R}_{-} \big)\geq \kappa \quad \text{for\  all}\quad  x\in \mathbb{R}^3,
$$
where $Sp(\nabla u^\epsilon_0(x))$ denotes the spectrum of the matrix $\nabla u^\epsilon_0(x)$;

\item[$(\rm A_2)$] $\rho^\epsilon_0\geq 0$,  $(\rho^\epsilon_0)^{\frac{\gamma-1}{2}}\in H^3$,    $(\rho^\epsilon_0)^{\frac{\delta-1}{2}}\in H^3$, and
$$\big\|(\rho^\epsilon_0)^{\frac{\gamma-1}{2}}\big\|_3+ \big\|(\rho^\epsilon_0)^{\frac{\delta-1}{2}}\big\|_2+\epsilon^{\frac{1}{2}}|\nabla^3 (\rho^\epsilon_0)^{\frac{\delta-1}{2}}|_2 \leq D_0$$
 for some constant  $D_0=D_0(\gamma, \delta, \alpha,\beta,  A, \kappa, \|u^\epsilon_0\|_{\Xi})>0$ that   is independent of $\epsilon$,
\end{itemize}
then for any $T>0$,  there exists a   unique  regular solution $(\rho^\epsilon, u^\epsilon)$ in $[0,T]\times \mathbb{R}^3$  to the  Cauchy problem  (\ref{eq:1.1})-(\ref{10000}) with (\ref{initial})-(\ref{far}) satisfying the following uniform estimates:
\begin{equation}\label{uniformtime}\begin{split}
\sum_{k=0}^3 (1+t)^{2(k-2.5)}\Big(\big|\nabla^k(\rho^\epsilon)^{\frac{\gamma-1}{2}}(t,\cdot)\big|^2_2+\big|\nabla^k(u^\epsilon- \widehat{u}^\epsilon)(t,\cdot)\big|^2_2\Big)&\\
+\sum_{k=0}^2(1+t)^{2(k-3)}\big|\nabla^k(\rho^\epsilon)^{\frac{\delta-1}{2}}(t,\cdot)\big|^2_2+\epsilon\big|(\rho^\epsilon)^{\frac{\delta-1}{2}}(t,\cdot)\big|^2_{D^3}\leq & C_{01}(1+t)^{-\iota},\\
\epsilon \sum_{k=0}^3\int_0^t (1+s)^{2(k-2.5)}\big|(\rho^\epsilon)^{\frac{\delta-1}{2}}\nabla^{k+1} (u^\epsilon- \widehat{u}^\epsilon)(s,\cdot)\big|_2^2
\text{d}s\leq &C_{01}
\end{split}
\end{equation}
for any $t\in [0,T]$ and some  constant $C_{01}=C_{01}(\gamma, \delta, \alpha,\beta,  A, \kappa, \rho^\epsilon_0, u^\epsilon_0)>0$ that is  independent of both  $\epsilon$ and the time $T$, where  $\iota=\iota(\gamma,\delta,\alpha,\beta)=(1-\eta^*)b_*\in (1,2)$, and 
 \begin{equation*}
\begin{split}
 \eta^*=&\min\Big\{\frac{3\gamma-3}{4(3\gamma-1)}, \frac{-M_4-1}{6\delta-M_3}, \frac{1}{10}\Big\}>0,\\
b_*=&
 \begin{cases}
\min\Big\{2, \frac{3}{2}\delta-\frac{1}{4}M_3\Big\}>1 \;\qquad\quad \ \ \      \text{if} \ \ \gamma\geq \frac{5}{3},\\[6pt]
\min\Big\{\frac{3\gamma}{2}-0.5, \frac{3}{2}\delta-\frac{1}{4}M_3\Big\}>1  \ \  \  \    \text{if} \ \  1< \gamma<\frac{5}{3}.
 \end{cases}
\end{split}
 \end{equation*}
Particularly,  when the condition $(P_3)$ holds, the smallness assumption on $(\rho^\epsilon_0)^{\frac{\delta-1}{2}}$ could be removed.

\end{theorem}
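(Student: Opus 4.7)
The plan is to recast the Cauchy problem in a form that isolates the ``quasi-symmetric hyperbolic''--``degenerate elliptic'' coupling alluded to in the introduction, and then to close a time-weighted energy inequality whose decay rates are inherited from the Grassin--Serre dispersive mechanism for the limiting Euler flow $\widehat{u}^\epsilon$ in \eqref{eq:approximation}. Introduce the variables
\begin{equation*}
\phi^\epsilon=\sqrt{\tfrac{A\gamma}{\gamma-1}}\,(\rho^\epsilon)^{\frac{\gamma-1}{2}},\qquad
\psi^\epsilon=(\rho^\epsilon)^{\frac{\delta-1}{2}},\qquad
v^\epsilon=u^\epsilon-\widehat{u}^\epsilon.
\end{equation*}
From $\rho^\epsilon_t+\mathrm{div}(\rho^\epsilon u^\epsilon)=0$ I derive transport equations with linear source for $\phi^\epsilon$ and $\psi^\epsilon$ (with transport speed $u^\epsilon=v^\epsilon+\widehat{u}^\epsilon$), and dividing the momentum equation by $\rho^\epsilon$ I obtain
\begin{equation*}
v^\epsilon_t+(v^\epsilon+\widehat{u}^\epsilon)\!\cdot\!\nabla v^\epsilon+v^\epsilon\!\cdot\!\nabla\widehat{u}^\epsilon+\tfrac{2}{\gamma-1}\phi^\epsilon\nabla\phi^\epsilon=\epsilon\,(\psi^\epsilon)^2\,L u^\epsilon+\epsilon\,R(\psi^\epsilon,u^\epsilon),
\end{equation*}
where $Lu=\alpha\Delta u+(\alpha+\beta)\nabla\mathrm{div}\,u$ is the (degenerate) Lam\'e operator and $R$ collects lower-order terms proportional to $\nabla\psi^\epsilon$. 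The pair $(\phi^\epsilon,v^\epsilon)$ then satisfies a symmetric-hyperbolic-type system modulo the $\widehat{u}^\epsilon$-transport, the stretching term $v^\epsilon\!\cdot\!\nabla\widehat{u}^\epsilon$, and the degenerate viscous right-hand side.

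The main energy functional I would close is
\begin{equation*}
\mathcal{E}(t)=\sum_{k=0}^{3}(1+t)^{2(k-5/2)}\bigl(|\nabla^k\phi^\epsilon|_2^2+|\nabla^k v^\epsilon|_2^2\bigr)
+\sum_{k=0}^{2}(1+t)^{2(k-3)}|\nabla^k\psi^\epsilon|_2^2+\epsilon|\nabla^3\psi^\epsilon|_2^2,
\end{equation*}
with dissipation $\mathcal{D}(t)=\epsilon\sum_{k=0}^{3}(1+t)^{2(k-5/2)}|\psi^\epsilon\nabla^{k+1}v^\epsilon|_2^2$. For each $k\le 3$ I would apply $\nabla^k$ to the $(\phi^\epsilon,v^\epsilon)$ system and use the quasi-symmetric structure to cancel the top-order coupling in $\int\phi^\epsilon\nabla^{k+1}\phi^\epsilon\cdot\nabla^k v^\epsilon$; simultaneously I would apply $\nabla^k$ to the $\psi^\epsilon$-transport equation to get weighted estimates on $\psi^\epsilon$. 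The viscous contribution $\epsilon\int\nabla^k((\psi^\epsilon)^2 Lu^\epsilon)\cdot\nabla^k v^\epsilon$ is integrated by parts to recover the dissipation $\mathcal{D}$, with commutators $[\nabla^k,(\psi^\epsilon)^2]Lu^\epsilon$ absorbed via $\|\psi^\epsilon\|_{H^2}$ bounds and the extra $\sqrt{\epsilon}\,\nabla^3\psi^\epsilon$ estimate (which is obtained from the $\psi^\epsilon$-equation tested against $\epsilon\nabla^6\psi^\epsilon$ using the dissipation borrowed from $\mathcal{D}$). The time weights are engineered so that the damping produced by the Grassin--Serre term $v^\epsilon\!\cdot\!\nabla\widehat{u}^\epsilon$---whose symmetric part is uniformly positive thanks to assumption $(A_1)$ on $\mathrm{Sp}(\nabla u^\epsilon_0)$---dominates the weight derivative $2(k-5/2)(1+t)^{-1}$, forcing the stated decay exponent $\iota=(1-\eta^*)b_*$.

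At this point the argument reduces to verifying that $\mathcal{E}(t)\le C_{01}$ on the maximal existence interval under the smallness assumption $(A_2)$ on the initial density. I would do this by a continuation/bootstrap scheme: the nonlinear terms of highest order such as $\int|\nabla^3\phi^\epsilon|^2\,|\nabla\widehat{u}^\epsilon|$ and $\int\phi^\epsilon|\nabla^3\phi^\epsilon||\nabla^4 v^\epsilon|$ are controlled by $\|\widehat{u}^\epsilon\|_{T,\Xi}^{b_*}$-type decay (coming from \eqref{eq:approximation} and $(A_1)$) multiplied by $\mathcal{E}$, so if initially $\mathcal{E}(0)\le D_0^2$ with $D_0$ small then $\mathcal{E}$ stays bounded for all time. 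The conditions $(P_1)$--$(P_4)$ enter precisely at this bookkeeping step: under $(P_2)$ the bulk-viscosity term vanishes, under $(P_3)$--$(P_4)$ a favorable algebraic relation between $\gamma$ and $\delta$ allows controlling $|\nabla\phi^\epsilon|\cdot|\nabla\psi^\epsilon|$ cross-terms without smallness of $\psi^\epsilon_0$, while $(P_1)$ expresses the general sign condition $M_4<-1$ needed for the integrated weighted dissipation to close. Local-in-time existence of the regular solution and uniqueness follow from the scheme already established in \cite{Geng, zz}, so the real content is the uniform-in-$(\epsilon,T)$ bound on $\mathcal{E}+\int_0^T\mathcal{D}$.

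The hardest step will be the weighted $k=3$ estimate for $v^\epsilon$: commuting $\nabla^3$ past the degenerate coefficient $(\psi^\epsilon)^2$ produces terms like $\nabla^3(\psi^\epsilon)^2\cdot\nabla^2 u^\epsilon$ that are not controlled by $\mathcal{D}$ alone and require the supplementary $\sqrt{\epsilon}\,\nabla^3\psi^\epsilon$ bound, while the cross term $\nabla^3 v^\epsilon\cdot\nabla^4\phi^\epsilon\cdot\phi^\epsilon$ has no analog at the level of classical symmetrization and must be absorbed via a subtle integration by parts using the $\phi^\epsilon$-equation to trade one spatial derivative for a time derivative. Balancing these two top-order obstructions against the available time-decay budget $(1-\eta^*)b_*-1>0$ is what dictates the precise form of $\eta^*$ and $b_*$ displayed in the theorem.
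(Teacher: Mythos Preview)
Your overall architecture matches the paper's: same change of variables, same time-weighted energy $\mathcal{E}$ (this is exactly the paper's $Z^2$ with weights $n=5/2$, $m=3$), and the same reliance on the Grassin--Serre damping from $(A_1)$ to drive the decay. Local existence and uniqueness are indeed borrowed from \cite{zz}. But two substantive points are missing or misdiagnosed.

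\textbf{The cutoff approximation.} You do not address that $|\widehat{u}^\epsilon(t,x)|\to\infty$ as $|x|\to\infty$. When you integrate by parts in the transport term $\int \widehat{u}^\epsilon\!\cdot\!\nabla^{k+1}\psi^\epsilon\,\nabla^k\psi^\epsilon$, the boundary contribution $\int_{\partial B_R}|\nabla^k\psi^\epsilon|^2\,\widehat{u}^\epsilon\!\cdot\!\nu\,dS$ need not vanish, since $\nabla^k\psi^\epsilon\in L^2$ does not guarantee $|x|\,|\nabla^k\psi^\epsilon|^2\in L^1$. The paper handles this by first proving the full estimate for compactly supported $\rho^\epsilon_0$ (so $\psi^\epsilon$ stays compactly supported by transport), obtaining bounds independent of the support radius, and then passing to the limit by weak compactness. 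Without this step your $k=3$ energy identity for $\psi^\epsilon$ is formal.

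\textbf{The genuine top-order obstruction.} The ``cross term $\nabla^3 v^\epsilon\cdot\nabla^4\phi^\epsilon\cdot\phi^\epsilon$'' you flag is the standard symmetric-hyperbolic coupling and cancels in the usual way; it is not the hard part. The real issue is a $2\times2$ quadratic form linking $\Psi=(1+t)^{\delta_3}\sqrt{\epsilon}\,|\nabla^3\psi^\epsilon|_2$ and $\Phi=(1+t)^{\gamma_3}\sqrt{\epsilon}\,|\psi^\epsilon\nabla^3\mathrm{div}\,v^\epsilon|_2$. Two uncancellable cross terms feed into it: from the $\psi^\epsilon$-equation at $k=3$ one gets $\frac{\delta-1}{2}\epsilon|\nabla^3\psi^\epsilon|_2|\psi^\epsilon\nabla^3\mathrm{div}\,v^\epsilon|_2$, and from $\epsilon\,\nabla^3\bigl(\nabla(\psi^\epsilon)^2\cdot Q(\widehat{u}^\epsilon)\bigr)\cdot\nabla^3 v^\epsilon$ (the source term $R$ you called ``lower order'') one gets, after an integration by parts transferring the derivative onto $v^\epsilon$, the term $\frac{(4\alpha+6\beta)\delta}{(\delta-1)(1+t)}\epsilon|\nabla^3\psi^\epsilon|_2|\psi^\epsilon\nabla^3\mathrm{div}\,v^\epsilon|_2$. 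These sit against the good diagonal terms $(2\alpha+\beta)\Phi^2$ from the dissipation and $\frac{3\delta/2-3+m}{1+t}\Psi^2$ from the $\widehat u$-damping. The resulting form is positive definite precisely when $M_2<-1$ with the choice $m=n+\tfrac12=3$; this is where $(P_1)$ enters, and why $b_*$ contains the term $\tfrac32\delta-\tfrac14 M_3$. Under $(P_2)$ the second cross term vanishes, and under $(P_3)$/$(P_4)$ the $\psi^\epsilon$-estimates can be read off from $\phi^\epsilon$ without a separate smallness hypothesis. Once this form is controlled, one obtains the scalar ODE
\[
\frac{d}{dt}Z^2+\frac{(1-\eta^*)b_*}{1+t}Z^2\le C(1+t)^{1+\epsilon^*}Z^4+C_0(1+t)^{-1-\epsilon^*}Z^2,
\]
whose global solvability under $Z_0\le\Lambda(C_0)$ yields $D_0$ and the decay rate $\iota$.

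Minor: ``tested against $\epsilon\nabla^6\psi^\epsilon$'' should be ``apply $\nabla^3$ and test against $\epsilon\nabla^3\psi^\epsilon$''.
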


\begin{remark}\label{r1} Now we give some comments on the above theorem.  First, under  $(A_1)$, the global well-posedess  of the unique smooth solution $\widehat{u}$ to the  Cauchy problem \eqref{eq:approximation} can be found  in \cite{grassin, Serre} {\rm (see also Proposition \ref{p1} in our appendix)}.

Second,  the conditions $(A_1)$-$(A_2)$ in Theorem \ref{tha} identify a class of admissible initial data that
provides the unique solvability to the  Cauchy problem  (\ref{eq:1.1})-(\ref{10000}) with (\ref{initial})-(\ref{far}).
Such initial data contain  the following examples:
\begin{equation*}\begin{split}
\rho^\epsilon_0(x)=&\frac{\nu_1}{(1+|x|)^{2\sigma_1}}, \ \  \nu_1 g^{2\sigma_2}(x), \ \ \nu_1\exp\{-x^2\}, \ \ \frac{\nu_1 |x|}{(1+|x|)^{2\sigma_3}}; \\
u^\epsilon_0(x)=&\mathcal{A}x+\textbf{b}+\nu_2 f(x),
\end{split}
\end{equation*}
where $\nu_i$ {\rm ($i=1,2$)} are both sufficiently small constants,   $\sigma_i$ {\rm ($i=1,2,3$)} are all constants satisfying
\begin{equation*}\begin{split}
&\sigma_1>\frac{3}{2}\max\Big\{\frac{1}{\delta-1}, \frac{1}{\gamma-1}\Big\}, \quad   \sigma_2>3\max\Big\{\frac{1}{\delta-1}, \frac{1}{\gamma-1}\Big\},\\
&\sigma_3>\frac{3}{2}\max\Big\{\frac{1}{\delta-1}, \frac{1}{\gamma-1}\Big\}+\frac{1}{2};\\
\end{split}
\end{equation*}
$0\leq g(x)\in C^3_c(\mathbb{R}^3)$,  $\mathcal{A}$ is a $3\times 3$ constant matrix whose  eigenvalues are all greater than $2\kappa$,     $f=(f^1,f^2,f^3)^\top $ with $f^i \in \Xi \ (i=1,2,3)$ and $\textbf{b}\in \mathbb{R}^3$ is a constant vector.

At last, the contribution of the  above  theorem is to provide the uniform estimates \eqref{uniformtime} with respect to $\epsilon$ for arbitrarily large time, which also implies that the $L^2$ norm of the third order derivatives of the local sound speed will decay to zero as time goes to infinity.
\end{remark}

Next,  for the multi-dimensional   compressible isentropic Euler equations,  we  introduce the following definition of regular solutions to the  Cauchy problem  \eqref{eq:1.1E} with
\begin{align}
&(\rho,u)|_{t=0}=(\rho_0(x)\geq 0,  u_0(x)) \ \ \ \ \ \ \ \  \  \ \  \  \text{for} \ \ \   x\in \mathbb{R}^3,\label{winitial}\\[4pt]
&(\rho,u)(t,x)\rightarrow  (0,0) \ \ \   \text{as}\ \ \ |x|\rightarrow \infty \ \ \ \    \ \  \text{for} \ \ \  \ t\geq 0.  \label{wfar}
\end{align}

\begin{definition}\label{Eulerd1} {\bf(Regular solution of inviscid flow)}  Let $T> 0$ be a finite constant.  A solution $(\rho,u)$ to the  Cauchy problem  \eqref{eq:1.1E} with \eqref{winitial}-\eqref{wfar} is called a regular solution in $ [0,T]\times \mathbb{R}^3$ if $(\rho,u)$ satisfies this problem in the sense of distributions and:
\begin{equation*}\begin{split}
&(\textrm{A})\quad  \rho\geq 0, \quad   \rho^{\frac{\gamma-1}{2}}\in C([0,T]; H^3);\\
& (\textrm{B})\quad u- \widehat{u}\in C([0,T]; H^{3});\\
&(\textrm{C})\quad u_t+u\cdot\nabla u =0\quad  \text{as } \quad  \rho(t,x)=0,
\end{split}
\end{equation*}
where $\widehat{u}$  satisfying  $\widehat{u}(0,x)=u_0(x)$  is the unique  solution of the Cauchy problem \eqref{eq:approximation}.
\end{definition}

According to the uniform estimates \eqref{uniformtime} for the degenerate viscous flow and the Aubin-Lions Lemma, one can obtain that
\begin{corollary} \label{serre} Let $\gamma>1$.
If the following initial hypothesis hold:
\begin{itemize}
\item[$(\rm H_1)$]  $u_0\in \Xi$, and  there exists a constant  $\kappa>0$ such that ,
$$
\text{Dist}\big(\text{Sp}( \nabla u_0(x)), \mathbb{R}_{-} \big)\geq \kappa \quad \text{for\  all}\quad  x\in \mathbb{R}^3;\\
$$
\item[$(\rm H_2)$] $\rho_0\geq 0$, \ $\rho^{\frac{\gamma-1}{2}}_0\in H^3$, \   and  \ $\big\|\rho^{\frac{\gamma-1}{2}}_0\big\|_3 \leq D^*_0(\gamma,  A, \kappa, \|u_0\|_{\Xi})$
\end{itemize}
for some constant $D^*_0=D^*_0( \gamma, A,  \kappa, \|u_0\|_{\Xi})>0$,
then for any $T>0$,  there exists a   unique  regular solution $(\rho, u)$ in $[0,T]\times \mathbb{R}^3$  to the  Cauchy problem  \eqref{eq:1.1E} with (\ref{winitial})-(\ref{wfar}) satisfying
\begin{equation}\label{Euniformtime}\begin{split}
\sum_{k=0}^3 (1+t)^{2(k-2.5)}\Big(\big|\nabla^k \rho^{\frac{\gamma-1}{2}}(t,\cdot)\big|^2_2+\big|\nabla^k(u-\widehat{u})(t,\cdot)\big|^2_2\Big)\leq C_{02}(1+t)^{-\iota},
\end{split}
\end{equation}
for any $t\in [0,T]$ and some   constant $C_{02}=C_{02}(\gamma, A,  \kappa, \rho_0, u_0)>0$ that is   independent of  the time $T$.

\end{corollary}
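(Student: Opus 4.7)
The plan is to realize the regular inviscid solution $(\rho,u)$ as a subsequential $\epsilon\to 0$ limit of viscous approximations produced by Theorem \ref{tha}. First I would fix $\delta=\gamma$ so that condition $(P_4)$ is automatic, pick any $\alpha>0$ and $\beta$ with $2\alpha+3\beta\geq 0$, and take $\rho^\epsilon_0\equiv \rho_0$, $u^\epsilon_0\equiv u_0$. With $\delta=\gamma$ the left side of $(A_2)$ collapses to $\|\rho_0^{(\gamma-1)/2}\|_3+\|\rho_0^{(\gamma-1)/2}\|_2+\epsilon^{1/2}|\nabla^3\rho_0^{(\gamma-1)/2}|_2$, which is controlled by a multiple of $D_0^*$; choosing $D_0^*$ slightly smaller than the $D_0$ of Theorem \ref{tha} arranges $(A_2)$, while $(A_1)$ is identical to $(H_1)$. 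Thus for each $\epsilon\in(0,1]$ Theorem \ref{tha} yields a global regular solution $(\rho^\epsilon,u^\epsilon)$ satisfying \eqref{uniformtime} with $C_{01}$ independent of $\epsilon$, and the associated $\widehat u^\epsilon$ coincides with the fixed $\widehat u$ solving \eqref{eq:approximation} with datum $u_0$.

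Second, I would extract a limit along some $\epsilon_j\to 0$. The bounds \eqref{uniformtime} give uniform $L^\infty(0,T;H^3)$ control of $(\rho^\epsilon)^{(\gamma-1)/2}$ and of $u^\epsilon-\widehat u$. To apply Aubin--Lions I would control the time derivatives from the equations themselves: $\partial_t(\rho^\epsilon)^{(\gamma-1)/2}$ is $L^\infty_tH^2_x$-bounded from mass conservation, while $\partial_t(u^\epsilon-\widehat u)$ is bounded in $L^2_tH^1_x$ after combining the $(\rho^\epsilon)^{(\delta-1)/2}\nabla^4 u^\epsilon\in L^2_tL^2_x$ estimate from Definition \ref{d1}(E) with the $H^2$-bound on $(\rho^\epsilon)^{(\delta-1)/2}$ (the viscous term carries an explicit $\epsilon$ that is retained). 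Aubin--Lions then gives strong convergence of $(\rho^{\epsilon_j})^{(\gamma-1)/2}$ and $u^{\epsilon_j}$ in $C([0,T];H^{s'}_{\loc})$ for any $s'\in[2,3)$, hence locally uniform convergence to a pair $(\rho,u)$ by Sobolev embedding.

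Third, I would pass to the limit in \eqref{eq:1.1}. The continuity equation is immediate. In the momentum equation, $\dd\mathbb{T}$ carries the prefactor $\epsilon$, and the uniform $H^2$-bound on $(\rho^\epsilon)^{(\delta-1)/2}$ together with the strong local convergence of $u^\epsilon$ forces it to vanish in $\mathcal{D}'$, so $(\rho,u)$ solves \eqref{eq:1.1E} in the sense of distributions. The vacuum closure $(\textrm{C})$ of Definition \ref{Eulerd1} is inherited from $(\textrm{F})$ for the viscous solutions via the locally uniform convergence of $(\rho^\epsilon,u^\epsilon_t+u^\epsilon\cdot\nabla u^\epsilon)$. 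The regularity $(\textrm{A})$--$(\textrm{B})$ follows from weak-$*$ lower semicontinuity, which applied to \eqref{uniformtime} also produces the decay estimate \eqref{Euniformtime}. Uniqueness would be obtained by a standard weighted $H^3$ energy estimate on the difference of two regular solutions, symmetrizing for $(\rho^{(\gamma-1)/2},u-\widehat u)$ in the spirit of the ``quasi-symmetric hyperbolic'' framework with $\epsilon=0$ and closing by Gronwall.

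The main obstacle I expect is the time-derivative step: producing $\epsilon$-independent $L^2_t$ control of $\partial_t u^\epsilon$ despite the fact that the degenerate viscosity $\epsilon(\rho^\epsilon)^\delta$ is not uniformly bounded below, so that standard parabolic bounds of the form $\|\nabla^2 u^\epsilon\|_2\lesssim \epsilon^{-1}(\cdots)$ are useless. It is precisely the degenerate-weighted fourth-order bound in \eqref{uniformtime}, tailored to the ``quasi-symmetric hyperbolic''--``degenerate elliptic'' structure of Theorem \ref{tha}, that permits this step and makes the inviscid passage clean.
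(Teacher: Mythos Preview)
Your overall strategy is correct and in fact somewhat cleaner than the paper's in one respect: by choosing $\delta=\gamma$ (condition $(P_4)$) and $\rho^\epsilon_0\equiv\rho_0$, you bypass the approximation machinery the paper develops in Lemma~\ref{example} and Lemmas~\ref{L:6.2}--\ref{L:8.3q}, where $(\rho^\epsilon_0)^{(\gamma-1)/2}$ is built as a truncation of $\rho_0^{(\gamma-1)/2}$ plus a small tail $\eta\epsilon^p f^{(\gamma-1)/2}$, and then $(A_2)$ is verified for a suitable $\delta$. Your observation that with $\delta=\gamma$ the quantity in $(A_2)$ is simply $\leq 3\|\rho_0^{(\gamma-1)/2}\|_3$ is correct and makes the existence half of the argument shorter. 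The compactness step via Aubin--Lions on bounded domains and passage to the limit in the weak formulation match the paper's Section~5.2 essentially line for line; note that the time-derivative bounds must be taken locally (on each $B_{R_0}$), since $\widehat u\cdot\nabla v^\epsilon$ and $\widehat u\cdot\nabla\phi^\epsilon$ are not globally in $L^2$.

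Two places where your sketch underestimates the work: (i) Uniqueness is not a ``standard weighted $H^3$'' argument. Because $|\widehat u(t,x)|$ grows linearly in $|x|$, the transport term $\widehat u\cdot\nabla\overline W$ obstructs a global energy estimate; the paper handles this by multiplying the difference by a spatial cutoff $F_N=F(|x|/N)$, doing an $L^2$ estimate on $\overline W^N$, and using $|\widehat u\,\nabla F_N|\lesssim |\nabla\widehat u|_\infty$ on the annulus $N\leq|x|\leq 2N$ to show the error terms vanish as $N\to\infty$. (ii) Your compactness gives $C([0,T];H^{s'}_{\mathrm{loc}})$ for $s'<3$, but the Definition~\ref{Eulerd1} regularity $(\rho^{(\gamma-1)/2},u-\widehat u)\in C([0,T];H^3)$ requires a separate argument (the paper invokes the standard symmetric-hyperbolic time-continuity argument of Majda). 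Also, the vacuum condition $(\mathrm C)$ need not be ``inherited'' from $(\mathrm F)$: once the limit solves the reformulated system \eqref{cvli47-1}, one reads off $u_t+u\cdot\nabla u=-c\,\phi\nabla\phi$, which vanishes on $\{\phi=0\}$ directly.
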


\subsection{Global-in-time inviscid limit}

Based on the above global-in-time uniform estimates for both degenerate viscous flow \eqref{eq:1.1} and the inviscid flow \eqref{eq:1.1E}, i.e., \eqref{uniformtime} and \eqref{Euniformtime},  now we can  show that the inviscid flow (\ref{eq:1.1E}) can be regarded as viscous flow (\ref{eq:1.1}) with vanishing real physical viscosities in the sense of the regular solution  with  vacuum   for arbitrarily large time. For simplicity, we denote
  \begin{equation*}
\overline{W}^\epsilon=\left(\sqrt{\frac{4A\gamma}{(\gamma-1)^2}}\big((\rho^\epsilon)^{\frac{\gamma-1}{2}}-\rho^{\frac{\gamma-1}{2}}\big), u^\epsilon- u\right),\quad \overline{W}^\epsilon_0=\overline{W}^\epsilon(0,x).
\end{equation*}

\begin{theorem}\label{th3A} {\bf (Inviscid limit)} Let $T>0$ be an arbitrarily large time. Under the assumptions of Theorem {\rm\ref{tha}}, if we assume in addition that  $u^\epsilon_0=u_0$ is independent of $\epsilon$, and there exists one  function  $\rho_0(x)\in L^2(\mathbb{R}^3)$   such that
\begin{equation}\label{initialrelation1}
\lim_{\epsilon\rightarrow 0}\Big|(\rho^\epsilon_0)^{\frac{\gamma-1}{2}}-\rho^{\frac{\gamma-1}{2}}_0\Big|_{2}=0,
\end{equation}
then there exist functions $(\rho,u)$ defined in $[0,T]\times \mathbb{R}^3$  satisfying:
\begin{equation}\label{jkkab1}
\sum_{k=0}^3 (1+t)^{2(k-2.5)}\Big(\big|\nabla^k\rho^{\frac{\gamma-1}{2}}(t,\cdot)\big|^2_2+\big|\nabla^k(u-\widehat{u})(t,\cdot)\big|^2_2\Big)  \leq C_{03}(1+t)^{-\iota},
\end{equation}
 for any  $t\in [0,T]$ and some   constant $C_{03}=C_{03}(\gamma, \alpha,\beta,  A, \kappa, \rho_0,u_0)>0$,  and  $(\rho, u)$ is  the unique regular
solution       to the Cauchy problem  \eqref{eq:1.1E} with (\ref{winitial})-(\ref{wfar}) satisfying
$$(\rho,u)|_{t=0}=(\rho_0(x)\geq 0,  u_0(x)) \quad \text{for} \quad   x\in \mathbb{R}^3.$$
Furthermore, we still have the following convergence estimates:
\begin{equation}\label{shou1A1}
\begin{split}
|\overline{W}^\epsilon(t)|^2_2
\leq& (1+t)^{C_{04}}\Big(|\overline{W}^\epsilon_0|^2_2+C_{04}\epsilon^2\ln(1+t)\Big) \quad \text{when}\quad \iota=\frac{7}{4};\\
|\overline{W}^\epsilon(t)|^2_2
\leq& (1+t)^{C_{04}}\Big(|\overline{W}^\epsilon_0|^2_2+C_{04}\epsilon^2\big|(1+t)^{7-4\iota}-1\big|\Big)\quad \text{when} \quad \iota\neq \frac{7}{4};\\
|\overline{W}^\epsilon(t)|^2_{D^1}\leq& (1+t)^{C_{04}}\Big(|\overline{W}^\epsilon_0|^2_{D^1}+C_{04}\epsilon^2\ln(1+t)\Big) \quad \text{when}\quad \iota=\frac{5}{4};\\
|\overline{W}^\epsilon(t)|^2_{D^1}\leq& (1+t)^{C_{04}}\Big(|\overline{W}^\epsilon_0|^2_{D^1}+C_{04}\epsilon^2\big|(1+t)^{5-4\iota}-1\big|\Big)\quad \text{when} \quad \iota\neq \frac{5}{4};\\
|\overline{W}^\epsilon(t)|^2_{D^2}\leq&  \exp\big( C_{04}((1+t)^{3-2\iota}-1) \big)\Big(|\overline{W}^\epsilon_0|^2_{D^2}
+C_{04}\epsilon\Big(1+\int_0^t f(s) \ \text{\rm d}s\Big)\\
&+C_{04}|\overline{W}^\epsilon_0|^2_{D^1}\int_0^t (1+s)^{1-4\iota+C_{04}}\ \text{\rm d}s\Big) \quad \text{when}\quad \iota \in \big(1,\frac{3}{2} \big);\\
|\overline{W}^\epsilon(t)|^2_{D^2}\leq& (1+t)^{C_{04}}\Big(|\overline{W}^\epsilon_0|^2_{D^2}+C_{04}|\overline{W}^\epsilon_0|^2_{D^1}\int_0^t (1+s)^{1-4\iota+C_{04}}\ \text{\rm d}s+C_{04}\epsilon\\
&+C_{04}\epsilon\int_0^t (1+s)^{1-4\iota+C_{04}}\big|(1+s)^{5-4\iota}-1\big| \ \text{\rm d}s\Big)\quad  \text{when} \quad \iota\in \big[\frac{3}{2}, 2\big),
\end{split}
\end{equation}
for any $0\leq t \leq T$ and  some   constant $C_{04}=C_{04}(\gamma, \delta, \alpha,\beta,  A, \kappa, D_0, \|u_0\|_{\Xi})>0$, where the function  $f$ of the time  is  defined by
 \begin{equation*}
\begin{split}
f(t)=&
 \begin{cases}
(1+t)^{-4+C_{04}}\ln (1+t)  \;\qquad\quad \ \ \    \  \  \text{when} \ \ \iota=\frac{5}{4},\\[10pt]
(1+t)^{1-4\iota+C_{04}}\big|(1+t)^{5-4\iota}-1\big|  \ \  \  \    \text{when} \ \  \iota \in \big(1,\frac{5}{4}\big)\cup \big(\frac{5}{4},\frac{3}{2} \big).
 \end{cases}
\end{split}
 \end{equation*}
\end{theorem}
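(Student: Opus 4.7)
The plan has three stages: extract a subsequential limit of $(\rho^\epsilon,u^\epsilon)$, identify it as the unique Euler regular solution of Corollary \ref{serre}, and then quantify the convergence rate at three different Sobolev levels by a direct energy method applied to $\overline{W}^\epsilon$.

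\textbf{Step 1: compactness and identification of the limit.} Theorem \ref{tha} supplies uniform-in-$\epsilon$ bounds on $(\rho^\epsilon)^{(\gamma-1)/2}$ and $u^\epsilon-\widehat{u}^\epsilon$ in $L^\infty([0,T];H^3)$ together with a $\sqrt{\epsilon}$-weighted bound on $(\rho^\epsilon)^{(\delta-1)/2}\nabla(u^\epsilon-\widehat{u}^\epsilon)$ in $L^2_tL^2_x$. Reading off $\partial_t$ from the equations themselves in a space of one fewer derivative and applying Aubin--Lions produces a subsequential limit $(\rho,u)$ with the regularity required by Definition \ref{Eulerd1}. Every viscous contribution in \eqref{eq:1.1} is $\epsilon$ times a uniformly bounded quantity against compactly supported test functions, so the limit solves \eqref{eq:1.1E} in the distributional sense together with the vacuum condition (C). Uniqueness given by Corollary \ref{serre} then forces the whole family to converge, and \eqref{jkkab1} is inherited by lower semicontinuity from \eqref{uniformtime} combined with the initial convergence \eqref{initialrelation1}.

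\textbf{Step 2: the difference equation and the $L^2$/$D^1$ convergence estimates.} Rewriting the system in the quasi-symmetric variables $\phi^\epsilon=\sqrt{4A\gamma/(\gamma-1)^2}\,(\rho^\epsilon)^{(\gamma-1)/2}$ yields
\[
\phi^\epsilon_t+u^\epsilon\cdot\nabla\phi^\epsilon+\tfrac{\gamma-1}{2}\phi^\epsilon\,\text{div}\,u^\epsilon=0,\qquad u^\epsilon_t+u^\epsilon\cdot\nabla u^\epsilon+\tfrac{\gamma-1}{2}\phi^\epsilon\nabla\phi^\epsilon=\epsilon\,(\rho^\epsilon)^{-1}\text{div}\,\mathbb{T},
\]
and an analogous $\epsilon=0$ system for $(\phi,u)$. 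Subtracting gives a symmetric hyperbolic system for $\overline{W}^\epsilon$ whose coefficients are controlled by Theorem \ref{tha} and Corollary \ref{serre} and whose forcing has size $\epsilon$. Testing with $\overline{W}^\epsilon$ produces an energy inequality of the schematic form
\[
\tfrac{d}{dt}|\overline{W}^\epsilon|_2^2\le Q(t)\,|\overline{W}^\epsilon|_2^2+C\epsilon^2 g(t),
\]
where $Q(t)$ is built from $L^\infty$ norms of first derivatives of $\phi^\epsilon,\phi,u^\epsilon,u$, hence has tame time behaviour by the uniform estimates, while $g(t)$ carries the time-decay from \eqref{uniformtime} applied to the viscous forcing. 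Gr\"onwall with explicit integration of the weight $(1+t)^{-2\iota+C_{04}}$ produces the two cases of \eqref{shou1A1}, the logarithmic case corresponding precisely to the exponent $-1$, i.e.\ $\iota=7/4$. The $D^1$ bound is obtained identically after commuting one derivative with the transport, the new commutators being dominated by $|\overline{W}^\epsilon|_2+|\overline{W}^\epsilon|_{D^1}$ via Theorem \ref{tha}.

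\textbf{Step 3: the $D^2$ estimate --- the main obstacle.} At the $\nabla^2$ level two new phenomena appear. First, the commutators $[\nabla^2,u^\epsilon\cdot\nabla]\overline{W}^\epsilon$ and $[\nabla^2,\phi^\epsilon\nabla\cdot]\overline{W}^\epsilon$ cannot be absorbed at top order and must be controlled in terms of $|\overline{W}^\epsilon|_{D^1}$ from Step 2; this is exactly the origin of the $\int_0^t(1+s)^{1-4\iota+C_{04}}\,\text{d}s\,|\overline{W}^\epsilon_0|^2_{D^1}$ term in \eqref{shou1A1}. Second, the viscous forcing at the $D^2$ level contains up to three derivatives of $u^\epsilon$, for which Theorem \ref{tha} only provides the weighted bound $\sqrt{\epsilon}\,(\rho^\epsilon)^{(\delta-1)/2}\nabla^3 u^\epsilon\in L^2_tL^2_x$; pairing via Cauchy--Schwarz with $\nabla^2\overline{W}^\epsilon_u$ then costs one factor of $\sqrt{\epsilon}$, which accounts for the degradation from $\epsilon^2$ (in the $L^2/D^1$ lines) to $\epsilon$ in the last two lines of \eqref{shou1A1}. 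The dichotomy $\iota\in(1,3/2)$ versus $\iota\in[3/2,2)$ is driven by whether the time-integrated Gr\"onwall exponent $3-2\iota$ is positive (forcing the $\exp(C_{04}((1+t)^{3-2\iota}-1))$ factor) or non-positive (in which case a polynomial factor $(1+t)^{C_{04}}$ suffices), and the piecewise time integrals simply collate the various $(1+s)^{a-4\iota+C_{04}}$ weights generated by $g(t)$ and the commutators. The crucial technical point throughout is that the quasi-symmetric hyperbolic structure inherited from the proof of Theorem \ref{tha} places every top-order derivative of $\phi^\epsilon$ in the symmetric part of the operator, where it cancels after integration by parts; without this cancellation the top-order $\phi^\epsilon$-terms would exceed anything controlled by \eqref{uniformtime}, and the scheme would fail.
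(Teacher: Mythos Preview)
Your overall architecture matches the paper's: identify the limiting Euler solution, write the symmetric-hyperbolic difference system for $\overline{W}^\epsilon$, and run a tiered energy/Gr\"onwall argument at the $L^2$, $D^1$, $D^2$ levels. The mechanisms you isolate for the $D^2$ degradation to $O(\epsilon)$ and for the $\iota$-dichotomy are the right ones.

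There is, however, one genuine technical obstruction your plan does not address. The transport part of the difference equation contains $\widehat{u}\cdot\nabla\overline{W}^\epsilon$ (equivalently $u^\epsilon\cdot\nabla\overline{W}^\epsilon$ in your formulation), and since $|\widehat{u}(t,x)|\sim|x|$ at infinity, the integration by parts
\[
\int_{\mathbb{R}^3}\widehat{u}^{(j)}\partial_j\overline{W}^\epsilon\cdot\overline{W}^\epsilon\,\text{d}x=-\frac12\int_{\mathbb{R}^3}\text{div}\,\widehat{u}\,|\overline{W}^\epsilon|^2\,\text{d}x
\]
is not a priori justified: membership of $\overline{W}^\epsilon$ in $H^3$ does not guarantee that $|x|\,|\overline{W}^\epsilon|^2$ is integrable or that the boundary contribution at infinity vanishes. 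The paper handles this by multiplying $\overline{W}^\epsilon$ by a compactly supported cutoff $F_N=F(|x|/N)$, proving Lemmas~\ref{L:7.1}--\ref{L:7.3} for $\overline{W}^\epsilon F_N$ with explicit remainder terms $J^*_N,I^*_N,L^*_N$ supported in the annulus $\{N\le|x|\le 2N\}$, and then sending $N\to\infty$ using the uniform $H^3$ control on $W^\epsilon,W$. This is not a formality: the same obstruction is already flagged in the strategy section for the uniform estimates themselves, and without the cutoff your energy inequality in Step~2 is only formal.

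One minor correction in Step~3: at the $D^2$ level the top viscous term is $\epsilon(\varphi^\epsilon)^2\nabla^2 L v^\epsilon$, which carries \emph{four} derivatives of $v^\epsilon$, not three. The weighted bound you need is therefore the $k=3$ case of \eqref{uniformtime}, namely $\sqrt{\epsilon}\,(1+t)^{1/2}\varphi^\epsilon\nabla^4 v^\epsilon\in L^2_tL^2_x$; the $\nabla^3 v^\epsilon$ contribution is already controlled uniformly in $\epsilon$ by the pointwise-in-time $H^3$ estimate and needs no weight. The Cauchy--Schwarz split then reads $\epsilon|\varphi^\epsilon|_\infty|\varphi^\epsilon\nabla^4 v^\epsilon|_2|\nabla^2\overline{v}^\epsilon|_2\le C\epsilon^2(1+t)|\varphi^\epsilon\nabla^4 v^\epsilon|_2^2+C(1+t)^{2-2\iota}|\overline{W}^\epsilon|_{D^2}^2$, and time-integrating the first piece against the weighted $L^2_t$ bound is exactly what produces the single power of $\epsilon$.
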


The above theorem implies that 

\begin{corollary}\label{th2} Let \eqref{canshu} hold, and functions $(\rho_0,u_0)$ satisfy conditions $(A_1)$-$(A_2)$ for some $\delta>1$. Suppose that $(\rho^\epsilon, u^\epsilon)$ is the regular solution to the Cauchy
 problem   (\ref{eq:1.1})-(\ref{10000}) with (\ref{initial})-(\ref{far}) obtained in Theorem \ref{tha},
and $(\rho,u)$ is the regular solution to the Cauchy problem  \eqref{eq:1.1E} with (\ref{winitial})-(\ref{wfar})  obtained  in Corollary \ref{serre}. If the initial data 
\begin{equation}\label{simplecase}
(\rho^{\epsilon}, u^{\epsilon})|_{t=0}=(\rho, u)|_{t=0}=(\rho_0,u_0),
\end{equation}
then when $\epsilon\to 0$, for arbitrarily large time $T>0$,
$\big(\rho^{\epsilon}, u^{\epsilon}\big)$
converges to $(\rho, u)$ in the following sense
$$
\lim_{\epsilon \rightarrow 0}\sup\limits_{0\leq t\leq T}\Big(\big\|(\rho^\epsilon)^{\frac{\gamma-1}{2}}(t,\cdot)-\rho^{\frac{\gamma-1}{2}}(t,\cdot)
\big\|_{s'}+\big\| u^\epsilon(t,\cdot)-u(t,\cdot)
\big\|_{s'}\Big)=0,
$$
for any $s'\in [0,3)$. Moreover, we still have
\begin{align}
\sup\limits_{0\leq t\leq T}\Big(\big\|(\rho^\epsilon)^{\frac{\gamma-1}{2}}(t,\cdot)-\rho^{\frac{\gamma-1}{2}}(t,\cdot)
\big\|_{1}+\big\| u^\epsilon(t,\cdot)-u(t,\cdot)
\big\|_{1}\Big)\leq &C^*\epsilon,\label{qiyu1}\\
\sup\limits_{0\leq t\leq T}\Big(\big|(\rho^\epsilon)^{\frac{\gamma-1}{2}}(t,\cdot)-\rho^{\frac{\gamma-1}{2}}(t,\cdot)
\big|_{D^2}+\big| u^\epsilon(t,\cdot)-u(t,\cdot)
\big|_{D^2}\Big)\leq & C^* \epsilon^{\frac{1}{2}},\label{qiyu2}
\end{align}
 for some   constant $C^*=C^*(\gamma, \delta, \alpha,\beta,  A, \kappa, D_0, \|u_0\|_{\Xi},T)>0$.

\end{corollary}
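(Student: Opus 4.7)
The plan is to deduce everything directly from Theorem \ref{th3A} applied to the simpler situation in which the initial data of the viscous and inviscid problems coincide. First, when $(\rho^\epsilon,u^\epsilon)|_{t=0}=(\rho,u)|_{t=0}=(\rho_0,u_0)$, we have $\overline{W}^\epsilon_0\equiv 0$; moreover, since $u^\epsilon_0=u_0$ is independent of $\epsilon$, the Cauchy problem \eqref{eq:approximation} for $\widehat{u}^\epsilon$ is itself $\epsilon$-independent, so $\widehat{u}^\epsilon\equiv\widehat{u}$. Hence $u^\epsilon-u=(u^\epsilon-\widehat{u})-(u-\widehat{u})$, and by the uniform estimate \eqref{uniformtime} in Theorem \ref{tha} together with \eqref{Euniformtime} in Corollary \ref{serre}, both differences $(\rho^\epsilon)^{\frac{\gamma-1}{2}}-\rho^{\frac{\gamma-1}{2}}$ and $u^\epsilon-u$ are bounded in $L^\infty([0,T];H^3)$ uniformly in $\epsilon\in(0,1]$.

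Next, I plug $\overline{W}^\epsilon_0=0$ into the four convergence estimates \eqref{shou1A1} and fix a finite horizon $t\in[0,T]$. In each of the possible ranges of $\iota\in(1,2)$, the polynomial prefactors $(1+t)^{C_{04}}$, the logarithmic factor $\ln(1+t)$, the powers $|(1+t)^{7-4\iota}-1|$ and $|(1+t)^{5-4\iota}-1|$, the exponential factor $\exp(C_{04}((1+t)^{3-2\iota}-1))$, and the integrals $\int_0^t (1+s)^{1-4\iota+C_{04}}\,\text{d}s$ and $\int_0^t f(s)\,\text{d}s$ are all finite on $[0,T]$. Collecting them into a single constant $C^*=C^*(\gamma,\delta,\alpha,\beta,A,\kappa,D_0,\|u_0\|_\Xi,T)$ and using $\overline{W}^\epsilon_0=0$ reduces the first four lines of \eqref{shou1A1} to $|\overline{W}^\epsilon(t)|_2^2+|\overline{W}^\epsilon(t)|_{D^1}^2\leq (C^*)^2\epsilon^2$, and the last two to $|\overline{W}^\epsilon(t)|_{D^2}^2\leq (C^*)^2\epsilon$. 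Taking square roots and recalling the definition of $\overline{W}^\epsilon$ yields \eqref{qiyu1} and \eqref{qiyu2} at once.

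For the $H^{s'}$ convergence with $s'\in[0,3)$, I combine step one (uniform $H^3$-boundedness) with the $L^2$ decay of order $\epsilon$ from step two, through the standard Sobolev interpolation $\|f\|_{s'}\leq C\|f\|_0^{1-s'/3}\|f\|_3^{s'/3}$. Applied to $(\rho^\epsilon)^{\frac{\gamma-1}{2}}-\rho^{\frac{\gamma-1}{2}}$ and to $u^\epsilon-u$, this produces a bound of order $\epsilon^{1-s'/3}$ for $\|\cdot\|_{s'}$, which vanishes uniformly on $[0,T]$ as $\epsilon\to 0$. Taking the supremum in $t$ finishes the argument.

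The main obstacle is essentially bookkeeping rather than new analysis: one must verify that all of the $\iota$-dependent growth factors in \eqref{shou1A1} stay finite for $t\in[0,T]$ and that the two pieces of $\overline{W}^\epsilon$, namely $(\rho^\epsilon)^{\frac{\gamma-1}{2}}-\rho^{\frac{\gamma-1}{2}}$ and $u^\epsilon-u$, really carry the uniform $H^3$ control needed for interpolation — the latter relying critically on the identity $\widehat{u}^\epsilon=\widehat{u}$, which is what allows us to cancel the (potentially large and only weakly controlled) background profile and reduce everything to the globally small perturbation handled by Theorem \ref{tha}.
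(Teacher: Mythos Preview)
Your proof is correct and matches the paper's approach exactly: the paper simply states that Corollary~\ref{th2} ``can be obtained easily from the conclusion of Theorem~\ref{th3A},'' and your argument---setting $\overline{W}^\epsilon_0=0$, reading off the rates from \eqref{shou1A1}, and interpolating against the uniform $H^3$ bounds---is precisely the intended derivation.
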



\begin{remark}
According to Theorem \ref{th3A} and the proof for  Corollary 2.1 {\rm (see Section 5)},  we know that    if $(\rho_0,u_0)$ satisfy the  assumptions $(H_1)$-$(H_2)$ in Corollary 2.1,
then  the corresponding Cauchy problem \eqref{eq:1.1E} with (\ref{winitial})-(\ref{wfar})   can be regarded as a  limit problem of the Cauchy problem (\ref{eq:1.1})-(\ref{10000}) with (\ref{initial})-(\ref{far})  as $\epsilon \rightarrow 0$ for some proper $\delta>1$.
\end{remark}

\begin{remark}\label{zhunbei1}
We need to point out that Theorems \ref{tha}-\ref{th3A} still hold when viscosities $\mu$ and $\lambda$ are in the general form:
\begin{equation}\label{nianxing}\mu(\rho^\epsilon)= \epsilon (\rho^\epsilon)^\delta \alpha(\rho^\epsilon),\quad \lambda(\rho^\epsilon)= \epsilon(\rho^\epsilon)^\delta \beta(\rho^\epsilon),
\end{equation}
where
$\alpha(\rho^\epsilon)$ and $\beta(\rho^\epsilon)$ are functions of $\rho^\epsilon$, satisfying
 \begin{equation}\label{zhunbei2}
  \big(\alpha(\rho^\epsilon),\beta(\rho^\epsilon)\big)\in C^4(\mathbb{R}^+),\;\;  \alpha(\rho^\epsilon)\geq C>0, \ \ \text{and} \ \  2\alpha(\rho^\epsilon)+3\beta(\rho^\epsilon)\geq 0,
 \end{equation}
 for some constant $C>0$.
For    function pairs $(g_1(\rho^\epsilon),g_2(\rho^\epsilon))$ with  the form $\sim (\rho^\epsilon)^p$, it is obvious that they do not belong to $C^4(\mathbb{R}^+)$ when $p<4$ in the presence of vacuum. However, Theorems \ref{tha}-\ref{th3A} still hold if the initial data satisfy the  additional initial assumptions:
\begin{equation}\label{yanshen}
\begin{cases}
\alpha(\rho^\epsilon)=P_{k_1}(\rho^\epsilon)+1,\quad \beta(\rho^\epsilon)=P_{k_2}(\rho^\epsilon),\\[10pt]
P_{k_1}(\rho^\epsilon_0)\in H^3,\quad P_{k_2}(\rho^\epsilon_0)\in H^3,\\[10pt]
\sum_{i=1}^2  \Big(\big\|P_{k_i}(\rho^\epsilon_0)\big\|_2+\epsilon^{\frac{1}{2}}|\nabla^3P_{k_i}(\rho^\epsilon_0)|_2\Big) \leq D_0,
\end{cases}
\end{equation}
where $P_{k_i}(\rho^\epsilon)$ $(i=1,2,k_i > 0)$ is a $k_i$-th  degree polynomial of $\rho^\epsilon$ with vanishing constant term, and the minimum power of density among all the terms of  $P_{k_i}(\rho^\epsilon)$ $(i=1,2,k_i \geq 1)$   should  be greater than or  equal to  $\frac{\delta-1}{2}$.

\end{remark}

\begin{remark}If we consider more regular initial data,  the convergence rates  (such as in $L^\infty$)  shown in Theorem \ref{th2} and Corollary \ref{th3A}  can be slightly  improved.
Actually,  if  the   initial data $( \rho^\epsilon_0, u^\epsilon_0)$ satisfy
\begin{itemize}

\item[$(\rm A^*_1)$] $u^\epsilon_0\in \Xi^*$, $\|u^\epsilon_0\|_{\Xi^*}\leq C$ for some constant $C>0$ that is independent of $\epsilon$,
and  there exists a constant  $\kappa>0$ that is also independent of $\epsilon$ such that ,
$$
\text{Dist}\big(\text{Sp}( \nabla u^\epsilon_0(x)), \mathbb{R}_{-} \big)\geq \kappa \quad \text{for\  all}\quad  x\in \mathbb{R}^3;\\
$$

\item[$(\rm A^*_2)$] $\rho^\epsilon_0\geq 0$,   and
$$\big\|(\rho^\epsilon_0)^{\frac{\gamma-1}{2}}\big\|_4+ \big\|(\rho^\epsilon_0)^{\frac{\delta-1}{2}}\big\|_3+\epsilon^{\frac{1}{2}}|\nabla^4 (\rho^\epsilon_0)^{\frac{\delta-1}{2}}|_2 \leq D_0$$
 for some constant  $D_0=D_0(\gamma, \delta, \alpha,\beta,  A, \kappa, \|u^\epsilon_0\|_{\Xi^*})>0$ that   is independent of $\epsilon$,
\end{itemize}
then under proper assumptions,  the convergence rate $\epsilon^{\frac{1}{2}}$  in  $\eqref{shou1A1}_5$-$\eqref{shou1A1}_6$ and \eqref{qiyu2}  can be improved to $\epsilon$. The details can be seen in Section 7. Moreover, it is worth pointing out that under the current frame work, the best convergence rate in $H^s$ {\rm ($0\leq s \leq 2$)} that we can obtain is $\epsilon$, no matter how small  or  smooth the initial data is.
\end{remark}

\subsection{Applications to other degenerate systems}

The above conclusions obtained in  Theorems \ref{tha}-\ref{th3A}  are still available for some other models such as the ones shown in the following two theorems.
\begin{theorem}\label{thglobal2}
Let the viscous stress tensor $\mathbb{T}$ in  (\ref{eq:1.1}) be  given by
$$
\mathbb{T}=\epsilon (\rho^\epsilon)^\delta(2\alpha  \nabla u^\epsilon+\beta  \text{div}u^\epsilon\mathbb{I}_3).
$$
Then the same conclusions obtained    in Theorems \ref{tha}-\ref{th3A} still hold.

\end{theorem}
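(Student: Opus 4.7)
The key observation is that the new viscous stress tensor differs from the original only in that the deformation tensor $D(u^\epsilon)=\frac{1}{2}(\nabla u^\epsilon+(\nabla u^\epsilon)^\top)$ is replaced by $\nabla u^\epsilon$, while the density-dependent factor $\epsilon(\rho^\epsilon)^\delta$ and the structural constants $(\alpha,\beta)$ are retained. A direct expansion shows that the principal part of the viscous operator changes from $\mu(\rho^\epsilon)\Delta u^\epsilon+(\mu(\rho^\epsilon)+\lambda(\rho^\epsilon))\nabla\text{div}\,u^\epsilon$ to $2\alpha\epsilon(\rho^\epsilon)^\delta\Delta u^\epsilon+\beta\epsilon(\rho^\epsilon)^\delta\nabla\text{div}\,u^\epsilon$, i.e.\ the coefficient pair $(\alpha,\alpha+\beta)$ is replaced by $(2\alpha,\beta)$, and the remaining first-order terms have the same shape $\epsilon(\rho^\epsilon)^{\delta-1}\nabla\rho^\epsilon\cdot\nabla u^\epsilon$ exploited in \cite{Geng}. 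The plan is therefore to re-run the proofs of Theorems \ref{tha}-\ref{th3A} line by line with only algebraic modifications to the structural constants.

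The first step is to verify coercivity of the new operator $Lu:=-2\alpha\Delta u-\beta\nabla\text{div}\,u$. A single integration by parts yields
\begin{equation*}
\int Lu\cdot u\,\text{d}x=2\alpha|\nabla u|_2^2+\beta|\text{div}\,u|_2^2\geq\min\{2\alpha,\,2\alpha+3\beta\}|\nabla u|_2^2,
\end{equation*}
which is non-negative under \eqref{10000}, using the pointwise bound $|\text{div}\,u|^2\leq 3|\nabla u|^2$ when $\beta<0$. This replaces the Korn-type estimate employed in the original $D(u^\epsilon)$-based argument and delivers the same weighted control on $\nabla^2u^\epsilon$. Next, I would introduce the usual reformulation variables $\phi:=\sqrt{A\gamma/(\gamma-1)}\,(\rho^\epsilon)^{(\gamma-1)/2}$ and $\psi:=(\rho^\epsilon)^{(\delta-1)/2}$, and check that the quasi-symmetric hyperbolic-degenerate elliptic coupling from Theorem \ref{tha} is recovered verbatim, with the same strongly nonlinear source terms of the form $\psi\nabla\psi\cdot\nabla u^\epsilon$ and $\psi\nabla\psi\,\text{div}\,u^\epsilon$ (only the numerical constants change). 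Accordingly, the parameters $M_1,M_2,M_3,M_4$ and the conditions $(P_1)$-$(P_4)$ are to be re-read with $(2\alpha,\beta)$ in place of $(\alpha,\alpha+\beta)$, while the initial assumptions $(A_1)$-$(A_2)$ retain their literal form.

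The third step is to execute the hierarchy of weighted-in-time energy estimates of Theorem \ref{tha}: $H^3$ for $\phi$, $H^2$ together with the $\epsilon^{1/2}$-weighted $D^3$ control for $\psi$, and $H^3$ for $u^\epsilon-\widehat{u}^\epsilon$, each carrying the polynomial weight $(1+t)^{2(k-2.5)}$. These estimates are driven by integration by parts against $Lu^\epsilon$ (whose coercivity has been verified) and by commutators between $\nabla^k$ and the transport/viscous operators. Because the new viscous operator is built from $\Delta$ and $\nabla\text{div}$ alone, without the $(\nabla u)^\top$ piece coming from $D(u)$, the commutator structure is actually slightly simpler, and the same bootstrap closes with the identical decay exponent $\iota$, yielding \eqref{uniformtime}. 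The main obstacle I expect is purely bookkeeping: carefully tracking the altered constants through every commutator and interpolation estimate, and confirming that the loss of symmetry of $\nabla u^\epsilon$ does not spoil the integration-by-parts identities underlying the Korn-type control.

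Finally, Theorem \ref{th3A} is obtained by repeating verbatim the $L^2$, $\dot H^1$, and $\dot H^2$ energy identities for $\overline{W}^\epsilon$ used to derive \eqref{shou1A1}. The viscous contribution enters those identities as an $O(\epsilon)$ perturbation whose $L^2$-norm is controlled by the uniform bound \eqref{uniformtime}; since the new viscous term is a linear combination of $\epsilon(\rho^\epsilon)^\delta\Delta u^\epsilon$ and $\epsilon(\rho^\epsilon)^\delta\nabla\text{div}\,u^\epsilon$ together with first-order pieces already bounded in the previous step, the same $O(\epsilon)$ smallness is available. Consequently, the convergence rates in \eqref{shou1A1} and Corollary \ref{th2} transfer with no further change, completing the proof of Theorem \ref{thglobal2}.
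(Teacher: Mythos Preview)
Your proposal is correct and matches the paper's intent: the paper states Theorem~\ref{thglobal2} as a direct application and gives no separate proof, relying on the fact that the entire machinery of Sections~4--6 goes through with the modified viscous tensor. Your outline---verify coercivity of the new Lam\'e operator, carry the reformulation \eqref{li47-1} over with $Lu=-2\alpha\Delta u-\beta\nabla\text{div}\,u$ and $\mathbb{S}(u)=2\alpha\nabla u+\beta\text{div}\,u\,\mathbb{I}_3$, then re-run the weighted energy hierarchy and the inviscid-limit estimates---is exactly the route implied.

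One small correction worth recording: the conditions $(P_1)$--$(P_4)$ and the constants $M_1,\dots,M_4$ do \emph{not} change at all, contrary to your remark that they should be ``re-read with $(2\alpha,\beta)$ in place of $(\alpha,\alpha+\beta)$''. The reason is that after invoking the relation $|\varphi\nabla^3\text{div}\,v|_2^2\le |\varphi\nabla^4 v|_2^2+J^*$ (Step~1 of Lemma~\ref{ll2}), the combined coefficient in front of $\Phi^2$ is $2\alpha+\beta$ in \emph{both} models, and the leading cross-term coefficient $\frac{(4\alpha+6\beta)\delta}{\delta-1}$ arising from $Q_3^3(3,0)$ is likewise unchanged (since $\partial_j\widehat u^{(i)}\approx\frac{\delta_{ij}}{1+t}$ picks out the same divergence contribution from $2\alpha\nabla u$ as from $\alpha(\nabla u+(\nabla u)^\top)$). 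Hence $a$, $c$, $d^*$, and therefore $M_3$ and the threshold conditions, are literally identical. This is why the theorem asserts that ``the same conclusions'' hold without any reformulation of the hypotheses.
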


\begin{theorem}\label{thglobal3}
Let (\ref{canshu}) hold. If the viscous term $\text{div}\mathbb{T}$ is replaced  by
$$
\epsilon \alpha (\rho^\epsilon)^\delta\triangle u^\epsilon,
$$
then under  the  initial assumptions $(A_1)$-$(A_2)$, the same conclusions obtained     in Theorems \ref{tha}-\ref{th3A} still hold.

\end{theorem}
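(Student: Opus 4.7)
The strategy is to mirror the proofs of Theorem \ref{tha} and Theorem \ref{th3A}, substituting the Lam\'e-type operator $\mathrm{div}(2\mu D(u^\epsilon)+\lambda\,\mathrm{div}\,u^\epsilon\,\mathbb{I}_3)$ by the single-term operator $\epsilon\alpha(\rho^\epsilon)^\delta\triangle u^\epsilon$ throughout. After dividing the momentum equation by $\rho^\epsilon$, the system retains the same ``quasi-symmetric hyperbolic''--``degenerate elliptic'' coupled form,
\begin{equation*}
u^\epsilon_t+u^\epsilon\cdot\nabla u^\epsilon+\frac{A\gamma}{\gamma-1}\nabla\bigl((\rho^\epsilon)^{\gamma-1}\bigr)=\alpha\epsilon(\rho^\epsilon)^{\delta-1}\triangle u^\epsilon,
\end{equation*}
coupled to the continuity equation. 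I would therefore re-introduce the symmetrizing variables $\psi^\epsilon\sim(\rho^\epsilon)^{(\gamma-1)/2}$ and $\phi^\epsilon=(\rho^\epsilon)^{(\delta-1)/2}$, subtract off the inviscid background flow $\widehat{u}^\epsilon$, and replay the global-in-time energy framework developed for Theorem \ref{tha}.

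The essential simplification is that the absence of the cross-term $(\mu+\lambda)\nabla\mathrm{div}\,u^\epsilon$ and of the $\nabla\mu$, $\nabla\lambda$ contributions inside $\mathrm{div}\,\mathbb{T}$ eliminates precisely those error terms whose control in Theorem \ref{tha} forced the algebraic restrictions $(P_1)$--$(P_4)$. Concretely, the viscous test integrals that appear in the $L^2$, $H^1$, $H^2$ and $H^3$ estimates for $(\psi^\epsilon,u^\epsilon-\widehat{u}^\epsilon)$ collapse to clean quadratic expressions of the form $\alpha\epsilon\int(\rho^\epsilon)^{\delta-1}|\nabla^{k+1}u^\epsilon|^2$, plus commutator terms dominated by $\epsilon|\phi^\epsilon\nabla^{k+1}u^\epsilon|_2^{2}$ and polynomial norms of $\phi^\epsilon$, $\psi^\epsilon$. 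As a result, the a priori bounds for $\phi^\epsilon$ in $H^2$ (with $\epsilon^{1/2}\phi^\epsilon$ in $H^3$), and for $\psi^\epsilon$ and $u^\epsilon-\widehat{u}^\epsilon$ in $H^3$, close under only the basic parameter constraint \eqref{canshu}, reproducing the time-weighted decay rates of \eqref{uniformtime}.

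The inviscid-limit step (the analogue of Theorem \ref{th3A} and Corollary \ref{th2}) then proceeds along identical lines: one tests the equation satisfied by $\overline{W}^\epsilon$ in $L^2$, $\dot H^1$ and $\dot H^2$, isolates the viscous remainder $\epsilon\alpha(\rho^\epsilon)^\delta\triangle u^\epsilon$, and applies Cauchy--Schwarz together with the uniform bound on $\phi^\epsilon\nabla^{k+1}u^\epsilon$ to reproduce the $\epsilon$-rate in $H^1$ and the $\epsilon^{1/2}$-rate in $\dot H^2$ asserted in \eqref{shou1A1}. Since the remainder has no divergence or symmetric-gradient structure to be rearranged, this estimate is in fact cleaner than in the original case.

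The principal obstacle is unchanged from Theorem \ref{tha}: the factor $(\rho^\epsilon)^{\delta-1}$ in front of $\triangle u^\epsilon$ still degenerates on the vacuum set, so one cannot gain two derivatives on $u^\epsilon$ by elliptic regularity directly. The resolution is again to close the top-order estimate by coupling the transport estimate for $u^\epsilon-\widehat{u}^\epsilon$ with the degenerate elliptic estimate for $\phi^\epsilon\nabla^2 u^\epsilon$, exploiting the time decay of $\psi^\epsilon$ to absorb the lower-order growth. The net effect is that the simpler viscous operator delivers the same conclusions as Theorems \ref{tha}--\ref{th3A} while eliminating the auxiliary conditions $(P_1)$--$(P_4)$.
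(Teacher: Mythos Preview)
Your proposal is essentially correct and follows the same route the paper intends: the paper itself does not give a separate proof of this theorem but treats it as a direct adaptation of the machinery built in Sections~4--6, and your sketch captures that adaptation accurately. In particular, you correctly observe that replacing $\mathrm{div}\,\mathbb{T}$ by $\epsilon\alpha(\rho^\epsilon)^\delta\triangle u^\epsilon$ kills the $\mathbb{H}(\varphi^\epsilon)\cdot\mathbb{Q}(v^\epsilon+\widehat u^\epsilon)$ source term in the reformulation~\eqref{li47-1}, so the entire family of $Q_k$-terms in Lemma~\ref{L:3.4} disappears; since the problematic cross term $\frac{(4\alpha+6\beta)\delta\epsilon}{(\delta-1)(1+t)}|\nabla^3\varphi|_2|\varphi\nabla^3\mathrm{div}\,v|_2$ in \eqref{E:gz54} is exactly what generates the constants $M_1,M_2,M_3$ in~\eqref{Agab}, this is indeed why the structural restrictions $(P_1)$--$(P_4)$ are no longer needed and only \eqref{canshu} survives.

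One point is worth making explicit, since your phrase ``collapse to clean quadratic expressions'' slightly understates the remaining work. Even with the $Q_k$-terms gone, the top-order estimate for $\varphi^\epsilon$ in Lemma~\ref{L:3.6} still produces the cross term $\frac{\delta-1}{2}\epsilon|\nabla^3\varphi|_2|\varphi\nabla^3\mathrm{div}\,v|_2$ coming from $\frac{\delta-1}{2}\varphi^\epsilon\mathrm{div}\,v^\epsilon$ in the transport equation~$\eqref{li47-1}_1$. This must still be absorbed by the now-simpler dissipation $\alpha\epsilon|\varphi\nabla^4 v|_2^2$ (using $|\varphi\nabla^3\mathrm{div}\,v|_2\le |\varphi\nabla^4 v|_2+J^*$ as in \eqref{changea22}). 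The resulting quadratic form is a reduced version of $H$ in~\eqref{E:gz68} with the $(2\alpha+3\beta)$-term removed, and it is rendered positive by choosing the time-weight gap $n-m$ slightly more negative than $-\tfrac12$, exactly as in the $(P_2)$ argument of Section~4.2. Your last paragraph alludes to this coupling, but it is the one place where a nontrivial (if easier) version of Lemma~\ref{ll2} must be redone rather than simply dropped. Minor notational remark: the paper's convention is $\varphi^\epsilon=(\rho^\epsilon)^{(\delta-1)/2}$ and $\phi^\epsilon\sim(\rho^\epsilon)^{(\gamma-1)/2}$, the reverse of your $\phi^\epsilon,\psi^\epsilon$.
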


\subsection{Outline}

The rest of this paper can be organized as follows.   In Section 3, first we reformulate the highly degenerate  equations  (\ref{eq:1.1}) into a  trackable system  (see (\ref{li47-1})). Then we introduce the $(t,\epsilon)$-dependent energy space that will be used for establishing the desired  uniform estimates with respect to $\epsilon$.   At last, we   show  the main    strategies  for  the proof   in this paper. Section 4 is devoted to  establishing  the global-in-time   uniform energy  estimates with respect to $\epsilon $ shown in Theorem \ref{tha},  when the initial density is compactly supported. The key point of this step is to derive one proper ODE inequality \eqref{Aeq:2.14}  for our $(t,\epsilon)$-dependent energy space $Z(t)$.  In Section 5,  we first give    the proof for Theorem \ref{tha} for general initial density without the compactly supported assumption. Then we prove  the  global well-posedness of regular solutions   for the three-dimensional compressible Euler equations, i.e., the proof for  Corollary \ref{serre}. In Section 6,   based on the uniform  energy estimates obtained in Sections 4-5,    we establish the vanishing viscosity limit  of the regular solutions of the  degenerate  viscous flow  to that of  the  inviscid flow stated in Theorem \ref{th3A} and Corollary \ref{th2}. Section 7 is devoted to showing how  to improve the convergence rate for the limit process from the viscous flow to the inviscid flow if we consider more regular initial data.
Finally, we also give two  appendixes for  showing  some known results on the   global well-posedness of multi-dimensional systems and some useful lemmas. Furthermore, it is worthing pointing out  that our framework  in this paper is applicable to other physical dimensions, say 1 and 2, with some minor modifications.

\section{Reformulation and main   strategies}

In this section, first we reformulate the highly degenerate  equations  (\ref{eq:1.1}) into a  trackable system  (see (\ref{li47-1})). Then we introduce the $(t,\epsilon)$-dependent energy space that will be used for establishing the desired  uniform estimates with respect to $\epsilon$. Finally, we show  main   strategies  of our following  proof.

\subsection{Reformulation}
Assume that $\epsilon\in(0,1]$. Let $ \widehat{u}$ be the unique  smooth solution to  (\ref{eq:approximation}) obtained in Proposition \ref{p1}.
Under the assumption of Theorem \ref{tha},  according to Proposition \ref{thglobal}, one has that  for arbitrarily large time  $T>0$,  there exists a   unique  regular solution $(\rho^\epsilon, u^\epsilon)$ in $[0,T]\times \mathbb{R}^3$  to the  Cauchy
 problem   (\ref{eq:1.1})-(\ref{10000}) with (\ref{initial})-(\ref{far}).

 In terms of  the new  variables
\begin{equation}\label{bianhuan}
(\varphi^\epsilon, W^\epsilon=(\phi^\epsilon, v^\epsilon=u^\epsilon- \widehat{u}^\epsilon))=\left((\rho^\epsilon)^{\frac{\delta-1}{2}},\sqrt{\frac{4A\gamma}{(\gamma-1)^2}}(\rho^\epsilon)^{\frac{\gamma-1}{2}}, u^\epsilon- \widehat{u}^\epsilon\right)
\end{equation}
with $v^\epsilon=((v^\epsilon)^{(1)},(v^\epsilon)^{(2)},(v^\epsilon)^{(3)})^\top$,
the Cauchy problem  (\ref{eq:1.1})-(\ref{10000}) with (\ref{initial})-(\ref{far}) can be reformulated  into
\begin{equation}\label{li47-1}
\begin{cases}
\displaystyle
\varphi^\epsilon_t+ v^\epsilon \cdot\nabla\varphi^\epsilon+\frac{\delta-1}{2}\varphi^\epsilon\text{div} v^\epsilon=-\widehat{u}^\epsilon \cdot\nabla\varphi^\epsilon-\frac{\delta-1}{2}\varphi^\epsilon\text{div} \widehat{u}^\epsilon,\\[10pt]
\displaystyle
W^\epsilon_t+\sum_{j=1}^3A_j(W^\epsilon) \partial_j W^\epsilon+\epsilon (\varphi^\epsilon)^2\mathbb{{L}}(v^\epsilon)=\epsilon \mathbb{{H}}(\varphi^\epsilon)  \cdot \mathbb{{Q}}(v^\epsilon+ \widehat{u}^\epsilon)+G^*(W^\epsilon, \varphi^\epsilon,  \widehat{u}^\epsilon),\\[10pt]
(\varphi^\epsilon,W^\epsilon)|_{t=0}=(\varphi^\epsilon_0,W^\epsilon_0)=(\varphi^\epsilon_0,\phi^\epsilon_0,0),\quad x\in \mathbb{R}^3,\\[10pt]
(\varphi^\epsilon,W^\epsilon)=(\varphi^\epsilon, \phi^\epsilon,  v^\epsilon)\rightarrow (0,0,0) \quad\quad   \text{as}\quad \quad  |x|\rightarrow \infty \quad \text{for} \quad  t\geq 0,
 \end{cases}
\end{equation}
where
\begin{equation} \label{li47-2}
\begin{split}
\displaystyle
A_j(W^\epsilon)=&\left(\begin{array}{cc}
(v^\epsilon)^{(j)}&\frac{\gamma-1}{2}\phi^\epsilon e_j\\[10pt]
\frac{\gamma-1}{2}\phi^\epsilon e_j^\top &(v^\epsilon)^{(j)}\mathbb{I}_3
\end{array}
\right),\quad j=1,2,3,\quad \mathbb{{L}}(v^\epsilon)=\left(\begin{array}{c}0\\
Lv^\epsilon\\
\end{array}\right),\\[10pt]
\displaystyle
 \mathbb{{H}}(\varphi)=&\left(\begin{array}{c}0\\
\nabla (\varphi^\epsilon)^2\\
\end{array}\right), \quad
 \mathbb{{Q}}(u^\epsilon)=\left(\begin{array}{cc}
0 & 0\\
0 &Q(u^\epsilon)
\end{array}\right), \quad
 Q(u^\epsilon)=\frac{\delta}{\delta-1}\mathbb{S}(u^\epsilon),\\[10pt]
G^*(W^\epsilon, \varphi^\epsilon,  \widehat{u}^\epsilon)=&-B(\nabla  \widehat{u}^\epsilon,W^\epsilon)-\sum_{j=1}^3  (\widehat{u}^\epsilon)^{(j)}\partial_j W^\epsilon- \epsilon D( (\varphi^\epsilon)^2,\nabla^2 \widehat{u}^\epsilon),\\[10pt]
 B(\nabla \widehat{u}^\epsilon,W^\epsilon)=&\left(\begin{array}{c}
\frac{\gamma-1}{2}\phi^\epsilon \text{div} \widehat{u}^\epsilon \\[10pt]
(v^\epsilon\cdot\nabla)  \widehat{u}^\epsilon\end{array}
\right),\quad
 D(\varphi^2,\nabla^2  \widehat{u}^\epsilon)=\left(\begin{array}{c}0\\[10pt]
   (\varphi^\epsilon)^2 L  \widehat{u}^\epsilon\\
\end{array}\right),\\[6pt]
Lu =&-\alpha\triangle u-(\alpha+\beta)\nabla \mathtt{div}u,\quad \mathbb{S}(u)=\alpha(\nabla u+(\nabla u)^\top)+\beta\mathtt{div}u\mathbb{I}_3,\\[10pt]
\end{split}
\end{equation}
and  $e_j=(\delta_{1j},\delta_{2j},\delta_{3j})$ $(j=1,2,3)$ is the Kronecker symbol satisfying $\delta_{ij}=1$, when $ i=j$ and $\delta_{ij}=0$, otherwise.


\subsection{$(t,\epsilon)$-dependent energy space}
In order to establish some  global-in-time uniform  energy  estimates with respect to $\epsilon$ and the lower bound of the initial density  based on the above reformulated  structure,  one of the key steps is  to  introduce one proper energy space.  Motivated by the one used for   the local-in-time  inviscid limit problem  in   \cite{Geng} and the other one used for  the global-in-time well-posedenss  of the regular solution  in  \cite{zz},  now we denote
\begin{equation}\label{E:gz32}\begin{split}
Y_k(t)=&|\nabla^k W^\epsilon(t)|_2,\quad Y^2(t)=\sum^3_{k=0}(1+t)^{2\gamma_k}Y^2_k(t),\\
U_k(t)=&
\Theta(k)|\nabla^k\varphi^\epsilon(t)|_2,\quad U^2(t)=\sum^3_{k=0}(1+t)^{2\delta_k}U^2_k(t),\\
 \gamma_k=& k-n,\ \ \delta_k=k-m,\\
\end{split}\end{equation}
where the numbers  $n$ and $m$ will be determined later and
$$
\Theta(k)=\begin{cases}
1,\quad \text{for}\ k=1,2,\\[10pt]
\epsilon^{\frac{1}{2}},\quad \text{for}\ k=3.
\end{cases}
$$
Then the   new $(t,\epsilon)$-dependent energy space desired in our following proof   can be given by
$$
Z^2(t)=Y^2(t)+U^2(t).
$$
For simplicity, we denote also
$$
Z(0)=Z_0,\quad Y(0)=Y_0,\quad U(0)=U_0.
$$

\subsection{Main   strategies} Now we show the  main   strategies of our proof.  First, we make an analysis on the mathematical structure of the new problem \eqref{li47-1}. On the one hand, for the transport equation  $\eqref{li47-1}_1$ of  $\varphi^\epsilon$, considering its highest order estimate  $|\nabla^3 \varphi^\epsilon|_2$, we need to deal with the following integration
\begin{equation}\label{wuzhiji}
\int \widehat{u}^\epsilon \cdot\nabla^4 \varphi^\epsilon \cdot \nabla^3 \varphi^\epsilon.
\end{equation}
A standard idea is to  transfer one order spatial derivative from  $\frac{1}{2}\nabla | \nabla^3 \varphi^\epsilon|^2$ to $\widehat{u}^\epsilon$  via  the integration by parts. However, for our case, 
$$|\widehat{u}^\epsilon|\rightarrow \infty\quad \text{as} \quad |x|\rightarrow \infty.$$ We are not sure whether  the following fact holds,
$$
\lim_{R\rightarrow \infty} \int_{\partial B_R} | \nabla^3 \varphi^\epsilon|^2 \widehat{u}^\epsilon\cdot \nu_R \text{d}S=0,$$
where  $B_{R}$ is  the ball centered at the origin with radius $R>0$, and  $\nu_R$ is the outward pointing unit normal to $\partial B_R$. In order to deal with this difficulty, one proper method is to first consider the compactly supported initial density:
$$
\supp_x \rho_0 \subset B_{R}.
$$
Then for any finite time $T>0$, from the continuity equation $\eqref{eq:1.1}_1$, we know that the corresponding density  $\rho^{\epsilon,R}(t,x)$ is still compactly supported for all $t\in [0,T]$, which means that the integration in \eqref{wuzhiji} can be estimated as follows:
$$
\int \widehat{u}^\epsilon \cdot\nabla^4 \varphi^\epsilon \cdot \nabla^3 \varphi^\epsilon \leq C|\nabla\widehat{u}^\epsilon |_\infty |\nabla^3 \varphi^\epsilon|^2_2,
$$
where, obviously, the constant $C$ is independent of $(\epsilon, R)$. The corresponding estimates for the general density without compactly supported assumption can be obtained via taking limit $R\rightarrow \infty$ in  some initial approximation process.

On the other hand, it should be pointed out that
not all the first order terms in the system $\eqref{li47-1}_2$ have been written into the symmetric structure
$$
W^\epsilon_t+\sum_{j=1}^3A_j(W^\epsilon) \partial_j W^\epsilon.
$$
This  is why we only call the system $\eqref{li47-1}_2$ "quasi-symmetric"  structure rather than a "symmetric " one. Specially,  treatments are needed for  the degenerate  source terms 
$$\mathbb{{H}}(\varphi^\epsilon)  \cdot \mathbb{{Q}}(v^\epsilon+ \widehat{u}^\epsilon)\quad  \text{and} \quad G^*(W^\epsilon, \varphi^\epsilon,  \widehat{u}^\epsilon).$$

\section{Uniform energy estimates with compactly supported initial density}\label{S:3}

This section will be devoted to  establishing  the global-in-time   uniform energy  estimates with respect to $\epsilon $ shown in Theorem \ref{tha} when the initial density is compactly supported, which can be shown in the following theorem:
\begin{theorem}\label{tha-compact}Let \eqref{canshu}
and any one of the   conditions $(P_1)$-$(P_4)$ (in Theorem \ref{tha})  hold.
If the   initial data $( \rho^\epsilon_0, u^\epsilon_0)$ satisfy the initial assumptions $(A_1)$-$(A_2)$ (in Theorem \ref{tha}), and \begin{itemize}
\item[$(\rm A_3)$] $\rho_0$ is  compactly supported: $\supp_x \rho_0 \subset B_{R} $;
\end{itemize}
then  for any $T>0$,    there exists a   unique  regular solution $(\rho^\epsilon, u^\epsilon)$ in $[0,T]\times \mathbb{R}^3$  to the  Cauchy problem  (\ref{eq:1.1})-(\ref{10000}) with (\ref{initial})-(\ref{far}) satisfying the  uniform estimates \eqref{uniformtime}
for any $t\in [0,T]$.
Particularly,  when condition $(P_3)$ holds, the smallness assumption on $(\rho^\epsilon_0)^{\frac{\delta-1}{2}}$ could be removed.

\end{theorem}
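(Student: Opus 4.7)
The plan is to work in the reformulated variables $(\varphi^\epsilon,W^\epsilon)=((\rho^\epsilon)^{(\delta-1)/2},\phi^\epsilon,v^\epsilon)$ of \eqref{li47-1}, where the difference variable $v^\epsilon=u^\epsilon-\widehat{u}^\epsilon$ has $H^3$ regularity while $\widehat{u}^\epsilon$ carries the large linear-growth background given by Proposition \ref{p1}. The proof naturally splits into the ``hyperbolic'' estimates for $U_k=\Theta(k)|\nabla^k\varphi^\epsilon|_2$ from $\eqref{li47-1}_1$, and the ``quasi-symmetric hyperbolic--degenerate elliptic'' estimates for $Y_k=|\nabla^k W^\epsilon|_2$ from $\eqref{li47-1}_2$. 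The aim is to derive a closed ODE inequality for the weighted energy $Z(t)^2=\sum_k(1+t)^{2\gamma_k}Y_k^2+(1+t)^{2\delta_k}U_k^2$ of the form
\begin{equation*}
\tfrac{d}{dt}Z(t)+\eta^\ast b_\ast (1+t)^{-1}Z(t)\le C(1+t)^{-1}Z(t)^{1+\sigma},
\end{equation*}
so that smallness of $Z(0)\le C D_0$ (from $(A_2)$) together with a continuity argument yields $Z(t)\le C_{01}(1+t)^{-(1-\eta^\ast)b_\ast/2}$, which is exactly the content of \eqref{uniformtime}.

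The first step is to differentiate $\eqref{li47-1}_1$ up to order $3$, pair with $\nabla^k\varphi^\epsilon$, and integrate by parts. The dangerous term is precisely \eqref{wuzhiji}: one needs $\int \widehat{u}^\epsilon\cdot\nabla(|\nabla^3\varphi^\epsilon|^2)$ controlled by $|\nabla\widehat{u}^\epsilon|_\infty|\nabla^3\varphi^\epsilon|_2^2$ via integration by parts, but $\widehat{u}^\epsilon$ grows linearly at infinity so the boundary term $\int_{\partial B_R}|\nabla^3\varphi^\epsilon|^2\widehat{u}^\epsilon\cdot\nu_R\,dS$ cannot be dismissed in general. This is where hypothesis $(A_3)$ intervenes: the transport structure of $\eqref{eq:1.1}_1$ preserves compactness of $\mathrm{supp}_x\rho^{\epsilon,R}(t,\cdot)$ on $[0,T]$, so $\varphi^\epsilon(t,\cdot)$ is compactly supported and the boundary integral vanishes identically. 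The resulting $k$-th order estimate for $U_k$ then involves $|\nabla\widehat{u}^\epsilon|_\infty$ and $|\nabla^{k+1}\widehat{u}^\epsilon|_{\Xi}$, both of which decay like $(1+t)^{-1}$ by the Grassin--Serre bounds (Proposition \ref{p1}), yielding the desired weight-gain once $\delta_k$ are chosen correctly.

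Next, for $Y_k$, I would use the quasi-symmetric hyperbolic part $W^\epsilon_t+\sum A_j(W^\epsilon)\partial_j W^\epsilon$ with the standard symmetrizer to produce $\frac{d}{dt}|\nabla^k W^\epsilon|_2^2$, together with the dissipation term $\epsilon\int(\varphi^\epsilon)^2\mathbb{L}(v^\epsilon)\cdot\nabla^{2k}(\ldots)$ contributing $\epsilon\alpha|\varphi^\epsilon\nabla^{k+1}v^\epsilon|_2^2$ plus commutators. The degenerate source $\epsilon\mathbb{H}(\varphi^\epsilon)\cdot\mathbb{Q}(v^\epsilon+\widehat{u}^\epsilon)$ and $G^\ast$ are then handled by the standard commutator/Moser trick in conjunction with the uniform bound $|\nabla\widehat{u}^\epsilon|_\infty+\|\nabla^2\widehat{u}^\epsilon\|_2\lesssim(1+t)^{-1}$; the numbers $n,m$ in $\gamma_k,\delta_k$ will be selected so that the destabilizing factors from $(v^\epsilon\cdot\nabla)\widehat{u}^\epsilon$ terms are defeated by the $\eta^\ast b_\ast/(1+t)$ gain coming from the total time-weight on $Z$. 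The explicit algebraic constants $M_1,M_2,M_3,M_4$ enter precisely here: condition $(P_1)$ (or alternatives $(P_2),(P_4)$) guarantees that the worst coupling coefficients between $|\nabla^3\phi^\epsilon|_2^2$, $|\nabla^2\varphi^\epsilon|_2^2$ and the cross-term $\int(\varphi^\epsilon)^2\nabla\varphi^\epsilon\cdot\nabla^2 v^\epsilon$ admit a Cauchy-Schwarz rearrangement with positive net sign.

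The main obstacle is closing the cubic (or higher) nonlinearity in $Z$ for arbitrarily large $T$: local-in-time uniform estimates in the style of \cite{Geng} follow from short-time continuity, but globally one must rely on the smallness of $D_0$ to absorb the $Z^{1+\sigma}$ term into the linear dissipation $\eta^\ast b_\ast(1+t)^{-1}Z$. A standard continuity argument then bootstraps: one posits $Z(t)\le 2CD_0$ on a maximal interval, uses the ODE inequality to improve this to $Z(t)\le (3/2)CD_0$, and extends. The final subtlety is case $(P_3)$: when $\delta\ge 2\gamma-1$, the pressure part $\phi^\epsilon$ in $Y_k$ controls $\varphi^\epsilon$ pointwise via the relation $(\rho^\epsilon)^{(\delta-1)/2}\lesssim ((\rho^\epsilon)^{(\gamma-1)/2})^{(\delta-1)/(\gamma-1)}$, so the $U$-piece is absorbed into $Y$ without needing smallness of $\|(\rho^\epsilon_0)^{(\delta-1)/2}\|$. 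This, combined with the compact-support reduction that removes the boundary subtlety in \eqref{wuzhiji}, delivers the uniform bound \eqref{uniformtime} and completes the proof of Theorem \ref{tha-compact}.
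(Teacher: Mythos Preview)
Your overall strategy---working in the reformulated variables $(\varphi^\epsilon,W^\epsilon)$, exploiting compact support of $\rho^\epsilon$ to justify the integration by parts behind \eqref{wuzhiji}, and closing a weighted ODE for $Z(t)$---is the same as the paper's. However, two of your key technical claims are wrong in ways that matter for the argument.

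First, the ODE inequality you write down is not the one that actually holds. The paper obtains (Lemma \ref{ll2})
\[
\frac{d}{dt}Z+\frac{(1-\eta^*)b_*}{1+t}Z\;\le\; C(1+t)^{1+\epsilon^*}Z^3+C_0(1+t)^{-1-\epsilon^*}Z,
\]
with a \emph{growing} coefficient $(1+t)^{1+\epsilon^*}$ in front of the cubic term, not your decaying $(1+t)^{-1}$; also the linear damping rate is $(1-\eta^*)b_*$, not $\eta^*b_*$. The mechanism for global control is \emph{not} that the nonlinearity becomes harmless for large $t$, but the purely algebraic fact $1+\epsilon^*-2(1-\eta^*)b_*<-1$, which makes the associated Bernoulli ODE (Proposition \ref{P:xinzhu1}) globally solvable for small $Z_0$. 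A bootstrap of the type ``assume $Z\le 2CD_0$, improve to $(3/2)CD_0$'' does not close once the coefficient of $Z^3$ grows in $t$. Second, you misidentify the coupling that forces conditions $(P_1)$--$(P_4)$. It is not a low-order cross-term like $\int(\varphi^\epsilon)^2\nabla\varphi^\epsilon\cdot\nabla^2 v^\epsilon$, but the top-order ($k=3$) interaction: the source $Q_3^3(3,0)$ in the $W$-estimate leaves the residual $\frac{(4\alpha+6\beta)\delta\epsilon}{(\delta-1)(1+t)}|\nabla^3\varphi|_2\,|\varphi\nabla^3\mathrm{div}\,v|_2$, and the highest-order $\varphi$-estimate leaves $\frac{\delta-1}{2}\epsilon|\nabla^3\varphi|_2\,|\varphi\nabla^3\mathrm{div}\,v|_2$. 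Combined with the degenerate dissipation $(2\alpha+\beta)\epsilon|\varphi\nabla^3\mathrm{div}\,v|_2^2$ and the transport decay $\frac{3\delta/2-3+m}{1+t}\epsilon|\nabla^3\varphi|_2^2$, these form the quadratic $\Lambda(\Phi,\Psi)$ of \eqref{Agab}; completing the square forces the specific choice $m=n+\tfrac12=3$ and yields the sign condition $M_2<-1$ (equivalently $d^*>1$). That is where $M_1,\dots,M_4$ actually enter---``Cauchy--Schwarz with positive net sign'' is too coarse to see why these particular constants appear or why $(P_2)$ (namely $2\alpha+3\beta=0$) kills the bad cross-term outright.
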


The proof of the above theorem will be given in the following 2 sections.    Let $ \widehat{u}$ be the unique  smooth solution to  (\ref{eq:approximation}) obtained in Proposition \ref{p1}.   First, under the assumption of Theorem \ref{tha-compact},  according to Proposition \ref{thglobal}, one has that  for arbitrarily large time  $T>0$,  there exists a   unique  regular solution $(\rho^\epsilon, u^\epsilon)$ in $[0,T]\times \mathbb{R}^3$  to the  Cauchy
 problem   (\ref{eq:1.1})-(\ref{10000}) with (\ref{initial})-(\ref{far}). Then according to Section 3.1, one gets that
$$
(\varphi^\epsilon, W^\epsilon=(\phi^\epsilon, v^\epsilon=u^\epsilon- \widehat{u}^\epsilon))=\left((\rho^\epsilon)^{\frac{\delta-1}{2}},\sqrt{\frac{4A\gamma}{(\gamma-1)^2}}(\rho^\epsilon)^{\frac{\gamma-1}{2}}, u^\epsilon- \widehat{u}^\epsilon\right)
$$
 is the unique  classical solution  in $[0,T]\times \mathbb{R}^3$  to the Cauchy problem    \eqref{li47-1}.

 In the rest of this section, for simplicity, we denote $(\varphi^\epsilon_0,W^\epsilon_0)=(\varphi^\epsilon_0,\phi^\epsilon_0,0)$ as $(\varphi_0,W_0)=(\varphi_0,\phi_0,0)$  and $(\widehat{u}^\epsilon,\varphi^\epsilon, W^\epsilon=(\phi^\epsilon, v^\epsilon=u^\epsilon- \widehat{u}))$ as $(\widehat{u}, \varphi, W=(\phi, v=u- \widehat{u}))$.

\subsection{Uniform energy estimates under the condition $(P_1)$}

Herinafter, $C\geq 1$ will denote a generic constant depending only on fixed constants
$(\alpha,\beta,\delta,A,\gamma,\kappa)$ but independent of $(\varphi_0,W_0)$, which may be different from line to line, and $C_0>0$ denotes a generic constant depending on $(C,\varphi_0,W_0)$. Specially, $C(l)$ or $C_0(l)$
denotes a generic positive constant depending on $(C,l)$ (or $(C,\varphi_0,W_0,l)$).

We first give the following lemma according to the classical Sobolev imbedding theorem.
\begin{lemma}\begin{equation}
\label{L:2.1}\begin{split}
|W(t)|_\infty\leq& C(1+t)^{\frac{2n-3}{2}}Y(t),\quad |\nabla W(t)|_\infty\leq C(1+t)^{\frac{2n-5}{2}}Y(t),\\
|\varphi(t)|_\infty\leq& C(1+t)^{\frac{2m-3}{2}}U(t),\quad |\nabla \varphi(t)|_\infty\leq C\epsilon^{-\frac{1}{4}}(1+t)^{\frac{2m-5}{2}}U(t).\\
\end{split}\end{equation}
\end{lemma}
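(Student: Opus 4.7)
The estimates in the lemma are essentially immediate consequences of Gagliardo--Nirenberg interpolation in $\mathbb{R}^3$ combined with the definitions of the weighted energies $Y(t)$ and $U(t)$. No PDE information about $W$ or $\varphi$ is needed; the content is purely functional-analytic. Hence the proposal amounts to writing down the correct interpolation inequality for $L^\infty$ in three space dimensions and then unwinding the time-weights $(1+t)^{\gamma_k}$ and $(1+t)^{\delta_k}$, together with the factor $\Theta(k)$ that distinguishes the $k=3$ slot from the others.

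The first step is to recall the standard 3D Gagliardo--Nirenberg inequality
\[
|f|_\infty \leq C\, |\nabla f|_2^{1/2}\, |\nabla^2 f|_2^{1/2},
\]
valid for all $f\in H^2(\mathbb{R}^3)$ with appropriate decay, whose scaling correctness is an easy check. Applied to $f=W$ and $f=\nabla W$ this gives the two inequalities
\[
|W|_\infty \leq C |\nabla W|_2^{1/2}|\nabla^2 W|_2^{1/2},\qquad |\nabla W|_\infty \leq C |\nabla^2 W|_2^{1/2}|\nabla^3 W|_2^{1/2}.
\]

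The second step is to read off from \eqref{E:gz32} the individual bounds $Y_k(t)\leq (1+t)^{n-k}Y(t)$ for $k=0,1,2,3$, i.e.\
\[
|\nabla^k W(t)|_2\leq (1+t)^{n-k}Y(t).
\]
Plugging $k=1,2$ into the first displayed interpolation inequality yields
\[
|W(t)|_\infty \leq C (1+t)^{(n-1)/2}(1+t)^{(n-2)/2}Y(t)=C(1+t)^{(2n-3)/2}Y(t),
\]
and similarly with $k=2,3$ one obtains the claimed bound on $|\nabla W(t)|_\infty$.

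For $\varphi$ the argument is identical, except that one must be careful about the factor $\Theta(k)$ sitting in $U_k$. For $k=1,2$ we have $\Theta(k)=1$, so $|\nabla^k \varphi(t)|_2\leq (1+t)^{m-k}U(t)$ directly, and the proof of the bound on $|\varphi(t)|_\infty$ proceeds exactly as for $|W(t)|_\infty$. For $k=3$, however, $\Theta(3)=\epsilon^{1/2}$ so that $|\nabla^3 \varphi(t)|_2\leq \epsilon^{-1/2}(1+t)^{m-3}U(t)$; this is precisely the source of the extra factor $\epsilon^{-1/4}$ in the final estimate for $|\nabla \varphi(t)|_\infty$, obtained after interpolating $|\nabla \varphi|_\infty \lesssim |\nabla^2 \varphi|_2^{1/2}|\nabla^3\varphi|_2^{1/2}$.

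There is no real obstacle here: the only thing one must be slightly careful about is the choice of the interpolation pair (using $\nabla$ and $\nabla^2$ rather than, say, $L^2$ and $H^2$) so that the arithmetic of the weight exponents $\gamma_k=k-n$ and $\delta_k=k-m$ produces exactly $(2n-3)/2$, $(2n-5)/2$, $(2m-3)/2$, $(2m-5)/2$ on the right-hand side. Any other interpolation pair would either leave a worse time growth rate or mix in the $U_0$ (respectively $Y_0$) slot whose definition of $\Theta(0)$ is not stated; restricting to the $k\geq 1$ derivatives avoids this ambiguity entirely.
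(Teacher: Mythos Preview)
Your proof is correct and matches the paper's approach: the paper simply states that the lemma follows ``according to the classical Sobolev imbedding theorem'' without writing out the details, and your explicit use of the 3D Gagliardo--Nirenberg inequality $|f|_\infty \leq C|\nabla f|_2^{1/2}|\nabla^2 f|_2^{1/2}$ together with the weight bookkeeping is exactly the intended argument. Your observation about avoiding the $k=0$ slot (where $\Theta(0)$ is not explicitly defined) is a nice bit of care, though in practice the paper implicitly treats $\Theta(0)=1$.
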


\subsubsection{Energy estimates on $W$} First, applying $\nabla^k$  to $\eqref{li47-1}_2$,
multiplying by $\nabla^k W$ and integrating over $\R^3$, one can get
\begin{equation}\label{E:gz33}\begin{split}
&\frac{1}{2}\frac{d}{dt}|\nabla^k W|_2^2+\epsilon\int\big(\alpha\varphi^2|\nabla^{k+1} v|^2
+(\alpha+\beta)\varphi^2|\text{div}\nabla^k v|^2\big)\\
=& R_k(W)+S_k(W,\widehat{u})+L_k(W,\varphi,\widehat{u})+Q_k(W,\varphi,\widehat{u}),
\end{split}\end{equation}
where
\begin{equation}\label{E:gz34}\begin{split}
R_k(W)=&-\int\nabla^kW\cdot\Big(\nabla^k\big(\sum_{j=1}^3A_j(W)\partial_j W\big)-\sum_{j=1}^3A_j(W)
\partial_j\nabla^k W\Big)\\
&+\frac{1}{2}\int \sum_{j=1}^3\nabla^k W\cdot\partial_jA_j(W)\nabla^k W,\\
S_k(W,\widehat{u})=&-\int \nabla^k W\cdot\nabla^k B(\nabla\widehat{u},W)
+\frac{1}{2}\int\sum_{j=1}^3\partial_j\widehat{u}^{(j)}\nabla^k W\cdot\nabla^k W\\
&-\int\nabla^k W\cdot\Big(\nabla^k\big(\sum_{j=1}^3\widehat{u}^{(j)}\partial_j W\big)-
\sum_{j=1}^3\widehat{u}^{(j)}\partial_j\nabla^k W\Big),\\
L_k(W,\varphi,\widehat{u})=&-\epsilon\int\Big(\nabla\varphi^2\cdot\mathbb{S}(\nabla^k v)-
\big(\nabla^k(\varphi^2L v)-\varphi^2L\nabla^k v\big)\Big)\cdot\nabla^k v\\
&+\epsilon\int\nabla^k(\varphi^2L\widehat{u})\cdot\nabla^k v\doteq L_k^1+L_k^2+L^3_k,\\
Q_k(W,\varphi,\widehat{u})=&\epsilon\int\Big(\nabla\varphi^2\cdot Q(\nabla^k v)+\big(\nabla^k(\nabla\varphi^2\cdot Q(v))
-\nabla\varphi^2\cdot Q(\nabla^k v)\big)\Big)\cdot\nabla^k v\\
&+\epsilon\int \nabla^k(\nabla\varphi^2\cdot Q(\widehat{u}))\cdot\nabla^kv\doteq Q_k^1+Q_k^2+Q_k^3.\\
\\
\end{split}\end{equation}

The right hand side of  \eqref{E:gz33} can be estimated in the next lemmas.
\begin{lemma} [\textbf{Estimates on  $R_k$ and $S_k$}]\label{L:3.2}
\begin{equation}\label{E:gz36}\begin{split}
\big|R_k(W)(t,\cdot)\big|\leq& C|\nabla W|_\infty Y_k^2,\\
\frac{k+r}{1+t}Y^2_k+S_k(W,\widehat{u})(t,\cdot)\leq& C_0Y_kZ(1+t)^{-\gamma_k-2},
\end{split}\end{equation}
for $k=0,1,2,3$,  where the constant $r$ is given by:
\begin{equation}\label{E:gz37}\begin{split}
r=-\frac{1}{2}\ \ \text{if} \ \ \gamma\geq\frac{5}{3},\ \ \text{or} \ \ \frac{3}{2}\gamma-3,\ \ \text{if} \ \ 1<\gamma<\frac{5}{3}.
\end{split}\end{equation}
\end{lemma}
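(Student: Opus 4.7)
My plan is to prove the two bounds separately. The estimate on $R_k(W)$ is a standard commutator analysis: I would expand $\nabla^k\bigl(\sum_{j}A_j(W)\partial_j W\bigr)-\sum_{j}A_j(W)\partial_j\nabla^k W$ via the Leibniz rule into a finite sum of products $\nabla^\ell W\cdot\nabla^{k-\ell+1}W$ for $1\le\ell\le k$. Because each entry of $A_j(W)$ is linear in $W$, the classical Moser / Kato--Ponce commutator estimate combined with Gagliardo--Nirenberg interpolation gives an $L^2$-bound of the form $C|\nabla W|_\infty|\nabla^k W|_2$, and pairing against $\nabla^k W$ yields the first inequality. The trailing piece $\tfrac12\int\sum_j\partial_jA_j(W)\,|\nabla^k W|^2$ is directly majorized by $|\nabla W|_\infty Y_k^2$ because $\partial_jA_j$ depends pointwise on $\nabla W$.

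The harder part is $S_k(W,\widehat{u})$, where one must extract a sharp dispersive leading coefficient from $\widehat{u}$. Using the Grassin--Serre structure of the solution to \eqref{eq:approximation} (Proposition 6.1 in the appendix), I would split $\widehat{u}(t,x)=\widehat{u}_L(t,x)+\widehat{u}_R(t,x)$ with $\widehat{u}_L(t,x)=\mathcal{A}(t)x+b(t)$, $\mathcal{A}(t)=\mathcal{A}_0(I+t\mathcal{A}_0)^{-1}$, and with $\widehat{u}_R$ enjoying strict extra decay on all of its derivatives. Substituting $\widehat{u}_L$ into the three constituents of $S_k$ and using $\nabla^2\widehat{u}_L\equiv 0$, the contribution collapses to pure algebraic terms: the transport integration-by-parts piece becomes $\tfrac12\,\mathrm{tr}\,\mathcal{A}\,|\nabla^k W|^2$; the commutator $[\nabla^k,\widehat{u}_L^{(j)}]\partial_jW$ reduces to $k(\nabla\widehat{u}_L^{(j)})(\partial_j\nabla^{k-1}W)$ (only the $\ell=1$ Leibniz term survives); and $\nabla^k B(\nabla\widehat{u}_L,W)$ reduces to $\bigl(\tfrac{\gamma-1}{2}\mathrm{tr}\,\mathcal{A}\,\nabla^k\phi,\,\mathcal{A}\nabla^k v\bigr)^{\top}$. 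Grouping by $\phi$ and $v$ components and invoking the spectral bound on $\mathcal{A}(t)$ inherited from $(A_1)$, the $|\nabla^k\phi|^2$-coefficient is bounded by $\bigl(\tfrac{3(2-\gamma)}{2}-k\bigr)/(1+t)$ and the $|\nabla^k v|^2$-coefficient by $\bigl(\tfrac12-k\bigr)/(1+t)$. Adding $\tfrac{k+r}{1+t}Y_k^2$, the choice $r=\min\{-\tfrac12,\,\tfrac{3\gamma}{2}-3\}$ exactly cancels the worse of these two and leaves the other with a nonpositive coefficient, explaining the branching of $r$ at $\gamma=\tfrac53$. The contribution from $\widehat{u}_R$ is the error: each resulting integrand contains at least one factor $\nabla^\ell\widehat{u}_R$ whose decay, combined with H\"older's inequality and the definitions of $Y$, $U$, $Z$, yields the bound $C_0 Y_k Z(1+t)^{-\gamma_k-2}$.

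The main obstacle I anticipate is the bookkeeping for the higher-$k$ commutators once the exact linear profile is replaced by the full $\widehat{u}$: for $k\ge 2$ the expansion $[\nabla^k,\widehat{u}^{(j)}]\partial_jW$ includes terms involving $\nabla^\ell\widehat{u}$ for every $2\le\ell\le k$, and although each such higher derivative of $\widehat{u}_R$ enjoys additional decay, derivatives must be distributed between $\widehat{u}_R$ and $W$ carefully to stay within the allowed scaling $Y_k Z(1+t)^{-\gamma_k-2}$. A secondary technical point is justifying the integration by parts against $\widehat{u}_L$, which grows like $|x|$ at infinity; here the compactly supported density hypothesis $(A_3)$ of Theorem \ref{tha-compact} enters, making $\varphi$ and $\phi$ compactly supported in $x$ and letting $v\in H^3$ decay sufficiently fast so that the boundary fluxes vanish.
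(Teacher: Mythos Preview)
Your approach is essentially the paper's: Gagliardo--Nirenberg interpolation for $R_k$, and for $S_k$ the extraction of the leading $\tfrac{1}{1+t}$ profile of $\nabla\widehat{u}$ with the remainder treated as a lower-order error (the paper organizes this by first splitting $s_k=s_k^1+s_k^2$ according to whether the derivative on $\widehat{u}$ is first order or higher, then inserting the identity $\nabla\widehat{u}=\tfrac{1}{1+t}\mathbb{I}_3+\tfrac{1}{(1+t)^2}K$ from Proposition~\ref{p1} into $s_k^1$). One correction to your formulation: the correct linear profile is $\widehat{u}_L(t,x)=\tfrac{x}{1+t}$, i.e.\ $\mathcal{A}(t)=\tfrac{1}{1+t}\mathbb{I}_3$, not $\mathcal{A}_0(I+t\mathcal{A}_0)^{-1}$ --- the latter is only the exact gradient when $u_0$ is itself affine, whereas Proposition~\ref{p1} furnishes $\tfrac{1}{1+t}\mathbb{I}_3$ as the universal leading term for \emph{any} $u_0$ satisfying $(A_1)$, and it is precisely this scalar multiple of the identity (giving $\mathrm{tr}\,\mathcal{A}=3/(1+t)$ and $-v^\top\mathcal{A}v=-\tfrac{1}{1+t}|v|^2$) that produces the coefficients $(\tfrac12-k)/(1+t)$ and $(\tfrac{3(2-\gamma)}{2}-k)/(1+t)$ you quote.
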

\begin{proof}
{\bf Step 1:} Estimates of $R_k$. Noticing that $R_k$ is a sum of terms as
$$
\nabla^k W\cdot\nabla^l W\cdot\nabla^{k+1-l}W\ \ \text{for}\ \ 1\leq l\leq k,
$$
then $\eqref{E:gz36}_1$ is obvious when $k=0,1.$

For $k\neq 0,1,$ one can apply Lemma \ref{gag112} to $\nabla W$ to get
\begin{equation}\label{E:gz38}\begin{split}
|\nabla^j W|_{p_j}\leq C|\nabla W|^{1-2/p_j}_\infty|\nabla^k W|_2^{2/p_j}, \quad p_j=2\frac{k-1}{j-1}.
\end{split}\end{equation}
If $l\neq k,$ and $l\neq 1$, since $1/p_l+1/p_{k-l+1}=\frac{1}{2}$, then H$\ddot{\text{o}}$lder's inequality implies
$$
\int |\nabla^k W\nabla^l W\nabla^{k+1-l} W|\leq |\nabla^k W|_2|\nabla^l W|_{p_l}|\nabla^{k+1-l}W|_{p_{k-l+1}}\leq
C|\nabla W|_\infty|\nabla^k W|_2^2.
$$
The other cases could be handled similarly. Thus $\eqref{E:gz36}_1$ is proved.

{\bf Step 2:} Estimates on $S_k$. The integral of $S_k(W, \widehat{u})$ in $\eqref{E:gz34}_2$
can be rewritten as
\begin{equation}\label{E:gz39}\begin{split}
s_k(W,\widehat{u})=&-\nabla^k W\cdot B(\nabla\widehat{u},\nabla^k W)+\frac{1}{2}\sum_{j=1}^3\partial_j\widehat{u}^{(j)}\nabla^k W\cdot
\nabla^k W\\
&-\nabla^k W\cdot\big(\nabla^k(B(\nabla\widehat{u},W))-B(\nabla\widehat{u},\nabla^k W)\big)\\
&-\nabla^k W\cdot\big(\nabla^k(\sum_{j=1}^3\widehat{u}^{(j)}\partial_jW)-\sum_{j=1}^3\widehat{u}^{(j)}\partial_j\nabla^k W\big)
\doteq s_k^1+s_k^2,\\
\end{split}\end{equation}
where $s_k^1$ is a sum of terms with a derivatives of order one of $\widehat{u},$ and $s_2^k$ is a sum of terms with a derivative of order at least two for $\widehat{u}$.

{\bf Step 2.1:} Estimates on $S_k^1=\int s_k^1$. Let $\nabla^k=\partial_{\beta_1\beta_2\cdots \beta_i\cdots \beta_k}$ with $\beta_i=1,2,3.$ Decompose $S_k^1$ as:
\begin{equation}\label{E:gz40}\begin{split}
S_k^1=&\int\Big(-\nabla^k W\cdot B(\nabla\widehat{u},\nabla^k W)+\frac{1}{2}\sum_{j=1}^3\partial_j\widehat{u}^{(j)}\nabla^k W\cdot\nabla^k W\Big)
\\
&-\int\partial_{\beta_1...\beta_k} W\cdot\sum_{i=1}^k\sum_{j=1}^3\partial_{\beta_i}\widehat{u}^{(j)}\partial_j\partial_{\beta_1\cdots \beta_{i-1}\beta_{i+1}\cdots \beta_k}W
\doteq I_1+I_2+I_3,
\end{split}\end{equation}
where, form Proposition \ref{p1}, $I_1$-$I_3$ are given by
\begin{equation}\label{E:gz41}\begin{split}
I_1=&-\frac{3(\gamma-1)}{2(1+t)}\int\nabla^k\phi\cdot\nabla^k\phi
-\frac{1}{1+t}\int\nabla^k v\cdot\nabla^k v+G_1,\\
I_2=&\frac{3}{2}\frac{1}{1+t}Y_k^2+G_2,\quad I_3=-\frac{k}{1+t}Y_k^2+G_3,\\
 |G_j|\leq&\frac{C_0}{(1+t)^2}Y^2_k,\
\ j=1,2,3.
\end{split}\end{equation}
Therefore,
\begin{equation}\label{E:gz42}\begin{split}
S_k^1(W,\widehat{u})(t,\cdot)\leq& \frac{C_0}{(1+t)^2}Y_k^2-\frac{A_k}{1+t}\int\nabla^kv\cdot\nabla^k v
-\frac{B_k}{1+t}\int\nabla^k\phi\cdot\nabla^k \phi\\
\leq&C_0 Y_kZ(1+t)^{-\gamma_k-2}-\frac{k+r}{1+t}Y^2_k,
\end{split}\end{equation}
where
$$A_k=k-\frac{1}{2},\quad B_k=\frac{3\gamma}{2}-3+k,\quad r=\min(A_k,B_k)-k.
$$
{\bf Step 2.2:} Estimates of $S_k^2=\int s_k^2$. $s_k^2$ is a sum of the terms as
$$
E_1(W)=\nabla^k W\cdot\nabla^l\widehat{u}\cdot\nabla^{k+1-l} W\ \ \text{for}\ \ 2\leq k\leq 3,\ \ \text{and}\ \ 2\leq l\leq k;
$$
$$
E_2(W)=\nabla^k W\cdot\nabla^{l+1}\widehat{u}\cdot\nabla^{k-l} W\ \ \text{for}\ \ 1\leq k\leq 3,\ \ \text{and}\ \ 1\leq l\leq k.
$$
Hence,
\begin{equation}\label{E:gz43}\begin{split}
S_1^2(W)\leq& C|W|_\infty|\nabla^2\widehat{u}|_2|\nabla W|_2\leq C_0Y_1Z(1+t)^{-2-\gamma_1},\\
S_2^2(W)\leq& C\big(|\nabla^2\widehat{u}|_\infty|\nabla W|_2+|\nabla^3\widehat{u}|_2| W|_\infty\big)|\nabla^2W|_2\leq C_0Y_2Z(1+t)^{-2-\gamma_2},\\
S_3^2(W)\leq& C\big(|\nabla^2\widehat{u}|_\infty|\nabla^2 W|_2+|\nabla^3\widehat{u}|_2|\nabla W|_\infty
+|\nabla^4\widehat{u}|_2|W|_\infty\big)|\nabla^3W|_2\\
\leq& C_0Y_3Z(1+t)^{-2-\gamma_3},\\
\end{split}\end{equation}
which imply that
$$
S_k^2(W)\leq C_0Y_kZ(1+t)^{-2-\gamma_k},\ \ \text{for}\ \ k=1,2,3.
$$

This, together with \eqref{E:gz42},  yields $\eqref{E:gz36}_2$.
\end{proof}
\begin{lemma}\label{L:3.3}{\bf(Estimates on $L_k$)}. For any suitable small constant $\nu>0$, there
are two constants $C(\nu)$ and $C_0(\nu)$ such that
\begin{equation}\label{E:gz44}\begin{split}
L_k(W)\leq& \nu\epsilon|\varphi\nabla^{k+1}v|_2^2\delta_{3k}+C(\nu)(1+t)^{2m-5-\gamma_k}Z^3Y_k\\
&+C_0\epsilon^{1/2}(1+t)^{2m-n-4.5-\gamma_k}Z^2Y_k+C_0(\nu)\epsilon (1+t)^{2m-10}Z^2.
\end{split}\end{equation}
\end{lemma}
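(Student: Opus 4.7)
The plan is to decompose $L_k=L_k^1+L_k^2+L_k^3$ as in \eqref{E:gz34}, to estimate each piece separately, and to match the four summands on the right-hand side of the claim. Schematically, $L_k^1$ and $L_k^2$ together will produce the absorbing term $\nu\epsilon|\varphi\nabla^{k+1}v|_2^2\delta_{3k}$ (needed only at top order $k=3$, so as to preserve the dissipation on the LHS of \eqref{E:gz33}) and the cubic contribution $C(\nu)(1+t)^{2m-5-\gamma_k}Z^3 Y_k$; while $L_k^3$, the only piece involving the known background velocity $\widehat u$, will produce the remaining two terms that carry the explicit factors $\epsilon^{1/2}$ and $\epsilon$.

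\textbf{Estimates of $L_k^1$ and $L_k^2$.} For $L_k^1=-\epsilon\int\nabla\varphi^2\cdot\mathbb S(\nabla^k v)\cdot\nabla^k v$, I write $\nabla\varphi^2=2\varphi\nabla\varphi$ and use $|\mathbb S(\nabla^k v)|\leq C|\nabla^{k+1}v|$ to dominate the integrand pointwise by $\epsilon|\varphi||\nabla\varphi||\nabla^{k+1}v||\nabla^k v|$. For $k\in\{0,1,2\}$, Hölder with $|\varphi|_\infty$, $|\nabla\varphi|_\infty$ in $L^\infty$ (bounded via Lemma \ref{L:2.1}) and the two $v$-factors in $L^2$ feeds directly into the cubic $Z^3 Y_k$ bound. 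For $k=3$, $|\nabla^4 v|_2$ is not on the RHS, so I pair $\varphi\nabla^4 v$ as a unit and apply Young's inequality
\[
\epsilon\int|\varphi||\nabla\varphi||\nabla^4 v||\nabla^3 v|\leq\nu\epsilon|\varphi\nabla^4 v|_2^2+C(\nu)\epsilon|\nabla\varphi|_\infty^2|\nabla^3 v|_2^2;
\]
the second term uses $|\nabla\varphi|_\infty^2\leq C\epsilon^{-1/2}(1+t)^{2m-5}U^2$ from Lemma \ref{L:2.1}, so that the $\epsilon\cdot\epsilon^{-1/2}$ combination leaves an $O(\epsilon^{1/2})$ cubic-$Z$ contribution absorbed into $C(\nu)$. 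The commutator $L_k^2=\epsilon\int(\nabla^k(\varphi^2 Lv)-\varphi^2 L\nabla^k v)\cdot\nabla^k v$ expands via Leibniz into a sum of integrals $\epsilon\int\nabla^j(\varphi^2)\nabla^{k+2-j}v\cdot\nabla^k v$ with $1\leq j\leq k$; after further expanding $\nabla^j(\varphi^2)=\sum\binom{j}{i}\nabla^i\varphi\nabla^{j-i}\varphi$, I apply Gagliardo-Nirenberg interpolation (Lemma \ref{gag112}) together with Lemma \ref{L:2.1}. Sub-terms producing $|\nabla^4 v|_2$ (only when $k=3$) are treated exactly as above via Young's inequality and contribute to the $\nu$-absorption; all other sub-terms feed into the cubic $Z^3 Y_k$-term.

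\textbf{Estimate of $L_k^3$.} Leibniz applied to $L_k^3=\epsilon\int\nabla^k(\varphi^2 L\widehat u)\cdot\nabla^k v$ produces a sum of terms $\epsilon\int(\nabla^i\varphi)(\nabla^j\varphi)(\nabla^{k+2-i-j}\widehat u)\cdot\nabla^k v$ with $0\leq i+j\leq k$. Using the decay bounds on derivatives of $\widehat u$ from Proposition \ref{p1} and the $\varphi$-bounds of Lemma \ref{L:2.1}, I split into two families: those where one $\varphi$-factor is placed in $L^\infty$ (the associated $\epsilon^{-1/4}$ loss combines with the prefactor $\epsilon$ into $\epsilon^{1/2}$), producing the $C_0\epsilon^{1/2}(1+t)^{2m-n-4.5-\gamma_k}Z^2 Y_k$ term; and those where both $\varphi$-factors are placed in $L^2$ (no $L^\infty$-loss), producing the $C_0(\nu)\epsilon(1+t)^{2m-10}Z^2$ term. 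Summing with the contributions from $L_k^1$ and $L_k^2$ delivers the four summands of the claim.

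\textbf{Main obstacle.} The principal technical difficulty is the bookkeeping of $\epsilon$-powers. Since the top-order $\varphi$-estimate is weighted by $\epsilon^{1/2}$ in the definition of $Z$, Lemma \ref{L:2.1} yields $|\nabla\varphi|_\infty$ with an $\epsilon^{-1/4}$ loss, and maintaining the correct balance between this loss and the factor $\epsilon$ multiplying $L_k$ is what forces the middle summand to have the power exactly $\epsilon^{1/2}$. This dictates that $\nabla\varphi$ must be placed uniformly in $L^\infty$ (rather than interpolated into any $L^p$ with $p<\infty$) whenever it appears; likewise, the Young-inequality absorption for $k=3$ has to be applied consistently across every top-order sub-term arising in Steps 1--2. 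A secondary subtlety is the matching of the time-weight exponents $2m-5-\gamma_k$, $2m-n-4.5-\gamma_k$, $2m-10$, which requires careful use of $\gamma_k=k-n$, $\delta_k=k-m$, and the extraction $Y_k\leq(1+t)^{-\gamma_k}Y$.
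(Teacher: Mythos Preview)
Your overall decomposition and the treatment of $L_k^1$ and $L_k^2$ are essentially correct and match the paper's approach. The genuine gap is in your handling of $L_k^3$ at top order $k=3$.

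After Leibniz, the piece with all derivatives on $\widehat u$ is
\[
L_3^3(0,3)=\epsilon\int \varphi^2\,\nabla^3 L\widehat u\cdot\nabla^3 v,
\]
which contains $\nabla^5\widehat u$. Under the standing hypothesis $u_0\in\Xi$ (i.e.\ $\nabla^2 u_0\in H^2$), Proposition~\ref{p1} only controls $\nabla^l\widehat u$ in $L^2$ for $2\le l\le 4$; there is no bound on $|\nabla^5\widehat u|_2$. Hence this term cannot be estimated directly by any H\"older split, and your proposed dichotomy ``one $\varphi$ in $L^\infty$'' versus ``both $\varphi$ in $L^2$'' does not cover it. The paper instead integrates by parts,
\[
L_3^3(0,3)\le C\epsilon|\varphi|_\infty|\nabla^4\widehat u|_2\big(|\varphi\nabla^4 v|_2+|\nabla\varphi|_\infty|\nabla^3 v|_2\big),
\]
and it is the first parenthesis, after Young, that produces \emph{both} the extra $\nu\epsilon|\varphi\nabla^4 v|_2^2$ contribution you did not account for in $L_k^3$, \emph{and} the term $C_0(\nu)\epsilon(1+t)^{2m-10}Z^2$ (from $\epsilon|\varphi|_\infty^2|\nabla^4\widehat u|_2^2$). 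The $\nu$-dependence of the constant $C_0(\nu)$ should have signalled to you that this term arises from a Young split, not from a straight H\"older estimate.

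A secondary inaccuracy: the $\epsilon^{1/2}$ in the middle summand does not come solely from placing $\nabla\varphi$ in $L^\infty$. In several sub-terms (e.g.\ $L_3^3(3,0)$) it comes instead from $\epsilon|\nabla^3\varphi|_2\le\epsilon^{1/2}U_3$, reflecting the weight $\Theta(3)=\epsilon^{1/2}$ in the definition of $U_3$; and in others the paper uses $|\nabla\varphi|_3$ or $|\nabla\varphi|_6$ via interpolation (no $\epsilon$-loss at all). So your rule ``$\nabla\varphi$ must be placed uniformly in $L^\infty$'' is too rigid and would not reproduce the stated exponents term by term.
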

\begin{proof}
{\bf Step 1:} Estimates on $L^1_k.$ It is easy to check that,
\begin{equation}\label{E:gz45}\begin{split}
L^1_k\leq& C\epsilon|\varphi|_\infty|\nabla\varphi|_\infty|\nabla^{k+1}v|_2|\nabla^k v|_2\\
\leq & C\epsilon^{3/4}(1+t)^{2m-5-\gamma_k}Z^3Y_k \ \ \text{for} \ \  k\leq 2,\\
L^1_3\leq& C\epsilon|\varphi\nabla^4 v|_2|\nabla\varphi|_\infty|\nabla^3v|_2\\
\leq & \nu\epsilon|\varphi\nabla^4 v|_2^2+C(\nu)\epsilon^{1/2}(1+t)^{2m-5-\gamma_3}Z^3Y_3,
\end{split}\end{equation}
where $\nu>0$ is some sufficiently small constant.

{\bf Step 2:} Estimates on $L_k^3.$ If $k=0,1$, one has
\begin{equation}\label{E:gz46}\begin{split}
L^3_0\leq& C\epsilon|\varphi|_\infty|\nabla^2\widehat{u}|_\infty|\varphi|_2|v|_2\leq C_0\epsilon (1+t)^{2m-n-4.5-\gamma_0}Z^2Y_0, \\
L^3_1\leq& C\epsilon\big(|\nabla \varphi|_2|\nabla^2\widehat{u}|_\infty+|\varphi|_\infty|\nabla^3\widehat{u}|_2\big)|\varphi |_\infty|\nabla v|_2\\
\leq& C_0 \epsilon (1+t)^{2m-n-4.5-\gamma_1}Z^2Y_1.
\end{split}\end{equation}
For the case of $k=2,$ decompose $L^3_2\doteq L^3_2(0,2)+L^3_2(1,1)+L^3_2(2,0).$ One gets
\begin{equation}\label{E:gz47}\begin{split}
L_2^3(0,2)\doteq&\epsilon\int\varphi^2\nabla^2 L\widehat{u}\cdot\nabla^2 v\\
\leq&   C\epsilon | \varphi|^2_\infty |\nabla^2 L\widehat{u}|_2|\nabla^2 v|_2
\leq   C_0\epsilon(1+t)^{2m-n-4.5-\gamma_2}Z^{2}  Y_2,\\
L_2^3(1,1)\doteq&\epsilon\int\nabla\varphi^2\cdot \nabla L\widehat{u}\cdot\nabla^2 v\\
\leq& C\epsilon|\varphi|_\infty|\nabla\varphi|_\infty|\nabla L\widehat{u}|_2|\nabla^2 v|_2
\leq  C_0 \epsilon^{3/4}(1+t)^{2m-n-4.5-\gamma_2}Z^2Y_2,\\
L^3_2(2,0)\doteq& \epsilon\int \nabla^2\varphi^2\cdot L\widehat{u}\cdot\nabla^2 v\\
\leq & C\epsilon\big(|\nabla\varphi|_\infty^2|L\widehat{u}|_2+|\varphi|_\infty|\nabla^2\varphi|_2|L\widehat{u}|_\infty\big)|\nabla^2 v|_2\\
\leq& C_0\epsilon^{1/2}(1+t)^{2m-n-4.5-\gamma_2}Z^2Y_2.\\
\end{split}\end{equation}
For last case of $k=3$, decompose $L^3_3\doteq L^3_3(0,3)+L^3_3(1,2)+L^3_3(3,0)$. Then, by integration by parts, one can obtain
\begin{equation}\label{E:gz48}\begin{split}
L^3_3(0,3)\doteq&\epsilon\int\varphi^2\nabla^3L\widehat{u}\cdot\nabla^3 v\\
\leq& C\epsilon|\varphi|_\infty|\nabla^2L\widehat{u}|_2\big(|\varphi\nabla^4 v|_2+|\nabla\varphi|_\infty|\nabla^3 v|_2\big)\\
\leq& \nu\epsilon|\varphi\nabla^{4}v|_2^2+C_0\epsilon^{3/4}(1+t)^{2m-n-4.5-\gamma_3}Z^2Y_3+C_0(\nu)\epsilon(1+t)^{2m-10}Z^2,\\
L^3_3(1,2)\doteq& \epsilon\int \nabla\varphi^2\cdot\nabla^2 L\widehat{u}\cdot\nabla^3 v\\
\leq &
C\epsilon|\varphi|_\infty|\nabla\varphi|_\infty|\nabla^2 L\widehat{u}|_2|\nabla^3 v|_2
\leq C_0\epsilon^{3/4}(1+t)^{2m-n-4.5-\gamma_3}Z^2Y_3,\\
L^3_3(2,1)\doteq& \epsilon\int\nabla^2\varphi^2\cdot \nabla L\widehat{u}\cdot\nabla^3 v\\
\leq& C\epsilon
\big(|\nabla\varphi|_\infty^2|\nabla^3 \widehat{u}|_2+|\varphi|_\infty|\nabla^2\varphi|_6|\nabla^3\widehat{u}|_3\big)|\nabla^3 v|_2\\
\leq& C_0\epsilon^{1/2}(1+t)^{2m-n-4.5-\gamma_3}Z^2Y_3,\\
L^3_3(3,0)\doteq&\epsilon\int
\nabla^3\varphi^2\cdot L\widehat{u}\cdot\nabla^3 v\\
\leq &C\epsilon\big(|\varphi|_\infty|\nabla^3 \varphi|_2+|\nabla\varphi|_\infty|\nabla^2\varphi|_2\big)| L\widehat{u}|_\infty|\nabla^3 v|_2\\
\leq& C_0\epsilon^{1/2}(1+t)^{2m-n-4.5-\gamma_3}Z^2Y_3.\\
\end{split}\end{equation}
It follows from \eqref{E:gz46}-\eqref{E:gz48} that
\begin{equation}\label{E:gz49}\begin{split}
L^3_k
\leq& \nu\epsilon|\varphi\nabla^{k+1}v|_2^2\delta_{3k}+ C_0\epsilon^{1/2}(1+t)^{2m-n-4.5-\gamma_k}Z^2Y_k
+C_0(\nu)\epsilon(1+t)^{2m-10}Z^2.\\
\end{split}\end{equation}
{\bf Step 3:}
Estimates on $L_k^2.$ If $k=1,$ one gets
\begin{equation}\label{E:gz50}\begin{split}
L^2_1=& \epsilon\int \nabla\varphi^2\cdot L v\cdot\nabla v\\
\leq & C\epsilon|\varphi|_\infty|\nabla\varphi|_\infty|\nabla^2 v|_2|\nabla v|_2\leq C \epsilon^{3/4}(1+t)^{2m-5-\gamma_1}Z^3Y_1.
\end{split}\end{equation}
Next for $k=2,$ decompose $L^2_2\doteq L^2_2(1,1)+L^2_2(2,0).$ In a similar way for $L_2^1,$ one has
\begin{equation}\label{E:gz51}\begin{split}
L^2_2(1,1)\doteq& \epsilon\int \nabla\varphi^2\cdot\nabla L v\cdot\nabla^2 v\\
\leq & C\epsilon|\varphi|_\infty|\nabla\varphi|_\infty|\nabla^3 v|_2|\nabla^2 v|_2
\leq C\epsilon^{3/4}(1+t)^{2m-5-\gamma_2}Z^3 Y_2,\\
L^2_2(2,0)\doteq& \epsilon\int \nabla^2\varphi^2\cdot L v\cdot\nabla^2 v\\
\leq &
C\epsilon\big(|\nabla \varphi|_\infty^2
|\nabla^2 v|_2+|\varphi|_6|\nabla^2 v|_6|\nabla^2\varphi|_6\big)|\nabla^2 v|_2\\
\leq& C \epsilon^{1/2}(1+t)^{2m-5-\gamma_2}Z^3Y_2.
\end{split}\end{equation}
At last, for $k=3$, decompose $L^2_3\doteq L^2_3(1,2)+L^2_3(2,1)+L^2_3(3,0).$ In a similar way for $L_3^1$, on can get
\begin{equation}\label{E:gz52}\begin{split}
L^2_3(1,2)\doteq& \epsilon\int \nabla\varphi^2\cdot\nabla^2 L v\cdot\nabla^3 v\\
\leq & \nu\epsilon|\varphi\nabla^4 v|_2^2+C(\nu)\epsilon^{1/2}(1+t)^{2m-5-\gamma_3}Z^3Y_3,\\
L^2_3(2,1)\doteq& \epsilon\int \nabla^2\varphi^2\cdot \nabla L v\cdot\nabla^3 v\\
\leq & C\epsilon\big(|\nabla\varphi|_\infty^2|\nabla^3 v|_2+|\nabla^2\varphi|_3|\varphi\nabla Lv|_6\big)|\nabla^3 v|_2\\
\leq& \nu\epsilon|\varphi\nabla^4 v|_2^2+C(\nu)\epsilon^{1/2}(1+t)^{2m-5-\gamma_3}Z^3Y_3,\\
L^2_3(3,0)\doteq& \epsilon\int \nabla^3\varphi^2\cdot L v\cdot\nabla^3 v\\
\leq &
C\epsilon\big(|\nabla\varphi|_\infty|\nabla^2\varphi|_3|\nabla^3 v|_2|\nabla^2 v|_6+|\varphi\nabla^3 v|_6
|\nabla^2 v|_3|\nabla^3\varphi|_2\big)\\
\leq& \nu\epsilon|\varphi\nabla^4 v|_2^2+C(\nu)(1+t)^{2m-5-\gamma_3}Z^3Y_3.
\end{split}\end{equation}
Then combining the estimates \eqref{E:gz50}-\eqref{E:gz52} yields
\begin{equation}\label{E:gz53}\begin{split}
L^2_k\leq & \nu\epsilon|\varphi\nabla^4 v|_2^2\delta_{3k}+C(\nu)(1+t)^{2m-5-\gamma_k}Z^3Y_k.
\end{split}\end{equation}
\end{proof}

\begin{lemma}\label{L:3.4} {\bf Estimates on $Q_k$}. For any suitable small constant $\nu>0,$
there are two constants $C(\nu)$ and $C_0(\nu)$ such that
\begin{equation}\label{E:gz54}\begin{split}
Q_k(W,\widehat{u})\leq & \nu\epsilon|\varphi\nabla^{k+1} v|_2^2\delta_{3k}+C(\nu)(1+t)^{2m-5-\gamma_k}Z^3Y_k\\
&+C_0\epsilon^{\frac{1}{4}}(1+t)^{2m-n-3.5-\gamma_k}Z^2 Y_k+C_0(\nu)(1+t)^{2m-10}Z^2\\
&+\frac{(4\alpha+6\beta)\delta\epsilon}{(\delta-1)(1+t)}|\nabla^3\varphi|_2|\varphi\nabla^3\text{div}v|_2.\\
\end{split}\end{equation}
\end{lemma}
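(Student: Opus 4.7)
\emph{Proof plan.} The argument parallels the one for Lemma \ref{L:3.3}, handling $Q_k^1$, $Q_k^2$ and $Q_k^3$ separately, with one genuinely new ingredient in the top-order case $k=3$ that produces the explicit $(1+t)^{-1}$ contribution in \eqref{E:gz54}. For $Q_k^1$, the object $Q(\nabla^k v)$ is a constant-coefficient linear combination of components of $\nabla^{k+1} v$, so it admits exactly the same pointwise bounds as $L v$ used in $L_k^1$; combining H\"older with the $L^\infty$ estimates of Lemma \ref{L:2.1} produces $\nu \epsilon |\varphi \nabla^4 v|_2^2$ at $k=3$ and $C(\nu)(1+t)^{2m-5-\gamma_k} Z^3 Y_k$ at lower $k$. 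For $Q_k^2$, the commutator kills all terms with at most one derivative on $\varphi^2$, so it reduces to a sum of integrals of the form $\epsilon \int \nabla^l \varphi^2 \cdot \nabla^{k+1-l} v \cdot \nabla^k v$ with $l \geq 2$, which are estimated term-by-term using Leibniz and Gagliardo--Nirenberg, exactly as for $L_k^2$, with integration by parts used to avoid uncontrolled fourth-order derivatives of $v$ at $k=3$. For $Q_k^3$ with $k \in \{0,1,2\}$, the decay estimates on $\nabla^j \widehat{u}$ coming from Proposition \ref{p1} allow a direct H\"older/Sobolev bound to close everything within the $C_0 \epsilon^{1/4}(1+t)^{2m-n-3.5-\gamma_k} Z^2 Y_k + C_0(\nu)(1+t)^{2m-10} Z^2$ budget, in strict analogy with $L_k^3$.

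\textbf{The main obstacle} is the piece $Q_3^3 = \epsilon \int \nabla^3(\nabla \varphi^2 \cdot Q(\widehat{u})) \cdot \nabla^3 v$, which via Leibniz contains the term $I := \epsilon \int \nabla^4 \varphi^2 \cdot Q(\widehat{u}) \cdot \nabla^3 v$; here $\nabla^4 \varphi$ is not in our energy space and cannot be treated directly. The plan is to integrate by parts once to transfer one derivative off $\varphi^2$:
$$
I = -\epsilon \int \nabla^3 \varphi^2 \cdot \nabla Q(\widehat{u}) \cdot \nabla^3 v - \epsilon \int \nabla^3 \varphi^2 \cdot Q(\widehat{u}) \cdot \nabla^4 v.
$$
The first integrand contains $\nabla^2 \widehat{u}$, whose decay (Proposition \ref{p1}) lets it be absorbed into the $C_0 \epsilon^{1/4}$ term. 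For the second integrand, write $\nabla^3 \varphi^2 = 2\varphi \nabla^3 \varphi + \text{l.o.t.}$; the l.o.t.\ pieces are absorbed routinely via H\"older, but the leading piece $-2 \epsilon \int \varphi \nabla^3 \varphi \cdot Q(\widehat{u}) \cdot \nabla^4 v$ cannot simply be absorbed into $\nu \epsilon |\varphi \nabla^4 v|_2^2$, because the resulting coefficient $\nu^{-1} |Q(\widehat{u})|_\infty^2$ would not be small uniformly in time.

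The resolution is to use the precise asymptotic shape of $\nabla \widehat{u}$ from Proposition \ref{p1}, namely $\nabla \widehat{u}(t,x) = (1+t)^{-1} \mathbb{I}_3 + R(t,x)$ with $\|R(t,\cdot)\|_{L^\infty} \leq C_0(1+t)^{-2}$; this is the same structural fact that produced the explicit $(1+t)^{-1}$ factors in $S_k^1$ in the proof of Lemma \ref{L:3.2}. Splitting $Q(\widehat{u})$ accordingly, its leading part equals $\frac{\delta(2\alpha+3\beta)}{(\delta-1)(1+t)} \mathbb{I}_3$, so the contraction $Q(\widehat{u})_{dj} \partial_d \nabla^3 v^{(j)}$ collapses at leading order to $\frac{\delta(2\alpha+3\beta)}{(\delta-1)(1+t)} \nabla^3 \text{div}\, v$. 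Plugging this in and applying Cauchy--Schwarz gives exactly the explicit term $\frac{(4\alpha+6\beta)\delta\epsilon}{(\delta-1)(1+t)} |\nabla^3 \varphi|_2 |\varphi \nabla^3 \text{div} v|_2$ stated in the lemma; the remainder originating from $R$ enjoys an extra $(1+t)^{-1}$ of decay and is absorbed into $\nu \epsilon |\varphi \nabla^4 v|_2^2$ together with the other error terms. Summing the contributions of the standard estimates for $Q_k^1$, $Q_k^2$, $Q_k^3$ ($k \leq 2$) with this delicate extraction at $k=3$ yields \eqref{E:gz54}.
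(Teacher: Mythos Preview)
Your proof plan is correct and follows essentially the same route as the paper: $Q_k^1$ is handled exactly like $L_k^1$; $Q_k^2$ via Leibniz and integration by parts at $k=3$ to avoid uncontrolled $\nabla^4\varphi$; and the crucial observation that the top-order piece of $Q_3^3$ must be integrated by parts and then split via the asymptotic decomposition $\nabla\widehat u=(1+t)^{-1}\mathbb{I}_3+O((1+t)^{-2})$ from Proposition~\ref{p1}, which isolates the explicit $\frac{(4\alpha+6\beta)\delta\epsilon}{(\delta-1)(1+t)}|\nabla^3\varphi|_2|\varphi\nabla^3\mathrm{div}\,v|_2$ contribution while the $K$-remainder is absorbed by Young's inequality into $\nu\epsilon|\varphi\nabla^4 v|_2^2+C_0(\nu)(1+t)^{2m-10}Z^2$. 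One minor indexing slip: in your description of $Q_k^2$, since $Q(v)$ already carries one derivative of $v$, the commutator terms are of the form $\nabla^l\varphi^2\cdot\nabla^{k+2-l}v\cdot\nabla^k v$ (not $\nabla^{k+1-l}v$) for $2\le l\le k+1$; this does not affect the argument.
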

\begin{proof}
{\bf Step 1:} Estimates on $Q^1_k.$ Similar as $L^1_k$, it is easy to get
$$
Q_k^1\leq \nu\epsilon|\varphi\nabla^{k+1}v|_2^2\delta_{3,k}+C(\nu)\epsilon^{\frac{1}{2}}(1+t)^{2m-5-\gamma_k}Z^3Y_k.
$$
{\bf Step 2:} Estimates on $Q^3_k.$ If $k=0,$ one has
\begin{equation}\label{E:gz55}\begin{split}
Q_0^3\leq C\epsilon|\varphi|_\infty|\nabla\widehat{u}|_\infty|\nabla\varphi|_2|v|_2\leq C_0\epsilon
(1+t)^{2m-n-3.5-\gamma_0}Z^2Y_0.\\
\end{split}\end{equation}
For $k=1,$ in a similar way for $L^3_1,$ one has
\begin{equation}\label{E:gz56}\begin{split}
Q_1^3\leq& C\epsilon\big(|\varphi|_\infty|\nabla^2\varphi|_2|\nabla\widehat{u}|_\infty+|\nabla\varphi|_6|\nabla\varphi|_3|\nabla \widehat{u}|_\infty
+|\varphi|_\infty|\nabla\varphi|_2|\nabla^2\widehat{u}|_\infty\big)|\nabla v|_2\\
\leq& C_0\epsilon^{3/4}
(1+t)^{2m-n-3.5-\gamma_1}Z^2Y_1.\\
\end{split}\end{equation}
For $k=2,$ decompose $Q^3_2\doteq Q^3_2(0,2)+Q^3_2(1,1)+Q^3_2(2,0)$. In a similar way for $L^3_2(1,1)$ and $L^3_2(2,0)$, one obtains
\begin{equation}\label{E:gz57}\begin{split}
Q_2^3(0,2)\doteq& C\epsilon\int\nabla\varphi^2\cdot\nabla^3\widehat{u}\cdot\nabla^2 v\\
\leq & C_0\epsilon^{3/4}
(1+t)^{2m-n-4.5-\gamma_2}Z^2Y_2,\\
Q_2^3(1,1)\doteq& C\epsilon\int\nabla^2\varphi^2\cdot\nabla^2\widehat{u}
\cdot\nabla^2 v\\
\leq &C_0\epsilon^{1/2}
(1+t)^{2m-n-4.5-\gamma_2}Z^2Y_2.\\
Q_2^3(2,0)\doteq& C\epsilon\int\nabla^3\varphi^2\cdot\nabla\widehat{u}\cdot\nabla^2 v\\
\leq  & C\epsilon\big(  |\nabla^3 \varphi|_2  |\varphi|_\infty + |\nabla^2 \varphi|_6|\nabla \varphi|_3\big)|\nabla \widehat{u}|_\infty |\nabla^2 v|_2\\
\leq &
C_0\epsilon^{1/2} (1+t)^{2m-n-3.5-\gamma_2}Z^{2}Y_2.
\end{split}\end{equation}
For $k=3$, let $Q^3_3\doteq Q^3_3(0,3)+Q^3_3(1,2)+Q^3_3(2,1)+Q^3_3(3,0)$, as for $L_3^3(1,2)$, $L^3_3(2,1)$ and  $L^3_3(3,0)$, one can get
\begin{equation}\label{E:gz58a}\begin{split}
\sum_{i=0}^2Q_3^3(i,3-i)\doteq& C\epsilon\int\big(\nabla\varphi^2\cdot\nabla^4\widehat{u}\cdot\nabla^3 v+\nabla^2\varphi^2\cdot\nabla^3\widehat{u}
\cdot\nabla^3 v+\nabla^3\varphi^2\cdot\nabla^2\widehat{u}\cdot\nabla^3 v\big)\\
\leq& C_0\epsilon^{1/2}
(1+t)^{2m-n-4.5-\gamma_3}Z^2Y_3.
\end{split}\end{equation}

Finally, we estimate $Q_3^3(3,0)$ defined below, for which some additional information is needed. Using
integration by parts and Proposition \ref{p1}, one has
\begin{equation}\label{E:gz58}\begin{split}
Q_k^3(3,0)\doteq
&\frac{\delta\epsilon}{\delta-1}\sum_{i,j=1}^3\int\big(\alpha\nabla^3\partial_j \varphi^2(\partial_i\widehat{u}^{(j)}+\partial_j\widehat{u}^{(i)})
+\beta\nabla^3\partial_i\varphi^2\text{div}\widehat{u}\big)\cdot \nabla^3 v^{(i)}\\
=&Q^3_3(A)-\frac{\delta\epsilon}{\delta-1}\sum_{i,j=1}^3\int\alpha\nabla^3\varphi^2(\partial_i\widehat{u}^{(j)}+\partial_j\widehat{u}^{(i)})\cdot \nabla^3 \partial_jv^{(i)}\\
&-\frac{\delta\epsilon}{\delta-1}\int
\beta(\nabla^3 \varphi^2\text{div}\widehat{u})\cdot \nabla^3\text{div} v\\
=&Q^3_3(A)+Q^3_3(B)+Q_3^3(D)\\
&-\frac{\delta\epsilon}{(\delta-1)(1+t)}\int(4\alpha+6\beta)\varphi\nabla^3\varphi\cdot \nabla^3\text{div} v\\
\leq&Q^3_3(A)+Q^3_3(B)+Q_3^3(D)+\frac{(4\alpha+6\beta)\delta\epsilon}{(\delta-1)(1+t)}|\nabla^3\varphi|_2|\varphi\nabla^3\text{div} v|_2.\\
\end{split}\end{equation}
where
\begin{equation}\label{E:gz58a}\begin{split}
Q_3^3(A)\doteq&C\epsilon\int\nabla^3\varphi^2\cdot\nabla^2\widehat{u}\cdot\nabla^3 v \\
\leq& C\epsilon \big(|\varphi|_\infty|\nabla^3\varphi|_2+|\nabla\varphi|_\infty|\nabla^2\varphi|_2\big)|\nabla^2\widehat{u}|_\infty
|\nabla^3 v|_2\\
\leq& C_0\epsilon^{1/2}(1+t)^{2m-n-4.5-\gamma_3}Z^2Y_3,\\
Q_3^3(B)\doteq&C\epsilon\int\nabla\varphi\cdot\nabla^2\varphi\cdot\nabla\widehat{u}\cdot\nabla^4 v\\
\leq& C\epsilon\big(|\nabla\varphi|_\infty|\nabla^3\varphi|_2|\nabla\widehat{u}|_\infty+|\nabla^2\varphi|_6(|\nabla^2 \varphi|_3
|\nabla\widehat{u}|_\infty+|\nabla\varphi|_3|\nabla^2\widehat{u}|_\infty)\big)|\nabla^3 v|_2\\
\leq& C_0\epsilon^{1/4}(1+t)^{2m-n-3.5-\gamma_3}Z^2Y_3,\\
Q_3^3(D)\doteq&-\frac{\delta\epsilon}{(\delta-1)(1+t)^2}\sum_{i,j=1}^3\int\Big(
2\alpha\varphi\nabla^3\varphi(K_{ij}+K_{ji})\cdot\nabla^3\partial_j v^{(i)}\\
&+2\beta\varphi\nabla^3\varphi K_{ii}\cdot\nabla^3\text{div} v\Big)\\
\leq &\epsilon\nu|\varphi\nabla^4 v|_2^2+C_0(\nu)(1+t)^{2m-10}Z^2,
\end{split}\end{equation}
where the matrix $K=\{K_{ij}\}$ is defined in Proposition \ref{p1}.

Then, combining the estimates \eqref{E:gz55}-\eqref{E:gz58a} yields
\begin{equation}\label{E:gz58b}\begin{split}
Q_k^3
\leq &\epsilon\nu|\varphi\nabla^4 v|_2^2\delta_{3k}+C_0(\nu)(1+t)^{2m-10}Z^2+C_0\epsilon^{1/4}(1+t)^{2m-n-3.5-\gamma_k}Z^2Y_k\\
&+\frac{(4\alpha+6\beta)\delta\epsilon}{(\delta-1)(1+t)}|\nabla^3\varphi|_2|\varphi\nabla^3\text{div}v|_2.\\
\end{split}\end{equation}

{\bf Step 3:} Estimates on $Q_k^2.$ For $k=1,$ direct computation gives
\begin{equation}\label{E:gz59}\begin{split}
Q_1^2=& C\epsilon\int\big(\varphi\nabla^2\varphi+\nabla\varphi\cdot\nabla\varphi\big)\cdot\nabla v\cdot\nabla v\\
\leq& C\epsilon\big(|\varphi|_3|\nabla^2\varphi|_6|\nabla v|_\infty+|\nabla v|_2|\nabla\varphi|_\infty^2
\big)|\nabla v|_2\\
\leq&C\epsilon^{1/2}(1+t)^{2m-5-\gamma_1}Z^3Y_1.\\
\end{split}\end{equation}
For $k=2$, in a similar way for $L_2^2(2,0)$, one gets
\begin{equation}\label{E:gz60}\begin{split}
Q_2^2=& C\epsilon\int\big(\nabla^3\varphi^2\cdot\nabla v+\nabla^2\varphi^2\cdot\nabla^2v\big)\cdot\nabla^2 v\\
\leq& C\epsilon\big(|\varphi|_\infty|\nabla^3\varphi|_2|\nabla v|_\infty+|\nabla v|_\infty|\nabla\varphi|_6
|\nabla^2\varphi|_3\big)|\nabla^2v|_2\\
&+C\epsilon^{1/2}(1+t)^{2m-5-\gamma_2}Z^3Y_2\\
\leq& C\epsilon^{1/2}(1+t)^{2m-5-\gamma_2}Z^3Y_2.
\end{split}\end{equation}
For $k=3$, as for $L^2_3(3,0)$ and $L_3^2(2,1)$, it follows from integration by parts that
\begin{equation}\label{E:gz60}\begin{split}
Q_3^2=& C\epsilon\int\big(\nabla^4\varphi^2\cdot\nabla v+\nabla^3\varphi^2\cdot\nabla^2v+\nabla^2\varphi^2\cdot\nabla^3 v\big)\cdot\nabla^3 v\\
\leq&  \nu\epsilon|\varphi\nabla^4 v|_2^2+C(\nu)(1+t)^{2m-5-\gamma_3}Z^3Y_3+Q_3^2(A),\\
\end{split}\end{equation}
with
\begin{equation}\label{E:gz60a}\begin{split}
Q_3^2(A)\doteq& C\epsilon\int\nabla^3\varphi^2\cdot \nabla v\cdot\nabla^4 v\\
=& C\epsilon\int(\varphi\nabla^3\varphi+\nabla\varphi\cdot\nabla^2\varphi)
\cdot\nabla v\cdot\nabla^4 v\\
\leq& C\epsilon|\nabla^3\varphi|_2|\nabla v|_\infty|\varphi\nabla^4 v|_2+Q^2_3(B)\\
\leq &\nu\epsilon|\varphi\nabla^4v|_2^2+C(\nu)(1+t)^{2m-5-\gamma_3}Z^3Y_3+Q_3^2(B),\\
\end{split}\end{equation}
where $Q^2_3(B)$ can be estimated by using integration by parts again,
\begin{equation}\label{E:gz60b}\begin{split}
Q_3^2(B)\doteq& C\epsilon\int\nabla\varphi\cdot\nabla^2\varphi\cdot\nabla v\cdot\nabla^4 v\\
\leq& C\epsilon\big(|\nabla^2\varphi|^2_6|\nabla v|_6+|\nabla^3 \varphi|_2|\nabla\varphi|_\infty|\nabla v|_\infty
+|\nabla\varphi|_6|\nabla^2\varphi|_6|\nabla^2 v|_6\big)|\nabla^3 v|_2\\
\leq &C(1+t)^{2m-5-\gamma_3}Z^3Y_3.\\
\end{split}\end{equation}
Collecting the estimates \eqref{E:gz59}-\eqref{E:gz60b} shows that
\begin{equation}\label{E:gz60c}\begin{split}
Q_k^2(B)\leq \nu\epsilon|\varphi\nabla^4v|_2^2\delta_{3k}+C(\nu)(1+t)^{2m-5-\gamma_k}Z^3Y_k.
\end{split}\end{equation}

\end{proof}
It follows from \eqref{E:gz33} and Lemmas \ref{L:3.2}-\ref{L:3.4} that
\begin{lemma}\label{L:3.5}
\begin{equation}\label{E:gz61}\begin{split}
&\frac{1}{2}\frac{d}{dt}Y_k^2+\frac{k+r}{1+t}Y_k^2+\alpha\epsilon\int|\varphi\nabla^{k+1}v|^2+(\alpha+\beta)\epsilon
\int|\varphi\nabla^k\text{div}v|^2\\
\leq &C(\nu)(1+t)^{2m-5-\gamma_k}Z^3Y_k\\
&+C_0\epsilon^{\frac{1}{4}}(1+t)^{2m-n-3.5-\gamma_k}Z^2Y_k+C(1+t)^{n-2.5-\gamma_k}Z^2Y_k\\
&+C_0(\nu)\big((1+t)^{2m-10}Z^2+(1+t)^{-\gamma_k-2}ZY_k\big)\\
&+\nu\epsilon|\varphi\nabla^{k+1}v|_2^2\delta_{3,k}+\frac{(4\alpha+6\beta)\delta \epsilon }{(\delta-1)(1+t)}|\nabla^3\varphi|^2_2
|\varphi\nabla^{k}\text{div}v|^2_2\delta_{3k}.\\
\end{split}\end{equation}
\end{lemma}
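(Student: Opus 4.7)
\textbf{Proof proposal for Lemma \ref{L:3.5}.}

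The plan is to assemble the estimate by combining the energy identity \eqref{E:gz33} with the three bounds established in Lemmas \ref{L:3.2}--\ref{L:3.4}. First, I would rewrite \eqref{E:gz33} using the identification $Y_k = |\nabla^k W|_2$ so that the viscous dissipation terms on the left-hand side read precisely $\alpha\epsilon\int|\varphi\nabla^{k+1}v|^2+(\alpha+\beta)\epsilon\int|\varphi\nabla^k\operatorname{div}v|^2$. This gives an identity of the form
\begin{equation*}
\tfrac{1}{2}\tfrac{d}{dt}Y_k^2+\alpha\epsilon\int|\varphi\nabla^{k+1}v|^2+(\alpha+\beta)\epsilon\int|\varphi\nabla^k\operatorname{div}v|^2 = R_k+S_k+L_k+Q_k.
\end{equation*}

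Next I would insert each of the four estimates on the right-hand side. From Lemma \ref{L:3.2}, $R_k$ contributes $C|\nabla W|_\infty Y_k^2$, which by \eqref{L:2.1} is bounded by $C(1+t)^{n-2.5-\gamma_k} Z^2 Y_k$; the bound on $S_k$ provides the crucial damping term $-\tfrac{k+r}{1+t}Y_k^2$ together with the residual $C_0 Y_k Z(1+t)^{-\gamma_k-2}$, which I would move to the left-hand side as $\tfrac{k+r}{1+t}Y_k^2$. From Lemma \ref{L:3.3}, $L_k$ contributes the three terms involving the factor $(1+t)^{2m-5-\gamma_k}Z^3 Y_k$, $(1+t)^{2m-n-4.5-\gamma_k}Z^2 Y_k$, and $(1+t)^{2m-10}Z^2$, together with the absorbable term $\nu\epsilon|\varphi\nabla^{k+1}v|_2^2 \delta_{3k}$. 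From Lemma \ref{L:3.4}, $Q_k$ produces a similar block of contributions plus the extra singular term $\tfrac{(4\alpha+6\beta)\delta\epsilon}{(\delta-1)(1+t)}|\nabla^3 \varphi|_2|\varphi\nabla^3\operatorname{div}v|_2$ appearing only for $k=3$.

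Finally, I would consolidate exponents. The estimates from Lemmas \ref{L:3.3}--\ref{L:3.4} carry prefactors $\epsilon^{1/2}$ or $\epsilon^{1/4}$; since $\epsilon \in (0,1]$, these are bounded by $1$ and $\epsilon^{1/4}$ respectively, which matches the coefficients written in the statement. Likewise, the rates $(1+t)^{2m-n-4.5-\gamma_k}$ and $(1+t)^{2m-n-3.5-\gamma_k}$ can be compared and the slower-decaying one absorbs the other, leaving the single bound $C_0\epsilon^{1/4}(1+t)^{2m-n-3.5-\gamma_k}Z^2 Y_k$ stated in the conclusion. Collecting all the surviving terms, moving $\tfrac{k+r}{1+t}Y_k^2$ to the left, and grouping the two $C_0(\nu)$ contributions into the single line $C_0(\nu)\bigl((1+t)^{2m-10}Z^2 + (1+t)^{-\gamma_k - 2}Z Y_k\bigr)$, we arrive at \eqref{E:gz61}.

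Since no genuinely new analysis is required, the only place where care is needed is verifying that every exponent actually matches what is quoted in the statement --- in particular confirming that the less favorable of the two $Q_k$/$L_k$ decay rates is the one retained, that the $\nu\epsilon|\varphi\nabla^{k+1}v|_2^2$ absorption is allowed (it is, since $\alpha > 0$ makes the dissipation on the left-hand side strictly positive and we choose $\nu \ll \alpha$), and that the $k=3$ singular term from $Q_3^3(3,0)$ is propagated unchanged, as it cannot be closed without further information on $|\nabla^3\varphi|_2$. That final term is the main structural obstacle to closing the energy estimate and will have to be balanced later against the dissipation coming from the transport equation for $\varphi$.
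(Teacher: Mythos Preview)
Your proposal is correct and follows exactly the paper's approach: the paper's own proof is simply the one-line remark ``It follows from \eqref{E:gz33} and Lemmas \ref{L:3.2}--\ref{L:3.4},'' and what you have written is precisely the bookkeeping that justifies this, including the use of \eqref{L:2.1} to convert $|\nabla W|_\infty Y_k^2$ into $C(1+t)^{n-2.5-\gamma_k}Z^2Y_k$ and the observation that $\epsilon^{1/2}(1+t)^{2m-n-4.5-\gamma_k}\le \epsilon^{1/4}(1+t)^{2m-n-3.5-\gamma_k}$. One minor remark: the $\nu\epsilon|\varphi\nabla^{k+1}v|_2^2\delta_{3k}$ term is not actually absorbed at this stage but simply carried to the right-hand side of \eqref{E:gz61}; its absorption into the dissipation happens later, after combining with the $\varphi$-estimate.
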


\subsubsection{Energy estimate on $\varphi$}

\begin{lemma}\label{L:3.6}
\begin{equation}\label{E:gz61a}\begin{split}
&\frac{1}{2}\Theta^2(k)\frac{d}{dt}|\nabla^k\varphi|_2^2
+\frac{\frac{3\delta}{2}-3+k}{1+t}\Theta^2(k)|\nabla^k\varphi|_2^2\\
\leq &
C(1+t)^{n-2.5-\delta_k}Z^2U_k+C_0(1+t)^{-2-\delta_k}ZU_k+\frac{\delta-1}{2}\epsilon|\nabla^3\varphi|_2
|\varphi\nabla^{3}\text{div}v|_2\delta_{3k}.
\end{split}\end{equation}

\end{lemma}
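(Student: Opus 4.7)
The plan is to run a weighted energy estimate on the transport equation $(\ref{li47-1})_1$ after differentiating it $k$ times. Applying $\nabla^k$ to the equation, taking the $L^2$ inner product with $\Theta^2(k)\,\nabla^k\varphi$, and integrating over $\mathbb{R}^3$; the compact support assumption $(A_3)$, propagated by the continuity equation on $[0,T]$, makes all integrations by parts legitimate with vanishing boundary terms. The time derivative yields the first term on the left-hand side, and the remainder splits into: contributions from the background-flow source $-\widehat{u}\cdot\nabla\varphi-\tfrac{\delta-1}{2}\varphi\,\text{div}\,\widehat{u}$; the nonlinear contributions from $-v\cdot\nabla\varphi-\tfrac{\delta-1}{2}\varphi\,\text{div}\,v$; and, only when $k=3$, one top-order coupling term that cannot be absorbed.

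For the $\widehat{u}$-pieces, I would exploit the precise asymptotic structure of $\widehat{u}$ supplied by Proposition \ref{p1}, $\nabla\widehat{u}=\tfrac{1}{1+t}\mathbb{I}+O((1+t)^{-2})$, exactly as in Step~2.1 of the proof of Lemma \ref{L:3.2}. Integration by parts on $-\int\widehat{u}\cdot\nabla^{k+1}\varphi\cdot\nabla^k\varphi$ produces $+\tfrac{3}{2(1+t)}|\nabla^k\varphi|_2^2$ from $\text{div}\,\widehat{u}$, the multiplier $-\tfrac{\delta-1}{2}\varphi\,\text{div}\,\widehat{u}$ gives $-\tfrac{3(\delta-1)}{2(1+t)}|\nabla^k\varphi|_2^2$, and the $i=1$ piece of the Leibniz expansion of $[\nabla^k,\widehat{u}\cdot\nabla]\varphi$ gives $-\tfrac{k}{1+t}|\nabla^k\varphi|_2^2$. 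Summing produces the coefficient $3-\tfrac{3\delta}{2}-k$, and moving this contribution to the left-hand side yields exactly the damping $\tfrac{3\delta/2-3+k}{1+t}\Theta^2(k)|\nabla^k\varphi|_2^2$ in the statement. All remaining pieces of the commutators carry $\nabla^j\widehat{u}$ with $j\ge 2$, whose norms decay like $(1+t)^{-2}$ by Proposition \ref{p1}; paired with $\nabla^k\varphi$ and controlled by Lemma \ref{L:2.1} and H\"older, they reproduce the $C_0(1+t)^{-2-\delta_k}ZU_k$ term.

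For the genuinely nonlinear pieces involving $v$, my approach would mirror the Moser-type manipulations already used for $R_k$ in Lemma \ref{L:3.2} and $L^1_k$ in Lemma \ref{L:3.3}. Expanding $\nabla^k(v\cdot\nabla\varphi)$ and $\nabla^k(\varphi\,\text{div}\,v)$ by Leibniz, the highest-order transport piece $v\cdot\nabla^{k+1}\varphi\cdot\nabla^k\varphi$ is handled by integration by parts together with $|\nabla v|_\infty\lesssim(1+t)^{n-5/2}Z$ from Lemma \ref{L:2.1}, while every lower-order product $\nabla^i v\cdot\nabla^{k+1-i}\varphi$ for $1\le i\le k$ is bounded using Gagliardo--Nirenberg interpolation (Lemma \ref{gag112}) and Lemma \ref{L:2.1}. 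Tracking the time weights carried by $Z$ and $U_k$ and converting back to the notation $\delta_k=k-m$ collapses all these estimates into the single contribution $C(1+t)^{n-2.5-\delta_k}Z^2U_k$.

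The principal obstacle, and the only reason the stated bound carries a third term on the right-hand side, sits at $k=3$. Expanding $\nabla^3(\varphi\,\text{div}\,v)$ by Leibniz produces $\varphi\,\nabla^3\text{div}\,v$; after pairing with $\Theta^2(3)\nabla^3\varphi=\epsilon\nabla^3\varphi$, this leaves $\tfrac{\delta-1}{2}\epsilon\int \varphi\,\nabla^3\text{div}\,v\cdot\nabla^3\varphi$, a fourth-order derivative of $v$ against the top derivative of $\varphi$. Since $(\ref{li47-1})_1$ is purely hyperbolic in $\varphi$, no dissipation is available to absorb this term at the $\varphi$-level, and Cauchy--Schwarz alone gives only $\tfrac{\delta-1}{2}\epsilon|\nabla^3\varphi|_2|\varphi\nabla^3\text{div}\,v|_2$, which is exactly what the lemma keeps on the right-hand side. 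The point is that this term is structurally identical to, and with compatible sign relative to, the residual $\tfrac{(4\alpha+6\beta)\delta\epsilon}{(\delta-1)(1+t)}|\nabla^3\varphi|_2|\varphi\nabla^3\text{div}\,v|_2$ isolated in Lemma \ref{L:3.5}, so after summing the $W$- and $\varphi$-energy inequalities in the right linear combination the two will cancel up to an absorbable remainder controlled by the viscous dissipation $\alpha\epsilon|\varphi\nabla^4 v|_2^2$. The $\Theta(3)=\epsilon^{1/2}$ weighting of the highest-order piece of $U$ is chosen precisely so that this cancellation takes place.
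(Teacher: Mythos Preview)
Your proposal is correct and follows essentially the same route as the paper: differentiate $(\ref{li47-1})_1$, pair with $\Theta^2(k)\nabla^k\varphi$, extract the damping coefficient $\tfrac{3\delta/2-3+k}{1+t}$ from the leading part of $\nabla\widehat{u}$ via Proposition~\ref{p1}, bound the higher-$\widehat{u}$-derivative remainders by $C_0(1+t)^{-2-\delta_k}ZU_k$, handle the $v$-commutators by H\"older/Gagliardo--Nirenberg to get $C(1+t)^{n-2.5-\delta_k}Z^2U_k$, and isolate the unabsorbable top-order piece $\tfrac{\delta-1}{2}\epsilon|\nabla^3\varphi|_2|\varphi\nabla^3\text{div}v|_2$ at $k=3$. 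One small correction to your closing commentary: in the subsequent combination (Lemma~\ref{L:3.5} and the argument of Lemma~\ref{ll2}) the two residual cross terms do not literally cancel; rather they are assembled into the quadratic form $\Lambda(\Phi,\Psi)$ in \eqref{Agab}, whose positivity (after completing the square) is what closes the estimate---but this concerns the next step, not the proof of the present lemma.
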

\begin{proof}
\textbf{Step 1}.
Applying $\nabla^k$ to $\eqref{li47-1}_1$, multiplying $\nabla^k\varphi$ for $k=1,2$,  and $\epsilon\nabla^k \varphi$ for $k=3$,  and integrating by parts over $\R^3$,  one gets
\begin{equation}\label{E:gz62}\begin{split}
\frac{1}{2}\Theta^2(k)\frac{d}{dt}|\nabla^k\varphi|_2^2=&\int\Theta^2(k)\big(v\cdot\nabla^{k+1}\varphi+\frac{\delta-1}{2}\nabla^k\varphi\text{div} v\big)
\cdot\nabla^k\varphi\\
&+S_k^*(\varphi,\widehat{u})-\Lambda_1^k-\Lambda_2^k\\
\leq& C\Theta^2(k)|\nabla v|_\infty|\nabla^k \varphi(t)|_2^2+S_k^*(\varphi,\widehat{u})-\Lambda_1^k-\Lambda_2^k,
\end{split}\end{equation}
where $S_k^*(\varphi,\widehat{u})$ is given in \textbf{Step 2} below, and
$$
\Lambda_1^k=\Theta^2(k)\big(\nabla^k(v\cdot\nabla\varphi)-v\cdot\nabla^{k+1}\varphi\big)\cdot \nabla^k \varphi,\quad
\Lambda_2^k=\Theta^2(k)\frac{\delta-1}{2}\big(\nabla^k(\varphi\text{div}v)-\nabla^{k}\varphi\text{div}v\big)\cdot\nabla^{k}\varphi.
$$
\textbf{Step 2:} Estimates on  $S^*_k=\int \Theta^2(k)s^*_k$ with the  integrand defined as
 \begin{equation}\label{eq:2.8ss}
 \begin{split}
 s^*_k(\varphi,  \widehat{u})=&-\frac{\delta-1}{2}\nabla^k \varphi \cdot \nabla^k \varphi \text{div}\widehat{u}+\frac{1}{2}\text{div}\widehat{u}  \nabla^k \varphi  \cdot \nabla^k \varphi \\
 &-\frac{\delta-1}{2}\nabla^k \varphi \cdot \Big(\nabla^k(\varphi \text{div}\widehat{u})-\text{div}\widehat{u}\nabla^k \varphi \Big)\\
&-\nabla^k \varphi \cdot \Big( \nabla^k\big( \widehat{u} \cdot \nabla \varphi\big)- \widehat{u} \cdot \nabla^{k+1} \varphi \Big)\equiv: s^{*1}_k+s^{*2}_k,
 \end{split}
 \end{equation}
where $s^{*1}_k$ is a sum of terms with a derivative of order one of $\widehat{u}$, and $s^{*2}_k$ is a sum of terms with a derivative of order  at least two of $\widehat{u}$.

\textbf{Step 2.1}: Estimates on  $S^{*1}_k=\int \Theta^2(k)s^{*1}_k$.
Let $\nabla^k=\partial_{\beta_1...\beta_k}$ with $0\leq \beta_i=1,2,3$.
 \begin{equation}\label{a1ss}
 \begin{split}
S^{*1}_k=&\int \Theta^2(k)\Big(-\frac{\delta-1}{2}\nabla^k \varphi \cdot \nabla^k \varphi \text{div}\widehat{u}+\frac{1}{2}\text{div}\widehat{u}  \nabla^k \varphi  \cdot \nabla^k \varphi\Big)\\
&-\int \Theta^2(k)\partial_{\beta_1...\beta_k} \varphi \cdot \sum_{i=1}^k \sum_{j=1}^k \partial_{ \beta_i}\widehat{u}^{(j)} \partial_j\partial_{\beta_1...\beta_{i-1}\beta_{i+1}...\beta_k}\varphi =J_1+J_2+J_3,
 \end{split}
 \end{equation}
where, by Proposition \ref{p1}, $J_1$-$J_3$ are estimated  by
 \begin{equation}\label{ka1ss}
 \begin{split}
J_1=&-\frac{3(\delta-1)}{2(1+t)}U^2_k+N_1,\quad
J_2=\frac{3}{2}\frac{1}{1+t}U^2_k+N_2,\\
J_3=& -\frac{k}{1+t}U^2_k+N_3, \quad |N_j|\leq \frac{C_0}{(1+t)^2}U^2_k,\quad j=1,2,3.
 \end{split}
 \end{equation}

 Therefore, it holds that
 \begin{equation}\label{eq:2.11ss}
 \begin{split}
 & S^{*1}_k(W,  \widehat{u})(t, \cdot)\text{d} x
 \leq  \frac{C_0}{(1+t)^2}U^2_k-\frac{\frac{3}{2}\delta-3+k}{1+t}U^2_k.
 \end{split}
 \end{equation}

\textbf{Step 2.2:} Estimates on  $S^{*2}_k=\int \Theta^2(k)s^{*2}_k$.
$s^{*2}_k$ is a sum of the terms defined as
$$
E_1(\varphi)=\Theta^2(k)\nabla^k \varphi \cdot \nabla^l \widehat{u} \cdot  \nabla^{k+1-l} \varphi\quad \text{for}\quad  2\leq k\leq 3 \quad  \text{and}\quad  2\leq l\leq k;
$$
$$
E_2(\varphi)=\Theta^2(k)\nabla^k \varphi \cdot \nabla^{l+1} \widehat{u} \cdot \nabla^{k-l} \varphi\quad \text{for}\quad  1\leq k\leq 3 \quad  \text{and}\quad  1\leq l\leq k.
$$
Then it holds that
\begin{equation}\label{zhon5r}
\begin{split}
S^{*2}_1(\varphi)\leq & C| \varphi|_6|\nabla^2\widehat{u}|_2|\nabla \varphi|_3
\leq C_0 (1+t)^{-2-\delta_1}ZU_1,\\
 S^{*2}_2(\varphi)\leq &C(|\nabla^2 \widehat{u}|_\infty |\nabla \varphi|_2+|\nabla^3 \widehat{u}|_2|\varphi|_\infty) |\nabla^2 \varphi|_2
\leq  C_0 (1+t)^{-2-\delta_2}ZU_2,\\
 S^{*2}_3(\varphi)\leq & C\epsilon \big(|\nabla^2 \widehat{u}|_\infty|\nabla^{2}\varphi|_2+|\nabla^3 \widehat{u}|_6|\nabla \varphi|_3+|\nabla^4 \widehat{u}|_2|\varphi|_\infty\big) |\nabla^3 \varphi|_2\\
 \leq&  C_0\epsilon^{\frac{1}{2}} (1+t)^{-2-\delta_3}ZU_3,
\end{split}
\end{equation}
which  implies immediately  that
$$
S^{*2}_k(\varphi)\leq  C_0 (1+t)^{-2-\delta_k}ZU_k, \quad \text{for} \quad  k=1,2,3.
$$

{\bf Step 3:} Estimates on $\Lambda_1^k+\Lambda_2^k.$ It follows from Lemma \ref{L:3.2}
and H$\ddot{\text{o}}$lder's inequality that
\begin{equation}\label{E:gz67}\begin{split}
\Lambda_1^1+\Lambda_2^1\leq& C|\nabla v|_\infty|\nabla\varphi|_2^2\leq C(1+t)^{n-2.5-\delta_1}Z^2U_1,\\
\Lambda_1^2+\Lambda_2^2\leq& C\big(|\nabla \varphi|_3|\nabla^2 v|_6+|\nabla v|_\infty|\nabla^2\varphi|_2\big)
|\nabla^2\varphi|_2\\
\leq& C(1+t)^{n-2.5-\delta_2}Z^2U_2,\\
\Lambda_1^3\leq& C\epsilon\big(|\nabla \varphi|_\infty|\nabla^3 v|_2+|\nabla^2 \varphi|_6|\nabla^2 v|_3
+|\nabla v|_\infty|\nabla^3\varphi|_2\big)|\nabla^3\varphi|_2\\
\leq& C(1+t)^{n-2.5-\delta_3}Z^2 U_3,\\
\Lambda_2^3\leq& C\epsilon\big(|\nabla \varphi|_\infty|\nabla^3 v|_2+|\nabla^2 \varphi|_6|\nabla^2 v|_3
+\frac{\delta-1}{2}|\varphi\nabla^3\text{div} v|_2\big)|\nabla^3\varphi|_2\\
\leq& C(1+t)^{n-2.5-\delta_3}Z^2U_3+\frac{\delta-1}{2}\epsilon|\varphi\nabla^3\text{div} v|_2|\nabla^3\varphi|_2\\
\end{split}\end{equation}
Thus, \eqref{E:gz62}-\eqref{E:gz67} yields the desired \eqref{E:gz61a}.
\end{proof}

\subsubsection{Energy inequality on $Z$}

Finally, set
\begin{equation}\label{E:gz68}\begin{split}
H(\Phi^*,\Phi,\Psi)=&\alpha(\Phi^*)^2+(\alpha+\beta)\Phi^2+\frac{\frac{3}{2}\delta-3+m}{1+t}\B^2\\
&-\Big(\frac{\delta-1}{2}(1+t)^{n-m}+\frac{2\delta}{\delta-1}(2\alpha+3\beta)(1+t)^{m-n-1}\Big)\A\B,\\
\end{split}\end{equation}
where
$$
\A^*=(1+t)^{\gamma_3}\epsilon^{\frac{1}{2}}|\varphi\nabla^4 v|_2,\quad \A=(1+t)^{\gamma_3}\epsilon^{\frac{1}{2}}|\varphi\nabla^3\text{div}v|_2,
\quad \B=(1+t)^{\delta_3}\epsilon^{\frac{1}{2}}|\nabla^3\varphi|_2.
$$

Then the following lemma holds.
\begin{lemma}\label{L:3.5}
There exists some positive constants $b_{m,n}$  such that
\begin{equation}\label{E:gz69}\begin{split}
\frac{1}{2}\frac{d}{dt}Z^2&+\frac{b_{m,n}}{1+t}(Z^2-\B^2)+H(\A^*,\A,\B)+\alpha \epsilon\sum_{k=0}^2(1+t)^{2\gamma_k}
|\varphi\nabla^{k+1} v|_2^2\\
&+(\alpha+\beta)\epsilon\sum_{k=0}^2(1+t)^{2\gamma_k}
|\varphi\nabla^k\text{div} v|_2^2\\
\leq& \nu(1+t)^{2\gamma_3}\epsilon|\varphi\nabla^4 v|^2_2+C(\nu)(1+t)^{2m-5}Z^4+C_0\epsilon^{\frac{1}{4}}(1+t)^{2m-n-3.5}Z^3\\
&+C(1+t)^{n-2.5}Z^3+C_0(\nu)\big((1+t)^{2m-2n-4}+(1+t)^{-2}\big)Z^2,\\
\end{split}\end{equation}
with the constant $b_{m,n}$ given by
\begin{equation}\label{E:gz70}\begin{split}
b_{m,n}=\begin{cases}
\min\{n-0.5,\frac{3\delta}{2}-3+m\},\quad \ \ \ \ \   \quad  \text{if}\  \quad \gamma\geq\frac{5}{3},\\
\min\{\frac{3\gamma}{2}-3+n,\frac{3\delta}{2}-3+m\},\quad \ \ \ \ \text{if}\ \quad 1<\gamma<\frac{5}{3}.\\
\end{cases}
\end{split}\end{equation}
\end{lemma}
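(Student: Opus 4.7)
The plan is to derive the aggregate energy identity \eqref{E:gz69} by weighting the per-order bounds of the two preceding lemmas and summing. For each $k\in\{0,1,2,3\}$, I will multiply the inequality \eqref{E:gz61} for $\frac{1}{2}\frac{d}{dt}Y_k^2$ by $(1+t)^{2\gamma_k}$, the inequality \eqref{E:gz61a} for $\frac{1}{2}\Theta^2(k)\frac{d}{dt}|\nabla^k\varphi|_2^2$ by $(1+t)^{2\delta_k}$, invoke the product rule
\begin{equation*}
\frac{1}{2}\frac{d}{dt}\bigl[(1+t)^{2\gamma_k}Y_k^2\bigr]=(1+t)^{2\gamma_k}\cdot\frac{1}{2}\frac{d}{dt}Y_k^2+\gamma_k(1+t)^{2\gamma_k-1}Y_k^2
\end{equation*}
(and its analogue on the $U$-side), and sum over $k$, using $\sum_k(1+t)^{2\gamma_k}Y_k^2+\sum_k(1+t)^{2\delta_k}U_k^2=Y^2+U^2=Z^2$.

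In this sum, the lower-order term $\frac{k+r}{1+t}Y_k^2$ on the left of \eqref{E:gz61}, after weighting, combines with the negative product-rule correction $-\gamma_k(1+t)^{2\gamma_k-1}Y_k^2$ to yield the $k$-independent coefficient $\frac{k+r-\gamma_k}{1+t}=\frac{n+r}{1+t}$, using $\gamma_k=k-n$. The same manipulation on the $U_k$-side, using $\delta_k=k-m$, gives the coefficient $\frac{\frac{3}{2}\delta-3+m}{1+t}$. With $b_{m,n}=\min\{n+r,\ m+\frac{3}{2}\delta-3\}$ as in \eqref{E:gz70}, I peel off the $k=3$ piece of the $U$-sum, which equals $\frac{\frac{3}{2}\delta-3+m}{1+t}\B^2$ and is absorbed into $H$, leaving $\frac{b_{m,n}}{1+t}(Z^2-\B^2)$ on the left. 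The viscous dissipation splits analogously: the $k=0,1,2$ sums stay outside $H$, while the $k=3$ pieces reproduce $\alpha(\A^*)^2+(\alpha+\beta)\A^2$ from the definitions of $\A^*$ and $\A$.

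The decisive identification is the indefinite cross-term in $H$. The $k=3$ contribution $\frac{(4\alpha+6\beta)\delta\epsilon}{(\delta-1)(1+t)}|\nabla^3\varphi|_2|\varphi\nabla^3\text{div}v|_2$ on the right of \eqref{E:gz61}, after the weight $(1+t)^{2\gamma_3}$ and the identity $\epsilon|\nabla^3\varphi|_2|\varphi\nabla^3\text{div}v|_2=(1+t)^{n+m-6}\A\B$, contributes $\frac{2\delta(2\alpha+3\beta)}{\delta-1}(1+t)^{m-n-1}\A\B$; symmetrically, the $k=3$ term $\frac{\delta-1}{2}\epsilon|\nabla^3\varphi|_2|\varphi\nabla^3\text{div}v|_2$ in \eqref{E:gz61a}, weighted by $(1+t)^{2\delta_3}$, contributes $\frac{\delta-1}{2}(1+t)^{n-m}\A\B$. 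Moving both to the left assembles precisely the mixed quadratic of $H(\A^*,\A,\B)$.

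The remaining right-hand pieces of the form $(1+t)^{p-\gamma_k}Z^qY_k$ or $(1+t)^{p-\delta_k}Z^qU_k$ collapse routinely: the weight $(1+t)^{2\gamma_k}$ (respectively $(1+t)^{2\delta_k}$) converts them into $(1+t)^{p+\gamma_k}Z^qY_k\leq(1+t)^pZ^{q+1}$, matching the stated right-hand side of \eqref{E:gz69}. The only subtleties are: (i) the uniform piece $C_0(\nu)(1+t)^{2m-10}Z^2$, after weighting, is dominated by its worst exponent $(1+t)^{2m-2n-4}$ attained at $k=3$; and (ii) the pieces $(1+t)^{-\gamma_k-2}ZY_k$ become $(1+t)^{\gamma_k-2}ZY_k\leq(1+t)^{-2}Z^2$ uniformly in $k$. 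The principal obstacle is purely algebraic book-keeping, but it is genuinely delicate: the two $\A\B$ coefficients must match exactly, since the sign and magnitude of $H$ as a quadratic form in $(\A^*,\A,\B)$ will control the admissibility of the parameter regimes $(P_1)$--$(P_4)$ in the subsequent ODE closure.
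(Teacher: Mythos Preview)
Your proposal is correct and follows exactly the approach the paper indicates: multiply \eqref{E:gz61} by $(1+t)^{2\gamma_k}$, \eqref{E:gz61a} by $(1+t)^{2\delta_k}$, and sum. The paper's own proof is the single sentence ``\eqref{E:gz69} can be obtained by multiplying \eqref{E:gz61} and \eqref{E:gz61a} by $(1+t)^{2\gamma_k}$ and $(1+t)^{2\delta_k}$ respectively and summing the resulting inequalities together''; you have carried out precisely that computation, and your verification of the two $\A\B$ coefficients, the collapse $k+r-\gamma_k=n+r$, and the identification of $(1+t)^{2m-2n-4}$ as the worst exponent from the $C_0(\nu)(1+t)^{2m-10}Z^2$ term are all accurate.
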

\begin{proof}
\eqref{E:gz69} can be obtained by multiplying \eqref{E:gz61} and \eqref{E:gz61a} by $(1+t)^{2\gamma_k}$ and
$(1+t)^{2\delta_k}$ respectively and summing the resulting inequalities together.
\end{proof}

\subsubsection{Proof of Uniform energy estimates under the condition $(P_1)$}
Set
\begin{equation}\label{Agab}\begin{split}
M_1=&\frac{2\alpha+3\beta}{2\alpha+\beta},\quad
M_2=-3\delta+1+\frac{1}{2}M_3,\\
M_3=&\frac{(\delta-1)^2}{4(2\alpha+\beta)}+\frac{4\delta^2(2\alpha+\beta)}{(\delta-1)^2}M^2_1+2M_1\delta,\\
M_4=&\frac{1}{2}\min\Big\{ \frac{3\gamma-3}{2},\frac{-M_2-1}{2},1\Big\}+M_2,\\
\Lambda(\A,\B)=&(2\alpha+\beta)\A^2+\frac{\frac{3}{2}\delta-3+m}{1+t}\B^2\\
&-\Big(\frac{\delta-1}{2}(1+t)^{n-m}+\frac{2\delta}{\delta-1}(2\alpha+3\beta)(1+t)^{m-n-1}\Big)\A\B.
\end{split}
\end{equation}

\begin{lemma}\label{ll2} Let  condition $(P_1)$ and  $M_1>0$ hold.  Then for $m=n+0.5=3$,
there exist some positive constants $\eta^*$, $b_*$ and $\epsilon^*$,  such that
\begin{equation}\label{Aeq:2.14}
\begin{split}
&\frac{1}{2}\frac{d}{dt} Z^2+\frac{(1-\eta^*)b_*}{1+t} Z^2
 \leq C(1+t)^{1+\epsilon^*}Z^{4}+C_0(1+t)^{-1-\epsilon^*}Z^2,
\end{split}
\end{equation}
where
 \begin{equation}\label{Aeq:2.6q}
\begin{split}
\epsilon^*=&\frac{1}{2}\min\Big\{ \frac{3\gamma-3}{2},\frac{-M_2-1}{2},1\Big\}>0,\\
 \eta^*=&\min\Big\{\frac{3\gamma-3}{4(3\gamma-1)}, \frac{-M_4-1}{6\delta-M_3}, \frac{1}{10}\Big\}>0,\\
b_*=&
 \begin{cases}
\min\Big\{2, \frac{3}{2}\delta-\frac{1}{4}M_3\Big\}>1 \;\qquad\qquad \quad \ \ \ \quad  \ \   \text{if} \ \ \gamma\geq \frac{5}{3},\\[6pt]
\min\Big\{\frac{3\gamma}{2}-0.5, \frac{3}{2}\delta-\frac{1}{4}M_3\Big\}>1  \qquad \ \  \  \ \ \  \quad  \text{if} \ \  1< \gamma<\frac{5}{3}.
 \end{cases}
\end{split}
 \end{equation}
Moreover, there exists a  constant $\Lambda(C_0)$ such that  $Z(t)$ is globally well-defined in $[0,+\infty)$ if $Z_0\leq \Lambda(C_0) $.
\end{lemma}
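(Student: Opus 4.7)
The plan is to close the energy inequality \eqref{E:gz69} of Lemma 3.5 into the single ODE \eqref{Aeq:2.14} by analyzing the top-order coupling form $H(\Phi^*,\Phi,\Psi)$ under the special choice $m = 3$, $n = 5/2$. With this choice one has $m - n = 1/2$, $\gamma_3 = 1/2$, $\delta_3 = 0$, and $\tfrac{3\delta}{2} - 3 + m = \tfrac{3\delta}{2}$; the explicit damping $\tfrac{b_{m,n}}{1+t}(Z^2 - \Psi^2)$ in \eqref{E:gz69} lacks exactly the $\Psi^2$ contribution, which must be recovered from $H$ itself.

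First I would observe that $H = \Lambda(\Phi,\Psi) + \alpha\bigl((\Phi^*)^2 - \Phi^2\bigr) \geq \Lambda(\Phi,\Psi)$, since $|\varphi\nabla^3\mathrm{div}\,v| \leq |\varphi\nabla^4 v|$ pointwise forces $\Phi \leq \Phi^*$. For $\Lambda$, substituting $m-n = 1/2$ and $m-n-1 = -1/2$ reduces the indefinite cross term to $-\bigl(\tfrac{\delta-1}{2} + \tfrac{2\delta(2\alpha+3\beta)}{\delta-1}\bigr)(1+t)^{-1/2}\Phi\Psi$, whose squared coefficient divided by $(2\alpha+\beta)$ equals precisely $M_3$. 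A weighted Cauchy-Schwarz with parameter $\lambda$ slightly larger than one then yields
\[
\Lambda(\Phi,\Psi) \ \geq \ \bigl(1 - \tfrac{1}{\lambda}\bigr)(2\alpha+\beta)\Phi^2 + \tfrac{6\delta - \lambda M_3}{4(1+t)}\Psi^2,
\]
and tuning $\lambda = 1 + 4b_*\eta^*/M_3$ converts this into $\Lambda \geq \tfrac{(1-\eta^*)b_*}{1+t}\Psi^2$ with $b_* = \tfrac{3\delta}{2} - \tfrac{M_3}{4}$ (matching the identity $6\delta - M_3 = 4b_*$ that sits inside the denominator of $\eta^*$ in \eqref{Aeq:2.6q}), while keeping a strictly positive $\Phi^2$ reserve that absorbs the RHS contribution $\nu(1+t)^{2\gamma_3}\epsilon|\varphi\nabla^4 v|_2^2 = \nu(\Phi^*)^2$ once $\nu \leq \alpha$. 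Hypotheses $M_1 > 0$ and $M_2 < -1$ from $(P_1)$ ensure $M_3 > 0$ and $b_* > 1$. In the regime $1 < \gamma < \tfrac{5}{3}$ the constant $r$ in Lemma 3.2 becomes $\tfrac{3\gamma}{2} - 3$, shifting $b_{m,n}$ and producing the alternative $b_* = \min\{\tfrac{3\gamma}{2} - \tfrac{1}{2},\ \tfrac{3\delta}{2} - \tfrac{M_3}{4}\}$.

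The remaining mixed-order terms on the right of \eqref{E:gz69} are then cleaned up by weighted Young inequalities: the quartic $C(1+t)^{2m-5}Z^4 = C(1+t)Z^4$ is immediately dominated by $C(1+t)^{1+\epsilon^*}Z^4$; each cubic $(1+t)^0 Z^3$ splits as $\tfrac{1}{2}(1+t)^{1+\epsilon^*}Z^4 + \tfrac{1}{2}(1+t)^{-1-\epsilon^*}Z^2$; and the subordinate quadratic terms $(1+t)^{-2}Z^2$ and $(1+t)^{2m-2n-4}Z^2 = (1+t)^{-3}Z^2$ are dominated by $(1+t)^{-1-\epsilon^*}Z^2$. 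Positivity of $\epsilon^*$ in \eqref{Aeq:2.6q} is ensured by $\gamma > 1$ and $M_2 < -1$. Collecting all contributions produces the closed ODE \eqref{Aeq:2.14}.

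For global well-posedness, set $f(t) = (1+t)^{2(1-\eta^*)b_*}Z^2(t)$; \eqref{Aeq:2.14} rewrites as $f'(t) \leq 2C(1+t)^{1+\epsilon^* - 2(1-\eta^*)b_*}f^2(t) + 2C_0(1+t)^{-1-\epsilon^*}f(t)$, and the formulas for $\eta^*, b_*, \epsilon^*$ in \eqref{Aeq:2.6q} are engineered so that $2(1-\eta^*)b_* > 2 + \epsilon^*$, rendering both RHS exponents integrable on $[0,\infty)$; a Bihari-type comparison then shows that if $Z_0^2$ is smaller than a threshold $\Lambda(C_0)^2$ determined by the reciprocal of these two integrals, $f$ stays uniformly bounded and the local solution from Proposition 4.1 extends globally. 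The main obstacle is Step 2: executing the algebra so that $M_3$ emerges as the exact discriminant of the indefinite form, verifying $b_* > 1$ under $(P_1)$ (which unpacks into multiple conditions on $(\alpha,\beta,\delta)$), and checking that the bootstrap integrability condition $2(1-\eta^*)b_* > 2+\epsilon^*$ is simultaneously consistent with the specific small value of $\eta^*$ prescribed by \eqref{Aeq:2.6q} --- the interlocking of the four parameters $(M_1,M_2,M_3,M_4)$ makes this a delicate bookkeeping exercise.
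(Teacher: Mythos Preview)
Your overall strategy and algebra are essentially those of the paper, but there is one genuine gap. The claim that $|\varphi\nabla^3\text{div}\,v| \leq |\varphi\nabla^4 v|$ pointwise is false: in $\mathbb{R}^3$ one only has $|\text{div}\,w|^2 \leq 3\sum_i |\partial_i w^{(i)}|^2 \leq 3|\nabla w|^2$ pointwise (Cauchy--Schwarz on the trace), so the best pointwise bound is $\Phi^2 \leq 3(\Phi^*)^2$. This factor $3$ is fatal for your argument: passing from $H$ to a $\Lambda$-type form via $\alpha(\Phi^*)^2 \geq \tfrac{\alpha}{3}\Phi^2$ replaces the quadratic coefficient $a = 2\alpha+\beta$ by $a' = \tfrac{4\alpha}{3}+\beta < a$, which inflates $c^2/(4a')$ relative to $c^2/(4a) = \tfrac{M_3}{4(1+t)}$ and destroys the identification of $M_3$ on which the specific constants $b_*, \eta^*, \epsilon^*$ in \eqref{Aeq:2.6q} rely.

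The sharp constant-$1$ inequality $\Phi^2 \leq (\Phi^*)^2 + J^*$ that you need is an $L^2$ fact, not a pointwise one. It is obtained (this is Step~1 of the paper's proof) by integrating by parts twice on the off-diagonal terms $\int \varphi^2\,\nabla^3\partial_i v^{(i)}\cdot\nabla^3\partial_j v^{(j)}$ to rewrite them as $\int \varphi^2\,\nabla^3\partial_j v^{(i)}\cdot\nabla^3\partial_i v^{(j)}$ plus commutators involving $\nabla\varphi^2$; those commutator errors are of size $\nu|\varphi\nabla^4 v|_2^2 + C(\nu)\epsilon^{-1/2}(1+t)^{2m-5-2\gamma_3}Z^4$ and feed harmlessly into the right side of the final ODE. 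Once this correction is in place, the rest of your scheme (weighted Cauchy--Schwarz on $\Lambda$, the choice $m=3$, $n=5/2$, and the Bihari comparison) goes through.

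A secondary imprecision: the term that absorbs $\nu(\Phi^*)^2$ is not the small $\Phi^2$ reserve by itself but the net $(\Phi^*)^2$ coefficient that survives after one also passes the negative term $(s-\alpha)\Phi^2$ (with $s=(1-1/\lambda)(2\alpha+\beta)$) through $\Phi^2 \leq (\Phi^*)^2 + J^*$; the correct smallness constraint is then $\nu \leq s$, not $\nu \leq \alpha$. The paper makes this transparent by an explicit $\eta$-splitting of $H$ that retains $\eta\alpha(\Phi^*)^2$ outright before completing the square on the remainder.
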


\begin{proof}
\textbf{Step 1}. We state that
\begin{equation}\begin{split}\label{changea22}
|\varphi\nabla^3\text{div}v|^2_2\leq |\varphi\nabla^4 v|^2_2+J^*,
\end{split}
\end{equation}
where the term $J^*$ can be controlled by
$$
J^*\leq \nu |\varphi \nabla^4 v|^2_2+C(\nu)\epsilon^{-\frac{1}{2}}(1+t)^{2m-5-2\gamma_3}Z^4,$$
for any constant $\nu>0$ small enough, and the constant $C(\nu)>0$.

Actually, according to the definition of $\text{div}$, one directly has
\begin{equation}\label{changeA2}
\begin{split}
|\varphi\nabla^3\text{div}v|^2_2=\sum_{i=1}^3 |\varphi\nabla^3\partial_iv^{(i)}|^2_2+2\sum_{i,j=1,\ i<j}^3 \int \varphi^2 \nabla^3\partial_iv^{(i)}\cdot  \nabla^3\partial_jv^{(j)}.
\end{split}
\end{equation}
Via the integration by parts, one can obtain that
\begin{equation}\label{changeA3}
\begin{split}
J_{ij} =&\int \varphi^2 \nabla^3\partial_iv^{(i)}\cdot  \nabla^3\partial_jv^{(j)}\\
=& \int \varphi^2 \nabla^3\partial_iv^{(j)}\cdot  \nabla^3\partial_jv^{(i)}+\int \big(\partial_i \varphi^2 \nabla^3 \partial_j v^{(i)} \nabla^3  v^{(j)}-\partial_j \varphi^2 \nabla^3 \partial_i v^{(i)} \nabla^3  v^{(j)} \big).
\end{split}
\end{equation}
It is easy to see that
\begin{equation}\label{changeA4}
\begin{split}
J^*_{ij} =& \int \big(\partial_i \varphi^2 \nabla^3 \partial_j v^{(i)} \nabla^3  v^{(j)}-\partial_j \varphi^2 \nabla^3 \partial_i v^{(i)} \nabla^3  v^{(j)} \big)\\
\leq & C|\nabla \varphi|_\infty|\varphi \nabla^4 v|_2|\nabla^3 v|_2\\
\leq & \nu |\varphi \nabla^4 v|^2_2+C(\nu)\epsilon^{-\frac{1}{2}}(1+t)^{2m-5-2\gamma_3}Z^4,
\end{split}
\end{equation}
for any constant $\nu>0$ small enough, and the constant $C(\nu)>0$,  which, along with (\ref{changeA2})-(\ref{changeA3}), quickly implies (\ref{changea22}).

\textbf{Step 2}.
First, note that
\begin{equation}\label{GAB}
\begin{split}
\Lambda(\A,\B)=& a\A^2+b\B^2-c\A\B
=a\big(\A-\frac{c}{2a}\B\big)^2+d \B^2,
\end{split}
\end{equation}
with
\begin{equation*}\begin{split}
a=&(2\alpha+\beta),\quad  b=\frac{\frac{3}{2}\delta-3+m}{1+t},\\
c=&\Big(\frac{\delta-1}{2}(1+t)^{n-m}+\frac{2\delta}{\delta-1}(2\alpha+3\beta)(1+t)^{m-n-1}\Big),\\
d=&(1+t)^{-1}\Big(\frac{3}{2}\delta-3+m\Big)-\frac{1}{4}\Big(\frac{(\delta-1)^2}{4(2\alpha+\beta)}(1+t)^{2n-2m}\\
&+\frac{4\delta^2(2\alpha+\beta)}{(\delta-1)^2}M^2_1 (1+t)^{2m-2n-2}+2M_1\delta (1+t)^{-1}\Big).
\end{split}
\end{equation*}

According to  \eqref{E:gz69} and  \eqref{changea22}, for any $\eta\in(0,1),$ one has
\begin{equation}\label{E:gz73}\begin{split}
\frac{1}{2}\frac{d}{dt}Z^2&+\frac{b_{m,n}}{1+t}(Z^2-\B^2)+\eta\alpha(\A^*)^2+(1-\eta)d\B^2+\eta\Big(b-\frac{c^2}{4(\alpha+\beta)}\Big)\B^2
\\
&+\alpha\epsilon\sum_{k=0}^2(1+t)^{2\gamma_k}
|\varphi\nabla^{k+1} v|_2^2+(\alpha+\beta)\epsilon\sum_{k=0}^2(1+t)^{2\gamma_k}
|\varphi\nabla\text{div} v|_2^2\\
\leq& \nu(\A^*)^{2}+C(\nu)((1+t)^{2m-5}+(1+t)^{2m-5-2\gamma_3})Z^4\\
&+C_0\epsilon^{\frac{1}{4}}(1+t)^{2m-n-3.5}Z^3\\
&+C(1+t)^{n-2.5}Z^3+C_0(\nu)\big((1+t)^{2m-2n-4}+(1+t)^{-2}\big)Z^2.\\
\end{split}\end{equation}
By Proposition \ref{p1},  in order to obtain the uniform estimates on $Z$, one needs
$$
2m-2n-2=2n-2m=-1,
$$
which  is equivalent to
$$
d=(1+t)^{-1}d^*(m)=(1+t)^{-1}\big(\frac{3\delta}{2}-3+m-\frac{1}{4}M_3\big).
$$
Choose $\epsilon^*=\frac{1}{2}\min\Big\{\frac{3\gamma-3}{2},\frac{-M_2-1}{2},\frac{1}{10}\Big\}>0.$
Then it follows from \eqref{E:gz73} that
\begin{equation}\label{E:gz74}\begin{split}
\frac{1}{2}\frac{d}{dt}Z^2&+\frac{b_{m,n}}{1+t}(Z^2-\B^2)+\eta\alpha(\A^*)^2+(1-\eta)\frac{d^*(m)}{1+t}\B^2\\
&+\frac{\eta}{1+t}\Big(d^*-\frac{1}{4}M_3\frac{\alpha}{\alpha+\beta}\Big)\B^2\\
\leq& \nu(\A^*)^{2}+C(\nu)((1+t)^{2m-5+\epsilon^*}+(1+t)^{2m-5-2\gamma_3})Z^4\\
&+C_0(\nu)(1+t)^{-1-\epsilon^*}Z^2.
\end{split}\end{equation}

\textbf{Step 3}.
On one hand, for fixed $\delta>1,$ a necessary condition to guarantee that the  set
$$
\Pi=\{(\alpha,\beta)|\alpha>0,\ \  2\alpha+3\beta>0, \ \  M_2<-1\}
$$
is not empty is that $M_1<\frac{3}{2}-\frac{1}{\delta}$ (see Remark \ref{necessarym1}).

On other hand, for fixed $\alpha,\beta,\gamma,\delta,$ there always exists a sufficiently large
number $m$ such that $d^*(m)>0.$ Indeed, one needs that
$$
d^*(m)=\frac{3\delta}{2}-3+m-\frac{1}{4}M_3=-\frac{1}{2}M_2+m-\frac{5}{2}>0.
$$
Here for the choice $m=n+0.5=3,$ one has therefore $d^*=\frac{3\delta}{2}-\frac{1}{4}M_3$.

\textbf{Step 4}.
Due to
$$d^*=\frac{3\delta}{2}-\frac{1}{4}M_3>1 \quad \text{and} \quad     M_4=\epsilon^*+M_2<-1,$$
so for $\eta^*=\min\Big\{\frac{3\gamma-3}{4(3\gamma-1)}, \frac{-M_4-1}{6\delta-M_3}, \frac{1}{20}\Big\}>0$,  and $\eta=\min\Big\{\frac{1}{200}, \frac{4d^*\eta^*(\alpha+\beta)}{M_3\alpha}\Big\}$, it hold that
\begin{equation}\label{keyinformation}\begin{split}
1+\epsilon^*-2(1-\eta^*)d^*=M_4+2\eta^* d^*<&-1,\\
 (1-\eta)d^*+\eta\Big(d^*-\frac{1}{4}M_3\frac{\alpha}{\alpha+\beta}\Big)\geq & (1-\eta^*)d^*.
\end{split}
\end{equation}
Based on this observation, we choose $\nu=\frac{\eta \alpha}{100}$, which, together
with \eqref{E:gz69}, $m=n+0.5=3$ and \eqref{GAB}, implies that
\begin{equation}\label{E:gz75}\begin{split}
\frac{1}{2}\frac{d}{dt}Z^2&+\frac{(1-\eta^*)b_*}{1+t}Z^2+\frac{\eta\alpha}{2}(1+t)^{2\gamma_3}\epsilon|\varphi\nabla^4 v|^2_2\\
&+\alpha \epsilon\sum_{k=0}^2(1+t)^{2\gamma_k}
|\varphi\nabla^{k+1} v|_2^2+(\alpha+\beta)\epsilon\sum_{k=0}^2(1+t)^{2\gamma_k}
|\varphi\nabla\text{div} v|_2^2\\
\leq& C(1+t)^{1+\epsilon^*}Z^4+C_0(1+t)^{-1-\epsilon^*}Z^2,
\end{split}\end{equation}
where $b_*$ is defined in \eqref{Aeq:2.6q}.    Therefore,
\begin{equation}\label{E:gz76}\begin{split}
\frac{d}{dt}Z&+\frac{(1-\eta^*)b_*}{1+t}Z
\leq C(1+t)^{1+\epsilon^*}Z^3+C_0(1+t)^{-1-\epsilon^*}Z.\\
\end{split}\end{equation}
According to Proposition \ref{P:xinzhu1} and
$$
\epsilon^*>0,\quad 1+\epsilon^*-2(1-\eta^*)b_*<-1,
$$
then $Z(t)$ satisfies
$$
Z(t)\leq \frac{(1+t)^{-(1-\eta^*)b_*\exp\Big(-\frac{C_0}{\epsilon^*}\Big((1+t)^{-\epsilon^*}-1\Big)\Big)}}
{\big(Z_0^{-2}-2C\int_0^t(1+s)^\xi\exp\Big(-\frac{2C_0}{\epsilon^*}\Big((1+t)^{-\epsilon^*}-1)\Big)\text{d}s\Big)^{1/2}},
$$
where $\xi=1+\epsilon^*-2(1-\eta^*)b_*<-1.$

Moreover, $Z(t)$ is globally well-defined for $t\geq 0$ if and if
$$
0<Z_0<\frac{1}{\big(2C\int_0^t(1+s)^\xi\exp\big(-2C_0/\epsilon^*((1+t)^{-\epsilon^*}-1)\big)\text{d}s\big)^{1/2}}.
$$
Moreover,
\begin{equation}\label{E:gz77}
Z(t)\leq C_0(1+t)^{-(1-\eta^*)b_*}\qquad \text{for}\ \text{all} \ \ t\geq 0,
\end{equation}
which implies that, for $t\geq 0$,
\begin{equation}\label{yu}
\begin{split}
Y_k(t)\leq & C_0(1+t)^{-\gamma_k-(1-\eta^*)b_*},\\[2pt]
U_k(t)\leq& C_0(1+t)^{-\delta_k-(1-\eta^*)b_*}.
\end{split}
\end{equation}

Finally, it follows from \eqref{E:gz75} and \eqref{E:gz77} that for $k=0,1,2,3$
\begin{equation}\label{yuend}
\begin{split}
\sum_{k=1}^3\epsilon\int_0^t(1+t)^{2\gamma_k} |\varphi\nabla^{k+1}v|^2_2\ \text{\rm d}s\leq C_0.
\end{split}
\end{equation}

\end{proof}

\begin{remark}\label{necessarym1}
Denote 
$
x=(\delta-1)^2/(4(2\alpha+\beta))
$.
Then $M_2<-1$ can be rewritten as 
$$
F(x)=x+\frac{1}{x}\delta^2M^2_1+2M_1\delta-6\delta+4<0.
$$
Then, in order to make sure that the set $\Pi$ is not empty, at least one needs that the minimum value of the function $F(x)$:
$$F(\delta M_1)=4M_1\delta-6\delta+4<0,$$  which requires $M_1<\frac{3}{2}-\frac{1}{\delta}$. 

\end{remark}

\subsection{Uniform energy estimates under the condition $(P_2)$}

For this case, we first choose
$$2n-2m=-(1+\epsilon)<-1$$
 for sufficiently small constant
 $$0<\epsilon<\frac{1}{2}\min\big\{1,3\gamma-3\big\}.$$ Then it follows from Lemma \ref{L:3.5} that
\begin{lemma}\label{Ul2}
There exist some positive constants $b_{m,n}$, $C$ and $C_0$  such that
\begin{equation}\label{ULeq:2.14}
\begin{split}
&\frac{1}{2}\frac{d}{dt} Z^2+\frac{b_{m,n}}{1+t} Z^2+\frac{\alpha}{2}\epsilon \sum_{k=0}^3 (1+t)^{2\gamma_k} |\varphi \nabla^{k+1}v(t)|^2_2\\
 \leq& C(1+t)^{2n-4+\epsilon}Z^{4}+C_0(1+t)^{-1-\epsilon}Z^2,
\end{split}
\end{equation}
with  the constant
 $b_{m,n}$   given by (\ref{E:gz70}).
\end{lemma}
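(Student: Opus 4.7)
The plan is to derive the desired inequality directly from \eqref{E:gz69} of Lemma \ref{L:3.5}, exploiting the considerable simplification afforded by $(P_2)$. Since $2\alpha+3\beta=0$, the coefficient $\frac{2\delta}{\delta-1}(2\alpha+3\beta)$ in the cross term of $H(\A^*,\A,\B)$ vanishes identically, leaving only the contribution $\frac{\delta-1}{2}(1+t)^{n-m}\A\B$ and eliminating the rapidly growing $(1+t)^{m-n-1}\A\B$ piece that forced the delicate Young-balancing in the proof under $(P_1)$. Consequently, no smallness constraint on $M_1$ is needed, and the exponent $m$ may be chosen freely, in particular matched to the prescribed relation $2n-2m=-(1+\epsilon)$.

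First I would apply Young's inequality to the surviving cross term,
\begin{equation*}
\frac{\delta-1}{2}(1+t)^{n-m}\A\B \;\leq\; (\alpha+\beta)\A^2 + \frac{(\delta-1)^2}{16(\alpha+\beta)}(1+t)^{-(1+\epsilon)}\B^2,
\end{equation*}
absorbing the $\A^2$ part fully into the $(\alpha+\beta)\A^2$ term already present in $H$. Since the $\B^2$ coefficient in $H$ is $\frac{3\delta/2-3+m}{1+t}$, taking $m$ sufficiently large (depending only on $\alpha,\beta,\delta,\epsilon$) dominates both the Young contribution $\frac{(\delta-1)^2}{16(\alpha+\beta)}(1+t)^{-(1+\epsilon)}\B^2$ and the unfavorable $-\frac{b_{m,n}}{1+t}\B^2$ appearing on the left-hand side of \eqref{E:gz69}, giving a net nonnegative $\B^2$ contribution and allowing the strong $\frac{b_{m,n}}{1+t}Z^2$ term on the LHS to be retained. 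The surviving $\alpha(\A^*)^2=\alpha\epsilon(1+t)^{2\gamma_3}|\varphi\nabla^4 v|_2^2$, combined with the $k=0,1,2$ dissipation already on the LHS of \eqref{E:gz69}, yields the full $\frac{\alpha}{2}\epsilon\sum_{k=0}^3(1+t)^{2\gamma_k}|\varphi\nabla^{k+1}v|_2^2$ once half is reserved to absorb the $\nu(\A^*)^2$ term on the RHS by choosing $\nu$ small; the identity \eqref{changea22} handles the div-form relation between $|\varphi\nabla^3 \text{div} v|_2$ and $|\varphi\nabla^4 v|_2$, with its $J^*$ remainder folded into the $Z^4$ bucket.

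Finally, I would repackage the nonlinear remainders on the right of \eqref{E:gz69} into $C(1+t)^{2n-4+\epsilon}Z^4+C_0(1+t)^{-1-\epsilon}Z^2$. The quartic terms with exponent $2m-5=2n-4+\epsilon$ fit exactly; the cubic terms $(1+t)^{a}Z^3$ split via Young as $\nu(1+t)^{2n-4+\epsilon}Z^4+C(\nu)(1+t)^{2a-(2n-4+\epsilon)}Z^2$; and the $Z^2$ remainders, of orders $(1+t)^{2m-2n-4}=(1+t)^{-3-\epsilon}$ and $(1+t)^{-2}$, are directly majorized by $(1+t)^{-1-\epsilon}Z^2$. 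The main technical point I anticipate will be verifying that every cubic exponent $a$ from Lemma \ref{L:3.5} satisfies $a\le n-5/2$, so that the resulting $Z^2$ decay is at worst $(1+t)^{-1-\epsilon}$; for the single borderline term carrying the $\epsilon^{1/4}$ prefactor (whose exponent is $n-5/2+\epsilon$), the slack is absorbed by using the smallness of $\epsilon^{1/4}$ together with the fact that $\epsilon$ may be chosen arbitrarily small subject to $\epsilon<\frac{1}{2}\min\{1,3\gamma-3\}$. Once this ODE inequality is in hand, it feeds into the same bootstrap analysis used after Lemma \ref{ll2}.
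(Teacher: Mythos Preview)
Your overall strategy is correct and matches the paper's intended (but omitted) argument: under $(P_2)$ the coefficient $4\alpha+6\beta=2(2\alpha+3\beta)$ vanishes, so the dangerous cross term in $H(\A^*,\A,\B)$ reduces to $\frac{\delta-1}{2}(1+t)^{n-m}\A\B$, and a single Young inequality disposes of it. Two simplifications: you do not need to take $m$ large, because the Young remainder $\frac{(\delta-1)^2}{16(\alpha+\beta)}(1+t)^{-(1+\epsilon)}\B^2$ is already bounded by $C_0(1+t)^{-1-\epsilon}Z^2$ and can be sent to the right-hand side directly (the inequality $\frac{3\delta/2-3+m}{1+t}\B^2-\frac{b_{m,n}}{1+t}\B^2\ge 0$ holds for \emph{every} $m$ since $b_{m,n}\le \frac{3\delta}{2}-3+m$); and you do not need \eqref{changea22}, since under $(P_2)$ the $(4\alpha+6\beta)$ cross term from Lemma~\ref{L:3.4} is absent and $\A$ is absorbed cleanly by the $(\alpha+\beta)\A^2$ already present in $H$.

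The one point that needs correction is your handling of the borderline cubic $C_0\epsilon^{1/4}(1+t)^{2m-n-3.5}Z^3$. Be careful: the $\epsilon^{1/4}$ here is the \emph{viscosity} parameter (coming from $|\nabla\varphi|_\infty\le C\epsilon^{-1/4}(1+t)^{(2m-5)/2}U$), whereas the $\epsilon$ in $2n-2m=-(1+\epsilon)$ is a separate, fixed small constant. The two are unrelated, so ``smallness of $\epsilon^{1/4}$'' buys nothing beyond $\epsilon^{1/4}\le 1$, and ``$\epsilon$ arbitrarily small'' does not improve the exponent $n-\tfrac52+\epsilon$. The clean fix is to Young-split this term so that the $Z^2$ piece carries the good exponent $-1-\epsilon$, at the price of the $Z^4$ piece carrying $2n-4+3\epsilon$ rather than $2n-4+\epsilon$; since the subsequent ODE analysis via Proposition~\ref{P:xinzhu1} only needs $D_1-2b_{m,n}<-1$ and $D_2<-1$, this discrepancy is harmless and can be absorbed by relabeling the small parameter (or, equivalently, by reading the $\epsilon$ in \eqref{ULeq:2.14} as a generic small constant rather than the specific one fixing $n-m$).
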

The proof of this lemma is routine based on the conclusion obtained in Lemmas 4.1-4.7,  and  thus omitted. The rest of the proof for  Theorem \ref{tha-compact} under the condition $(P_2)$ is similar to the one in Subsection 4.1.4 for the condition $(P_1)$.
The proof for Theorem \ref{tha-compact} under condition $ (P_3)$- $(P_4)$ follows quickly   from  the similar  arguments shown  in  Sections 4.1-4.2 under  the conditions $(P_1)$-$(P_2)$. Here
we omit the details.

\section{Uniform energy estimates for general  initial density}
\subsection{The proof for  Theorem \ref{tha}}
Now we give the proof for  Theorem \ref{tha} for general initial density. Let  $F(x)\in C^\infty_c(\mathbb{R}^3)$ be a  truncation function  satisfying
 \begin{equation}\label{eq:2.6-77A}
0\leq F(x) \leq 1, \quad \text{and} \quad F(x)=
 \begin{cases}
1 \;\qquad  \text{if} \ \ |x|\leq 1,\\[8pt]
0   \ \ \ \ \ \ \    \text{if} \ \   |x|\geq 2.
 \end{cases}
 \end{equation}
According to the proof for  Theorem \ref{tha-compact} (see \eqref{yu}-\eqref{yuend}),  for the initial data
$$
(\varphi^R_0,\phi^R_0, v^\epsilon_0)=(\varphi^\epsilon_0F(|x|/R), \phi^\epsilon_0F(|x|/R), 0),
$$
there exists the unique  global regular solution  $(\varphi^R, \phi^R,v^R)$ satisfying:
\begin{equation}\label{decay2}
\begin{cases}
\begin{split}
&|\nabla^k \phi^R(t)|_2+|\nabla^kv^R(t)|_2\leq  C_0(1+t)^{-(1-\eta^*)b_*+2.5-k}\quad \text{for} \quad k=0, 1,2,3;\\[10pt]
 & |\nabla^k \varphi^R(t)|_2\leq  C_0(1+t)^{-(1-\eta^*)b_*+3-k}\quad \text{for} \quad k=0, 1, 2;\\[10pt]
  &\epsilon^{\frac{1}{2}} |\nabla^3 \varphi^R(t)|_2\leq  C_0(1+t)^{-(1-\eta^*)b_*};\\[10pt]
&\epsilon^{\frac{1}{2}}\sum_{k=0}^3 \|(1+t)^{k-2.5}\varphi^R\nabla^{k+1} v^R\|_{L^2L^2_t}\leq  C_0,\\
\end{split}
\end{cases}
\end{equation}
for any  time $t\geq 0$ and one constant  $C_0>0$ that is   independent of $R$ and $\epsilon$.

Due to  (\ref{decay2}) and the  following relations:
\begin{equation}\label{shijianfangxiang}
\begin{cases}
\displaystyle
\varphi^R_t=-(v^R+ \widehat{u}^\epsilon )\cdot \nabla\varphi^R-\frac{\delta-1}{2}\varphi^R\text{div} (v^R+ \widehat{u}^\epsilon ),\\[8pt]
\displaystyle
\phi^R_t=-(v^R+ \widehat{u}^\epsilon )\cdot \nabla \phi^R-\frac{\gamma-1}{2}\phi^R\text{div} (v^R+ \widehat{u}^\epsilon ),\\[8pt]
\displaystyle
v^R_t=-v^R\cdot \nabla v^R-\frac{\gamma-1}{2}\phi^R\nabla \phi^R-\epsilon (\varphi^R)^2 Lv^R\\[8pt]
\quad +\epsilon \nabla (\varphi^R)^2\cdot Q(v^R+ \widehat{u}^\epsilon )- \widehat{u}^\epsilon \cdot \nabla v^R-v^R\cdot \nabla  \widehat{u}^\epsilon -\epsilon (\varphi^R)^2 L\widehat{u}^\epsilon,
 \end{cases}
\end{equation}
 it holds that  for any fixed finite  constant  $R_0>0$ and finite time $T>0$,
\begin{equation}\label{uniformshijianN}
\begin{split}
\|\varphi^R_t\|_{H^2(B_{R_0})}+\|\phi^R_t\|_{H^2(B_{R_0})}+\|v^R_t\|_{H^1(B_{R_0})}+\int_0^t \|\nabla^2 v^R_t\|^2_{L^2(B_{R_0})}\text{d}s\leq C_0(R_0,T,\epsilon),
\end{split}
\end{equation}
for $0\leq t \leq T$, where the constant $C_0(R_0,T,\epsilon)>0$ depends only  on $C_0$, $\epsilon$,  $R_0$ and $T$, but is independent of $R$.

Since  (\ref{decay2}) and (\ref{uniformshijianN}) are  independent of  $R$,   there exists a subsequence of solutions (still denoted by) $(\varphi^R, \phi^R, v^R)$   converging to  a limit  $(\varphi^\epsilon,\phi^\epsilon,v^\epsilon)$  in the  sense:
\begin{equation}\label{strongjixian}
\begin{split}
&(\varphi^R, \phi^R,v^R)\rightarrow   (\varphi^\epsilon, \phi^\epsilon,v^\epsilon) \quad \text{strongly \ in } \ C([0,T];H^2(B_{R_0}))\quad \text{as} \quad R\rightarrow \infty.
\end{split}
\end{equation}

For $k=0,1,2,3$, denote
$$
a_k=-(1-\eta^*)b_*+2.5-k,\quad b_k=-(1-\eta^*)b_*+3-k,\quad c_k=k-2.5.
$$
Again due to  (\ref{decay2}), there exists a subsequence of solutions (still denoted by) $(\varphi^R, \phi^R, v^R)$ converging  to the same limit $(\varphi^\epsilon,\phi^\epsilon,v^\epsilon)$ as above in the following   weak* sense (for $k=0,1,2,3$):
\begin{equation}\label{ruojixianvv}
\begin{split}
(1+t)^{b_k}\varphi^R \rightharpoonup  (1+t)^{b_k}\varphi^\epsilon \quad \text{weakly* \ in } \ & L^\infty([0,T];H^3(\mathbb{R}^3)),\\
(1+t)^{a_k}\big( \phi^R, v^R\big) \rightharpoonup  (1+t)^{a_k}\big(\phi^\epsilon, v^\epsilon) \quad \text{weakly* \ in } \ & L^\infty([0,T];H^3(\mathbb{R}^3)).
\end{split}
\end{equation}

Combining  the strong convergence in (\ref{strongjixian}) and the weak convergence in (\ref{ruojixianvv}) shows that  $(\varphi^\epsilon, \phi^\epsilon,v^\epsilon) $ also satisfies the corresponding  estimates (\ref{decay2}) and (for $k=0,1,2,3$):
\begin{equation}\label{ruojixian1}
\begin{split}
(1+t)^{c_k}\varphi^R \nabla^{k+1} v^R \rightharpoonup (1+t)^{c_k} \varphi^\epsilon\nabla^{k+1} v^\epsilon \quad &\text{weakly \ in } \ L^2([0,T]\times \mathbb{R}^3).
\end{split}
\end{equation}


It is then obvious that $(\varphi^\epsilon, W^\epsilon) $ is a solution to problem (\ref{li47-1}) in the sense of distributions, and also satisfies the uniform estimate \eqref{uniformtime}.

\subsection{The proof for Corollary \ref{serre}. }

\subsubsection{Initial approximation}
In this subsection, we  give the following lemma for initial approximation:
\begin{lemma}\label{example}
If functions $(\rho_0,u_0)$ satisfy conditions $(\rm H_1)$-$(\rm H_2)$ shown in Corollary \ref{serre},
then for every $\epsilon \in (0,1]$,  we state that there exist functions $(\rho^\epsilon_0\geq 0,u^\epsilon_0)$ satisfying  the initial conditions $(A_1)$-$(A_2)$ shown  in Theorem \ref{tha},  and
\begin{equation}\label{5.1}
\begin{split}
 1+\|(\rho^\epsilon_0)^{\frac{\gamma-1}{2}}\|^2_{3}+\|u^\epsilon_0\|^2_{3}\leq &C_0,\\
 \|\nabla (\rho^\epsilon_0)^{\frac{\delta-1}{2}}\|^2_{2} +\epsilon|\nabla^3 (\rho^\epsilon_0)^{\frac{\delta-1}{2}}|^2_2 \leq &\Upsilon_1(\|(\rho^\epsilon_0)^{\frac{\gamma-1}{2}}\|_{3})+\Upsilon_2(\epsilon),
 \end{split}
\end{equation}
for some proper $\delta>1$ and some  constant $C_0>0$ that are both   independent of $\epsilon$, and $\Upsilon_i(y)$ {\rm ($i=1,2$)} are both  monotonically increasing continuous functions defined in $\mathbb{R}$ that are  independent of $\epsilon$ and satisfy $\Upsilon_i(0)=0$. Moreover, we still have
\begin{equation}\label{5.2}
\lim_{\epsilon \to 0}\Big\|\Big((\rho^{\epsilon}_0)^{\frac{\gamma-1}{2}}-\rho^{\frac{\gamma-1}{2}}_0, u^\epsilon_0
-u_0\Big)\Big\|_{3}=0.
\end{equation}
\end{lemma}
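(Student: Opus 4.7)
My plan is to exhibit the approximation explicitly by mollifying the sound speed $\psi_0 := \rho_0^{(\gamma-1)/2}$ and then recovering $\rho_0^\epsilon$ algebraically. The crucial step is to fix $\delta > 1$ so that the ratio $p := (\delta-1)/(\gamma-1)$ is a positive integer with $p \geq 2$; the simplest choice is $\delta = 2\gamma - 1$, giving $p = 2$, which also places the parameters under condition $(P_3)$ of Theorem \ref{tha} and hence removes the smallness requirement on $(\rho_0^\epsilon)^{(\delta-1)/2}$.

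With $\delta$ chosen, I would let $\eta_\epsilon$ be a standard Friedrichs mollifier and define
\begin{equation*}
\psi_0^\epsilon := \eta_\epsilon * \psi_0, \qquad \rho_0^\epsilon := (\psi_0^\epsilon)^{2/(\gamma-1)}, \qquad u_0^\epsilon := u_0.
\end{equation*}
Then $\rho_0^\epsilon \geq 0$, and $(\rho_0^\epsilon)^{(\gamma-1)/2} = \psi_0^\epsilon \to \psi_0$ in $H^3$ as $\epsilon \to 0$ by standard mollifier convergence in the space where $\psi_0$ actually lives; the velocity half of (5.2) is trivial. Condition $(A_1)$ reduces directly to $(H_1)$ since $u_0^\epsilon = u_0$. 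For $(A_2)$, the key observation is that $(\rho_0^\epsilon)^{(\delta-1)/2} = (\psi_0^\epsilon)^p$ is an \emph{integer} power of an $H^3 \cap L^\infty$ function and therefore lies in $H^3$ by the algebra property of $H^3(\mathbb{R}^3)$, with $\|(\psi_0^\epsilon)^p\|_3 \leq C \|\psi_0^\epsilon\|_3^p \leq C \|\psi_0\|_3^p$. Choosing $D_0^*$ in $(H_2)$ small enough so that $D_0^* + C(D_0^*)^p \leq D_0$ then yields $(A_2)$.

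The bounds in (5.1) follow from the same algebra estimate: I would take $\Upsilon_1(y) := C(y^2 + y^{2p})$ to control the $H^2$ norm of $\nabla (\psi_0^\epsilon)^p$ uniformly in $\epsilon$, and $\Upsilon_2(\epsilon) := C\epsilon$ to absorb the $\epsilon$-weighted $|\nabla^3 \cdot|_2$ contribution (which is bounded above by $C\epsilon \|\psi_0\|_3^{2p}$, a quantity that can be reshuffled into either $\Upsilon_i$ as needed). Both $\Upsilon_i$ are monotone continuous with $\Upsilon_i(0)=0$.

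The step I expect to be most delicate is reconciling the term $\|u_0^\epsilon\|_3^2$ appearing in (5.1) with the fact that $u_0 \in \Xi$ need not belong to $L^2(\mathbb{R}^3)$ globally (cf.\ the far-field example $u_0 = \mathcal{A}x + \mathbf{b} + \nu_2 f$ in Remark \ref{r1}). This quantity must be interpreted relative to the $\Xi$-norm, or via a splitting $u_0 = \widetilde{u}_0 + u^{\mathrm{lin}}_0$ in which the affine growing piece is isolated and only the $H^3$ remainder is measured; since $u_0^\epsilon := u_0$ is $\epsilon$-independent, the corresponding constant is harmlessly absorbed into $C_0$. Apart from this bookkeeping, the whole construction is the clean "integer exponent" trick enabled by the choice $p = (\delta-1)/(\gamma-1) \in \mathbb{Z}_{\geq 2}$, which converts the nonlinear pointwise relationship between $(\rho_0^\epsilon)^{(\gamma-1)/2}$ and $(\rho_0^\epsilon)^{(\delta-1)/2}$ into a polynomial one to which Sobolev algebra applies directly.
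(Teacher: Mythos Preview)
Your approach is correct and in fact more streamlined than the paper's. Both arguments set $u_0^\epsilon = u_0$ and exploit the freedom in choosing $\delta$, but the paper constructs
\[
(\rho_0^\epsilon)^{\frac{\gamma-1}{2}} = \rho_0^{\frac{\gamma-1}{2}}\chi_{\epsilon^q} + \eta\,\epsilon^p f^{\frac{\gamma-1}{2}}, \qquad f(x)=\frac{1}{1+|x|^{2a}},
\]
i.e.\ a spatial cutoff plus a small explicit tail with polynomial decay, and then analyzes the map $\psi \mapsto \psi^{e}$ with $e=(\delta-1)/(\gamma-1)$ case by case (Lemmas~5.2--5.3), including the delicate fractional range $2<e<3$ where the added tail provides a lower bound needed to control $\nabla^3 \psi^e$. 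Your observation that one may simply fix $\delta = 2\gamma-1$ so that $e=2$ is an integer, invoke the $H^3$ algebra property to get $\|(\psi_0^\epsilon)^2\|_3 \le C\|\psi_0^\epsilon\|_3^2$, and land in condition $(P_3)$ (so that smallness of $(\rho_0^\epsilon)^{(\delta-1)/2}$ is not required) short-circuits all of that. Indeed, since $\psi_0 = \rho_0^{(\gamma-1)/2}$ is already in $H^3$, even the mollification step is cosmetic: the constant choice $\rho_0^\epsilon \equiv \rho_0$ would suffice. What the paper's construction buys is coverage of non-integer exponents $e$, which matters if one wishes to realize the inviscid limit through a \emph{prescribed} $\delta$ rather than a convenient one; for the lemma as stated, your route is cleaner. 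Your remark about $\|u_0^\epsilon\|_3^2$ in \eqref{5.1} is also well taken: the paper never addresses this, and since $u_0 \in \Xi$ need not lie in $L^2$, the intended reading is presumably the $\Xi$-norm (this is how the constant is actually used downstream in Theorem~\ref{tha}).
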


\begin{proof}
First, let $
f(x)=\frac{1}{1+|x|^{2a}}$. It is easy to see that if
$$
a>\max\Big\{\frac{3}{2(\gamma-1)},\frac{3}{2(\delta-1)}\Big\},
$$
then
$f^{\frac{\gamma-1}{2}}\in H^3(\R^3)$ and $f^{\frac{\delta-1}{2}}\in H^3(\R^3)$.

Second, we use the above function $f$  to construct the desired $\rho_0^\epsilon$.
\begin{lemma}\label{L:6.2} Let  $q$ and $p$ be any two positive constants, and

\begin{equation}\begin{split}\label{E:1}
(\rho_0^\epsilon)^{\frac{\gamma-1}{2}}= \rho_0^{\frac{\gamma-1}{2}}\chi_{\epsilon^q}+\eta \epsilon^pf^{\frac{\gamma-1}{2}},
\end{split}
\end{equation}
where $\eta>0$ is some sufficiently small constant, $\chi(x)\in C^\infty_c(\mathbb{R}^3)$ satisfying
$$
\chi(x)=\begin{cases}
1\quad   \ x\leq1,\\[5pt]
0,\quad  x\geq 2,
\end{cases}
$$
and $\chi_{\epsilon^q}(x)=\chi(\epsilon^q x)$.  Then it holds that
\begin{equation}\label{E:2}
\begin{split}
\big\|(\rho_0^\epsilon)^{\frac{\gamma-1}{2}}-\rho_0^{\frac{\gamma-1}{2}}\big\|_{3}
\rightarrow0 \quad \text{as}\quad \epsilon\rightarrow 0 \quad \text{for \ any } \quad \gamma>1.
\end{split}
\end{equation}
\end{lemma}
\begin{proof}
First, it is obvious that
$$
\big\|\rho_0^{\frac{\gamma-1}{2}}\big\|_{3}+\eta \epsilon^p \big\| f^{\frac{\gamma-1}{2}}\big\|_{3}\leq C_0,
$$
for some constant $C_0$ independent of $\epsilon$. Then, due to
\begin{equation}\label{boundness1}
|\nabla^k\chi_{\epsilon^q}|_{\infty}\leq \epsilon^{kq}|\nabla^k\chi|_{\infty}\leq C_0 \quad \text{for} \quad
k=0,1,2,3,
\end{equation}
one can obtain that
\begin{equation}\label{boundness}\big\|(\rho_0^\epsilon)^{\frac{\gamma-1}{2}}\big\|_{3}\leq C_0.
\end{equation}

Second, one has
 \begin{equation}\label{expansion2}
\begin{split}
\int_{ |x|\geq \frac{1}{\epsilon^q}} \big|\nabla^k\rho^{\frac{\gamma-1}{2}}_0 \big|^2\text{d}x\rightarrow 0\quad \text{as} \quad \epsilon \rightarrow 0 \quad \text{for} \quad k=&0,1,2,3,\end{split}
\end{equation}
which, together with \eqref{boundness1}-\eqref{boundness}, implies that
\begin{equation}\label{E:3}
\begin{split}\big\|(\rho_0^\epsilon)^{\frac{\gamma-1}{2}}-\rho_0^{\frac{\gamma-1}{2}}\big\|_{3}
&\leq \big\|\rho_0^{\frac{\gamma-1}{2}}\big\|_{H^3(|x|\geq \frac{1}{\epsilon^q} )}+\eta \epsilon^p\big\| f^{\frac{\gamma-1}{2}}\big\|_{3}\rightarrow 0 \quad \text{as} \quad \epsilon \rightarrow 0.
\end{split}
\end{equation}

\end{proof}

For simplicity,  in the rest of this section we denote
$$e=\frac{\delta-1}{\gamma-1},\quad B_{\epsilon,q}=B_{\frac{2}{\epsilon^q}}\quad \text{and} \quad B^C_{\epsilon,q}=\mathbb{R}^3/B_{\frac{2}{\epsilon^q}}.$$
Based on the conclusion of Lemma \ref{L:6.2}, now we have
\begin{lemma}\label{L:8.3q}      If one of  the following assumptions holds:
\begin{itemize}

\item  $e=1$, \text{or} $e=2$, \text{or} $e\geq 3$;\\
\item $2<e<3$ \text{and}  $
\frac{1}{2}-(3-e)(p+aq(\gamma-1))\geq 0
$,
\end{itemize}
then one has
\begin{equation}\label{finalbound}
\|\nabla (\rho^\epsilon_0)^{\frac{\delta-1}{2}}\|^2_{2} +\epsilon|\nabla^3 (\rho^\epsilon_0)^{\frac{\delta-1}{2}}|^2_2 \leq \Upsilon_1(\|(\rho^\epsilon_0)^{\frac{\gamma-1}{2}}\|_{3})+\Upsilon_2(\epsilon),
\end{equation}
for some two monotonically increasing continuous functions $\Upsilon_i(y)$ {\rm ($i=1,2$)} defined in $\mathbb{R}$ that are both   independent of $\epsilon$ and satisfy $\Upsilon_i(0)=0$.
\end{lemma}
\begin{proof}
On the one hand,  let
$$
\varphi^\epsilon_0=(\rho_0^\epsilon)^{\frac{\delta-1}{2}}=\big((\rho_0^\epsilon)^{\frac{\gamma-1}{2}}\big)^e
=\big(\rho_0^{\frac{\gamma-1}{2}}\chi_{\epsilon^q}+\eta \epsilon^pf^{\frac{\gamma-1}{2}}\big)^{e-1}(\rho_0^\epsilon)^{\frac{\gamma-1}{2}}.
$$
Under the assumption that $e=1$, \text{or} $e=2$, \text{or} $e\geq 3$, it is easy to show that \eqref{finalbound} holds.

On the other hand, under the assumption that $2<e<3$, it is  easy to see that
we only need to consider the bound of $\epsilon |\nabla^3 \varphi^\epsilon_0|^2_{2}$. First,  one has
\begin{equation}\label{insidebound}
\epsilon\|\nabla^3 \varphi^\epsilon_0\|_{L^2(B^C_{\epsilon,q})}=\eta^e \epsilon\big\|\epsilon^{ep}\nabla^3 f^{\frac{\delta-1}{2}}
\big\|_{L^2(B^C_{\epsilon,q})}\leq C\eta^e \epsilon^{ep+1}.
\end{equation}
Second, it follows from the direct calculation that
\begin{equation*}\begin{split}
\nabla^3 \varphi^\epsilon_0=&e(e-1)(e-2)\big((\rho_0^\epsilon)^{\frac{\gamma-1}{2}}\big)^{e-3}\Big(\nabla(\rho_0^\epsilon)^{\frac{\gamma-1}{2}}
\Big)^3\\
&+3e(e-1)\big((\rho_0^\epsilon)^{\frac{\gamma-1}{2}}\big)^{e-2}\nabla(\rho_0^\epsilon)^{\frac{\gamma-1}{2}}
\nabla^2(\rho_0^\epsilon)^{\frac{\gamma-1}{2}}\\
&+e\big((\rho_0^\epsilon)^{\frac{\gamma-1}{2}}\big)^{e-1}\nabla^3
(\rho_0^\epsilon)^{\frac{\gamma-1}{2}}.\\
\end{split}\end{equation*}
Then for $2< e<3$,  under the assumption that  $
\frac{1}{2}-(3-e)(p+aq(\gamma-1))\geq 0
$,
one has
\begin{equation}\label{E:9.1}\begin{split}
\epsilon^{\frac{1}{2}}\|\nabla^3 \varphi^\epsilon_0\|_{L^2(B_{\epsilon,q})}
\leq& C\epsilon^{\frac{1}{2}}\Big(\eta \epsilon^p\Big(\frac{\epsilon^{2aq}}{2^{2a}+\epsilon^{2aq}}
\Big)^{\frac{\gamma-1}{2}}\Big)^{e-3}\big|
\nabla \big(\rho_0^\epsilon\big)^{\frac{\gamma-1}{2}}\big|^2_{\infty}  \Big\|
\nabla\big(\rho_0^\epsilon\big)^{\frac{\gamma-1}{2}}\Big\|_{L^2(B_{\epsilon,q})}\\
&+C\epsilon^{\frac{1}{2}}(1+\eta\epsilon^p)^{e-2}   \big|
\nabla \big(\rho_0^\epsilon\big)^{\frac{\gamma-1}{2}}\big|_{\infty}      \big|
\nabla^2\big(\rho_0^\epsilon\big)^{\frac{\gamma-1}{2}}\big|_{2}\\
&+C\epsilon^{\frac{1}{2}}(1+\eta\epsilon^p)^{e-1}\big|
\nabla^3\big(\rho_0^\epsilon\big)^{\frac{\gamma-1}{2}}\big|_{2}\\
\leq & C(\big\|(\rho_0^\epsilon)^{\frac{\gamma-1}{2}}\big\|_{3}+\big\|(\rho_0^\epsilon)^{\frac{\gamma-1}{2}}\big\|^3_{3}),
\end{split}\end{equation}
for some constant $C$ independent of $\epsilon$, which, together with \eqref{insidebound}, implies  \eqref{finalbound}.

\end{proof}

At last, we just take $u^\epsilon_0(x)=u_0(x)$ for $x\in \mathbb{R}^3$. Then according to Lemmas \ref{L:6.2}-\ref{L:8.3q}, the proof for  Lemma \ref{example} is completed.
\end{proof}

\subsubsection{The proof for Corollary \ref{serre}} Now we give the proof for  Corollary \ref{serre}.
\begin{proof} We divide our proof into three steps: existence, uniqueness and time continuity.

 \textbf{Step 1:} Existence of the regular solution.
First, according to Lemma \ref{example}, we know that  for every $\epsilon \in (0,1]$,  there exists functions $(\rho^\epsilon_0,u^\epsilon_0)$ satisfying  the initial conditions $(A_1)$-$(A_2)$ for some fixed constant $\delta_0=\delta_0(\rho_0,u_0)>1$ (independent of $\epsilon$) in Theorem \ref{tha},  and \eqref{5.1}-\eqref{5.2}. Here, we take $u^\epsilon_0(x)=u_0(x)$ for $x\in \mathbb{R}^3$.
Then,  according to Theorem \ref{tha}, for such fixed $\delta_0$ and any $(\alpha,\beta)$ satisfying \eqref{canshu} and any one of  conditions $(P_1)$-$(P_4)$, we know that for any $T>0$ and $\epsilon\in (0,1]$,  there exists a   unique  regular solution $(\rho^\epsilon, u^\epsilon)$ in $[0,T]\times \mathbb{R}^3$  to the  Cauchy problem  (\ref{eq:1.1})-(\ref{10000}) with (\ref{initial})-(\ref{far}) satisfying
$$
(\rho^\epsilon,u^\epsilon)|_{t=0}=(\rho^\epsilon_0(x)\geq 0,  u_0(x)) \quad  \text{for} \quad   x\in \mathbb{R}^3,
$$
  and  the  uniform estimates  \eqref{uniformtime}. We denote
  \begin{equation*}
(\varphi^\epsilon, W^\epsilon=(\phi^\epsilon, v^\epsilon=u^\epsilon- \widehat{u}))=\left((\rho^\epsilon)^{\frac{\delta-1}{2}},\sqrt{\frac{4A\gamma}{(\gamma-1)^2}}(\rho^\epsilon)^{\frac{\gamma-1}{2}}, u^\epsilon- \widehat{u}\right).
\end{equation*}

Second,  for any bounded smooth domain $\Omega\subset \mathbb{R}^3$, due to the uniform estimates  \eqref{uniformtime} and  the Aubin-Lions Lemma  (see \cite{jm}) (i.e., Lemma \ref{aubin}), there exists a subsequence (still denoted by $( \phi^\epsilon,  u^{ \epsilon})$ and limit functions $(\phi^*,u^*)$ satisfying
\begin{equation}\label{ertfinalq}\begin{split}
(\phi^{ \epsilon},  u^{ \epsilon})\rightarrow& (\phi^*,u^*) \quad \text{in } \ C([0,T];H^2(\Omega)).
\end{split}
\end{equation}
Moreover,  there also exists a subsequence (still denoted by $(\phi^{ \epsilon},  u^{\epsilon})$) converging  to the  limit $( \phi^*,u^*)$ in the following  weak* sense:
\begin{equation}\label{ruojixianass}
\begin{split}
(\phi^{\epsilon}, v^\epsilon= u^{ \epsilon}- \widehat{u})\rightharpoonup  (\phi^*, v^*=u^*- \widehat{u}) \quad \text{weakly* \ in } \ L^\infty([0,T];H^3).
\end{split}
\end{equation}
From the lower semi-continuity  of norms in  the weak convergence, it is obvious that $( \phi^*,v^*)$ also satisfies the following   estimates
\begin{equation}\label{Euleruniformtime}\begin{split}
\sum_{k=0}^3 (1+t)^{2(k-2.5)+(1-\eta^*)b_*}\Big(\big|\nabla^k \phi^* (t,\cdot)\big|^2_2+\big|\nabla^k v^*(t,\cdot)\big|^2_2\Big)\leq C,
\end{split}
\end{equation}
for some constant $C=C(\gamma, A,  \kappa, \rho_0, u_0)>0$ that is independent of $T$.

Now we  are going to show that $(\phi^*,v^*)$ is the unique  regular  solution  to the  Cauchy problem  \eqref{eq:1.1E} with (\ref{winitial})-(\ref{wfar}).
First,  multiplying the equations in  $(\ref{li47-1})$ for the time evolution of $v^\epsilon$ by a test function  $h(t,x)=(h^1,h^2,h^3)\in C^\infty_c ([0,T)\times \mathbb{R}^3)$ on both sides, and then  integrating over $[0,T]\times \mathbb{R}^3$, one has
\begin{equation}\label{zhenzheng1q}
\begin{split}
&\int_0^T \int_{\mathbb{R}^3} \Big(v^{ \epsilon} \cdot h_t - (v^{\epsilon} \cdot \nabla) v^{\epsilon} \cdot h +\frac{\gamma-1}{4}\big(\phi^{\epsilon}\big)^2\text{div}h \Big)\text{d}x\text{d}t\\
=&\int_0^T \int_{\mathbb{R}^3} \epsilon \Big( (\varphi^{\epsilon})^2 Lv^{\epsilon}-\nabla (\varphi^{\epsilon})^2 \cdot Q(u^{ \epsilon}) \Big)\cdot h \text{d}x\text{d}t\\
&+\int_0^T \int_{\mathbb{R}^3}  \Big( v^{\epsilon}\cdot \nabla \widehat{u}^\epsilon+\sum_{j=1}^3 (\widehat{u}^\epsilon)^{(j)}\partial_j v^\epsilon +\epsilon      (\varphi^\epsilon)^2 L  \widehat{u}^\epsilon \Big)\cdot h \text{d}x\text{d}t.
\end{split}
\end{equation}
Combining  the  uniform estimates obtained above, the strong convergence in (\ref{ertfinalq}),  and  the weak convergences in (\ref{ruojixianass}),  and  letting $\epsilon \rightarrow 0$ in (\ref{zhenzheng1q}),  one has
\begin{equation*}
\begin{split}
&\int_0^T \int_{\mathbb{R}^3}  \Big(v^* \cdot h_t - (v^* \cdot \nabla) v^* \cdot h +\frac{\gamma-1}{4}\big(\phi^* \big)^2\text{div}h \Big)\text{d}x\text{d}t\\
=
&\int_0^T \int_{\mathbb{R}^3}  \Big( v^*\cdot \nabla \widehat{u}+\sum_{j=1}^3 \widehat{u}^{(j)}\partial_j v^* \Big)\cdot h \text{d}x\text{d}t.
\end{split}
\end{equation*}
Similarly,  one can use the same argument to show that $( \phi^*,v^*)$ also satisfies the equation in  $(\ref{li47-1})$ for the time evolution of  $\rho^{\frac{\gamma-1}{2}}$ and the initial data in the sense of distributions.
So it is obvious that $W=(\phi^*, v^*=u^*- \widehat{u})$ satisfies  the following  Cauchy problem  in the sense of distributions,
\begin{equation}\label{cvli47-1}
\begin{cases}
\displaystyle
W_t+\sum_{j=1}^3A_j(W) \partial_j W=G(W,  \widehat{u}),\\[12pt]
W |_{t=0}=W_0=(\phi_0,0),\quad x\in \mathbb{R}^3,\\[12pt]
W=( \phi,  v)\rightarrow (0,0) \quad\quad   \text{as}\quad \quad  |x|\rightarrow \infty \quad \text{for} \quad  t\geq 0,
 \end{cases}
\end{equation}
where
\begin{equation} \label{cvli47-2}
\begin{split}
\displaystyle
A_j(W)=&\left(\begin{array}{cc}
(v^*)^{(j)}&\frac{\gamma-1}{2}\phi^* e_j\\[8pt]
\frac{\gamma-1}{2}\phi^* e_j^\top & (v^*)^{(j)}\mathbb{I}_3
\end{array}
\right),\quad j=1,2,3,\\[6pt]
G(W,  \widehat{u})=&-B(\nabla  \widehat{u},W)-\sum_{j=1}^3  \widehat{u}^{(j)}\partial_j W,\quad B(\nabla \widehat{u},W)=\left(\begin{array}{c}
\frac{\gamma-1}{2}\phi^* \text{div} \widehat{u} \\[10pt]
(v^* \cdot\nabla)  \widehat{u}
\end{array}
\right).
\end{split}
\end{equation}
Moreover, it is easy to check that $W$  satisfies the estimates \eqref{Euleruniformtime}
and  $$
 u^*_t+u^*\cdot\nabla u^* =0\quad  \text{as } \quad  \phi^*(t,x)=0.
 $$

 \textbf{Step 2:} Uniquness.  Let $W_1=(\phi_1,v_1)$ and $W_2=(\phi_2,v_2)$ be two  solutions  to  (\ref{cvli47-1}) satisfying the uniform a priori estimates (\ref{Euleruniformtime}). Set
$F_N=F(|x|/N)$, and
\begin{equation*}\begin{split}
& \overline{W}=(\overline{\phi},\overline{v})=(\phi_1-\phi_2,v_1-v_2),\quad   \overline{W}^N=\overline{W}F_N=(\overline{\phi}^N,\overline{v}^N),
\end{split}
\end{equation*}
then  $\overline{W}^N$ solves the  following problem
 \begin{equation}
\label{weiyixing}
\begin{cases}
\displaystyle
\ \  \ \overline{W}^N_t+\sum\limits_{j=1}^3A_j(W_1)\partial_{j} \overline{W}^N+\sum\limits_{j=1}^3A_j(\overline{W}^N)\partial_{j} W_{2} \\[10pt]
= G(\overline{W}^N, \widehat{u}) +\sum\limits_{j=1}^3\big(A_j(W_1) +\widehat{u}^{(j)}\mathbb{I}_4\big)\overline{W} \partial_{j} F_N ,\\[10pt]
 \ \ \ \  \overline{W}^N|_{t=0}=0 \quad \text{for} \quad x\in \mathbb{R}^3,\\[10pt]
\ \ \ \ \overline{W}^N \rightarrow 0 \quad \text{as } \quad |x|\rightarrow +\infty,\quad t\geq   0.
\end{cases}
\end{equation}

It is not hard to obtain that
\begin{equation}\label{weiyi1}
\begin{split}
\frac{d}{dt}|\overline{W}^N|^2_2
\leq  &C\Big(1+\|W_1\|^2_{3}+\| W_2\|^2_{3}+\| \widehat{u}\|^2_{ \Xi}\Big)|\overline{W}^N|^2_2+M_N,
\end{split}
\end{equation}
where the error term $M_N$ is  given and estimated  by
\begin{equation*}
\begin{split}
M_N=&\int_{N\leq |x|\leq 2N}\frac{1}{N} (|W_1|+|\widehat{u}|)|\overline{W}|^2 \text{d}x\\
\leq & C(|W_1|_\infty+|\nabla \widehat{u}|_\infty)\|\overline{W}\|^2_{L^2(\mathbb{R}^3/B_N)}
\end{split}
\end{equation*}
for $0\leq t \leq T$. Then
one can  have
\begin{equation}\label{jiewei}
\begin{cases}
\displaystyle
\frac{d}{dt}|\overline{W}^N(t)|^2_{2}\leq J(t)|\overline{W}^N(t)|^2_{2}+M_N(t),\\[10pt]
\displaystyle
\int_{0}^{t}(J(s)+M_N(s))\ \text{\rm d}s\leq C_0  \quad  \text{for} \quad 0\leq t\leq T,
\end{cases}
\end{equation}
where the constant   $C_0>0 $ is  independent of $N$.

It follows from
$$
\int_0^t  M_N \text{d}t\rightarrow 0 \quad \text{as} \quad N\rightarrow \infty,
$$
and  Gronwall's inequality that $\overline{\phi}=\overline{v}=0$.
Then the uniqueness is obtained.

\textbf{Step 3}. The time-continuity  can be obtained via the  same arguments as in  Majda \cite{amj} for hyperbolic conservation laws. Here we omit its details.

Thus  the proof of Corollary \ref{serre} is complete.

\end{proof}

\section{Global-in-time inviscid limit}
Based on the uniform  energy estimates obtained in Sections 4-5,  now in this section   we prove the global-in-time inviscid  limit  from  the regular solutions of the  degenerate  viscous flow  to that of  the  inviscid flow which  are stated  in Theorem \ref{th2} and Corollary \ref{th3A}.

\subsection{Reformulations of the corresponding Cauchy problems}
From this subsection, we start to give the proof for Theorem \ref{th3A}. Corollary \ref{th2} can be obtained easily from the conclusion of Theorem \ref{th3A}. 

First, for  the  initial data $( \rho^\epsilon_0, u_0)$ satisfying  the assumptions $(A_1)$-$(A_2)$,   it follows from  Theorem \ref{tha} that for any $T>0$,  there exists a   unique  regular solution $(\rho^\epsilon, u^\epsilon)$ in $[0,T]\times \mathbb{R}^3$  to the  Cauchy problem  (\ref{eq:1.1})-(\ref{10000}) with (\ref{initial})-(\ref{far}) satisfying
$$
(\rho^\epsilon,u^\epsilon)|_{t=0}=\big(\rho^\epsilon_0(x)\geq 0,  u_0(x)\big) \quad  \text{for} \quad   x\in \mathbb{R}^3,
$$
  and  the  uniform estimates  \eqref{uniformtime}. We denote
  \begin{equation*}
(\varphi^\epsilon, W^\epsilon=(\phi^\epsilon, v^\epsilon=u^\epsilon- \widehat{u}))=\left((\rho^\epsilon)^{\frac{\delta-1}{2}},\sqrt{\frac{4A\gamma}{(\gamma-1)^2}}(\rho^\epsilon)^{\frac{\gamma-1}{2}}, u^\epsilon- \widehat{u}\right),
\end{equation*}
and the Cauchy problem  (\ref{eq:1.1})-(\ref{10000}) with (\ref{initial})-(\ref{far}) can be reformulated  into
\begin{equation}\label{li47-1xx}
\begin{cases}
\displaystyle
\quad \varphi^\epsilon_t+ v^\epsilon \cdot\nabla\varphi^\epsilon+\frac{\delta-1}{2}\varphi^\epsilon\text{div} v^\epsilon=-\widehat{u} \cdot\nabla\varphi^\epsilon-\frac{\delta-1}{2}\varphi^\epsilon\text{div} \widehat{u},\\[10pt]
\displaystyle
\quad W^\epsilon_t+\sum_{j=1}^3A_j(W^\epsilon) \partial_j W^\epsilon+\epsilon (\varphi^\epsilon)^2\mathbb{{L}}(v^\epsilon)\\
=\epsilon \mathbb{{H}}(\varphi^\epsilon)  \cdot \mathbb{{Q}}(v^\epsilon+ \widehat{u})+G^*(W^\epsilon, \varphi^\epsilon,  \widehat{u}),\\[10pt]
\quad (\varphi^\epsilon,W^\epsilon)|_{t=0}=(\varphi^\epsilon_0,W^\epsilon_0)=(\varphi^\epsilon_0,\phi^\epsilon_0,0),\quad x\in \mathbb{R}^3,\\[10pt]
\quad (\varphi^\epsilon,W^\epsilon)=(\varphi^\epsilon, \phi^\epsilon,  v^\epsilon)\rightarrow (0,0,0) \quad\quad   \text{as}\quad \quad  |x|\rightarrow \infty \quad \text{for} \quad  t\geq 0.
 \end{cases}
\end{equation}

Second,  on the one hand,  from the initial assumption of Theorem \ref{tha}, one has 
$$ \rho^\epsilon_0\geq 0 \quad \text{and} \quad  \big\|(\rho^\epsilon_0)^{\frac{\gamma-1}{2}}\big\|_3 \leq D_0$$
 for some constant  $D_0=D_0(\gamma, \delta, \alpha,\beta,  A, \kappa, \|u^\epsilon_0\|_{\Xi})>0$ that   is independent of $\epsilon$, where $u^\epsilon_0=u_0$ satisfying the initial assumption $(\rm A_1)$.   Then one gets  there exists a subsequence of  (still denoted by) $\rho^\epsilon_0$, which    converges to  a limit function $0\leq \rho^* \in  H^3(\mathbb{R}^3)$ in the following  weak sense:
\begin{equation}\label{ruojixian}
\begin{split}
(\rho^\epsilon_0)^{\frac{\gamma-1}{2}}\rightharpoonup  \rho^* \quad &\text{weakly \ in } \ H^3(\mathbb{R}^3) \quad \text{as} \quad \epsilon \rightarrow 0.
\end{split}
\end{equation} 
 
 On the other hand,  from \eqref{initialrelation1},  there exists one  function  $\rho_0(x)\in L^2(\mathbb{R}^3)$   such that
$$
\lim_{\epsilon\rightarrow 0}\Big|(\rho^\epsilon_0)^{\frac{\gamma-1}{2}}-\rho^{\frac{\gamma-1}{2}}_0\Big|_{2}=0,
$$
which, along with \eqref{ruojixian}, implies that 
$$
0\leq \rho^{\frac{\gamma-1}{2}}_0=\rho^* \in  H^3(\mathbb{R}^3).$$
 Then one has that   the  initial data $( \rho_0, u_0)$ satisfying  the assumptions $(\rm H_1)$-$(\rm H_2)$ in Corollary \ref{tha}.     It follows from  Corollary  \ref{tha}  that for any $T>0$,  there exists a   unique  regular solution $(\rho, u)$ in $[0,T]\times \mathbb{R}^3$  to the  Cauchy problem  \eqref{eq:1.1E} with \eqref{winitial}-\eqref{wfar} satisfying
$$
(\rho,u)|_{t=0}=\big(\rho_0(x)\geq 0,  u_0(x)\big) \quad  \text{for} \quad   x\in \mathbb{R}^3,
$$
  and  the  uniform estimates  \eqref{Euniformtime}. We denote
  \begin{equation*}
W=(\phi, v=u- \widehat{u})=\left(\sqrt{\frac{4A\gamma}{(\gamma-1)^2}}\rho^{\frac{\gamma-1}{2}}, u- \widehat{u}\right),
\end{equation*}
and the Cauchy problem  (\ref{eq:1.1E}) with \eqref{winitial}-\eqref{wfar} can be reformulated  into \eqref{cvli47-1}.

At last, we denote
  \begin{equation*}
\overline{W}^\epsilon=W^\epsilon-W=(\overline{\phi}^\epsilon,  \overline{v}^\epsilon)=(\phi^\epsilon-\phi, v^\epsilon-v).
\end{equation*}
Then it follows from systems \eqref{li47-1xx} and \eqref{cvli47-1} that $\overline{W}^\epsilon$ satisfies the following problem
\begin{equation}\label{zheng6}
\begin{cases}
\begin{split}\displaystyle
&\overline{W}^\epsilon_t+\sum_{j=1}^3 A_j(W^\epsilon) \partial_j \overline{W}^\epsilon
+\epsilon(\varphi^\epsilon)^2\mathbb{L}(v^\epsilon)\\
\displaystyle
=&-\sum_{j=1}^3A_j(\overline{W}^\epsilon)\partial_j W+G(\overline{W}^\epsilon, \widehat{u})-
\epsilon D\big((\varphi^\epsilon)^2, \nabla^2  \widehat{u}\big)+\epsilon \mathbb{{H}}(\varphi^\epsilon)  \cdot \mathbb{{Q}}(v^\epsilon+ \widehat{u}),\\[10pt]
\displaystyle
&  \overline{W}^\epsilon|_{t=0}=\overline{W}^\epsilon_0=\big(\rho^\epsilon_0-\rho_0,0\big) \quad \text{for} \quad x\in \mathbb{R}^3,\\[10pt]
\displaystyle
&\overline{W}^\epsilon\rightarrow 0 \quad \text{as } \quad  |x|\rightarrow \infty \quad \text{for} \quad  t\geq 0.
\end{split}
\end{cases}
\end{equation}

Next we need to do a series of energy estimates for $\overline{W}^\epsilon$.
Before this, for regular solutions $(W^\epsilon,W)$ obtained above, we list some useful time weighted  estimates.
Denote 
$$\iota=(1-\eta^*)b_*>1,$$  and  also $\iota<2$ from \eqref{Aeq:2.6q}, where the definitions of $\eta^*$ and $b_*$ can be found in Lemma \ref{ll2}.
According to Lemma 4.1, Proposition \ref{p1}, \eqref{E:gz77} and the Gagliardo-Nirenberg inequality,  for some constant $C_0=C_0(A, \alpha, \beta, \gamma, \delta, \rho^\epsilon_0, \rho_0, u_0)>0$, it holds that for all $t\geq 0$,
\begin{equation}\label{E:gz77xx}
\begin{split}
Z(t)\leq C_0(1+t)^{-\iota},\quad |\nabla^k W^\epsilon|_2\leq &C_0(1+t)^{-(k-2.5+\iota)},\\[2pt] 
|W^\epsilon|_\infty\leq  C_0(1+t)^{1-\iota},\quad  |\nabla W^\epsilon|_\infty\leq & C_0(1+t)^{-\iota},\\[2pt]
 |\nabla W^\epsilon|_3\leq   C_0(1+t)^{1-\iota},\quad  |\nabla^2 W^\epsilon|_3\leq  & C_0(1+t)^{-\iota},\\[2pt]
     \Theta(k)|\nabla^k \varphi^\epsilon|_2\leq & C_0(1+t)^{-(k-3+\iota)},  \\[2pt] 
     |\varphi^\epsilon|_\infty\leq C_0(1+t)^{1.5-\iota},\quad |\nabla \varphi^\epsilon|_\infty\leq & C_0
\epsilon^{-\frac{1}{4}}(1+t)^{0.5-\iota},\\[2pt] 
 |\nabla\varphi^\epsilon|_3\leq C_0(1+t)^{1.5-\iota},\quad
 |\nabla^2\varphi^\epsilon|_3\leq & C_0\epsilon^{-\frac{1}{4}}(1+t)^{0.5-\iota}.
\end{split}\end{equation}
From Section 5, it is obvious  that the solution $W$ to the Cauchy problem \eqref{cvli47-1} also satisfies the same  estimates as the ones on $W^\epsilon$ shown  in the first three lines of  \eqref{E:gz77xx}.
For the Cauchy problem \eqref{zheng6},  actually one can obtain the following energy estimates:
\begin{theorem}\label{T:7.1}
If $(\varphi^\epsilon, W^\epsilon)$ and $W$ are the regular solutions to  the Cauchy problems \eqref{li47-1xx}  and
\eqref{cvli47-1} respectively, then for arbitrarily large time $T>0$,  one has
\begin{equation*}\begin{split}
|\overline{W}^\epsilon(t)|^2_2
\leq& (1+t)^C\Big(|\overline{W}^\epsilon_0|^2_2+C\epsilon^2\ln(1+t)\Big) \quad \text{when}\quad \iota=\frac{7}{4};\\
|\overline{W}^\epsilon(t)|^2_2
\leq& (1+t)^C\Big(|\overline{W}^\epsilon_0|^2_2+C\epsilon^2\big|(1+t)^{7-4\iota}-1\big|\Big)\quad \text{when} \quad \iota\neq \frac{7}{4},\\
|\overline{W}^\epsilon(t)|^2_{D^1}\leq& (1+t)^C\Big(|\overline{W}^\epsilon_0|^2_{D^1}+C\epsilon^2\ln(1+t)\Big) \quad \text{when}\quad \iota=\frac{5}{4};\\
|\overline{W}^\epsilon(t)|^2_{D^1}\leq& (1+t)^C\Big(|\overline{W}^\epsilon_0|^2_{D^1}+C\epsilon^2\big|(1+t)^{5-4\iota}-1\big|\Big)\quad \text{when} \quad \iota\neq \frac{5}{4},\\
|\overline{W}^\epsilon(t)|^2_{D^2}\leq& \exp\big( C((1+t)^{\frac{1}{2}}-1) \big) \Big(|\overline{W}^\epsilon_0|^2_{D^2}+C|\overline{W}^\epsilon_0|^2_{D^1}\int_0^t (1+s)^{-4+C}\ \text{\rm d}s\\
&+C\epsilon\big(1+\int_0^t (1+s)^{-4+C}\ln (1+s) \ \text{\rm d}s\big)\Big)\quad \text{when}\quad \iota=\frac{5}{4};\\
|\overline{W}^\epsilon(t)|^2_{D^2}\leq&  \exp\big( C((1+t)^{3-2\iota}-1) \big)\Big(|\overline{W}^\epsilon_0|^2_{D^2}+C|\overline{W}^\epsilon_0|^2_{D^1}\int_0^t (1+s)^{1-4\iota+C}\ \text{\rm d}s\\
&+C\epsilon\big(1+\int_0^t (1+s)^{1-4\iota+C}\big|(1+s)^{5-4\iota}-1\big| \ \text{\rm d}s\big)\Big) \\
&\text{when}\quad \iota \in \big(1,\frac{5}{4}\big)\cup \big(\frac{5}{4},\frac{3}{2} \big);\\
|\overline{W}^\epsilon(t)|^2_{D^2}\leq& (1+t)^C\Big(|\overline{W}^\epsilon_0|^2_{D^2}+C|\overline{W}^\epsilon_0|^2_{D^1}\int_0^t (1+s)^{1-4\iota+C}\ \text{\rm d}s+C\epsilon\\
&+C\epsilon\int_0^t (1+s)^{1-4\iota+C}\big|(1+s)^{5-4\iota}-1\big| \ \text{\rm d}s\Big)\quad  \text{when} \quad \iota\in \big[\frac{3}{2}, 2\big),
\end{split}\end{equation*}
for any  $t\in [0,T]$ and the  constant  $C=C(A, \alpha, \beta, \gamma, \delta, \rho^\epsilon_0, \rho_0, u_0)>0$.
\end{theorem}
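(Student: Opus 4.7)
The plan is to perform energy estimates on the quasi-symmetric hyperbolic system \eqref{zheng6} for $\overline{W}^\epsilon$ at the three levels $L^2$, $D^1$ and $D^2$, exploiting the symmetric structure of the matrices $\{A_j(W^\epsilon)\}$ to handle the principal-order transport terms and treating every $\epsilon$-weighted contribution on the right-hand side as a forcing, bounded through the uniform decay estimates \eqref{E:gz77xx} for $(\varphi^\epsilon,W^\epsilon,W,\widehat{u})$. The piecewise structure of the conclusion will emerge from Gronwall's inequality applied to the resulting ODE $\frac{d}{dt}F(t)\leq A(t)F(t)+B(t)$: the various thresholds ($\iota=\tfrac{7}{4},\tfrac{5}{4},\tfrac{3}{2}$) are exactly the values of $\iota$ at which the driving exponent in $\int(1+s)^{\,\cdots+C}\,\mathrm{d}s$ crosses $-1$ (giving a logarithm) or at which the coefficient $A(t)$ ceases to be integrable at infinity (triggering the exponential factor).

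For the $L^2$ bound I multiply \eqref{zheng6} by $\overline{W}^\epsilon$ and integrate. Symmetry of $A_j(W^\epsilon)$ converts the principal part into $-\tfrac{1}{2}\int\sum_j\partial_j A_j(W^\epsilon)|\overline{W}^\epsilon|^2$, bounded by $C|\nabla W^\epsilon|_\infty|\overline{W}^\epsilon|_2^2$. The semilinear terms $\sum_jA_j(\overline{W}^\epsilon)\partial_j W$ and the transport part of $G$ contribute $(|\nabla W|_\infty+|\nabla\widehat{u}|_\infty)|\overline{W}^\epsilon|_2^2\leq \tfrac{C}{1+t}|\overline{W}^\epsilon|_2^2$. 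The three $\epsilon$-order forcings $\epsilon(\varphi^\epsilon)^2\mathbb{L}(v^\epsilon)$, $\epsilon D((\varphi^\epsilon)^2,\nabla^2\widehat{u})$, and $\epsilon\mathbb{H}(\varphi^\epsilon)\cdot\mathbb{Q}(v^\epsilon+\widehat{u})$ are each estimated in $L^2$ via \eqref{E:gz77xx}, producing terms of size $C\epsilon(1+t)^{3-3\iota}$ (the leading one coming from $|\varphi^\epsilon|_\infty^2|\nabla^2 v^\epsilon|_2$). Cauchy--Schwarz yields
\begin{equation*}
\frac{d}{dt}|\overline{W}^\epsilon|_2^2\leq \frac{C}{1+t}|\overline{W}^\epsilon|_2^2+C\epsilon^2(1+t)^{6-4\iota},
\end{equation*}
and Gronwall gives the stated dichotomy at $\iota=\tfrac{7}{4}$. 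The $D^1$ estimate runs identically after applying $\nabla$ and pairing with $\nabla\overline{W}^\epsilon$: the commutator $[\nabla,A_j(W^\epsilon)]\partial_j\overline{W}^\epsilon$ is handled exactly as in the proof of Lemma~\ref{L:3.2}, and differentiation of the viscous forcings (estimated using \eqref{E:gz77xx}) produces $C\epsilon^2(1+t)^{4-4\iota}$, whose integration gives the critical threshold $\iota=\tfrac{5}{4}$.

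The $D^2$ estimate is the main obstacle and must be organized with care. Applying $\nabla^2$ and pairing with $\nabla^2\overline{W}^\epsilon$ generates two new difficulties. First, the commutator from $\nabla^2(A_j(\overline{W}^\epsilon)\partial_j W)$ contains a coupling term of type $\overline{W}^\epsilon\cdot\nabla^3 W\cdot\nabla^2\overline{W}^\epsilon$, which after Sobolev embedding and the $D^1$-bound already derived injects the term $C|\overline{W}^\epsilon_0|_{D^1}^2\int_0^t(1+s)^{1-4\iota+C}\,\mathrm{d}s$ into the forcing. Second, and most importantly, differentiating the viscous term twice produces a top-order contribution $\epsilon(\varphi^\epsilon)^2\nabla^4 v^\epsilon$ that is \emph{not} pointwise-in-time controlled by \eqref{E:gz77xx}; only the time-integrated weighted bound \eqref{yuend},
\begin{equation*}
\epsilon\int_0^t(1+s)^{2\gamma_3}|\varphi^\epsilon\nabla^4 v^\epsilon|_2^2\,\mathrm{d}s\leq C_0,
\end{equation*}
is available, so Cauchy--Schwarz in time converts this into a single power of $\epsilon$ (not $\epsilon^2$), which explains why the convergence rate at the $D^2$ level is only $\epsilon^{1/2}$. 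Finally, the principal coefficient multiplying $|\overline{W}^\epsilon|_{D^2}^2$ on the right-hand side, built from $|\nabla W^\epsilon|_\infty$, $|\nabla^2 W|_3$ and the $\epsilon$-weighted top-order piece, is of order $(1+t)^{2-2\iota}$ in the regime $\iota<\tfrac{3}{2}$; its primitive $(1+t)^{3-2\iota}$ then produces the exponential factor $\exp\!\big(C((1+t)^{3-2\iota}-1)\big)$ in Gronwall. For $\iota\geq\tfrac{3}{2}$ this coefficient is integrable on $[0,\infty)$, so Gronwall merely outputs polynomial growth $(1+t)^C$. The remaining sub-threshold at $\iota=\tfrac{5}{4}$ corresponds to the logarithmic integrability of $\int_0^t(1+s)^{-4+C}\ln(1+s)\,\mathrm{d}s$ produced by iterating the $\epsilon$-term through the $D^1$-estimate.
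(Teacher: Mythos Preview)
Your energy-estimate plan is structurally correct and matches the paper's scheme term by term at all three levels: the ODE you obtain for $|\overline{W}^\epsilon|_2^2$, the threshold exponents, the coupling mechanism injecting the $D^1$ bound into the $D^2$ forcing, the role of the time-integrated bound $\epsilon\int_0^t(1+s)^{2\gamma_3}|\varphi^\epsilon\nabla^4 v^\epsilon|_2^2\,\mathrm{d}s\leq C_0$ in degrading the $D^2$ rate to $\epsilon$, and the appearance of $(1+t)^{2-2\iota}$ as the principal coefficient at the $D^2$ level---all of this is right.

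There is, however, a genuine gap you do not address. When you pair $G(\overline{W}^\epsilon,\widehat{u})$ with $\overline{W}^\epsilon$ (and likewise at the $D^1$, $D^2$ levels), converting the transport term $\sum_j\widehat{u}^{(j)}\partial_j\overline{W}^\epsilon$ into $\tfrac{1}{2}\int(\mathrm{div}\,\widehat{u})|\overline{W}^\epsilon|^2$ requires integration by parts. Since $|\widehat{u}(t,x)|\to\infty$ as $|x|\to\infty$ (it grows linearly, cf.~Proposition~\ref{p1}), the boundary term $\int_{\partial B_R}|\overline{W}^\epsilon|^2\,\widehat{u}\cdot\nu_R\,\mathrm{d}S$ is not known to vanish: membership of $\overline{W}^\epsilon$ in $H^3$ alone gives no pointwise decay rate. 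The paper flags this explicitly in Section~3.3 and resolves it by inserting a smooth cut-off $F_N=F(|x|/N)$: one defines $\overline{W}=\overline{W}^\epsilon F_N$, derives the system \eqref{zheng7}, and proves the three estimates (Lemmas~\ref{L:7.1}--\ref{L:7.3}) for the compactly supported quantity $\overline{W}$ with explicit error terms $J_N^*,I_N^*,L_N^*$. The key observation \eqref{E:7.2bcd} that $|\widehat{u}|\leq 2N|\nabla\widehat{u}|_\infty$ on the annulus $N<|x|<2N$ makes the boundary commutators harmless, and the errors are shown to vanish as $N\to\infty$ in Subsection~6.3. Your direct approach would be complete once this approximation step is supplied; without it, the energy identity you write down is formally correct but not justified.
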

In order to  prove this theorem, we first need to consider the case when the mass density is compactly supported.
\subsection{Compactly supported approximation}
Denote
$F_N=F(|x|/N)$ (see \eqref{eq:2.6-77A}), and
\begin{equation*}\begin{split}
  \overline{W}=\overline{W}^\epsilon F_N=(\overline{\phi}^N,\overline{v}^N).
\end{split}
\end{equation*}
It follows from \eqref{zheng6} that   $\overline{W}$ solves the  following problem
\begin{equation}\label{zheng7}
\begin{cases}
\begin{split}\displaystyle
&\overline{W}_t+\sum_{j=1}^3 A_j(W^\epsilon) \partial_j \overline{W}-\sum_{j=1}^3 A_j(W^\epsilon)\overline{W}^\epsilon \partial_j F_N \\
\displaystyle
=&-\sum_{j=1}^3A_j(\overline{W})\partial_j W-B( \nabla\widehat{u},\overline{W})-\sum_{j=1}^3\widehat{u}^{(j)}\partial_j\overline{W}+\sum_{j=1}^3 \widehat{u}^{(j)}\partial_j F_N\overline{W}^\epsilon\\
&-\epsilon F_N \Big((\varphi^\epsilon)^2\mathbb{L}(v^\epsilon)+D\big((\varphi^\epsilon)^2, \nabla^2  \widehat{u}\big)
-\mathbb{{H}}(\varphi^\epsilon)  \cdot \mathbb{{Q}}(v^\epsilon+ \widehat{u})\Big),\\[6pt]
\displaystyle
&  \overline{W}|_{t=0}=\overline{W}_0=\big((\rho^\epsilon_0-\rho_0)F_N,0\big) \quad \text{for} \quad x\in \mathbb{R}^3,\\[6pt]
\displaystyle
&\overline{W}\rightarrow 0 \quad \text{as }  \quad  |x|\rightarrow \infty \quad \text{for} \quad  t\geq 0.
\end{split}
\end{cases}
\end{equation}

\begin{lemma}\label{L:7.1} 
\begin{equation}\label{E:7.1}\begin{split}
|\overline{W}(t)|^2_2
\leq& (1+t)^C\Big(|\overline{W}_0|^2_2+C\epsilon^2\ln(1+t)+J^*_N\Big) \quad \text{when}\quad \iota=\frac{7}{4};\\
|\overline{W}(t)|^2_2
\leq& (1+t)^C\Big(|\overline{W}_0|^2_2+C\epsilon^2\big|(1+t)^{7-4\iota}-1\big|+J^*_N\Big)\quad \text{when} \quad \iota\neq \frac{7}{4},
\end{split}\end{equation}
for any $0\leq t \leq T$ and the  constant  $C=C(A, \alpha, \beta, \gamma, \delta, \rho^\epsilon_0, \rho_0, u_0)>0$, and 
\begin{equation*}\begin{split}
J^*_N=&\int_0^T C\|\overline{W}\|_{L^2(\R^3/B_N)}\|\overline{W}^\epsilon\|_{L^2(\R^3/B_N)}\big|\nabla\widehat{u}|_\infty \ \text{\rm d}t+C(T)N^{-1},
\end{split}
\end{equation*}
for the  constant  $C(T)=C(A, \alpha,  \beta, \gamma, \delta, \rho^\epsilon_0, \rho_0, u_0,T)>0$ that is independent of $N$.
\end{lemma}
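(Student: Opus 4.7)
The plan is to test equation \eqref{zheng7} against $\overline{W}$ in $L^2(\mathbb{R}^3)$ and derive a Gronwall-type inequality. The symmetric hyperbolic part $\sum_{j}A_j(W^\epsilon)\partial_j\overline W$ and the reaction $\sum_{j}A_j(\overline W)\partial_j W$ are standard: after one integration by parts the former is bounded by $C|\nabla W^\epsilon|_\infty|\overline W|_2^2$, while the latter is bounded by $C|\nabla W|_\infty|\overline W|_2^2$, and both are $\le C(1+t)^{-\iota}|\overline W|_2^2$ by \eqref{E:gz77xx}. The drift $\sum_{j}\widehat{u}^{(j)}\partial_j\overline W$ is integrated by parts to yield $-\tfrac12(\text{div}\,\widehat u)|\overline W|^2$; coupled with $B(\nabla\widehat u,\overline W)$, the net bound is $\le C|\nabla\widehat u|_\infty|\overline W|_2^2\le C(1+t)^{-1}|\overline W|_2^2$ via Proposition \ref{p1}.

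The two cutoff-commutator terms $-\sum_{j}A_j(W^\epsilon)\overline W^\epsilon\partial_j F_N$ and $\sum_{j}\widehat u^{(j)}\partial_j F_N\overline W^\epsilon$ are both supported on the annulus $N\le|x|\le 2N$ and assemble into the boundary error $J_N^*$. For the first, $|\partial F_N|\le CN^{-1}$ together with the $L^\infty$ bound on $W^\epsilon$ produces an $O(N^{-1})$ contribution integrable in time; for the second, the crucial cancellation $|\widehat u|\le C(1+t)^{-1}|x|\sim C(1+t)^{-1}N$ on the annulus defeats the $N^{-1}$ factor and leaves the displayed $\int_0^T|\nabla\widehat u|_\infty\|\overline W\|_{L^2(\R^3/B_N)}\|\overline W^\epsilon\|_{L^2(\R^3/B_N)}\,\text{d}t$ piece of $J_N^*$. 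Of the three $O(\epsilon)$ source terms the dominant one is the quasi-pressure $\epsilon\,\mathbb H(\varphi^\epsilon)\cdot\mathbb Q(v^\epsilon+\widehat u)=2\epsilon\,\varphi^\epsilon\nabla\varphi^\epsilon\cdot Q(u^\epsilon)$: from $|\varphi^\epsilon|_\infty\le C(1+t)^{3/2-\iota}$, $|\nabla\varphi^\epsilon|_2\le C(1+t)^{2-\iota}$ and $|\nabla u^\epsilon|_\infty\le C(1+t)^{-1}$ (since $\iota>1$ makes $|\nabla\widehat u|_\infty$ dominate $|\nabla v^\epsilon|_\infty$) one obtains $|\epsilon\,\nabla(\varphi^\epsilon)^2\cdot Q(u^\epsilon)|_2\le C\epsilon(1+t)^{5/2-2\iota}$, and the weighted Young inequality $ab\le\tfrac12(1+t)a^2+\tfrac{b^2}{2(1+t)}$ produces the source $C\epsilon^2(1+t)^{6-4\iota}+\tfrac{|\overline v^N|_2^2}{2(1+t)}$. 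The term $\epsilon\,D((\varphi^\epsilon)^2,\nabla^2\widehat u)$ admits the same $(1+t)^{5/2-2\iota}$ scaling via $|\nabla^2\widehat u|_\infty\le C(1+t)^{-2}$, while $\epsilon(\varphi^\epsilon)^2\mathbb L(v^\epsilon)$ gives the strictly smaller exponent $(1+t)^{8-6\iota}$, absorbed for $\iota>1$.

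Assembling the above, we arrive at the differential inequality
\begin{equation*}
\frac{d}{dt}|\overline W|_2^2\le\frac{C}{1+t}|\overline W|_2^2+C\epsilon^2(1+t)^{6-4\iota}+M_N(t),\qquad\int_0^t M_N\,\text{d}s\le J_N^*.
\end{equation*}
Multiplying by the integrating factor $(1+t)^{-C}$ and integrating gives $|\overline W(t)|_2^2\le(1+t)^C\bigl(|\overline W_0|_2^2+C\epsilon^2\int_0^t(1+s)^{6-4\iota-C}\,\text{d}s\bigr)+J_N^*$, and the inner integral evaluates to $\bigl((1+t)^{7-4\iota-C}-1\bigr)/(7-4\iota-C)$ for $\iota\ne 7/4$ and to $\ln(1+t)$ at the resonant value $\iota=7/4$, which yields \eqref{E:7.1} after the generic $C$ absorbs dimensionless factors. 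The main obstacle is the linear growth of $\widehat u$ at infinity, which forces the $F_N$ truncation and relies on the annulus cancellation $|\widehat u||\partial F_N|\sim|\nabla\widehat u|_\infty$; once this is in place, the viscous scaling exponents follow by routine bookkeeping from \eqref{E:gz77xx}.
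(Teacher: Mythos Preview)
Your proposal is correct and follows essentially the same route as the paper: test \eqref{zheng7} against $\overline W$, use the symmetry of $A_j$ and integration by parts to control the hyperbolic and $\widehat u$-drift terms by $C(1+t)^{-1}|\overline W|_2^2$, exploit the annulus cancellation $|\widehat u|\,|\partial F_N|\lesssim|\nabla\widehat u|_\infty$ for the dominant cutoff error, and track the $\epsilon$-sources to the leading scale $\epsilon(1+t)^{5/2-2\iota}$ before applying Gronwall. One cosmetic slip: once you multiply by the integrating factor $(1+t)^{-C}$, the resonance in $\int_0^t(1+s)^{6-4\iota-C}\,\text{d}s$ would occur at $6-4\iota-C=-1$, not at $\iota=7/4$; the paper avoids this by using the cruder Gronwall bound $e^{-\int_0^s a}\le1$, which gives $\int_0^t(1+s)^{6-4\iota}\,\text{d}s$ directly and places the $\ln(1+t)$ exactly at $\iota=7/4$ as stated.
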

\begin{proof}
Multiplying $\eqref{zheng7}$ by $2\overline{W}$ on both sides and integrating over $\R^3$, then one has
\begin{equation*}
\begin{split}
\displaystyle
\frac{d}{dt} &\int |\overline{W}|^2+2\sum_{j=1}^3\int\overline{W}^\top A_j({W}^\epsilon) \partial_j \overline{W}
+2\sum_{j=1}^3\int \overline{W}^\top A_j(\overline{W}) \partial_j {W}\\
=&-2\int B(\nabla\widehat{u},\overline{W}) \cdot  \overline{W}+\sum_{j=1}^3\int(\overline{W})^\top\partial_j\widehat{u}^{(j)}\overline{W}\\
&-2\epsilon F_N\int\Big((\varphi^\epsilon)^2 \mathbb{L}(v^\epsilon)+D\big((\varphi^\epsilon)^2, \nabla^2  \widehat{u}\big)- \mathbb{{H}}(\varphi^\epsilon)  \cdot \mathbb{{Q}}(v^\epsilon+ \widehat{u})\Big) \cdot  \overline{W}\\
&+2\sum_{j=1}^3\int\overline{W}^\top A_j(W^\epsilon)\overline{W}^\epsilon \partial_j F_N+
2\sum_{j=1}^3\int\overline{W}^\top \widehat{u}^{(j)}\partial_j F_N\overline{W}^\epsilon.
\end{split}
\end{equation*}
It follows from the  integration by parts and H\"older's inequality that
\begin{equation}\label{E:7.2abca}
\begin{split}
\displaystyle
\frac{d}{dt}|\overline{W}|^2_2
\leq&C\big(|\nabla W^\epsilon|_\infty+|\nabla W|_\infty+|\nabla \widehat{u}|_\infty\big)|\overline{W}|^2_2
+C\epsilon \big(|\varphi^\epsilon|^2_\infty(
|\nabla^2 v^\epsilon|_2\\
&+|\nabla^2 \widehat{u}|_2)
+|\varphi^\epsilon|_\infty|\nabla \varphi^\epsilon|_2(|\nabla v^\epsilon|_\infty+|\nabla \widehat{u}|_\infty)\big)|\overline{W}|_2+J_N^1+J_N^2\\[2pt]
\leq & C(1+t)^{-1}|\overline{W}|^2_2+C\epsilon  (1+t)^{2.5-2\iota}|\overline{W}|_2+J_N^1+J_N^2,\\
\end{split}
\end{equation}
where one has used the fact $1<\iota <2$. Noticing that 
\begin{equation}\label{E:7.2bcd}|\widehat{u}|\leq 2N|\nabla\widehat{u}|,\quad |\nabla^k F_N(|x|/N)|\leq CN^{-k},\quad N<|x|<2N, \quad
k=0,1,2,3,
\end{equation}
then  $J_N^1$ and $J_N^2$ can be estimated as follows:
\begin{equation}
\label{E:7.2bc}\begin{split}
J_N^1\doteq&2\sum_{j=1}^3\int_{N<|x|<2N}\overline{W}^\top A_j(W^\epsilon)\overline{W}^\epsilon\partial_jF_N  \ \text{\rm d}x\\
\leq &C|\nabla F_N|_\infty|\overline{W}|_2|\overline{W}^\epsilon|_2\big|W^\epsilon|_\infty
\leq CN^{-1}(1+t)^{3.5-2\iota}|\overline{W}|_2,\\
J_N^2\doteq&2\sum_{j=1}^3\int_{N<|x|<2N}\overline{W}^\top\widehat{u}^{(j)}
\partial_jF_N\overline{W}^\epsilon  \ \text{\rm d}x \\
\leq& C\|\widehat{u} \nabla F_N\|_{L^\infty(N<|x|<2N)}\|\overline{W}\|_{L^2(\R^3/B_N)}\|\overline{W}^\epsilon\|_{L^2(\R^3/B_N)}\\[2pt]
\leq& C\|\overline{W}\|_{L^2(\R^3/B_N)}\|\overline{W}^\epsilon\|_{L^2(\R^3/B_N)}\big|\nabla\widehat{u}|_\infty=J_N,\\[2pt]
\end{split}
\end{equation}
where one has  used the fact 
$$|\overline{W}^\epsilon|_2=|W^\epsilon-W|_2\leq \big(|W^\epsilon|_2+|W|_2\big).$$

Then according to \eqref{E:7.2abca}-\eqref{E:7.2bc}, one has
\begin{equation*}\label{E:7.2abc}
\begin{split}
\displaystyle
\frac{d}{dt}|\overline{W}|^2_2
\leq & C(1+t)^{-1}|\overline{W}|^2_2+C\epsilon
(1+t)^{2.5-2\iota}|\overline{W}|_2\\
&+N^{-1}(1+t)^{3.5-2\iota}|\overline{W}|_2+J_N\\
\leq  &C(1+t)^{-1}|\overline{W}|^2_2+C\epsilon^2
(1+t)^{6-4\iota}+N^{-2}(1+t)^{8-4\iota}+J_N.\end{split}
\end{equation*}

If $\iota=\frac{7}{4}$ that means $6-4\iota=-1$,
according to   Gronwall's inequality, one has
\begin{equation*}\begin{split}|\overline{W}(t)|^2_2\leq& (1+t)^C\Big(|\overline{W}_0|^2_2+C\epsilon^2\ln(1+t)+\frac{1}{2}N^{-2}\big((1+t)^{2}-1\big)+ \tilde{J}_N\Big);
\end{split}\end{equation*}
if $\iota\neq\frac{7}{4}$,
according to Gronwall's inequality, one has
\begin{equation*}\begin{split}|\overline{W}(t)|^2_2\leq& (1+t)^C\Big(|\overline{W}_0|^2_2+\frac{C\epsilon^2}{7-4\iota}\big((1+t)^{7-4\iota}-1\big)+\frac{N^{-2}}{9-4\iota}\big((1+t)^{9-4\iota}-1\big)+\tilde{J}_N\Big),
\end{split}\end{equation*}
where  $ \tilde{J}_N=\int_0^t J_N \text{d}s$. Thus, (\ref{E:7.1}) follows immediately.
 \end{proof}
Now we consider  the estimate of $|\partial_x^\zeta \overline{W}|_2$ when $ |\zeta|=1$, $2$.  For simplicity, we denote $Q=Q(v^\epsilon+ \widehat{u})$ in the rest of this section.

\begin{lemma}\label{L:7.2} 
\begin{equation}\label{E:7.3}\begin{split}
|\overline{W}(t)|^2_{D^1}\leq& (1+t)^C\Big(|\overline{W}_0|^2_{D^1}+C\epsilon^2\ln(1+t)+ I^*_N\Big) \quad \text{when}\quad \iota=\frac{5}{4};\\
|\overline{W}(t)|^2_{D^1}\leq& (1+t)^C\Big(|\overline{W}_0|^2_{D^1}+C\epsilon^2\big|(1+t)^{5-4\iota}-1\big|+I^*_N\Big)\quad \text{when} \quad \iota\neq \frac{5}{4},
\end{split}\end{equation}
for any $0\leq t \leq T$, where the constant  $C=C(A, \alpha, \beta, \gamma, \delta, \rho^\epsilon_0, \rho_0, u_0)>0$, and 
\begin{equation*}\begin{split}
I^*_N=&\int_0^T C\|\nabla \overline{W}^\epsilon\|_{L^2(\R^3/B_N)}
|\nabla  \widehat{u}|_\infty\|\nabla\overline{W}\|_{L^2(\R^3/B_N)} \ \text{\rm d}t+C(T)N^{-1},
\end{split}
\end{equation*}
for the  constant  $C(T)=C(A, \alpha, \beta, \gamma, \delta, \rho^\epsilon_0, \rho_0, u_0,T)>0$ that is independent of $N$.

\end{lemma}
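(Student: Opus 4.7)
The plan mirrors the strategy of Lemma \ref{L:7.1} at the level of one spatial derivative. I would apply $\partial_x^\zeta$ with $|\zeta|=1$ to \eqref{zheng7}, multiply the resulting equation by $2\partial_x^\zeta\overline{W}$, and integrate over $\mathbb{R}^3$. The principal part is handled by integration by parts using the symmetry of $A_j(W^\epsilon)$, producing $\tfrac12\sum_j\int\partial_j A_j(W^\epsilon)|\partial_x^\zeta\overline{W}|^2$, which is bounded by $C|\nabla W^\epsilon|_\infty|\nabla\overline{W}|_2^2\leq C(1+t)^{-\iota}|\nabla\overline{W}|_2^2$ thanks to \eqref{E:gz77xx}. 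The commutator $[\partial_x^\zeta, A_j(W^\epsilon)\partial_j]\overline{W}$ and the differentiated source $\partial_x^\zeta(A_j(\overline{W})\partial_j W)$ are then estimated by H\"older's inequality, Sobolev embedding, and \eqref{E:gz77xx}, each contributing at most $C(1+t)^{-\iota}|\nabla\overline{W}|_2^2$ (for the second, the only delicate piece is $A_j(\overline{W})\partial_j\nabla W$, which I treat by $|\overline{W}|_6|\nabla^2 W|_3|\nabla\overline{W}|_2\leq C|\nabla^2 W|_3|\nabla\overline{W}|_2^2$). The lower-order terms $\partial_x^\zeta B(\nabla\widehat u,\overline W)$ and $\partial_x^\zeta(\widehat u^{(j)}\partial_j\overline W)$ are controlled using $|\nabla\widehat u|_\infty\leq C(1+t)^{-1}$ from Proposition \ref{p1}.

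Next I would estimate the genuinely $\epsilon$-dependent source
\[
-\epsilon F_N\Big((\varphi^\epsilon)^2\mathbb{L}(v^\epsilon)+D\big((\varphi^\epsilon)^2,\nabla^2\widehat u\big)-\mathbb{H}(\varphi^\epsilon)\cdot\mathbb{Q}(v^\epsilon+\widehat u)\Big).
\]
Applying $\partial_x^\zeta$ and using \eqref{E:gz77xx} (in particular $|\varphi^\epsilon|_\infty\leq C(1+t)^{1.5-\iota}$, $|\nabla\varphi^\epsilon|_\infty\leq C\epsilon^{-1/4}(1+t)^{0.5-\iota}$, $|\nabla^3 v^\epsilon|_2\leq C(1+t)^{-0.5-\iota}$), combined with the decay rates for $\widehat u$ from Proposition \ref{p1}, each resulting term is bounded in $L^2$ by at most $C\epsilon(1+t)^{2-2\iota}$. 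Pairing against $\partial_x^\zeta\overline W$ via Cauchy's inequality yields
\[
\epsilon\,\big|\text{viscous source}\big|_{D^1}|\nabla\overline W|_2\leq \tfrac12(1+t)^{-1}|\nabla\overline W|_2^2+C\epsilon^2(1+t)^{4-4\iota}.
\]

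The cutoff terms $\sum_j A_j(W^\epsilon)\overline{W}^\epsilon\partial_j F_N$ and $\widehat u^{(j)}\partial_j F_N\overline{W}^\epsilon$ appearing in \eqref{zheng7}, as well as those produced when $\partial_x^\zeta$ falls on $F_N$, are treated exactly as in the proof of Lemma \ref{L:7.1}: using \eqref{E:7.2bcd} and $|\widehat u\,\nabla F_N|_\infty\leq C|\nabla\widehat u|_\infty$, they yield a contribution bounded precisely by the quantity defining $I^*_N$, plus an $O(N^{-1})$ tail absorbed into $C(T)N^{-1}$. Collecting everything produces
\[
\frac{d}{dt}|\nabla\overline W|_2^2\leq C(1+t)^{-1}|\nabla\overline W|_2^2+C\epsilon^2(1+t)^{4-4\iota}+\mathcal{I}_N(t),
\]
with $\int_0^t\mathcal{I}_N(s)\,ds\leq I^*_N$. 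Gronwall's inequality then finishes the proof: since the coefficient $C(1+t)^{-1}$ yields an integrating factor of the form $(1+t)^C$, integration of $(1+t)^{4-4\iota}$ gives $\ln(1+t)$ exactly when $\iota=5/4$ and $|(1+t)^{5-4\iota}-1|/|5-4\iota|$ otherwise, producing the two cases stated in \eqref{E:7.3}.

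I expect the main obstacle to be the interplay between the $\epsilon^{-1/4}$ factor carried by $|\nabla\varphi^\epsilon|_\infty$ and the explicit $\epsilon$ prefactor of the viscous source. Every summand of $\partial_x^\zeta[\epsilon(\varphi^\epsilon)^2\mathbb{L}(v^\epsilon)+\cdots]$ must be checked to produce at worst an $O(\epsilon^{3/4})$ amplitude (hence $O(\epsilon)$ after a further Cauchy inequality) with temporal growth no faster than $(1+t)^{2-2\iota}$; the condition $\iota>1$, together with \eqref{E:gz77xx}, makes this hold, because the genuinely higher-order pieces (such as $(1+t)^{2.5-3\iota}$) are dominated by, and can be absorbed into, the rougher target rate $(1+t)^{2-2\iota}$.
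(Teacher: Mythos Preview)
Your approach is essentially the same as the paper's: differentiate \eqref{zheng7} once, multiply by $2\partial_x^\zeta\overline{W}$, integrate, and sort the result into symmetric-hyperbolic commutators, $\widehat u$-transport terms, $\epsilon$-viscous source terms, and $F_N$-cutoff errors, then close by Gronwall. The paper carries this out by labeling the pieces $I_1$--$I_{15}$ and estimating each one, arriving at exactly the differential inequality you wrote.

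There is, however, one arithmetic inconsistency you should fix. You claim the differentiated viscous source is bounded in $L^2$ by $C\epsilon(1+t)^{2-2\iota}$, but then write the post-Cauchy contribution as $C\epsilon^2(1+t)^{4-4\iota}$. These do not match: with weight $(1+t)^{-1}$ on the quadratic term, Cauchy turns $C\epsilon(1+t)^{2-2\iota}|\nabla\overline W|_2$ into $C\epsilon^2(1+t)^{5-4\iota}$, which would shift the critical exponent from $\iota=5/4$ to $\iota=3/2$ and not reproduce \eqref{E:7.3}. The sharp rate you need (and which the paper obtains) is $C\epsilon(1+t)^{1.5-2\iota}$; the worst summand is the one coming from $\partial_x^\zeta\big(\nabla(\varphi^\epsilon)^2\cdot Q(\widehat u)\big)$, bounded by $C\epsilon\big(|\varphi^\epsilon|_\infty|\nabla^2\varphi^\epsilon|_2+|\nabla\varphi^\epsilon|_3|\nabla\varphi^\epsilon|_6\big)|\nabla\widehat u|_\infty\leq C\epsilon(1+t)^{1.5-2\iota}$, using \eqref{E:gz77xx} with the $\epsilon$-independent bounds on $|\nabla\varphi^\epsilon|_3$, $|\nabla\varphi^\epsilon|_6$, $|\nabla^2\varphi^\epsilon|_2$ rather than the $\epsilon^{-1/4}$-weighted $|\nabla\varphi^\epsilon|_\infty$. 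Once you sharpen that one line, your argument goes through verbatim and coincides with the paper's.
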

\begin{proof}
Applying the operator $\partial_x^\zeta$ on $\eqref{zheng7}_1$ leads to
\begin{equation}\label{E:7.4}
\begin{split}
&(\partial_x^\zeta \overline{W})_t+\sum_{j=1}^3A_j(W^\epsilon)\partial_j(\partial_x^\zeta \overline{W})-\sum_{j=1}^3\partial_x^\zeta\big(A_j(W^\epsilon)\overline{W}^\epsilon\partial_j F_N\big)\\
=&-\sum_{j=1}^3\left(\partial_x^\zeta (A_j(W^\epsilon)\partial_j\overline{W})
-A_j(W^\epsilon)\partial_j(\partial_x^\zeta \overline{W})\right)-\sum_{j=1}^3\partial_x^\zeta (A_j(\overline{W}))\partial_j W\\
&-\sum_{j=1}^3 \left(\partial_x^\zeta \big(A_j(\overline{W})\partial_j W\big)
-\partial_x^\zeta \big(A_j(\overline{W})\big)\partial_j  W\right)+\sum_{j=1}^3\partial_x^\zeta\big(\widehat{u}^{(j)}\partial_j F_N\overline{W}^\epsilon\big)\\
&-B(\nabla\widehat{u},\partial_x^\zeta \overline{W})-\big(\partial_x^\zeta B( \nabla\widehat{u},\overline{W})
-B(\nabla\widehat{u},\partial_x^\zeta\overline{W})\big)\\
&-\sum_{j=1}^3\widehat{u}^{(j)}\partial_j\partial_x^\zeta\overline{W}-\sum_{j=1}^3\Big(\partial_x^\zeta\big(\widehat{u}^{(j)}\partial_j\overline{W}\big)-
\widehat{u}^{(j)}\partial_j\partial_x^\zeta\overline{W}\Big)\\
&-\epsilon F_N(\varphi^\epsilon)^2\mathbb{L}(\partial_x^\zeta v^\epsilon)-\epsilon\partial_x^\zeta\big(F_ND((\varphi^\epsilon)^2,\nabla^2\widehat{u})\big)+\epsilon F_N\mathbb{H}(\varphi^\epsilon)\cdot\mathbb{Q}(\partial_x^\zeta (v^\epsilon+\widehat{u}))\\
&-\epsilon\left(\partial_x^\zeta\big(F_N(\varphi^\epsilon)^2
\mathbb{L}(v^\epsilon)\big)-F_N(\varphi^\epsilon)^2\mathbb{L}(\partial_x^\zeta v^\epsilon)\right)\\
&+\epsilon\left(\partial_x^\zeta\big(F_N\mathbb{H}(\varphi^\epsilon)\cdot\mathbb{Q}(v^\epsilon+\widehat{u})\big)-F_N\mathbb{H} (\varphi^\epsilon)
\cdot \partial_x^\zeta\mathbb{Q}(v^\epsilon+\widehat{u})\right).
\end{split}
\end{equation}
Then multiplying (\ref{E:7.4}) by $2\partial^\zeta_x \overline{W}$  and integrating  over $\mathbb{R}^3$ by parts,  one can obtain that
\begin{equation*}
\begin{split}
 \frac{d}{dt}|\partial^\zeta_x \overline{W}|_2^2
=&\sum_{j=1}^3\int (\partial^\zeta_x \overline{W})^\top \Big(\partial_jA_j (W^\epsilon)\partial^\zeta_x \overline{W}+2\partial_x^\zeta\big(A_j(W^\epsilon)\overline{W}^\epsilon\partial_j F_N\big)\Big)\\
&-2\sum_{j=1}^3\int\left(\partial_x^\zeta \left(A_j(W^\epsilon)\partial_j\overline{W}\right)-A_j(W^\epsilon)\partial_j(\partial_x^\zeta \overline{W})\right)\cdot\partial_x^\zeta \overline{W}\\
&-2\sum_{j=1}^3\int(\partial_x^\zeta \overline{W})^\top \Big(\partial_x^\zeta (A_j(\overline{W}))\partial_j W-\partial_x^\zeta\big(\widehat{u}^{(j)}\partial_j F_N\overline{W}^\epsilon\big)\Big)\\
&-2\sum_{j=1}^3 \int  \left(\partial_x^\zeta\big(A_j(\overline{W})\partial_j W \big) -\partial_x^\zeta A_j(\overline{W})\partial_j  W \right)\cdot\partial_x^\zeta \overline{W} \\
&-2\int \Big(B( \nabla\widehat{u},\partial_x^\zeta \overline{W})+\Big(\partial_x^\zeta B(\nabla\widehat{u},\overline{W})-B( \nabla\widehat{u},\partial_x^\zeta \overline{W})\Big)
\Big)\cdot \partial_x^\zeta \overline{W}\\
&+\sum_{j=1}^3\int \Big( \partial_j\widehat{u}^{(j)}|\partial_x^\zeta\overline{W}|^2-2(\partial_x^\zeta\overline{W})^\top\Big(\partial_x^\zeta\big(\widehat{u}^{(j)}\partial_j\overline{W}\big)-
\widehat{u}^{(j)}\partial_j\partial_x^\zeta\overline{W}\Big)\Big)\\
&-2 \epsilon\int \Big( F_N(\varphi^\epsilon)^2  L(\partial^\zeta_x v^\epsilon)+\partial_x^\zeta \big(F_ND((\varphi^\epsilon)^2,\nabla^2\widehat{u})\Big)\cdot\partial_x^\zeta\overline{v}\\
&+2\epsilon\int \Big(F_N\nabla (\varphi^\epsilon)^2 \cdot \partial^\zeta_x Q\Big) \cdot \partial^\zeta_x \overline{v}\\
&-2 \epsilon\int \Big( \partial^\zeta_x \big(F_N(\varphi^\epsilon)^2 Lv^\epsilon\big)-F_N(\varphi^\epsilon)^2 L\partial^\zeta_x v^\epsilon \Big)\cdot \partial^\zeta_x \overline{v}\\
\end{split}
\end{equation*}
\begin{equation}\label{gongshiding}
\begin{split}
&+2 \epsilon\int  \Big(\partial^\zeta_x\big(F_N\nabla (\varphi^\epsilon)^2  \cdot Q\big)-F_N\nabla (\varphi^\epsilon)^2  \cdot  \partial^\zeta_xQ\Big)\cdot \partial^\zeta_x \overline{v}
:=\sum_{i=1}^{15}I_i.\\
\end{split}
\end{equation}
When $|\zeta|=1$, according to the estimate \eqref{E:gz77xx}, one has
\begin{equation}\label{E:7.6}\begin{split}
I_1=&\sum_{j=1}^3\int (\partial^\zeta_x \overline{W})^\top\partial_jA_j (W^\epsilon)\partial^\zeta_x \overline{W}
\leq C|\nabla W^\epsilon|_\infty| \overline{W}|^2_{D^1}
\leq  C(1+t)^{-\iota}| \overline{W}|^2_{D^1},\\
I_3=&-2\sum_{j=1}^3\int\left(\partial_x^\zeta (A_j(W^\epsilon)\partial_j\overline{W})-A_j(W^\epsilon)\partial_j(\partial_x^\zeta \overline{W})\right)\cdot\partial_x^\zeta \overline{W}\\
\leq& C|\nabla W^\epsilon|_\infty|\overline{W}|^2_{D^1}\leq C(1+t)^{-\iota}|\overline{W}|^2_{D^1},\\
I_4=&-2\sum_{j=1}^3\int(\partial_x^\zeta \overline{W})^\top\partial_x^\zeta (A_j(\overline{W}))\partial_j W
\leq C|\nabla W|_\infty|\overline{W}|^2_{D^1}
\leq C(1+t)^{-\iota}|\overline{W}|^2_{D^1},\\
I_6=&- 2\sum_{j=1}^3 \int  \left(\partial_x^\zeta\big(A_j(\overline{W})\partial_j W \big) -\partial_x^\zeta (A_j(\overline{W}))\partial_j  W \right)\cdot\partial_x^\zeta \overline{W} \\
\leq &C|\nabla^2 W|_3|\nabla \overline{W}|_2|\overline{W}|_6
\leq C(1+t)^{-\iota}| \overline{W}|^2_{D^1},\\
I_{7}=&-2\int B(\nabla\widehat{u},\partial_x^\zeta \overline{W})\cdot\partial_x^\zeta \overline{W}
\leq C|\nabla\widehat{u}|_\infty|\nabla\overline{W}|_2^2
\leq C(1+t)^{-1}|\overline{W}|_{D^1}^2,\\
I_{8}=&-2\int\Big(\partial_x^\zeta B( \nabla\widehat{u},\overline{W})-B(\nabla\widehat{u},\partial_x^\zeta \overline{W})\Big)
\cdot\partial_x^\zeta \overline{W}\\
\leq& C|\nabla^2\widehat{u}|_3|\overline{W}|_6|\nabla\overline{W}|_2
\leq C(1+t)^{-2}|\overline{W}|_{D^1}^2,\\
I_{9}=&\sum_{j=1}^3\int \partial_j\widehat{u}^{(j)}|\partial_x^\zeta\overline{W}|^2
\leq C|\nabla\widehat{u}|_\infty|\nabla\overline{W}|_2^2
\leq C(1+t)^{-1}|\overline{W}|_{D^1}^2.
\end{split}\end{equation}
It follows from \eqref{E:7.2bcd} that
\begin{equation}\label{E:7.6aa}\begin{split}
I_2=&2\sum_{j=1}^3\int_{N<|x|<2N}(\partial_x^\zeta \overline{W})^\top\partial_x^\zeta
\big(A_j(W^\epsilon)\overline{W}^\epsilon\partial_j F_N\big)  \ \text{\rm d}x \\
\leq&C\Big(\big(|  W^\epsilon|_\infty|\nabla \overline{W}^\epsilon|_2
+|\nabla W^\epsilon|_2|\overline{W}^\epsilon|_\infty\big)|\nabla F_N|_\infty\\
&+
| W^\epsilon|_\infty|\overline{W}^\epsilon|_2|\nabla^2 F_N|_\infty\Big)|\nabla\overline{W}|_{2}
\leq  CN^{-1}(1+t)^{3.5-2\iota}|\overline{W}|_{D^1},\\
I_5=&2\sum_{j=1}^3\int_{N<|x|<2N}(\partial_x^\zeta \overline{W})^\top\partial_x^\zeta
\big(\widehat{u}^{(j)}\partial_j F_N\overline{W}^\epsilon\big)  \ \text{\rm d}x\\
\leq&C\Big(|  \nabla \widehat{u}|_\infty|\nabla F_N|_\infty|\overline{W}^\epsilon|_2+| \overline{W}^\epsilon|_2\|\widehat{u}^{(j)}\partial_j \partial_x^\zeta F_N\|_{L^\infty(N<|x|<2N)}\Big)|\nabla\overline{W}|_{2}\\
&+C\|\nabla \overline{W}^\epsilon\|_{L^2(\R^3/B_N)}\|\widehat{u}^{(j)}\partial_j F_N\|_{L^\infty(N<|x|<2N)}\|\nabla\overline{W}\|_{L^2(\R^3/B_N)}\\
\leq & CN^{-1}(1+t)^{1.5-\iota}|\overline{W}|_{D^1}+I_N,\\
\end{split}\end{equation}
where the term $I_N$ can be defined and estimated as follows
\begin{equation*}\label{E:7.6a}\begin{split}
I_N=&C\|\nabla \overline{W}^\epsilon\|_{L^2(\R^3/B_N)}
|\nabla  \widehat{u}|_\infty\|\nabla\overline{W}\|_{L^2(\R^3/B_N)}.
\end{split}\end{equation*}
Similarly, one gets
\begin{equation}\label{E:4.9b}\begin{split}
I_{10}=&-2\sum_{j=1}^3\int(\partial_x^\zeta\overline{W})^\top\Big(\partial_x^\zeta\big(\widehat{u}^{(j)}\partial_j\overline{W}
\big)-\widehat{u}^{(j)}\partial_j\partial_x^\zeta\overline{W}\Big)\\
\leq& C|\nabla\widehat{u}|_\infty|\nabla\overline{W}|_2^2
\leq C(1+t)^{-1}|\overline{W}|_{D^1}^2,\\
I_{11}=&-2\epsilon\int \Big( F_N(\varphi^\epsilon)^2 \cdot L(\partial^\zeta_x v^\epsilon)\Big) \cdot \partial^\zeta_x \overline{v}\\
\leq& C\epsilon |\varphi^\epsilon|^2_\infty| \nabla^3 v^\epsilon|_2|\nabla \overline{v}|_2
\leq C\epsilon(1+t)^{2.5-3\iota}|\nabla \overline{v}|_2,\\
I_{12}=&-2\epsilon\int\partial_x^\zeta \big(F_ND((\varphi^\epsilon)^2,\nabla^2\widehat{u})\big)\cdot\partial_x^\zeta\overline{v}\\
\leq& C\epsilon\big(|\varphi^\epsilon|_\infty|\nabla\varphi^\epsilon|_2|\nabla^2\widehat{u}|_\infty+
|\varphi^\epsilon|_\infty^2|\nabla^3\widehat{u}|_2+|\nabla F_N|_\infty|\varphi^\epsilon|^2_\infty|\nabla^2\widehat{u}|_2\big)|\nabla\overline{v}|_2\\
\leq& C\epsilon\big((1+t)^{0.5-2\iota}+N^{-1}(1+t)^{1.5-2\iota}\big)|\nabla\overline{v}|_2,\\
I_{13}=&2\epsilon\int \Big(F_N\nabla (\varphi^\epsilon)^2 \cdot Q(\partial^\zeta_x (v^\epsilon+\widehat{u}))\Big) \cdot \partial^\zeta_x \overline{v}\\
\leq& C\epsilon |\varphi^\epsilon|_\infty\big(|\nabla \varphi^\epsilon|_3|\nabla^2 v^\epsilon|_6+|\nabla \varphi^\epsilon|_2|\nabla^2\widehat{u}|_\infty)|\nabla \overline{v}|_2
\leq C\epsilon (1+t)^{2.5-3\iota}|\nabla \overline{v}|_2,\\
I_{14}= &-2\epsilon\int \Big( \partial^\zeta_x (F_N(\varphi^\epsilon)^2 Lv^\epsilon)-F_N(\varphi^\epsilon)^2 L\partial^\zeta_x v^\epsilon \Big)\cdot \partial^\zeta_x \overline{v}\\
\leq&C\epsilon ( |\varphi^\epsilon|_\infty|\nabla \varphi^\epsilon|_3|\nabla^2 v^\epsilon|_6+|\nabla F_N|_\infty|\varphi^\epsilon|^2_\infty|\nabla^2 v^\epsilon|_2)|\nabla \overline{v}|_2\\
\leq& C\epsilon\big((1+t)^{2.5-3\iota}+N^{-1}(1+t)^{3.5-3\iota}\big)|\nabla \overline{v}|_2,\\
I_{15}=&2\epsilon\int  \Big(\partial^\zeta_x(F_N\nabla (\varphi^\epsilon)^2  \cdot Q(v^\epsilon+\widehat{u}))-F_N\nabla (\varphi^\epsilon)^2  \cdot  Q(\partial^\zeta_x (v^\epsilon+\widehat{u}))\Big)\cdot \partial^\zeta_x \overline{v}\\
\leq& C\epsilon  \big(|\nabla\varphi^\epsilon|_6|\nabla\varphi^\epsilon|_3+|\varphi|_\infty|\nabla^2\varphi^\epsilon|_2\big)(|\nabla v^\epsilon|_\infty+|\nabla\widehat{u}|_\infty)|\nabla \overline{v}|_2\\
&+C|\nabla F_N|_\infty|\varphi^\epsilon|_\infty|\nabla\varphi^\epsilon|_2\big(|\nabla v^\epsilon|_\infty+|\nabla\widehat{u}|_\infty\big)|\nabla \overline{v}|_2\\
\leq&C\epsilon\Big((1+t)^{1.5-2\iota}+N^{-1}(1+t)^{2.5-2\iota}\Big)|\nabla \overline{v}|_2.
\end{split}\end{equation}

It follows from  \eqref{gongshiding}-\eqref{E:4.9b}  that 
\begin{equation*}\begin{split}
 \frac{d}{dt}| \overline{W}|_{D^1}^2\leq& C(1+t)^{-1}|\overline{W}|^2_{D^1}+C\big(\epsilon(1+t)^{1.5-2\iota}+N^{-1}(1+t)^{3.5-2\iota}\big)|\overline{W}|_{D^1}+I_N\\
 \leq & C(1+t)^{-1}|\overline{W}|^2_{D^1}+C\epsilon^2(1+t)^{4-4\iota}+C(T)N^{-1}+I_N,
  \end{split}\end{equation*}
which, along with   Gronwall's inequality,  implies that: 
when $\iota=\frac{5}{4}$ that means $4-4\iota=-1$,
\begin{equation*}\begin{split}|\overline{W}(t)|^2_{D^1}\leq& (1+t)^C\Big(|\overline{W}_0|^2_{D^1}+C\epsilon^2\ln(1+t)+C(T)N^{-1}+ \tilde{I}_N\Big);
\end{split}\end{equation*}
when $\iota\neq\frac{5}{4}$,
\begin{equation*}\begin{split}|\overline{W}(t)|^2_{D^1}\leq& (1+t)^C\Big(|\overline{W}_0|^2_{D^1}+\frac{C\epsilon^2}{5-4\iota}\big((1+t)^{5-4\iota}-1\big)+C(T)N^{-1}+\tilde{I}_N\Big),
\end{split}\end{equation*}
where  $ \tilde{I}_N=\int_0^t I_N \text{d}s$. Thus, (\ref{E:7.3}) follows immediately.

\end{proof}
When $|\zeta|=2$, one has
\begin{lemma}\label{L:7.3}
\begin{equation}\label{E:7.10}\begin{split}
|\overline{W}(t)|^2_{D^2}\leq& \exp\big( C((1+t)^{\frac{1}{2}}-1) \big) \Big(|\overline{W}_0|^2_{D^2}+C|\overline{W}_0|^2_{D^1}\int_0^t (1+s)^{-4+C}\ \text{\rm d}s\\
&+C\epsilon\big(1+\int_0^t (1+s)^{-4+C}\ln (1+s) \ \text{\rm d}s\big)+ L^*_N\Big)\quad \text{when}\quad \iota=\frac{5}{4};\\
|\overline{W}(t)|^2_{D^2}\leq&  \exp\big( C((1+t)^{3-2\iota}-1) \big)\Big(|\overline{W}_0|^2_{D^2}+C|\overline{W}_0|^2_{D^1}\int_0^t (1+s)^{1-4\iota+C}\ \text{\rm d}s\\
&+C\epsilon\big(1+\int_0^t (1+s)^{1-4\iota+C}\big|(1+s)^{5-4\iota}-1\big| \ \text{\rm d}s\big)+ L^*_N\Big) \\
&\text{when}\quad \iota \in \big(1,\frac{5}{4}\big)\cup \big(\frac{5}{4},\frac{3}{2} \big);\\
|\overline{W}(t)|^2_{D^2}\leq& (1+t)^C\Big(|\overline{W}_0|^2_{D^2}+C|\overline{W}_0|^2_{D^1}\int_0^t (1+s)^{1-4\iota+C}\ \text{\rm d}s+C\epsilon\\
&+C\epsilon\int_0^t (1+s)^{1-4\iota+C}\big|(1+s)^{5-4\iota}-1\big| \ \text{\rm d}s+L^*_N\Big)\quad  \text{when} \quad \iota\in \big[\frac{3}{2}, 2\big),
\end{split}\end{equation}
for any $0\leq t \leq T$, where the constant  $C=C(A, \alpha, \beta, \gamma, \delta, \rho^\epsilon_0, \rho_0, u_0)>0$, and 
\begin{equation*}\begin{split}
L^*_N=&\int_0^T \big(L_N +C(T)I^*_N\big)\ \text{\rm d}t+C(T)N^{-1}
\end{split}
\end{equation*}
for the  constant  $C(T)=C(A, \alpha, \beta, \gamma, \delta, \rho^\epsilon_0, \rho_0, u_0,T)>0$ that is independent of $N$.
\end{lemma}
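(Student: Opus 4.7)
The plan is to follow the scheme of Lemmas 6.1 and 6.2 verbatim but at one derivative higher: apply $\partial_x^\zeta$ with $|\zeta|=2$ to the truncated equation \eqref{zheng7} for $\overline{W}$, take the $L^2$ inner product with $\partial_x^\zeta \overline{W}$, and use the master identity \eqref{gongshiding}. I will estimate each of the fifteen terms $I_1,\dots,I_{15}$ using the uniform bounds \eqref{E:gz77xx}, integration by parts, commutator estimates, and Gagliardo--Nirenberg interpolation, aiming at a differential inequality
\begin{equation*}
\frac{d}{dt}|\overline{W}|_{D^2}^2 \leq G_1(t)|\overline{W}|_{D^2}^2 + G_2(t)|\overline{W}|_{D^1}^2 + \epsilon G_3(t) + L_N(t) + C(T)N^{-1},
\end{equation*}
where $G_1,G_2,G_3$ are explicit power-type weights of $t$ and $L_N$ records the $F_N$-cut-off boundary contributions. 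Gronwall's inequality, combined with the $D^1$ bound from Lemma 6.2 substituted into $G_2(t)|\overline{W}|_{D^1}^2$, will then yield the three regimes stated in \eqref{E:7.10}.

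The transport-type entries $I_1,I_3,I_4,I_6,I_7$--$I_{10}$ are handled as in Lemma 6.2: the commutator structure confines derivatives falling on $W^\epsilon$, $W$, or $\widehat{u}$ to at most second order, so that either $L^\infty$ norms at first order (all $\sim(1+t)^{-1}$) or $L^3$ norms at second order (via $|\nabla^2 W^\epsilon|_3,|\nabla^2 W|_3\leq C(1+t)^{-\iota}$) absorb them. Third-order derivatives of $W$ or $W^\epsilon$ arising from double commutators with $|\zeta|=2$ have only $L^2$ control; after pairing with $|\nabla^2\overline{W}|_2$ via H\"older together with $|\overline{W}|_6\lesssim|\nabla\overline{W}|_2$, such terms produce precisely the forcing $G_2(t)|\overline{W}|_{D^1}^2$ with $G_2(t)\sim(1+t)^{1-4\iota+C}$ that matches the statement. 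The truncation terms $I_2,I_5$ generate $L_N$ and the $C(T)N^{-1}$ remainder by the argument of Lemma 6.1.

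The case distinction $\iota\gtrless\tfrac{3}{2}$ originates in the viscous contributions $I_{11}$--$I_{15}$: these produce $\epsilon G_3(t)$ from $|\varphi^\epsilon|_\infty^2\sim(1+t)^{3-2\iota}$ and analogous products, combined with the uniform $L^2_tL^2$ control $\epsilon^{1/2}(1+s)^{1/2}|\varphi^\epsilon\nabla^4 v^\epsilon|_2\in L^2_t$. More crucially, the Gronwall coefficient $G_1(t)$ picks up a contribution of order $(1+t)^{2-2\iota}$ from interactions of $|\varphi^\epsilon|_\infty\cdot|\nabla\varphi^\epsilon|$-type factors with the weighted velocity structure. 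When $\iota<\tfrac{3}{2}$, the integral $\int_0^t(1+s)^{2-2\iota}\,ds\sim(1+t)^{3-2\iota}/(3-2\iota)$ diverges, producing the exponential factor $\exp\big(C((1+t)^{3-2\iota}-1)\big)$; when $\iota\geq\tfrac{3}{2}$, only the integrable $(1+s)^{-1}$ survives and integration yields the polynomial growth $(1+t)^C$.

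The main obstacle is the bookkeeping needed to propagate the $D^1$ bound through the Gronwall iteration at the endpoint $\iota=\tfrac{5}{4}$. There the $D^1$ estimate in Lemma 6.2 carries an intrinsic $\ln(1+t)$ (borderline case $5-4\iota=0$), which when inserted into the $D^2$ forcing converts the $\epsilon$-contribution from $\int(1+s)^{1-4\iota+C}|(1+s)^{5-4\iota}-1|\,ds$ into $\int(1+s)^{-4+C}\ln(1+s)\,ds$, explaining why $\iota=\tfrac{5}{4}$ is singled out. Matching all three exponents in the $G_3$-driven integrals simultaneously with the correct Gronwall exponential is the delicate step that completes the full \eqref{E:7.10}.
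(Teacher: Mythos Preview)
Your plan is correct and matches the paper's proof: the paper estimates the same $I_1$--$I_{15}$ for $|\zeta|=2$, arrives at the differential inequality
\[
\frac{d}{dt}|\overline{W}|_{D^2}^2 \leq C(1+t)^{\max\{-1,\,2-2\iota\}}|\overline{W}|_{D^2}^2 + C\epsilon^2(1+t)|\varphi^\epsilon\nabla^4 v^\epsilon|_2^2 + C(1+t)^{1-4\iota}|\overline{W}|_{D^1}^2 + C\epsilon(1+t)^{2-4\iota} + C(T)N^{-1} + L_N,
\]
and finishes by Gronwall plus the $D^1$ bound of Lemma~\ref{L:7.2}. One small correction: the $(1+t)^{2-2\iota}$ contribution to $G_1$ comes not from a $|\varphi^\epsilon|_\infty|\nabla\varphi^\epsilon|$ product but from the Young splitting of $I_{11}$, namely $\epsilon|\varphi^\epsilon|_\infty|\varphi^\epsilon\nabla^4 v^\epsilon|_2|\nabla^2\overline v|_2 \leq \epsilon^2(1+t)|\varphi^\epsilon\nabla^4 v^\epsilon|_2^2 + (1+t)^{2-2\iota}|\overline{W}|_{D^2}^2$, which simultaneously produces the $\epsilon$-term (integrable by \eqref{uniformtime}) and the Gronwall coefficient driving the $\iota\gtrless\tfrac32$ dichotomy.
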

\begin{proof}
When $|\zeta|=2,$  similarly to \eqref{E:7.6}-\eqref{E:7.6aa}, one has
\begin{equation}\label{E:7.11}\begin{split}
I_1+&I_4\leq C(1+t)^{-\iota}|\overline{W}|^2_{D^2},\\
I_2
\leq& C|\nabla^2\overline{W}|_2\Big(|\nabla F_N|_\infty\big(|\nabla^2W^\epsilon|_2|\overline{W}^\epsilon|_\infty
+|\nabla W^\epsilon|_3|\nabla\overline{ W}^\epsilon|_6+|W^\epsilon|_\infty|\nabla^2\overline{W}^\epsilon|_2\big)\\
&+|\nabla^2 F_N|_\infty\big(|\nabla W^\epsilon|_2|\overline{W}^\epsilon|_\infty
+|W^\epsilon|_\infty|\nabla\overline{W}^\epsilon|_2\big)+|\nabla^3 F_N|_\infty|W^\epsilon|_2|\overline{W}^\epsilon|_\infty\Big)\\
\leq&CN^{-1}(1+t)^{3.5-2\iota}|\nabla^2\overline{W}|_2,\\
 I_3
\leq&  C(|\nabla W^\epsilon|_\infty|\nabla^2\overline{W}|_2+|\nabla^2 W^\epsilon|_3|\nabla\overline{W}|_6)|\nabla^2\overline{W}|_2
\leq C(1+t)^{-\iota}|\overline{W}|^2_{D^2},\\
I_5
\leq& C|\nabla^2\overline{W}|_2\Big(|\nabla F_N|_\infty\big(|\nabla^2\widehat{u}|_2|\overline{W}^\epsilon|_\infty
+|\nabla \widehat{u}|_\infty|\nabla\overline{ W}^\epsilon|_2\big)\\
&+|\nabla^2 F_N|_\infty(  |\nabla \widehat{u}|_\infty|\overline{W}^\epsilon|_2+\|\widehat{u}\|_{L^\infty(N<|x|<2N)} |\nabla\overline{W}^\epsilon|_2 )\\
&+|\nabla^3 F_N|_\infty\|\widehat{u}\|_{L^\infty(N<|x|<2N)} |\overline{W}^\epsilon|_2\Big)+I_5^N\\
\leq &CN^{-1}(1+t)^{1.5-\iota}|\overline{W}|_{D^2}+I_5^N,\\
\end{split}\end{equation}
where $I_5^N$ can be defined and estimated as follows  
\begin{equation*}
\begin{split}
I_5^N=&C\|\widehat{u}\|_{L^\infty(N<|x|<2N)}\|\nabla^2\overline{W}\|_{L^2(\R^3/B_N)}\|\nabla^2\overline{ W}^\epsilon\|_{L^2(\R^3/B_N)}|\nabla F_N|_\infty\\
\leq& C\|\nabla^2 \overline{ W}^\epsilon\|_{L^2(\R^3/B_N)}|\nabla\widehat{u}|_\infty\|\nabla^2\overline{W}\|_{L^2(\R^3/B_N)}=L_N.
\end{split}\end{equation*}

Similarly, one has
\begin{equation}\label{E:4.9c1}\begin{split}
I_6
\leq& C\big( |\nabla \overline{W}|_6|\nabla^2 W|_3+|\nabla^3 W|_2|\overline{W}|_\infty\big)|\nabla^2 \overline{W}|_2\\[2pt]
\leq& C\big( |\nabla^2 W|_3|\overline{W}|^2_{D^2}+|\nabla^3 W|_2|\overline{W}|^{\frac{1}{2}}_{D^1}|\overline{W}|^{\frac{3}{2}}_{D^2}\big)\\[2pt]
\leq& C\big((1+t)^{-\iota}|\overline{W}|^2_{D^2}+(1+t)^{-0.5-\iota}|\overline{W}|^{\frac{1}{2}}_{D^1}|\overline{W}|^{\frac{3}{2}}_{D^2}\big),\\
I_{7}+&I_9
\leq 
C|\nabla\widehat{u}|_\infty |\overline{W}|_{D^2}^2
\leq C(1+t)^{-1}|\overline{W}|_{D^2}^2,\\
I_{8}
\leq& C\big(|\nabla^3\widehat{u}|_2|\overline{W}|_\infty+|\nabla^2\widehat{u}|_3|\nabla\overline{W}|_6
\big)|\nabla^2\overline{W}|_{2}\\[2pt]
\leq& C\Big((1+t)^{-1}|\overline{W}|_{D^2}^2+(1+t)^{-2.5}|\overline{W}|^{\frac{1}{2}}_{D^1}|\overline{W}|^{\frac{3}{2}}_{D^2}\Big),\\
I_{10}
\leq& C\big(|\nabla\widehat{u}|_\infty|\nabla^2\overline{W}|_2+|\nabla^2\widehat{u}|_3|\nabla\overline{W}|_6\big)|\nabla^2\overline{W}|_2\\
\leq & C(1+t)^{-1}|\nabla^2\overline{W}|_2^2,\\
I_{11}
\leq& C\epsilon |\varphi^\epsilon|_\infty|\varphi^\epsilon\nabla^4 v^\epsilon|_2|\nabla^2 \overline{v}|_2
\leq C\epsilon(1+t)^{1.5-\iota}|\varphi^\epsilon\nabla^4 v^\epsilon|_2|\nabla^2 \overline{v}|_2,\\[2pt]
\leq&  C\epsilon^2(1+t)|\varphi^\epsilon\nabla^4 v^\epsilon|_2^2+C(1+t)^{2-2\iota}|\overline{W}|^2_{D^2},\\
I_{12}
\leq& C\epsilon\Big((|\varphi^\epsilon|_\infty|\nabla^2\varphi^\epsilon|_2+
|\nabla\varphi^\epsilon|_3|\nabla\varphi^\epsilon|_6)|\nabla^2\widehat{u}|_\infty\\
&+|\varphi^\epsilon|_\infty|\nabla\varphi^\epsilon|_\infty
|\nabla^3\widehat{u}|_2+|\varphi^\epsilon|_\infty^2|\nabla^4\widehat{u}|_2\Big)|\nabla^2\overline{v}|_2\\
&+C\epsilon|\nabla F_N|_\infty\big(|\varphi^\epsilon|_\infty|\nabla \varphi^\epsilon|_2|\nabla^2\widehat{u}|_\infty+|\varphi^\epsilon|_\infty^2|\nabla^3\widehat{u}|_2\big)|\nabla^2\overline{v}|_2\\
&+C\epsilon|\nabla^2 F_N|_\infty|\varphi^\epsilon|_\infty^2|\nabla^2\widehat{u}|_2|\nabla^2\overline{v}|_2\\
\leq& C\big(\epsilon^{\frac{3}{4}}(1+t)^{-0.5-2\iota}+N^{-1}(1+t)^{1.5-2\iota}\big)|\nabla^2\overline{v}|_2,\\
I_{13}
\leq& C\epsilon |\varphi^\epsilon|_\infty|\nabla \varphi^\epsilon|_\infty(|\nabla^3 v^\epsilon|_2+|\nabla^3\widehat{u}|_2)|\nabla^2 \overline{v}|_2\\
\leq & C\epsilon^{\frac{3}{4}}(1+t)^{1.5-3\iota}|\nabla^2 \overline{v}|_2,\\
I_{14}
\leq &C\epsilon\Big(|\varphi^\epsilon|_\infty
|\nabla^2\varphi^\epsilon|_3|\nabla^2v^\epsilon|_6+|\nabla \varphi^\epsilon|^2_6|\nabla^2 v^\epsilon|_6+|\varphi^\epsilon|_\infty|\nabla \varphi^\epsilon|_\infty|\nabla^3 v^\epsilon|_2\\
&+N^{-1}\big(
|\varphi^\epsilon|_\infty|\nabla\varphi^\epsilon|_3
|\nabla^2 v^\epsilon|_6+|\varphi^\epsilon|_\infty^2|\nabla^3v^\epsilon|_2+|\varphi^\epsilon|_\infty^2|\nabla^2 v^\epsilon|_2\big)\Big)
|\nabla^2 \overline{v}|_2\\
\leq& C\big(\epsilon^{\frac{3}{4}}(1+t)^{1.5-3\iota}+N^{-1}(1+t)^{3.5-3\iota}\big)|\nabla^2 \overline{v}|_2,\\
I_{15}
\leq&C\epsilon \Big(\big(|\varphi^\epsilon|_\infty |\nabla^3 \varphi^\epsilon|_2+
|\nabla\varphi^\epsilon|_3|\nabla^2\varphi^\epsilon|_6\big)(|\nabla v^\epsilon|_\infty+|\nabla\widehat{u}|_\infty)\\
 &+\big(|\varphi^\epsilon|_\infty|\nabla^2 \varphi^\epsilon|_3+
|\nabla\varphi^\epsilon|_\infty|\nabla\varphi^\epsilon|_3\big)|\nabla^2 v^\epsilon|_6\\
&+\big(|\varphi^\epsilon|_\infty|\nabla^2 \varphi^\epsilon|_2+
|\nabla\varphi^\epsilon|_3|\nabla\varphi^\epsilon|_6\big)|\nabla^2\widehat{u}|_\infty\Big)|\nabla^2\overline{v}|_2\\
&+CN^{-1}\epsilon \Big((|\varphi^\epsilon|_\infty|\nabla^2\varphi^\epsilon|_2+|\nabla\varphi^\epsilon|_3
|\nabla\varphi^\epsilon|_6)(|\nabla v^\epsilon|_\infty+|\nabla\widehat{u}|_\infty)\\[2pt]
&+|\varphi^\epsilon|_\infty(|\nabla\varphi^\epsilon|_3|\nabla^2 v^\epsilon|_6+|\nabla\varphi^\epsilon|_2|\nabla^2\widehat{u}|_\infty)\\
&+ \big(|\varphi^\epsilon|_\infty|\nabla\varphi^\epsilon|_2(|\nabla v^\epsilon|_\infty+|\nabla\widehat{u}|_\infty)\Big)|\nabla^2\overline{v}|_2\\
\leq& C\big(\epsilon^{\frac{1}{2}}(1+t)^{0.5-2\iota}+N^{-1}
(1+t)^{2.5-2\iota}\big)|\nabla^2\overline{v}|_2.
\end{split}\end{equation}

It follows from  \eqref{E:7.11}-\eqref{E:4.9c1} that 
\begin{equation}\label{E:7.15}\begin{split}
 \frac{d}{dt}| \overline{W}|_{D^2}^2\leq&C(1+t)^{\max\{-1,2-2\iota\}}|\overline{W}|_{D^2}^2+C\epsilon^2(1+t)|\varphi^\epsilon\nabla^4 v^\epsilon|^2_2\\
 &+C(1+t)^{1-4\iota}|\overline{W}|^2_{D^1}+C\epsilon(1+t)^{2-4\iota}+C(T)N^{-1}+L_N,
\end{split}\end{equation}
which, along with \eqref{E:7.3} and  Gronwall's inequality, implies  \eqref{E:7.10}.
\end{proof}

\subsection{The proof for Theorem \ref{T:7.1}}
\begin{proof}
First,  it is easy to see that 
\begin{equation}\label{appro}
\begin{split}
|\nabla^k\big((\rho^\epsilon_0-\rho_0)F_N\big)|_2 \rightarrow & |\nabla^k (\rho_0^\epsilon-\rho_0)|_2 \quad \text{for} \quad k=0,1,2, 3 \quad \text{as} \quad N\rightarrow \infty,\\
|\nabla^k \overline{W}|_2\rightarrow & |\nabla^k \overline{W}^\epsilon|_{2} \quad \text{for} \quad k=0,1,2, 3 \quad \text{as} \quad N\rightarrow \infty.
\end{split}
\end{equation}

Second, due to the uniform estimates \eqref{E:gz77xx}, one also has
\begin{equation}\label{appro1}
\begin{split}
J^*_N+I^*_N+L^*_N \rightarrow 0 \quad \text{as} \quad N\rightarrow \infty.
\end{split}
\end{equation}

Then, according to Lemmas \ref{L:7.1}-\ref{L:7.3}, \eqref{appro} and \eqref{appro1},  Theorem \ref{T:7.1} can be proved.
\end{proof}

\section{More regular initial data}
This section will be  devoted to showing how  to improve the convergence rate for the inviscid  limit process if we consider more regular initial data.

First, one has 
\begin{theorem}\label{tha-7}{\bf(Uniform energy estimates)}  Let \eqref{canshu} and 
any one of  $(P_1)$-$(P_4)$ hold.
If the   initial data $( \rho^\epsilon_0, u^\epsilon_0)$ satisfy the assumptions $(A^*_1)$-$(A^*_2)$ (in Remark 2.4),
then for any $T>0$,  there exists a   unique  regular solution $(\rho^\epsilon, u^\epsilon)$ in $[0,T]\times \mathbb{R}^3$  to the  Cauchy problem  (\ref{eq:1.1})-(\ref{10000}) with (\ref{initial})-(\ref{far}) satisfying
\begin{equation*}\begin{split}
&(\textrm{D1})\quad   \Big((\rho^\epsilon)^{\frac{\delta-1}{2}},(\rho^\epsilon)^{\frac{\gamma-1}{2}}\Big)\in C([0,T]; H^{s'}_{loc})\cap L^\infty([0,T]; H^4);\\[2pt]
& (\textrm{E1})\quad u^\epsilon-  \widehat{u}^\epsilon\in C([0,T]; H^{s'}_{loc})\cap L^\infty([0,T]; H^{4}),\quad  (\rho^\epsilon)^{\frac{\delta-1}{2}}\nabla^5 u^\epsilon \in L^2([0,T]; L^2),
\end{split}
\end{equation*}
for any $s'\in[3,4)$, and  the following uniform estimates:
\begin{equation}\label{uniformtime-7}\begin{split}
\sum_{k=0}^4 (1+t)^{2(k-2.5)}\Big(\big|\nabla^k(\rho^\epsilon)^{\frac{\gamma-1}{2}}(t,\cdot)\big|^2_2+\big|\nabla^k(u^\epsilon- \widehat{u}^\epsilon)(t,\cdot)\big|^2_2\Big)&\\
+\sum_{k=0}^3(1+t)^{2(k-3)}\big|\nabla^k(\rho^\epsilon)^{\frac{\delta-1}{2}}(t,\cdot)\big|^2_2+\epsilon\big|(\rho^\epsilon)^{\frac{\delta-1}{2}}(t,\cdot)\big|^2_{D^4}\leq &C_{05}(1+t)^{-\iota},\\
\epsilon \sum_{k=0}^4\int_0^t (1+s)^{2(k-2.5)}\big|(\rho^\epsilon)^{\frac{\delta-1}{2}}\nabla^{k+1} (u^\epsilon- \widehat{u}^\epsilon)(s,\cdot)\big|_2^2
\ \text{\rm d}s\leq &C_{05}
\end{split}
\end{equation}
for any $t\in [0,T]$ and some  positive constant $C_{05}=C_{05}(\gamma, \delta, \alpha,\beta,  A, \kappa, \rho^\epsilon_0, u^\epsilon_0)$ that is  independent of $\epsilon$.
Particularly,  when condition $(P_3)$ holds, the smallness assumption on $(\rho^\epsilon_0)^{\frac{\delta-1}{2}}$ could be removed.

\end{theorem}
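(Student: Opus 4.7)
The plan is to mirror the argument of Theorem \ref{tha-compact} but carry it out one order higher in regularity, producing the analogous uniform-in-$\epsilon$ ODE for the enlarged $(t,\epsilon)$-dependent energy
\begin{equation*}
Z_*^2(t)=\sum_{k=0}^{4}(1+t)^{2(k-n)}|\nabla^k W^\epsilon|_2^2+\sum_{k=0}^{3}(1+t)^{2(k-m)}|\nabla^k\varphi^\epsilon|_2^2+\epsilon(1+t)^{2(4-m)}|\nabla^4\varphi^\epsilon|_2^2,
\end{equation*}
with the same choice $m=n+\tfrac12=3$ that worked in \S 4. First I would run the compactly supported approximation: pick $(\varphi_0^R,\phi_0^R,v_0^R)=(\varphi_0^\epsilon F(|x|/R),\phi_0^\epsilon F(|x|/R),0)$ so that the density stays compactly supported on $[0,T]$, and then solve the reformulated problem \eqref{li47-1}. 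The $k\le 3$ estimates are already furnished by Theorem \ref{tha-compact}, so only the top layer $k=4$ has to be added.

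For the transport equation $\eqref{li47-1}_1$, applying $\nabla^4$ and pairing with $\nabla^4\varphi^\epsilon$ (weighted by $\epsilon$ for the top level, as in $\Theta(3)=\epsilon^{1/2}$) the same decomposition $s^{*1}_k+s^{*2}_k$ of Lemma \ref{L:3.6} will produce a diagonal dissipative term $\tfrac{3\delta/2-3+k}{1+t}U_k^2$ plus commutators bounded by $C|\nabla v|_\infty|\nabla^4\varphi^\epsilon|_2^2$ and cross terms of the form $|\nabla^2\varphi^\epsilon|_3|\varphi^\epsilon\nabla^4\mathrm{div}\,v^\epsilon|_2|\nabla^4\varphi^\epsilon|_2$; the latter is again absorbed by the $\epsilon\alpha|\varphi^\epsilon\nabla^5 v^\epsilon|_2^2$ produced by the viscous term in the momentum identity. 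For the hyperbolic-symmetric equation $\eqref{li47-1}_2$ at $k=4$, the same structural decomposition $R_k+S_k+L_k+Q_k$ of \eqref{E:gz34} applies, with the crucial bad term again being $Q_4^3(4,0)$ where integration by parts yields $\tfrac{(4\alpha+6\beta)\delta\epsilon}{(\delta-1)(1+t)}|\nabla^4\varphi^\epsilon|_2|\varphi^\epsilon\nabla^4\mathrm{div}\,v^\epsilon|_2$, exactly mirroring \eqref{E:gz58}. This is what couples $\epsilon^{1/2}|\nabla^4\varphi^\epsilon|_2$ to $\epsilon^{1/2}|\varphi^\epsilon\nabla^5 v^\epsilon|_2$ and produces a quadratic form $H(\A^*,\A,\B)$ of the same shape as \eqref{E:gz68}; the same algebraic manipulation \eqref{GAB}--\eqref{keyinformation} then yields positivity under the same structural condition on $(\alpha,\beta,\gamma,\delta)$ governed by $M_1,M_2,M_3$.

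The Gagliardo-Nirenberg bookkeeping at $k=4$ requires only the already-controlled lower norms plus $|\nabla^4 W^\epsilon|_2$, $|\nabla^4 \varphi^\epsilon|_2$ and $|\nabla^3\varphi^\epsilon|_\infty$; each of the commutator and source expressions (for instance, $R_4$ is a sum of $\nabla^4 W\cdot\nabla^\ell W\cdot\nabla^{5-\ell}W$) is handled by applying Lemma \ref{gag112} with $p_j=2(k-1)/(j-1)=6/(j-1)$ to $\nabla W^\epsilon$, producing bounds linear in $Y_4$ times $Z_*^2$. Summing all inequalities with weights $(1+t)^{2\gamma_k}$ and $(1+t)^{2\delta_k}$ and choosing the same $\eta^*$, $b_*$, $\epsilon^*$ as in Lemma \ref{ll2} gives the identical ODE
\begin{equation*}
\tfrac{d}{dt}Z_*^2+\tfrac{2(1-\eta^*)b_*}{1+t}Z_*^2\le C(1+t)^{1+\epsilon^*}Z_*^4+C_0(1+t)^{-1-\epsilon^*}Z_*^2,
\end{equation*}
whose global solvability requires smallness of $Z_*(0)$, which is guaranteed by $(A_2^*)$ through $D_0$.

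The main technical obstacle will be the handling of $L_4^2$ and $L_4^3$, specifically the top-order term $\int \nabla^4\varphi^2\cdot L v\cdot \nabla^4 v$ appearing in $L_4^2$. Following the same strategy as in the proof of Lemma \ref{L:3.3} for $L_3^2(3,0)$ (integration by parts once, moving a derivative onto $\nabla^4 v$ to produce $\varphi^\epsilon\nabla^5 v^\epsilon$), one obtains $\nu\epsilon|\varphi^\epsilon\nabla^5 v^\epsilon|_2^2+C(\nu)(1+t)^{2m-5-\gamma_4}Z_*^3 Y_4$, but this now requires the bound $|\nabla^3\varphi^\epsilon|_3\le C\epsilon^{-1/4}(1+t)^{2m-6-\delta_3}Z_*$ via Gagliardo-Nirenberg, analogous to \eqref{L:2.1}. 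Once this is in place, the compactly-supported estimate passes to general data by the same truncation-and-weak-limit argument of \S 5.1, and then Theorem \ref{tha-7} and \eqref{uniformtime-7} follow. The improved inviscid-limit rate from $\epsilon^{1/2}$ to $\epsilon$ in $\eqref{shou1A1}_5$-$\eqref{shou1A1}_6$ and \eqref{qiyu2} then follows from redoing the $D^2$ estimate of Lemma \ref{L:7.3} with the improved $L^\infty$-bound $|\varphi^\epsilon\nabla^4 v^\epsilon|_2\le C_0(1+t)^{-0.5-\iota}$ now available from \eqref{uniformtime-7}, replacing the $\epsilon^2(1+t)|\varphi^\epsilon\nabla^4 v^\epsilon|_2^2$ forcing in \eqref{E:7.15} by $\epsilon^2(1+t)^{-2\iota}$, which is integrable and hence improves the $\epsilon$-dependence by one power of $\epsilon^{1/2}$.
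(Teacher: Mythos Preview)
Your proposal is correct and matches the paper's approach exactly: the paper's own proof of Theorem~\ref{tha-7} is the single sentence ``The proof of the above theorem is completely similar to that of Theorem~\ref{tha}. Here we omit it,'' and what you have written is precisely a detailed sketch of how the \S4--\S5 machinery lifts from $H^3$ to $H^4$ with the same weights $m=n+\tfrac12=3$, the same quadratic form $H(\Phi^*,\Phi,\Psi)$ at the top order, and the same ODE for $Z_*$. Your final paragraph on the improved $D^2$ convergence rate actually pertains to Theorem~\ref{th3A-7} rather than Theorem~\ref{tha-7}, but it is consistent with the paper's short argument there as well.
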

The proof of the above theorem is completely  similar to that of  Theorem \ref{tha}. Here we omit it.

Second, for the inviscid flow, similarly, one also has
\begin{corollary} \label{serre-7} Let $\gamma>1$.
If the following initial hypothesis hold:
\begin{itemize}
\item[$(\rm H^*_1)$]  $u_0\in \Xi^*$, and  there exists a constant  $\kappa>0$ such that ,
$$
\text{Dist}\big(\text{Sp}( \nabla u_0(x)), \mathbb{R}_{-} \big)\geq \kappa \quad \text{for\  all}\quad  x\in \mathbb{R}^3;\\
$$
\item[$(\rm H^*_2)$] $\rho_0\geq 0$\  and  \ $\big\|\rho^{\frac{\gamma-1}{2}}_0\big\|_4 \leq D^*_0(\gamma,  A, \kappa, \|u_0\|_{\Xi^*})$
\end{itemize}
for some constant $D^*_0=D^*_0( \gamma, A,  \kappa, \|u_0\|_{\Xi^*})>0$,
then for any $T>0$,  there exists a   unique  regular solution $(\rho, u)$ in $[0,T]\times \mathbb{R}^3$  to the  Cauchy problem  \eqref{eq:1.1E} with (\ref{winitial})-(\ref{wfar}) satisfying
\begin{equation*}\begin{split}
&(\textrm{D2})\quad   \rho^{\frac{\gamma-1}{2}} \in C([0,T]; H^4),\quad  u-  \widehat{u}\in C([0,T]; H^{4}),\end{split}
\end{equation*}
and 
\begin{equation}\label{Euniformtime-7}\begin{split}
\sum_{k=0}^4 (1+t)^{2(k-2.5)}\Big(\big|\nabla^k \rho^{\frac{\gamma-1}{2}}(t,\cdot)\big|^2_2+\big|\nabla^k(u-\widehat{u})(t,\cdot)\big|^2_2\Big)\leq C_{06},
\end{split}
\end{equation}
for any $t\in [0,T]$ and  some constant $C_{06}=C_{06}(\gamma, A,  \kappa, \rho_0, u_0)>0$ that is independent of  the time $T$.

\end{corollary}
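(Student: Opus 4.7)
The plan is to repeat the three-step scheme used for Corollary \ref{serre} (existence via vanishing viscosity, uniqueness, time continuity), but with one extra derivative throughout, relying now on the higher-regularity uniform estimates of Theorem \ref{tha-7} in place of Theorem \ref{tha}. First I would upgrade the initial approximation Lemma \ref{example} to the $H^4$ setting: for every $\epsilon\in(0,1]$, construct
\[
(\rho_0^\epsilon)^{\frac{\gamma-1}{2}}=\rho_0^{\frac{\gamma-1}{2}}\chi_{\epsilon^q}+\eta\,\epsilon^p f^{\frac{\gamma-1}{2}},\qquad u_0^\epsilon=u_0,
\]
with $f(x)=(1+|x|^{2a})^{-1}$ and $a$ chosen large enough that $f^{(\gamma-1)/2},\,f^{(\delta-1)/2}\in H^4(\mathbb{R}^3)$. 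The cut-off/truncation arguments of Lemma \ref{L:6.2} give $\|(\rho_0^\epsilon)^{(\gamma-1)/2}-\rho_0^{(\gamma-1)/2}\|_4\to 0$, and the case analysis of Lemma \ref{L:8.3q} (on $e=(\delta-1)/(\gamma-1)$) then produces suitable monotone envelopes $\Upsilon_i^*$ with $\|\nabla(\rho_0^\epsilon)^{(\delta-1)/2}\|_3^2+\epsilon|\nabla^4(\rho_0^\epsilon)^{(\delta-1)/2}|_2^2\le \Upsilon_1^*(\|(\rho_0^\epsilon)^{(\gamma-1)/2}\|_4)+\Upsilon_2^*(\epsilon)$, so that $(\rho_0^\epsilon,u_0^\epsilon)$ satisfies $(A_1^*)$-$(A_2^*)$ uniformly in $\epsilon$.

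Next, for the chosen $\delta_0>1$ from this initial approximation together with any $(\alpha,\beta)$ satisfying \eqref{canshu} and one of $(P_1)$-$(P_4)$, Theorem \ref{tha-7} provides a unique regular solution $(\rho^\epsilon,u^\epsilon)$ on $[0,T]\times\mathbb{R}^3$ obeying the $H^4$-level uniform estimates \eqref{uniformtime-7}. Writing $(\phi^\epsilon,v^\epsilon)=(\sqrt{4A\gamma/(\gamma-1)^2}\,(\rho^\epsilon)^{(\gamma-1)/2},u^\epsilon-\widehat u)$, the Aubin--Lions lemma (combined with the time-derivative bounds obtainable from the evolution equations of \eqref{li47-1xx} as in Section~5) yields a subsequence converging strongly in $C([0,T];H^3(\Omega))$ on every bounded $\Omega\subset\mathbb{R}^3$ and weakly$^*$ in $L^\infty([0,T];H^4)$ to a limit $(\phi^*,v^*)$. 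Lower semicontinuity of the norms gives the uniform bound \eqref{Euniformtime-7}; passing to the limit in the distributional form of \eqref{li47-1xx} against test functions in $C_c^\infty$ (the viscous source terms vanish because of the $\epsilon$-prefactor combined with the uniform control of $(\rho^\epsilon)^{(\delta-1)/2}$ in $H^3$) shows that $(\phi^*,v^*)$ solves \eqref{cvli47-1}. The vacuum condition $u_t^*+u^*\cdot\nabla u^*=0$ on $\{\rho=0\}$ follows, as in Corollary \ref{serre}, from the structure of the momentum equation divided by $\rho^\epsilon$ and the uniform regularity.

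For uniqueness, I would reuse the cut-off argument of Section~5.2.2: given two regular solutions $W_1,W_2$ satisfying \eqref{Euniformtime-7}, set $\overline W^N=(W_1-W_2)F_N$ and perform a weighted $L^2$ energy estimate on \eqref{weiyixing}; the commutator terms from $\sum_j A_j(W_1)\overline W\partial_j F_N$ and $\sum_j \widehat u^{(j)}\overline W\partial_j F_N$ are controlled by the higher regularity of $W_1$ and the far-field vanishing of $\overline W\in L^2$, and Gronwall's inequality together with $\int_0^t M_N\,\mathrm ds\to 0$ as $N\to\infty$ forces $W_1\equiv W_2$. Time continuity in the strong $H^4$ topology follows from Majda's classical hyperbolic-symmetric-system argument, exactly as invoked at the end of the proof of Corollary \ref{serre}.

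The main obstacle I anticipate is the initial-approximation step at the $H^4$ level. One extra derivative in the composition $(\rho_0^\epsilon)^{(\delta-1)/2}=((\rho_0^\epsilon)^{(\gamma-1)/2})^e$ forces a finer decomposition when $2<e<4$, because $\nabla^4$ produces terms of the form $((\rho_0^\epsilon)^{(\gamma-1)/2})^{e-4}(\nabla(\rho_0^\epsilon)^{(\gamma-1)/2})^4$ that are singular at the vacuum; the analogue of the arithmetic condition $\tfrac12-(3-e)(p+aq(\gamma-1))\ge 0$ from Lemma \ref{L:8.3q} must be sharpened to $\tfrac12-(4-e)(p+aq(\gamma-1))\ge 0$, which constrains the admissible exponents $(p,q,a)$ but can always be satisfied by taking $p,q$ small and $a$ large. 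Once this admissibility is verified, the remainder of the proof is a faithful transcription of the Section~5 arguments one order of regularity higher.
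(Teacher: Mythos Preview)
Your proposal is correct and follows exactly the approach the paper intends: the paper explicitly states that the proof of Corollary~\ref{serre-7} ``is also completely similar to that of Corollary~\ref{serre}'' and omits the details, and you have faithfully filled in those details by upgrading the initial-approximation lemma, the vanishing-viscosity compactness argument, and the uniqueness/time-continuity steps from the $H^3$ to the $H^4$ setting using Theorem~\ref{tha-7} in place of Theorem~\ref{tha}. Your identification of the sharpened arithmetic constraint on $(p,q,a)$ in the analogue of Lemma~\ref{L:8.3q} is the one genuinely new bookkeeping point, and your resolution of it is correct.
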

The proof of the above corollary is also completely similar to that of  Corollary \ref{serre}. Here we omit it.

At last, we can improve the convergence rate in the inviscid limit as follows:
\begin{theorem}\label{th3A-7}Let $T>0$ be an arbitrarily large time. Under the assumptions of Theorem {\rm\ref{tha-7}}, if we assume in addition that  $u^\epsilon_0=u_0$ is independent of $\epsilon$, and there exists one  function  $\rho_0(x)$ defined in $\mathbb{R}^3$  such that
\begin{equation}\label{initialrelation1-7}
\lim_{\epsilon\rightarrow 0}\Big|(\rho^\epsilon_0)^{\frac{\gamma-1}{2}}-\rho^{\frac{\gamma-1}{2}}_0\Big|_{2}=0,
\end{equation}
then there exist functions $(\rho,u)$ defined in $[0,T]\times \mathbb{R}^3$  satisfying:
\begin{equation}\label{jkkab1-7}
\sum_{k=0}^4 (1+t)^{2(k-2.5)}\Big(\big|\nabla^k\rho^{\frac{\gamma-1}{2}}(t,\cdot)\big|^2_2+\big|\nabla^k(u-\widehat{u})(t,\cdot)\big|^2_2\Big)  \leq C_{07}(1+t)^{-\iota},
\end{equation}
 for any  $t\in [0,T]$ and some   constant $C_{07}=C_{07}(\gamma, \delta, \alpha,\beta,  A, \kappa, \rho_0,u_0)>0$,  and
\begin{equation}\label{shou1A1-7}
\begin{split}
&\sup\limits_{0\leq t\leq T}\Big(\big\|(\rho^\epsilon)^{\frac{\gamma-1}{2}}(t,\cdot)-\rho^{\frac{\gamma-1}{2}}(t,\cdot)
\big\|_{2}+\big\| u^\epsilon(t,\cdot)-u(t,\cdot)\|_2 \Big)\\
\leq &C_{08}\Big(  \epsilon+\big\|(\rho^\epsilon_0)^{\frac{\gamma-1}{2}}-\rho^{\frac{\gamma-1}{2}}_0
\big\|_{2}\Big),\\
\end{split}
\end{equation}
\begin{equation}\label{shou1A2-7}
\begin{split}
&\sup\limits_{0\leq t\leq T}\Big(\big|(\rho^\epsilon)^{\frac{\gamma-1}{2}}(t,\cdot)-\rho^{\frac{\gamma-1}{2}}(t,\cdot)
\big|_{D^3}+\big| u^\epsilon(t,\cdot)-u(t,\cdot)|_{D^3} \Big)\\
\leq &C_{08}\Big(  \epsilon^{\frac{1}{2}}+\big\|(\rho^\epsilon_0)^{\frac{\gamma-1}{2}}-\rho^{\frac{\gamma-1}{2}}_0
\big\|_{3}\Big),
\end{split}
\end{equation}
for some   constant $C_{08}=C_{08}(\gamma, \delta, \alpha,\beta,  A, \kappa, D_0, \|u_0\|_{\Xi^*},T)>0$.
Furthermore,  $(\rho, u)$ is  the unique regular
solution       to the Cauchy problem  \eqref{eq:1.1E} with (\ref{winitial})-(\ref{wfar}) satisfying (D2) and 
$$(\rho,u)|_{t=0}=(\rho_0(x)\geq 0,  u_0(x)) \quad \text{for} \quad   x\in \mathbb{R}^3.$$

\end{theorem}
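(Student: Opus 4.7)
The plan is to run the same three-stage strategy used for Theorem \ref{th3A}, but at one order higher regularity, exploiting the enhanced uniform estimates of Theorem \ref{tha-7}. First I would invoke Theorem \ref{tha-7} to obtain, for each $\epsilon\in(0,1]$, a unique regular solution $(\rho^\epsilon, u^\epsilon)$ satisfying \eqref{uniformtime-7}. Combined with the reformulated system \eqref{li47-1xx}, these bounds give uniform control of $\partial_t(\varphi^\epsilon, W^\epsilon)$ in $H^3_{\text{loc}}$, so the Aubin--Lions lemma and weak-$*$ compactness (applied exactly as in Section 5.2) produce subsequential limits $(\rho,u)$ satisfying \eqref{jkkab1-7} together with the regularity $(\textrm{D2})$. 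Passing to the limit in the distributional formulation identifies $(\rho, u)$ as a regular solution of the Euler Cauchy problem \eqref{eq:1.1E} with \eqref{winitial}--\eqref{wfar}; hypothesis \eqref{initialrelation1-7} pins down $\rho_0$, and uniqueness within the class $(\textrm{D2})$ follows from the weighted $L^2$ argument used in Step 2 of the proof of Corollary \ref{serre}.

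For the quantitative rates \eqref{shou1A1-7}--\eqref{shou1A2-7}, I would repeat the compactly supported approximation $\overline{W}^\epsilon F_N$ of Section 6.2 and derive energy identities for $|\nabla^\zeta \overline{W}|_2$ with $|\zeta|=0,1,2,3$, generalizing Lemmas \ref{L:7.1}--\ref{L:7.3}. The critical new ingredients are the $\epsilon^{1/2}$-weighted $L^\infty_t H^4$ bound on $(\rho^\epsilon)^{(\delta-1)/2}$ in \eqref{uniformtime-7}, together with the corresponding dissipation bound $\epsilon \int_0^T |(\rho^\epsilon)^{(\delta-1)/2}\nabla^5 v^\epsilon|_2^2\,\mathrm{d}t\leq C$. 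For the $D^3$ estimate, the leading degenerate term $\epsilon \int F_N (\varphi^\epsilon)^2 L(\nabla^3 v^\epsilon)\cdot \nabla^3 \overline{v}$ is bounded, after an integration by parts as in the $I_{11}$ computation, by $\epsilon |\varphi^\epsilon|_\infty |\varphi^\epsilon \nabla^5 v^\epsilon|_2 |\nabla^3 \overline{v}|_2$, and Cauchy--Schwarz converts this into a time-integrable source of size $O(\epsilon)$ plus an absorbable term; Gronwall then produces the $\epsilon^{1/2}$ rate in \eqref{shou1A2-7}. For the $H^2$ rate in \eqref{shou1A1-7}, the same estimates carried out at orders $|\zeta|\leq 2$ now have the advantage that every factor such as $|\nabla \varphi^\epsilon|_\infty$ or $|\nabla^2 \varphi^\epsilon|_3$—which carried an $\epsilon^{-1/4}$ loss in \eqref{E:gz77xx}—can be replaced by the one-higher-order uniform bounds from \eqref{uniformtime-7} after a Gagliardo--Nirenberg interpolation free of negative powers of $\epsilon$; consequently each source in the $L^2$, $D^1$, and $D^2$ inequalities carries a full factor of $\epsilon$, and Gronwall on $[0,T]$ absorbs the polynomial time factors into $C_{08}(T)$.

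The main obstacle is the $D^3$ energy estimate. Expanding \eqref{gongshiding} with $|\zeta|=3$ produces a host of commutator terms of the form $\nabla^\ell \varphi^\epsilon \cdot \nabla^{4-\ell} v^\epsilon$, and the endpoint contributions with $\ell=1,2$ generate pieces such as $\epsilon |\nabla \varphi^\epsilon|_\infty |\nabla^4 v^\epsilon|_2 |\nabla^3 \overline{v}|_2$, which are not directly controlled by the clean source $|\varphi^\epsilon \nabla^5 v^\epsilon|_2$ without reintroducing $\epsilon^{-1/4}$ losses. To handle this I would introduce a decomposition of $|\varphi^\epsilon \nabla^4 \mathrm{div}\,v^\epsilon|_2^2$ in terms of $|\varphi^\epsilon \nabla^5 v^\epsilon|_2^2$ plus lower-order correctors, mirroring the identity \eqref{changea22}--\eqref{changeA4} but at one higher level, which converts every offending commutator into either an $\epsilon |\varphi^\epsilon\nabla^5 v^\epsilon|_2^2$ piece (absorbed by the dissipation) or an $O(\epsilon)$ source. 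Once all source terms are bounded by $C\epsilon(1+t)^p$ in the lower-order estimates and $C\epsilon^{1/2}(1+t)^p$ plus the initial-data contribution in the $D^3$ estimate, standard Gronwall arguments close the scheme and yield \eqref{shou1A1-7}--\eqref{shou1A2-7}, with time-continuity handled as in Majda \cite{amj}.
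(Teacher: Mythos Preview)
Your proposal is correct and follows essentially the same route as the paper. The paper's own proof is terser: it simply observes that \eqref{uniformtime-7} now furnishes uniform-in-$\epsilon$ control of $|\nabla^k\varphi^\epsilon|_2$ for all $k\leq 3$, revisits the terms $I_{11}$--$I_{15}$ in Lemma \ref{L:7.3} to upgrade the $D^2$ source from $O(\epsilon^{1/2})$ to $O(\epsilon)$, and then asserts that \eqref{shou1A2-7} follows ``by the completely same argument used in the proof for Lemma 6.3'' carried out at $|\zeta|=3$.

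One minor point: your ``main obstacle'' is a phantom. Because $|\nabla^3\varphi^\epsilon|_2$ is now uniform in $\epsilon$, Gagliardo--Nirenberg gives $|\nabla\varphi^\epsilon|_\infty\leq C|\nabla^2\varphi^\epsilon|_2^{1/2}|\nabla^3\varphi^\epsilon|_2^{1/2}$ with no $\epsilon^{-1/4}$ loss, and likewise $|\nabla^4 v^\epsilon|_2$ is uniform by \eqref{uniformtime-7}. Hence the commutator piece $\epsilon|\nabla\varphi^\epsilon|_\infty|\nabla^4 v^\epsilon|_2|\nabla^3\overline v|_2$ is already $O(\epsilon)$ and can be absorbed by Young's inequality directly; the higher-order analog of \eqref{changea22}--\eqref{changeA4} you propose is not needed, though it does no harm. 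The only residual $\epsilon^{-1/4}$ factor lives on $|\nabla^2\varphi^\epsilon|_\infty$ (which touches $|\nabla^4\varphi^\epsilon|_2$), but by choosing H\"older exponents as in \eqref{E:4.9c1} one avoids that quantity in all critical terms.
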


\begin{remark} 
Compared with Theorem \ref{th3A}, it is obvious  that   our current convergence rate is  more accurate than
that of \eqref{shou1A1} and \eqref{qiyu2} in the sense of $L^\infty$ or $H^2$ norm.
\end{remark}
Based on the proof for Theorem \ref{th2}, now we give a simple proof for Theorem \ref{th3A-7}.
\begin{proof}First, according to  \eqref{uniformtime-7}, one knows that the $L^{\infty}L^2$ estimates on 
$$
\nabla^{k_1}(\rho^\epsilon)^{\frac{\gamma-1}{2}}, \quad \nabla^{k_1}(u^\epsilon- \widehat{u}^\epsilon) \quad \text{and} \quad    \nabla^{k_2}(\rho^\epsilon)^{\frac{\delta-1}{2}}
$$
are independent of $\epsilon$, when $k_1\in [0,4]$ and $k_2\in [0,3]$.  

Second, we can come back to the proof for Lemma 6.3. Noticing that 
$$
\sum_{k=0}^3(1+t)^{2(k-3)}\big|\nabla^k(\rho^\epsilon)^{\frac{\delta-1}{2}}(t,\cdot)\big|^2_2\leq C_{05}(1+t)^{-\iota},
$$
then in \eqref{E:4.9c1}, actually one can obtain
\begin{equation*}\begin{split}
\sum_{i=11}^{15} I_i\leq  &C\epsilon^2(1+t)|\varphi^\epsilon\nabla^4 v^\epsilon|_2^2+C(1+t)^{2-2\iota}|\overline{W}|^2_{D^2}\\
&+C\big(\epsilon (1+t)^{1.5-2\iota}+N^{-1}
(1+t)^{2.5-2\iota}\big)|\nabla^2\overline{v}|_2.
\end{split}
\end{equation*}

At last,  we can prove \eqref{shou1A2-7}  by the completely same argument used in the proof for Lemma 6.3.
Thus, we have proved Theorem \ref{th3A-7}.
\end{proof}

\section*{Appendix A. Related global well-posedenss of multi-dimensional  systems}

This appendix will  show some known results on the   global well-posedenss of multi-dimensional systems for supporting  our previous  proof.

First,
let $\widehat{u}$ be the solution to the   problem (\ref{eq:approximation}) in    d-dimensional space. Then,  along the particle path $X(t;x_0)$ defined as
\begin{equation}\label{eq:1.5}
\frac{d}{d t}X(t;x_0)= \widehat{u}(t, X(t;x_0)), \quad X(0;  x_0)=x_0,
\end{equation}
$\widehat{u}$ is a constant in t: $\widehat{u}(t,X(t;x_0))=u_0(x_0)$ and
$$
\nabla  \widehat{u}(t,X(t;x_0))=\big(\mathbb{I}_d+t\nabla u_0(x_0)\big)^{-1}\nabla u_0(x_0).
$$

Based on this observation, one can have the following  decay estimates of $ \widehat{u}$, which play important roles in  establishing the global existence of the smooth solution to the  problem considered in this paper, and their proof  could be found in \cite{grassin} or  the  appendix of \cite{zz}.
\begin{proposition}\cite{grassin,zz}\label{p1} Let $m>1+\frac{d}{2}$. Assume that
$$\nabla u_0\in L^\infty( \mathbb{R}^d),\quad \nabla^2 u_0\in H^{m-1}( \mathbb{R}^d),$$
 and  there exists a constant  $\kappa>0$ such that for all $x\in \mathbb{R}^d$,
$$
\text{Dist}\big(\text{Sp}( \nabla u_0(x)), \mathbb{R}_{-} \big)\geq \kappa,
$$
then there exists a unique global  classical solution $ \widehat{u}$ to the  problem \eqref{eq:approximation}, which satisfies
\vspace{0.1cm}
\begin{itemize}
\item[(1)]$\nabla \widehat{u}(t, x)= \frac{1}{1+t}\mathbb{I}_d+\frac{1}{(1+t)^2}K(t,x),\quad \text{for \ all} \quad x\in \mathbb{R}^d,\quad t\geq 0$;\\[1pt]
\item[(2)]$
\|\nabla^l \widehat{u}(\cdot, t)\|_{L^2(\mathbb{R}^d)}\leq C_{0,l} (1+ t)^{\frac{d}{2}-(l+1)},\quad \text{for} \quad  2\leq l\leq m+1$;\\[1pt]
\item[(3)]$
\|\nabla^2 \widehat{u}(\cdot, t)\|_{L^\infty(\mathbb{R}^d)}\leq C_0(1+ t)^{-3}\|\nabla^2 u_0\|_{L^\infty(\mathbb{R}^d)}$,
\end{itemize}
where the matrix $K(t,x)=K_{ij}:  \mathbb{R}^+\times \mathbb{R}^d  \rightarrow M_d(\mathbb{R}^d)$  satisfies
$$\|K\|_{L^{\infty}(\mathbb{R}^+\times\mathbb{R}^d)}\leq C_0\big(1+\kappa^{-d}\|\nabla u_0\|^{d-1}_{L^\infty(\mathbb{R}^d)}\big).$$ Here, $C_{0}$ is a constant depending only on $m$, $d$, $\kappa$ and $u_0$, and $C_{0,l}$ are all constants depending on $C_0$ and $l$.
Moreover, if  $u_0(0)=0$,  then it holds  that
\begin{equation}\label{linear relation}
| \widehat{u}(t,x)|\leq |\nabla \widehat{u}|_\infty |x|,\quad \text{for\ \ any } \quad t\geq 0.
\end{equation}
\end{proposition}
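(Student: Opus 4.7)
My plan is to construct the solution explicitly via the method of characteristics. Since \eqref{eq:approximation} is a Burgers-type transport for $\widehat u$ along its own velocity, along any trajectory $X(t, x_0)$ of $\dot X = \widehat u(t, X)$ with $X(0, x_0) = x_0$ the value of $\widehat u$ is frozen: $\widehat u(t, X(t, x_0)) = u_0(x_0)$. Substituting back into the ODE gives $\dot X = u_0(x_0)$, hence
\begin{equation*}
X(t, x_0) = x_0 + t\, u_0(x_0), \qquad F(t, x_0) := \partial_{x_0} X(t, x_0) = \mathbb{I}_d + t\nabla u_0(x_0).
\end{equation*}
The first task is to upgrade this formal identity into a global classical solution, which reduces to showing that $X(t, \cdot)$ is a diffeomorphism of $\mathbb{R}^d$ uniformly in $t \geq 0$. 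For each eigenvalue $\lambda$ of $\nabla u_0(x_0)$, the spectral hypothesis $\mathrm{Dist}(\mathrm{Sp}(\nabla u_0(x_0)), \mathbb{R}_-) \geq \kappa$ rules out $\lambda = -1/t$ and, case-splitting on $\mathrm{Re}\,\lambda \geq 0$ versus $\mathrm{Re}\,\lambda < 0$ (in which case $|\mathrm{Im}\,\lambda| \geq \kappa$), yields the uniform lower bound $|1 + t\lambda| \geq c(\kappa, \|\nabla u_0\|_\infty)(1 + t)$. Multiplying over eigenvalues gives $|\det F| \gtrsim (1+t)^d$, while the entries of $\mathrm{adj}(F)$ are polynomials of degree at most $d-1$ in $t\,\nabla u_0$, so Cramer's rule produces the resolvent estimate
\begin{equation*}
\|F^{-1}(t, x_0)\| \leq \frac{C_0\bigl(1 + \kappa^{-d}\|\nabla u_0\|_\infty^{d-1}\bigr)}{1 + t}
\end{equation*}
uniformly in $(t, x_0)$. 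Hadamard's global inverse function theorem then makes $X(t, \cdot)$ a diffeomorphism with inverse $\Phi(t, \cdot)$, and $\widehat u(t, y) = u_0(\Phi(t, y))$ is the claimed global classical solution (uniqueness is immediate from the characteristic ODE).

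Differentiating the transport identity $\widehat u(t, X(t, x_0)) = u_0(x_0)$ in $x_0$ gives $\nabla \widehat u(t, X)\, F = \nabla u_0$, i.e., $\nabla \widehat u = \nabla u_0(x_0)\, F^{-1}(t, x_0)$. Using the algebraic identity $(1 + t)\nabla u_0(x_0) - F(t, x_0) = \nabla u_0(x_0) - \mathbb{I}_d$, I rearrange this as
\begin{equation*}
\nabla \widehat u(t, X(t, x_0)) = \frac{1}{1+t}\mathbb{I}_d + \frac{1}{(1+t)^2}\,K(t, X(t, x_0)), \qquad K := (1+t)(\nabla u_0(x_0) - \mathbb{I}_d)\,F^{-1}(t, x_0),
\end{equation*}
which is precisely the decomposition (1); the $L^\infty$ bound on $K$ follows at once from the resolvent estimate.

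For (2) and (3), I differentiate $\widehat u(t, y) = u_0(\Phi(t, y))$ by Fa\`a di Bruno: $\nabla^l \widehat u$ is a universal polynomial in $\nabla^j u_0 \circ \Phi$ and $\nabla^j \Phi$ with $j \leq l$, and $\nabla^j \Phi$ is obtained by iteratively differentiating $F(t, \Phi)\cdot \nabla_y\Phi = \mathbb{I}_d$, each step inserting an extra factor of $F^{-1}$ and hence an extra $(1+t)^{-1}$. A crucial cancellation sharpens (3) by one power: the leading term $(1+t)^{-1}\mathbb{I}_d$ of $\nabla\widehat u$ is spatially constant, so $\nabla^2 \widehat u = (1+t)^{-2}\nabla_y K$, and $\nabla_y K = (\nabla_{x_0} K)\,F^{-1}$ with $\|\nabla_{x_0} K\|_\infty \lesssim \|\nabla^2 u_0\|_\infty$, producing $|\nabla^2 \widehat u|_\infty \lesssim (1+t)^{-3}\|\nabla^2 u_0\|_\infty$. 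For (2), changing variables $y = X(t, x_0)$ in $\int |\nabla^l \widehat u(t, y)|^2\,dy$ contributes $\det F \lesssim (1+t)^d$ from the Jacobian, which combined with the pointwise rate $|\nabla^l \widehat u(t, X)| \lesssim (1+t)^{-(l+1)}(|\nabla^l u_0(x_0)| + \text{lower-order products})$ and the $H^{m-1}$-control of $\nabla^2 u_0$ yields $\|\nabla^l\widehat u(t)\|_{L^2}^2 \lesssim (1+t)^{d-2(l+1)}$, as asserted. Finally, when $u_0(0) = 0$ the characteristic $X(t, 0) = t\, u_0(0) = 0$ is stationary and $\widehat u(t, 0) = 0$, so \eqref{linear relation} is immediate from the mean value theorem.

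The main obstacle is the quantitative resolvent bound on $F^{-1}$: the spectral condition is an eigenvalue statement, and converting it into an operator-norm estimate with the precise $\kappa^{-d}\|\nabla u_0\|_\infty^{d-1}$ dependence requires careful bookkeeping, either through the adjugate-quotient computation above or through a Dunford--Cauchy contour integral enclosing $\mathrm{Sp}(\nabla u_0)$; once this estimate is in hand, the remaining steps are essentially formal chain-rule manipulations along the explicit flow.
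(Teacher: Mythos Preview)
Your approach is correct and is precisely the standard argument from Grassin \cite{grassin} and the appendix of \cite{zz}, which the paper cites rather than reproduces. The paper itself does not give a proof of this proposition: it merely records the characteristic identity $\nabla \widehat u(t,X(t;x_0))=(\mathbb{I}_d+t\nabla u_0(x_0))^{-1}\nabla u_0(x_0)$ as an ``observation'' and defers the remaining estimates to the references, so your sketch in fact supplies more detail than the paper does.
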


Recently, Xin-Zhu \cite{zz} established the global existence of  the unique regular solution to the 3-D compressible isentropic  Navier-Stokes equation with degenerate viscosities:
\begin{proposition}\cite{zz}\label{thglobal} Let \eqref{canshu}
and any one of the   conditions $(P_1)$-$(P_4)$ (in Theorem \ref{tha})  hold. If the   initial data $( \rho^\epsilon_0, u^\epsilon_0)$ satisfy
\begin{itemize}

\item[$(\rm E_1)$] $u^\epsilon_0\in \Xi$, 
and  there exists a constant  $\kappa>0$  such that ,
$$
\text{Dist}\big(\text{Sp}( \nabla u^\epsilon_0(x)), \mathbb{R}_{-} \big)\geq \kappa \quad \text{for\  all}\quad  x\in \mathbb{R}^3;\\
$$

\item[$(\rm E_2)$] $\rho^\epsilon_0\geq 0$,  $(\rho^\epsilon_0)^{\frac{\gamma-1}{2}}\in H^3$,    $(\rho^\epsilon_0)^{\frac{\delta-1}{2}}\in H^3$, and
$$\big\|(\rho^\epsilon_0)^{\frac{\gamma-1}{2}}\big\|_3+ \big\|(\rho^\epsilon_0)^{\frac{\delta-1}{2}}\big\|_3 \leq E_0$$
 for some constant  $E_0=E_0(\gamma, \delta, \alpha,\beta,  A, \kappa, \|u^\epsilon_0\|_{\Xi})>0$,
\end{itemize}
then  for arbitrarily large time  $T>0$,  there exists a   unique  regular solution $(\rho^\epsilon, u^\epsilon)$ in $[0,T]\times \mathbb{R}^3$  to the  Cauchy
 problem   (\ref{eq:1.1})-(\ref{10000}) with (\ref{initial})-(\ref{far}).
\end{proposition}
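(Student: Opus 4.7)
The plan is to establish Proposition \ref{thglobal} by combining a local existence theorem with the uniform-in-time energy estimates of Theorem \ref{tha-compact}/Theorem \ref{tha}, via a continuation argument. First I would reformulate the degenerate Cauchy problem \eqref{eq:1.1}--\eqref{10000} with \eqref{initial}--\eqref{far} using the change of variables $(\varphi^\epsilon,\phi^\epsilon,v^\epsilon)=((\rho^\epsilon)^{(\delta-1)/2},\sqrt{4A\gamma/(\gamma-1)^2}(\rho^\epsilon)^{(\gamma-1)/2},u^\epsilon-\widehat u^\epsilon)$, exactly as in (\ref{li47-1})--(\ref{li47-2}). This turns the pressure singularity into a smooth quadratic nonlinearity in $\phi^\epsilon$, replaces the degenerate parabolic operator $\mathrm{div}\,\mathbb T$ with the manageable $\epsilon(\varphi^\epsilon)^2 Lv^\epsilon - \epsilon\nabla(\varphi^\epsilon)^2\cdot Q$, and makes the subtraction $u^\epsilon-\widehat u^\epsilon$ an $L^2$-based quantity despite $\widehat u^\epsilon$ being unbounded at infinity.

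For local existence, I would run a two-stage approximation. Stage one: regularize the density by replacing $\rho^\epsilon_0$ with $\rho^\epsilon_0+\eta$, $\eta>0$, so that the momentum equation becomes uniformly parabolic; standard Banach fixed-point / linearization theory in $H^3$ (using that $\widehat u^\epsilon$ is globally smooth by Proposition \ref{p1}) yields a classical solution $(\rho^{\epsilon,\eta},u^{\epsilon,\eta})$ on a short interval $[0,T_\eta]$. Stage two: derive $\eta$-independent estimates on $(\varphi^{\epsilon,\eta},\phi^{\epsilon,\eta},v^{\epsilon,\eta})$ in the energy space $Z(t)$ of Section 3.2, using precisely the hyperbolic commutator bookkeeping of Lemmas \ref{L:3.2}--\ref{L:3.6} — the transport equation for $\varphi^{\epsilon,\eta}$ must be handled first with compactly supported initial density to justify the integration by parts against $\widehat u^\epsilon$ (as explained in Section 3.3), and then removed by the approximation-and-limit argument of Section 5.1. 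Passing $\eta\downarrow 0$ using weak-* convergence in $L^\infty_tH^3$ and strong convergence in $C_tH^2_{\mathrm{loc}}$ (Aubin--Lions) gives a local regular solution in the sense of Definition \ref{d1}.

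For global extension, the key is to iterate the local result using the a priori bound furnished by Theorem \ref{tha-compact} (and then Theorem \ref{tha} for general initial density via the $R$-truncation procedure of Section 5.1). Concretely, under the assumption $E_0$ small enough together with $(P_1)$--$(P_4)$ and \eqref{canshu}, Lemma \ref{ll2} produces the ODE inequality \eqref{Aeq:2.14} for $Z(t)$, whose solution stays uniformly bounded for all $t\geq 0$, indeed decays like $(1+t)^{-(1-\eta^*)b_*}$. This prevents blowup of the $H^3$-norm of $((\rho^\epsilon)^{(\gamma-1)/2},(\rho^\epsilon)^{(\delta-1)/2},u^\epsilon-\widehat u^\epsilon)$ on any $[0,T]$, so the local-existence result can be reapplied indefinitely.

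Uniqueness is the last piece and follows the scheme of Section 5.2.2 (used there for the Euler equations): take two regular solutions $(\rho_1^\epsilon,u_1^\epsilon)$ and $(\rho_2^\epsilon,u_2^\epsilon)$ with the same initial data, subtract the reformulated systems, multiply the difference by the cutoff $F_N(|x|/N)$ to control $\widehat u^\epsilon$'s linear growth, do a basic $L^2$ energy estimate, and let $N\to\infty$. The viscous contribution $\epsilon(\varphi^\epsilon)^2 L(u_1^\epsilon-u_2^\epsilon)$ generates a non-negative term that is favorable, while the singular source term $\epsilon\nabla(\varphi^\epsilon)^2\cdot Q$ can be absorbed using the uniform control of $|\nabla\varphi^\epsilon|_\infty$ from $Z(t)$. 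The hardest step in the whole program is obtaining the a priori bound for $Z(t)$ — specifically closing the cross-term between $\mathbb H(\varphi^\epsilon)\cdot\mathbb Q$ and the coercive quadratic $\alpha\epsilon|\varphi^\epsilon\nabla^{k+1}v^\epsilon|_2^2$ at the top order $k=3$ via the delicate algebraic balance \eqref{GAB}--\eqref{keyinformation}; this is exactly what forces the algebraic conditions $(P_1)$--$(P_4)$ on the viscosity parameters, and without this computation the degeneracy would not be globally controllable.
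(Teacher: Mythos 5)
The paper does not actually prove this proposition: it is imported wholesale from the reference \cite{zz} (the text immediately following the statement says ``The above conclusion can be found in Theorem 1.4 and Remark 1.2 in \cite{zz}''), and within this paper it serves as the \emph{input} that guarantees existence of the regular solution before any of the uniform-in-$\epsilon$ estimates are derived. Your proposal instead reconstructs a proof from scratch via local existence plus a priori estimates plus continuation, which is indeed the strategy of the cited work; so you are taking a genuinely different (and far more laborious) route than the paper, which simply cites.

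Two points deserve attention. First, as written your argument is circular relative to this paper's logical architecture: Theorem \ref{tha-compact} and Theorem \ref{tha} are proved \emph{for the solution whose global existence Proposition \ref{thglobal} supplies} (see the opening of Section 4: ``according to Proposition \ref{thglobal}, one has that \dots there exists a unique regular solution''). To make your continuation argument legitimate you must recast the Section 4 estimates as a priori bounds valid on any interval of existence of a local solution, independent of the global existence claim; this is doable since the energy computation never uses globality, but it needs to be said. Second, the two places where your sketch is thinnest are exactly the places where the real work of \cite{zz} lives: (i) the local existence for the degenerate system with vacuum --- the $\rho^\epsilon_0+\eta$ regularization is only the first step, and the $\eta$-independent closure near the vacuum (where the elliptic operator $\epsilon(\varphi^\epsilon)^2 L$ loses all coercivity) is not ``standard Banach fixed-point theory''; and (ii) uniqueness for the \emph{viscous} system, which is harder than the Euler case you point to in Section 5.2.2 because the difference of two solutions produces terms like $\epsilon\big((\varphi_1^\epsilon)^2-(\varphi_2^\epsilon)^2\big)Lv_1^\epsilon$ and $\epsilon\nabla\big((\varphi_1^\epsilon)^2-(\varphi_2^\epsilon)^2\big)\cdot Q$, whose control requires estimating the difference of the densities in stronger norms than $L^2$, not merely the uniform bound on $|\nabla\varphi^\epsilon|_\infty$. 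Neither gap is fatal --- both are resolved in \cite{zz} --- but a self-contained proof would have to supply them.
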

The above conclusion  can be found in Theorem 1.4 and Remark 1.2 in \cite{zz}.

Finally, we show the global well-posedness theory of  the following nonlinear ODE.
\begin{proposition}\label{P:xinzhu1}
For the constants $b,C_i$ and $D_i(i=1,2)$ satisfying
$$
a>1,\quad D_1-(a-1)b<-1,\quad D_2<-1,\quad C_i\geq 0,\quad \text{for}\ \ i=1,2,
$$
there exists a constant $\Lambda$ such that there exists a global smooth solution to the following problem
\begin{equation}\label{E:xinzhu4}
\begin{cases}
\frac{dZ(t)}{dt}+\frac{b}{1+t}Z(t)=C_1(1+t)^{D_1}Z^\alpha(t)+C_2(1+t)^{D_2}Z(t),\\
\displaystyle
Z(x,0)=Z_0<\Lambda.
\end{cases}
\end{equation}
\end{proposition}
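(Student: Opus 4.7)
The plan is to reduce \eqref{E:xinzhu4} to a linear ODE via a Bernoulli-type transformation, and then use the two decay hypotheses $D_2<-1$ and $D_1-(\alpha-1)b<-1$ to obtain uniform bounds that are preserved for all $t\geq 0$ provided $Z_0$ is small enough.

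First, I would remove the term $\frac{b}{1+t}Z$ by the integrating factor $(1+t)^{b}$: setting $Y(t):=(1+t)^{b}Z(t)$, equation \eqref{E:xinzhu4} becomes
\begin{equation*}
Y'(t)=C_1(1+t)^{D_1-(\alpha-1)b}\,Y^{\alpha}+C_2(1+t)^{D_2}Y,\qquad Y(0)=Z_0.
\end{equation*}
The advantage of this form is that the exponent $D_1-(\alpha-1)b$ on the nonlinear term now carries the full strength of the hypothesis $D_1-(\alpha-1)b<-1$, rather than being weakened by the linear decay at rate $b$. Next, since $\alpha>1$, the Bernoulli substitution $W(t):=Y(t)^{1-\alpha}$ linearises the equation to
\begin{equation*}
W'(t)+(\alpha-1)C_2(1+t)^{D_2}\,W(t)=-(\alpha-1)C_1(1+t)^{D_1-(\alpha-1)b},
\end{equation*}
as long as $W>0$, i.e.\ as long as $Y$ (equivalently $Z$) remains finite.

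Now I would solve the linearised equation explicitly using the integrating factor
\begin{equation*}
\mu(t):=\exp\Bigl((\alpha-1)C_2\int_0^{t}(1+s)^{D_2}\,\mathrm{d}s\Bigr).
\end{equation*}
Because $D_2<-1$, the integral in the exponent is bounded uniformly in $t$, so there exist constants $0<\mu_\ast\leq\mu(t)\leq\mu^\ast<\infty$. Multiplying by $\mu$ and integrating from $0$ to $t$ gives
\begin{equation*}
\mu(t)W(t)=Z_0^{1-\alpha}-(\alpha-1)C_1\int_0^{t}\mu(s)(1+s)^{D_1-(\alpha-1)b}\,\mathrm{d}s.
\end{equation*}
Using $D_1-(\alpha-1)b<-1$, the remaining integral is bounded uniformly in $t$ by some finite constant $I_\infty<\infty$. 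Hence $\mu(t)W(t)\geq Z_0^{1-\alpha}-(\alpha-1)C_1\mu^\ast I_\infty$ for every $t\geq 0$.

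Finally, defining
\begin{equation*}
\Lambda:=\bigl((\alpha-1)C_1\mu^\ast I_\infty\bigr)^{-1/(\alpha-1)}\quad\text{(with the convention }\Lambda=+\infty\text{ if }C_1=0\text{)},
\end{equation*}
any initial datum $0<Z_0<\Lambda$ forces $\mu(t)W(t)$ to stay bounded below by a positive constant on every interval of existence; hence $W(t)$, and therefore $Y(t)=W(t)^{1/(1-\alpha)}$, remains finite and smooth for all $t\geq 0$, yielding the global classical solution $Z(t)=(1+t)^{-b}Y(t)$. The only delicate point in the argument is the continuation step: one must verify that the \emph{a priori} bound on $\mu W$ rules out blow-up in finite time, which is precisely guaranteed by the strict inequality $Z_0<\Lambda$ and the uniform positivity of $\mu(t)$. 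The non-negativity assumptions on $C_i$ ensure monotonicity of the nonlinear contribution and that $\mu$ is a well-defined positive integrating factor; degenerate cases ($C_1=0$ or $C_2=0$) follow from trivial simplifications of the same formula.
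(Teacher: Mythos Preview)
Your proof is correct and follows essentially the same approach as the one the paper relies on (cited from \cite{zz}): the explicit formula displayed in the proof of Lemma~\ref{ll2} shows that the referenced argument also uses the Bernoulli substitution $V=Z^{1-\alpha}$ together with an integrating factor, and then exploits the integrability conditions $D_2<-1$ and $D_1-(\alpha-1)b<-1$ exactly as you do. The only cosmetic difference is that you first peel off the factor $(1+t)^{b}$ and then apply Bernoulli, whereas the formula in the paper corresponds to performing the Bernoulli substitution directly on $Z$ with the combined integrating factor $(1+t)^{-(\alpha-1)b}\exp\bigl((\alpha-1)C_2\int_0^t(1+s)^{D_2}\,\mathrm{d}s\bigr)$; the resulting solution formula and smallness threshold $\Lambda$ are identical.
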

The proof of Proposition \ref{P:xinzhu1} can be found in \cite{zz}.

\section*{Appendix B. Useful lemmas}
In this appendix, we list some useful lemmas which were used frequently in the previous sections.
The first one is the  well-known Gagliardo-Nirenberg inequality.
\begin{lemma}\cite{oar}\label{lem2as}\
For $p\in [2,6]$, $q\in (1,\infty)$, and $r\in (3,\infty)$, there exists some generic constant $C> 0$ that may depend on $q$ and $r$ such that for
$$f\in H^1(\mathbb{R}^3),\quad \text{and} \quad  g\in L^q(\mathbb{R}^3)\cap D^{1,r}(\mathbb{R}^3),$$
one has
\begin{equation}\label{33}
\begin{split}
&|f|^p_p \leq C |f|^{(6-p)/2}_2 |\nabla f|^{(3p-6)/2}_2,\\[8pt]
&|g|_\infty\leq C |g|^{q(r-3)/(3r+q(r-3))}_q |\nabla g|^{3r/(3r+q(r-3))}_r.
\end{split}
\end{equation}
\end{lemma}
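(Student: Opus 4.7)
The plan is to establish both inequalities in Lemma \ref{lem2as} by combining Hölder interpolation with the sharp Sobolev embeddings specific to $\mathbb{R}^3$, which is the standard route to Gagliardo--Nirenberg-type estimates.

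First, for the bound $|f|_p^p \leq C|f|_2^{(6-p)/2}|\nabla f|_2^{(3p-6)/2}$ with $p\in[2,6]$, I would write $1/p$ as a convex combination of $1/2$ and $1/6$: choose $\theta\in[0,1]$ so that $\frac{1}{p}=\frac{\theta}{2}+\frac{1-\theta}{6}$, which gives $\theta = (6-p)/(2p)$. Hölder's inequality then yields $|f|_p \leq |f|_2^{\theta} |f|_6^{1-\theta}$. The 3D Sobolev embedding $\dot H^1(\mathbb{R}^3)\hookrightarrow L^6(\mathbb{R}^3)$ (equivalently the sharp Sobolev inequality $|f|_6 \leq C|\nabla f|_2$) controls the $L^6$ norm, and raising both sides to the $p$-th power gives the claimed estimate after simplifying the exponents $p\theta=(6-p)/2$ and $p(1-\theta)=(3p-6)/2$. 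The endpoint $p=2$ is trivial and $p=6$ is the Sobolev inequality itself.

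Second, for the $L^\infty$ bound, observe that the exponents $\alpha=q(r-3)/(3r+q(r-3))$ and $\beta=3r/(3r+q(r-3))$ are uniquely determined by two constraints: degree-one homogeneity in $g$, i.e.\ $\alpha+\beta=1$; and scaling invariance under $g\mapsto g(\lambda\,\cdot)$, i.e.\ $0=-3\alpha/q+\beta(1-3/r)$. To prove the inequality, I would exploit Morrey's embedding (valid since $r>3$), which provides the pointwise estimate for the oscillation of $g$ on a ball $B_\rho(x)$ of size $C\rho^{1-3/r}|\nabla g|_r$, while the mean value of $g$ on that ball is bounded by $C\rho^{-3/q}|g|_q$. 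Thus
\begin{equation*}
|g(x)|\leq C\rho^{-3/q}|g|_q + C\rho^{1-3/r}|\nabla g|_r.
\end{equation*}
Optimizing $\rho$ by balancing the two terms yields precisely the advertised exponents $\alpha,\beta$, taking the supremum over $x$ produces the $L^\infty$ bound.

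The main obstacle is purely algebraic bookkeeping of the interpolation exponents; no substantial analytic difficulty is present because the result is a particular case of the classical Gagliardo--Nirenberg inequality. In practice I would simply verify the scaling identities $\alpha+\beta=1$ and $-3\alpha/q+\beta(1-3/r)=0$, which together with the density of smooth compactly supported functions in the relevant spaces allow the general estimate of \cite{oar} to be invoked directly.
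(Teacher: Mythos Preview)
Your argument is correct and is the standard route to these particular Gagliardo--Nirenberg inequalities: log-convexity of $L^p$ norms plus the sharp Sobolev embedding $\dot H^1(\mathbb{R}^3)\hookrightarrow L^6$ for the first estimate, and Morrey's oscillation bound combined with optimization over the ball radius for the second. The exponent bookkeeping you carry out checks out in both cases.

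There is, however, nothing to compare your proof against: the paper does not prove this lemma. It is stated in Appendix~B as a quoted result from the reference \cite{oar} (Ladyzhenskaya--Ural'tseva) and used as a black box throughout. So you have supplied a self-contained proof where the paper simply cites the literature; your approach is the expected one and would be the natural way to fill in the details if asked.
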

Some special cases of this inequality can be written as
\begin{equation}\label{ine}\begin{split}
|u|_6\leq C|u|_{D^1},\quad |u|_{\infty}\leq C\|\nabla u\|_{1}, \quad |u|_{\infty}\leq C\|u\|_{W^{1,r}}.
\end{split}
\end{equation}

The second lemma is some compactness results obtained via the Aubin-Lions Lemma.
\begin{lemma}\cite{jm}\label{aubin} Let $X_0\subset X\subset X_1$ be three Banach spaces.  Suppose that $X_0$ is compactly embedded in $X$ and $X$ is continuously embedded in $X_1$. Then the following statements hold.

\begin{enumerate}
\item[i)] If $J$ is bounded in $L^p([0,T];X_0)$ for $1\leq p < +\infty$, and $\frac{\partial J}{\partial t}$ is bounded in $L^1([0,T];X_1)$, then $J$ is relatively compact in $L^p([0,T];X)$;\\

\item[ii)] If $J$ is bounded in $L^\infty([0,T];X_0)$  and $\frac{\partial J}{\partial t}$ is bounded in $L^p([0,T];X_1)$ for $p>1$, then $J$ is relatively compact in $C([0,T];X)$.
\end{enumerate}
\end{lemma}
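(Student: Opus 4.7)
The plan is to deduce both parts of the lemma from a single quantitative interpolation estimate (Ehrling's inequality) combined with the appropriate characterization of compactness in the relevant function space over time. The compact embedding $X_0 \hookrightarrow\hookrightarrow X$ together with the continuity $X \hookrightarrow X_1$ gives, for every $\eta>0$, a constant $C_\eta>0$ such that
\begin{equation*}
\|u\|_X \leq \eta\, \|u\|_{X_0} + C_\eta\, \|u\|_{X_1} \quad \text{for all } u \in X_0.
\end{equation*}
This is proved by contradiction: if it failed, a sequence in the unit ball of $X_0$ would exist with $\|u_n\|_X \to 1$ but $\|u_n\|_{X_1}\to 0$, and compactness of $X_0 \hookrightarrow X$ would then produce an $X$-limit $u\ne 0$, while continuity of $X \hookrightarrow X_1$ would force $u=0$.

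For part (i), I would appeal to the Riesz–Fréchet–Kolmogorov criterion for relative compactness in $L^p([0,T];X)$: it suffices to verify (a) pointwise relative compactness $\{f(t):f\in J\}\Subset X$ for a.e.\ $t$, and (b) uniform continuity of $X$-valued translations, i.e.\ $\sup_{f\in J}\int_0^{T-h}\|f(t+h)-f(t)\|_X^p\,dt\to 0$ as $h\to 0^+$. Item (a) is immediate from the boundedness of $J$ in $L^p([0,T];X_0)$ combined with the compact embedding $X_0 \hookrightarrow\hookrightarrow X$ (after extracting a subsequence on a dense set of times). For item (b), I apply the Ehrling estimate pointwise:
\begin{equation*}
\|f(t+h)-f(t)\|_X \leq \eta \,\|f(t+h)-f(t)\|_{X_0} + C_\eta\,\|f(t+h)-f(t)\|_{X_1}.
\end{equation*}
Raising to the $p$-th power, integrating in $t$, and using $\|f(t+h)-f(t)\|_{X_1} \leq \int_t^{t+h}\|\partial_s f\|_{X_1}\,ds$ gives a bound of the shape
\begin{equation*}
\eta^p\,C + C_\eta^p\,h^{p-1}\!\left(\int_0^T\|\partial_t f\|_{X_1}\,dt\right)^{\!p}
\end{equation*}
when $p\geq 1$ (the argument is adapted using Jensen's inequality for $p=1$). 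Choosing $\eta$ small first, then $h$ small, makes the right-hand side uniformly small in $J$.

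For part (ii), the target space is $C([0,T];X)$, so I would use the Arzelà–Ascoli theorem: pointwise relative compactness in $X$ (again from $L^\infty([0,T];X_0)$ plus compact embedding) and equicontinuity in $X$. Equicontinuity is again obtained via Ehrling: for any $t_1<t_2$,
\begin{equation*}
\|f(t_2)-f(t_1)\|_X \leq \eta\big(\|f(t_1)\|_{X_0}+\|f(t_2)\|_{X_0}\big) + C_\eta\,|t_2-t_1|^{1-1/p}\,\|\partial_t f\|_{L^p([0,T];X_1)}.
\end{equation*}
The hypothesis $p>1$ is used here precisely so that $|t_2-t_1|^{1-1/p}\to 0$; in the limit case $p=1$, continuity of translations fails in general, which is why only $L^p$ compactness (not $C([0,T];X)$) can be obtained in part~(i).

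The subtle point — and the piece I would most want to double-check — is the interplay between the exponents in the two parts. Part~(i) needs Jensen/Hölder carefully to convert the $L^1$-in-time derivative bound into a translation estimate that remains uniform over $J$, and part~(ii) needs the strict inequality $p>1$ to produce genuine continuity (not merely essential boundedness) in time. Once the Ehrling inequality is in hand, everything else is bookkeeping via Arzelà–Ascoli / Riesz–Fréchet–Kolmogorov; the conceptual core is simply that the $X_0$-bound kills the rough spatial oscillations while the $X_1$-derivative bound kills temporal oscillations, and the compact embedding mediates between them.
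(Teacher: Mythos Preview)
The paper does not prove this lemma at all: it is stated in Appendix~B as a known result and attributed to Simon~\cite{jm}, with no argument supplied. So there is no ``paper's own proof'' to compare against; your proposal is a self-contained proof of a cited background fact.

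Your outline is the standard Simon-type argument and is essentially correct, but a few details are imprecise. First, in part~(i), your formulation of the Riesz--Fr\'echet--Kolmogorov criterion via ``$\{f(t):f\in J\}\Subset X$ for a.e.\ $t$'' is not well posed for $L^p$-equivalence classes; the correct condition replaces pointwise values by local time-averages $\int_{t_1}^{t_2} f(s)\,ds$, whose relative compactness in $X$ follows from the $L^p([0,T];X_0)$ bound and the compact embedding. Second, your displayed translation bound has the power $h^{p-1}$, which vanishes at $p=1$; the sharper (and uniform-in-$p$) estimate is obtained by writing
\[
\int_0^{T-h}\Big(\int_t^{t+h}\|\partial_s f\|_{X_1}\,ds\Big)^p dt
\le \|\partial_t f\|_{L^1(X_1)}^{p-1}\int_0^{T-h}\!\!\int_t^{t+h}\|\partial_s f\|_{X_1}\,ds\,dt
\le h\,\|\partial_t f\|_{L^1(X_1)}^{p},
\]
which gives a factor $h$ (not $h^{p-1}$) for all $p\ge 1$ and removes the need for a separate Jensen argument at $p=1$. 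Third, in part~(ii), before invoking Arzel\`a--Ascoli in $C([0,T];X)$ you should note that each $f\in J$ has a representative in $C([0,T];X_1)$ (indeed in $C^{0,1-1/p}([0,T];X_1)$) from the $W^{1,p}([0,T];X_1)$ bound; only then does the Ehrling-based equicontinuity estimate make sense pointwise in $t$. With these small repairs your sketch goes through.
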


Next we give some Sobolev inequalities on the interpolation estimate, product estimate,  composite function estimate and so on in the following three lemmas.
\begin{lemma}\cite{amj}\label{gag111}
Let  $u\in H^s$, then for any $s'\in[0,s]$,  there exists  a constant $C_s$ only depending on $s$ such that
$$
\|u\|_{s'} \leq C_s \|u\|^{1-\frac{s'}{s}}_0 \|u\|^{\frac{s'}{s}}_s.
$$
\end{lemma}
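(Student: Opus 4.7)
The plan is to prove the interpolation inequality via the Fourier characterization of the Sobolev spaces $H^s(\mathbb{R}^3)$. Since by Plancherel's theorem one has the identity
\begin{equation*}
\|u\|_\sigma^2 = \int_{\mathbb{R}^3}(1+|\xi|^2)^\sigma |\hat{u}(\xi)|^2\,\text{\rm d}\xi
\end{equation*}
for any $\sigma \geq 0$, the inequality reduces to a pointwise-in-frequency factorization followed by H\"older's inequality on the frequency integral.

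First, I would write the integrand for $\|u\|_{s'}^2$ as
\begin{equation*}
(1+|\xi|^2)^{s'}|\hat{u}(\xi)|^2
= \Bigl[(1+|\xi|^2)^{s}|\hat{u}(\xi)|^2\Bigr]^{s'/s}\cdot \bigl[|\hat{u}(\xi)|^2\bigr]^{1-s'/s},
\end{equation*}
which is valid since $s'/s \in [0,1]$ for $s' \in [0,s]$ (with the degenerate cases $s'=0$ and $s'=s$ being trivial, handled separately). Next I would apply H\"older's inequality on the measure $\text{\rm d}\xi$ with conjugate exponents $p = s/s'$ and $q = s/(s-s')$ to obtain
\begin{equation*}
\int (1+|\xi|^2)^{s'}|\hat{u}(\xi)|^2\,\text{\rm d}\xi
\leq \Bigl(\int (1+|\xi|^2)^{s}|\hat{u}(\xi)|^2\,\text{\rm d}\xi\Bigr)^{s'/s}
\Bigl(\int |\hat{u}(\xi)|^2\,\text{\rm d}\xi\Bigr)^{1-s'/s}.
\end{equation*}
Using Plancherel on both factors gives $\|u\|_{s'}^2 \leq \|u\|_s^{2s'/s}\,\|u\|_0^{2(1-s'/s)}$, and taking square roots yields the desired inequality with the explicit constant $C_s = 1$.

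There is essentially no obstacle in this argument; the only minor bookkeeping is to cover the endpoint exponents $s'=0$ and $s'=s$ (where the inequality becomes an equality) and to verify that the weight $(1+|\xi|^2)^\sigma$ is nonnegative so that H\"older applies without absolute values. If one prefers to avoid the Fourier transform (e.g.\ for a definition of $\|\cdot\|_s$ based on $L^2$ norms of integer-order derivatives), the same inequality can alternatively be derived by iterating the classical Gagliardo-Nirenberg interpolation bound $|\nabla^k u|_2 \leq C|u|_2^{1-k/m}\|\nabla^m u\|_2^{k/m}$ for integers $0\leq k\leq m$ and then interpolating in the fractional index via Littlewood-Paley or real interpolation; however, the Fourier-side proof is cleanest and gives $C_s$ independent of $s'$.
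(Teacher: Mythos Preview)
Your proof is correct and is the standard Fourier-side argument for Sobolev interpolation; in fact it yields $C_s=1$. Note that the paper does not give its own proof of this lemma---it simply cites it from \cite{amj}---so there is no ``paper's proof'' to compare against. Your approach is the textbook one and is exactly what one would expect.
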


\begin{lemma}\cite{oar} \label{gag112}
Let  $r\geq 0$,  $i\in[0,r]$, and $f\in L^\infty\cap H^r$.  Then $\nabla^i f\in L^{2r/i}$, and there exist some generic constant
$C_{i,r}>0$ such that
$$
|\nabla^i f|_{2r/i} \leq C_{i,r} |f|_\infty^{1-i/r}|\nabla^r f|^{i/r}_2.
$$
\end{lemma}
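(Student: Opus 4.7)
The plan is to recognize the inequality
$$|\nabla^i f|_{2r/i} \leq C_{i,r}|f|_\infty^{1-i/r}|\nabla^r f|_2^{i/r}$$
as a special case of the Gagliardo--Nirenberg interpolation inequality with parameters $(j,m,p,q,\theta) = (i,r,2r/i,\infty,i/r)$ and $L^2$ norm on the highest derivative. Before proving anything, I would verify scaling consistency: under $f \mapsto f(\lambda \cdot)$ in $\mathbb{R}^3$, both sides scale like $\lambda^{i-3i/(2r)}$, since $|f|_\infty$ is scale-invariant and $|\nabla^r f|_2$ scales as $\lambda^{r-3/2}$, so the combination on the right has exponent $(i/r)(r-3/2) = i - 3i/(2r)$, matching the $L^{2r/i}$ scaling of $\nabla^i f$. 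This rules out a priori any wrong choice of exponents and indicates that the only natural interpolation endpoints are $|f|_\infty$ and $|\nabla^r f|_2$.

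Next I would establish the base case $(i,r)=(1,2)$, namely $|\nabla f|_4 \leq C|f|_\infty^{1/2}|\nabla^2 f|_2^{1/2}$, for $f \in C_c^\infty(\mathbb{R}^3)$. Writing
$$\int |\nabla f|^4 = \sum_{j=1}^3 \int (\partial_j f)(\partial_j f)|\nabla f|^2 = -\sum_{j=1}^3 \int f\,\partial_j\bigl((\partial_j f)|\nabla f|^2\bigr),$$
expanding by Leibniz and bounding $|f|$ by $|f|_\infty$ pointwise, the right-hand side is controlled by $C|f|_\infty \int |\nabla^2 f||\nabla f|^2$. Cauchy--Schwarz on $|\nabla^2 f| \cdot |\nabla f|^2$ yields $|\nabla f|_4^4 \leq C|f|_\infty |\nabla^2 f|_2 |\nabla f|_4^2$, and dividing by $|\nabla f|_4^2$ gives the base case. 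A density argument extends the bound from $C_c^\infty$ to $L^\infty \cap H^2$.

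The third step is to iterate. For general integer $r \geq 2$ and $1 \leq i \leq r-1$ I would apply the base case to $g = \nabla^{i-1}f$, obtaining $|\nabla^i f|_4 \leq C|\nabla^{i-1}f|_\infty^{1/2}|\nabla^{i+1}f|_2^{1/2}$, and combine this with the auxiliary $L^2$-interpolation $|\nabla^k f|_2 \leq C|f|_\infty^{1-k/r}|\nabla^r f|_2^{k/r}$ for $0 \leq k \leq r$ (established separately by integrating $\int |\nabla^k f|^2$ by parts $k$ times, applying Cauchy--Schwarz, and closing an inductive bootstrap). Inducting on $r$ and using H\"older to interpolate between neighbouring integer values of $i$ then produces the full range of exponents $p = 2r/i$ with the split $|f|_\infty^{1-i/r}|\nabla^r f|_2^{i/r}$. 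The main obstacle will be bookkeeping: naively iterating the base case repeatedly produces spurious intermediate $L^\infty$ or $L^p$ norms of $\nabla^k f$ for $0 < k < r$, and one must systematically re-absorb each such term using the auxiliary $L^2$-interpolation so that only $|f|_\infty$ and $|\nabla^r f|_2$ survive. The scaling identity from the first paragraph is the safety net: any surviving norm is forced to appear with the exponent dictated by dimensional counting, so mistakes in the algebra are immediately visible. Alternatively, one may invoke the full Gagliardo--Nirenberg theorem of \cite{oar} directly, as stated in the lemma.
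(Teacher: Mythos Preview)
The paper does not prove this lemma at all: it is listed in Appendix~B as a known result cited from \cite{oar}, with no argument given. Your final sentence (``Alternatively, one may invoke the full Gagliardo--Nirenberg theorem of \cite{oar} directly'') is exactly, and only, what the paper does. So there is no paper proof to compare your sketch against.

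That said, your sketch contains a genuine error. The ``auxiliary $L^2$-interpolation''
\[
|\nabla^k f|_2 \leq C\,|f|_\infty^{1-k/r}\,|\nabla^r f|_2^{k/r},\qquad 0\leq k\leq r,
\]
which you propose to establish and then use to re-absorb intermediate terms, fails your own scaling test. In $\mathbb{R}^3$ the left side scales as $\lambda^{k-3/2}$ under $f\mapsto f(\lambda\cdot)$, while the right side scales as $\lambda^{k-3k/(2r)}$; these agree only when $k=r$. For $k<r$ one can send $\lambda\to 0$ and violate the inequality. The correct Gagliardo--Nirenberg exponent for $|\nabla^k f|_2$ interpolated between $|f|_\infty$ and $|\nabla^r f|_2$ in $\mathbb{R}^n$ is $\theta=(n-2k)/(n-2r)$, not $k/r$, so this auxiliary step cannot be salvaged as written. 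Since your iteration scheme relies on precisely this inequality to collapse all intermediate norms back to $|f|_\infty$ and $|\nabla^r f|_2$, the induction does not close. The base case $(i,r)=(1,2)$ and the scaling check are fine; it is the bridge between the base case and the general $(i,r)$ that is broken.
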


\begin{lemma}\cite{amj}\label{gag113}
Let  functions $u,\ v \in H^s$ and $s>\frac{3}{2}$, then  $u\cdot v \in H^s$,  and  there exists  a constant $C_s$ only depending on $s$ such that
$$
\|uv\|_{s} \leq C_s \|u\|_s \|v\|_s.
$$
\end{lemma}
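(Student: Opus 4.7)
The plan is to establish this Moser-type algebra property for Sobolev spaces on $\mathbb{R}^3$ by treating the integer case via the Leibniz rule combined with Gagliardo--Nirenberg interpolation, and reducing the non-integer case to a Littlewood--Paley argument. The only extra ingredient needed is the Sobolev embedding $H^s(\mathbb{R}^3) \hookrightarrow L^\infty(\mathbb{R}^3)$, which is available precisely because $s > 3/2$ and provides the necessary $L^\infty$ control on lowest-order factors.

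First I would dispatch integer $s = k$. Expanding by Leibniz gives $\nabla^k(uv) = \sum_{j=0}^k \binom{k}{j} \nabla^j u \cdot \nabla^{k-j} v$, and for interior indices $1 \leq j \leq k-1$ Hölder's inequality with conjugate exponents $p_j = 2k/j$, $q_j = 2k/(k-j)$ reduces each term to a product of $L^{p_j}$ and $L^{q_j}$ norms of derivatives. Applying Lemma \ref{gag112} then interpolates each factor between $L^\infty$ and $|\nabla^k \cdot|_2$, yielding
\[
|\nabla^j u \cdot \nabla^{k-j} v|_2 \leq C |u|_\infty^{1-j/k}|\nabla^k u|_2^{j/k} \cdot |v|_\infty^{j/k}|\nabla^k v|_2^{(k-j)/k}.
\]
Combining with the embedding bound $|u|_\infty + |v|_\infty \leq C(\|u\|_k + \|v\|_k)$ collapses every interior term to $C_k \|u\|_k \|v\|_k$. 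The endpoint indices $j \in \{0, k\}$ are handled by a single $L^\infty \cdot L^2$ Hölder step, and the zeroth-order contribution $|uv|_2 \leq |u|_\infty |v|_2$ is immediate.

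For non-integer $s > 3/2$, the plan is to pass to the Littlewood--Paley characterization $\|f\|_s^2 \sim |f|_2^2 + \sum_{j \geq 0} 2^{2js}|\Delta_j f|_2^2$ and use Bony's paraproduct identity $uv = T_u v + T_v u + R(u,v)$. The two paraproducts $T_u v$ and $T_v u$ are bounded in $H^s$ by $\|u\|_s\|v\|_s$ using Bernstein's inequality on each dyadic block together with $H^{s'} \hookrightarrow L^\infty$ for any $s' > 3/2$. The remainder $R(u,v) = \sum_{|j-j'|\leq 2} \Delta_j u \cdot \Delta_{j'} v$ is the delicate piece; here neither factor is frequency-localized away from the other, so one must sum over all pairs of blocks with comparable frequency. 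Fortunately, because $s > 3/2$ sits strictly above the embedding threshold, the dyadic geometric series that arise converge absolutely.

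The main obstacle is precisely the high-high interaction inside the remainder $R(u,v)$ in the non-integer case, where one has to trade derivatives against the summability of the dyadic series; the strict inequality $s > 3/2$ gives enough slack that no borderline estimate is required. Since the present paper invokes this lemma only at the integer values $s \in \{0,1,2,3\}$ throughout Sections 4--7, the Leibniz/Gagliardo--Nirenberg argument of the first step already covers every application, and the general non-integer statement is retained here only as a reference to Majda \cite{amj}.
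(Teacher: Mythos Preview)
The paper does not supply its own proof of this lemma at all: it is simply quoted from Majda \cite{amj} and listed in Appendix~B among the ``useful lemmas'' without argument. So there is no in-paper proof to compare your proposal against.

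Your sketch is a correct outline of the standard algebra property $H^s \cdot H^s \hookrightarrow H^s$ for $s>3/2$: the integer case via Leibniz plus the Gagliardo--Nirenberg interpolation of Lemma~\ref{gag112} is exactly how Majda's book treats it, and your observation that only integer $s$ is ever invoked in Sections~4--7 is accurate, so the paraproduct argument for non-integer $s$ is indeed surplus to the paper's needs. One small point: for the endpoint $j=0$ term $u\,\nabla^k v$ you use $|u|_\infty|\nabla^k v|_2$, which is fine, but note that in the integer case $s=k$ the embedding $H^k\hookrightarrow L^\infty$ requires $k\geq 2$ in $\mathbb{R}^3$; the case $k=1$ is not covered by the lemma's hypothesis $s>3/2$ anyway, so this is harmless.
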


\begin{lemma}\cite{amj}
\begin{enumerate}
\item For functions $f,\ g \in H^s \cap L^\infty$ and $|\nu|\leq s$,   there exists  a constant $C_s$ only depending on $s$ such that
\begin{equation}\label{liu01}
\begin{split}
\|\nabla^\nu (fg)\|_s\leq C_s(|f|_{\infty}|\nabla^s g|_2+|g|_{\infty}|\nabla^s f|_{2}).
\end{split}
\end{equation}

\item Assume that  $g(u)$ is a smooth vector-valued function on $G$, $u(x)$ is a continuous function with $u\in H^s \cap L^\infty$. Then for $s\geq 1$,   there exists  a constant $C_s$ only depending on $s$ such that
\begin{equation}\label{liu02}
\begin{split}
|\nabla^s g(u)|_2\leq C_s\Big \|\frac{\partial g}{\partial u }\Big\|_{s-1}|u|^{s-1}_{\infty}|\nabla^s u|_{2}.
\end{split}
\end{equation}
\end{enumerate}
\end{lemma}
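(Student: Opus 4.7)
The plan is to establish both estimates through the classical Moser argument, combining Leibniz expansion, Faà di Bruno's formula, and the Gagliardo--Nirenberg interpolation already recorded in Lemma B.3. Throughout I treat the left-hand side of \eqref{liu01} as the customary Moser quantity $|\nabla^\nu(fg)|_2$ with $|\nu|\leq s$; the inequality with the full $H^s$ norm then follows by summing over $\nu$.

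For the product estimate \eqref{liu01}, I would first expand
\begin{equation*}
\nabla^\nu(fg) = \sum_{\mu+\kappa=\nu} \binom{\nu}{\mu}\, \nabla^\mu f \cdot \nabla^\kappa g,
\end{equation*}
so the problem reduces to bounding a typical mixed term $\nabla^\mu f \cdot \nabla^\kappa g$ with $|\mu|=j$, $|\kappa|=i$, $i+j=|\nu|\leq s$, in $L^2$. Choosing Hölder exponents $p=2s/j$ and $q=2s/i$ (interpreted as $\infty$ when the corresponding index vanishes) so that $\tfrac{1}{p}+\tfrac{1}{q}=\tfrac{i+j}{2s}\leq \tfrac{1}{2}$, and combining Hölder's inequality with the Gagliardo--Nirenberg bound $|\nabla^j f|_{2s/j}\leq C|f|_\infty^{1-j/s}|\nabla^s f|_2^{j/s}$ (and analogously for $g$), produces
\begin{equation*}
\big|\nabla^\mu f \cdot \nabla^\kappa g\big|_2 \leq C\,|f|_\infty^{1-j/s}\,|\nabla^s f|_2^{j/s}\,|g|_\infty^{1-i/s}\,|\nabla^s g|_2^{i/s}.
\end{equation*}
Young's inequality with exponents $s/j$ and $s/i$ converts this mixed product into the sum $|f|_\infty|\nabla^s g|_2 + |g|_\infty|\nabla^s f|_2$, and summation over $(\mu,\kappa)$ yields \eqref{liu01}. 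The cases where $i$ or $j$ equals zero are immediate, since the $L^\infty$ factor stays untouched.

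For the composition estimate \eqref{liu02}, Faà di Bruno's formula gives
\begin{equation*}
\nabla^s g(u) = \sum_{k=1}^{s} \sum_{\substack{\ell_1+\cdots+\ell_k=s \\ \ell_j \geq 1}} c_{k,\ell}\, g^{(k)}(u)\; \nabla^{\ell_1} u\otimes\cdots\otimes \nabla^{\ell_k} u.
\end{equation*}
Hölder's inequality with exponents $p_j = 2s/\ell_j$ (satisfying $\sum_j 1/p_j = 1/2$), followed by Lemma B.3 applied to each factor, yields
\begin{equation*}
\big|\nabla^{\ell_1}u\otimes\cdots\otimes \nabla^{\ell_k}u\big|_2 \leq C\prod_{j=1}^{k}|u|_\infty^{1-\ell_j/s}|\nabla^s u|_2^{\ell_j/s} = C\,|u|_\infty^{k-1}\,|\nabla^s u|_2,
\end{equation*}
because $\sum_j \ell_j = s$. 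To close \eqref{liu02}, it then suffices to control $|g^{(k)}(u)|_\infty$ uniformly in $k\leq s$ by $\|\partial g/\partial u\|_{s-1}$; for $k\leq s-1$ this is immediate from Sobolev embedding applied to the composition $\partial^{k-1}_u(\partial_u g)(u)$.

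The main technical obstacle will be the endpoint $k=s$ in Faà di Bruno, where $\ell_1=\cdots=\ell_s=1$ and one needs a bound on $g^{(s)}(u)$ that does not follow directly from a pointwise $\|\partial g/\partial u\|_{s-1}$ estimate. I would resolve this by an induction on $s$: assuming \eqref{liu02} is known up to order $s-1$, I would rewrite $\nabla^s g(u) = \nabla^{s-1}\big(g'(u)\cdot \nabla u\big)$ and apply the product estimate \eqref{liu01} just proved, distributing derivatives between the two factors. The factor $g'(u)$ is placed in $H^{s-1}$ via the inductive hypothesis applied to $g' = \partial g/\partial u$, whereas the remaining factor $\nabla u$ carries at most $s-1$ derivatives and is handled directly by Gagliardo--Nirenberg. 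This delivers the sharp coefficient $|u|_\infty^{s-1}$ on the right-hand side and completes \eqref{liu02}.
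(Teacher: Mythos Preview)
The paper does not supply its own proof of this lemma; it is quoted from Majda's monograph \cite{amj} and placed in Appendix~B as a background tool, so there is no in-paper argument to compare against. Your proposal follows precisely the classical Moser route (Leibniz expansion plus Gagliardo--Nirenberg for \eqref{liu01}, Fa\`a di Bruno plus induction for \eqref{liu02}) that underlies the original proof in \cite{amj}, and the outline is correct.

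Two small points worth tightening. In Part~1, your H\"older exponents $p=2s/j$, $q=2s/i$ satisfy $1/p+1/q=1/2$ only when $i+j=|\nu|=s$; for $|\nu|<s$ you should either take $p=2|\nu|/j$, $q=2|\nu|/i$ and then interpolate $|\nabla^{|\nu|}f|_2$ between $|f|_\infty$ and $|\nabla^s f|_2$, or simply note that the top-order case $|\nu|=s$ already dominates. In Part~2, your remark that $|g^{(k)}(u)|_\infty$ is controlled by $\|\partial g/\partial u\|_{s-1}$ via ``Sobolev embedding applied to the composition'' is not quite right if the norm is read as $H^{s-1}(\mathbb{R}^3)$ of $x\mapsto g'(u(x))$, since that gives $L^\infty$ control of $g'(u)$ but not of higher $u$-derivatives $g^{(k)}(u)$; the intended meaning in \cite{amj} is a $C^{s-1}$-type bound on $g'$ over the range of $u$, under which the estimate is immediate. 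Your inductive rewriting $\nabla^s g(u)=\nabla^{s-1}(g'(u)\nabla u)$ together with \eqref{liu01} is in any case a clean way to close the argument and matches the standard treatment.
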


The last one  is a useful tool to improve weak convergence to strong convergence.
\begin{lemma}\cite{amj}\label{zheng5}
If function sequence $\{w_n\}^\infty_{n=1}$ converges weakly in a Hilbert space $X$ to $w$, then $w_n$  converges strongly to $w$ in $X$ if and only if
$$
\|w\|_X \geq \lim \text{sup}_{n \rightarrow \infty} \|w_n\|_X.
$$
\end{lemma}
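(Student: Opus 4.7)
The plan is to treat the two implications separately and reduce the whole argument to the polarization identity for the Hilbert-space norm. The forward direction (strong convergence implies the limsup bound) is immediate from the continuity of the norm: if $w_n \to w$ strongly, then $\|w_n\|_X \to \|w\|_X$, so in particular $\limsup_{n\to\infty}\|w_n\|_X = \|w\|_X$, which trivially satisfies the stated inequality.

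For the nontrivial direction, suppose $w_n \rightharpoonup w$ weakly in $X$ and $\limsup_{n\to\infty}\|w_n\|_X \leq \|w\|_X$. The first step is to promote this one-sided bound on the norms to full convergence of norms. This uses the weak lower semicontinuity of the norm on a Hilbert space, which can be proved in one line by applying the Cauchy--Schwarz inequality to $\langle w_n, w\rangle$: since $\langle w_n, w\rangle \to \langle w,w\rangle = \|w\|_X^2$ by weak convergence, and $|\langle w_n, w\rangle| \leq \|w_n\|_X\|w\|_X$, one gets $\|w\|_X \leq \liminf_{n\to\infty}\|w_n\|_X$. Combined with the hypothesis, this yields $\lim_{n\to\infty}\|w_n\|_X = \|w\|_X$.

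The second step is the Hilbert-space identity
\begin{equation*}
\|w_n - w\|_X^2 = \|w_n\|_X^2 - 2\,\mathrm{Re}\,\langle w_n, w\rangle_X + \|w\|_X^2.
\end{equation*}
Passing to the limit on the right-hand side, the first and third terms both tend to $\|w\|_X^2$ by Step 1, and the cross term tends to $2\|w\|_X^2$ by weak convergence $\langle w_n, w\rangle_X \to \langle w,w\rangle_X$. Hence $\|w_n - w\|_X^2 \to 0$, which is strong convergence.

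There is essentially no obstacle here — this is a standard fact and each step is routine. The only subtlety worth flagging is that the argument uses the Hilbert structure twice (once via Cauchy--Schwarz to get weak lower semicontinuity, once via the polarization/expansion identity) and would fail in a general Banach space, where weak convergence plus norm convergence need not imply strong convergence (this is the Radon--Riesz / Kadec--Klee property, which Hilbert spaces enjoy automatically).
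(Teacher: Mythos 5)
Your proof is correct and complete: the forward direction is just continuity of the norm, and the reverse direction correctly combines weak lower semicontinuity of the norm (via Cauchy--Schwarz) with the expansion of $\|w_n-w\|_X^2$ to conclude strong convergence. The paper does not supply a proof of this lemma --- it is quoted from Majda's book --- and your argument is precisely the standard one that reference would give, so there is nothing to reconcile.
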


\bigskip

{\bf Acknowledgement:}
This research  was funded in part
by National Natural Science Foundation of China under grants   11571232  and 11831011. 
 The research of Shengguo Zhu was also  supported in part by
 the Royal Society-- Newton International Fellowships NF170015, and  Monash University--Robert Bartnik Visiting Fellowship.

\bigskip

{\bf Conflict of Interest:} The authors declare that they have no conflict of interest.

\bigskip


\begin{thebibliography}{99}

\bibitem{bianbu} S. Bianchini, A. Bressan, Vanishing viscosity solutions of nonlinear hyperbolic systems,  \textit{Anna. Math.},  \textbf{161} (2005), 223-342.





\bibitem{bd6} D. Bresch, B. Desjardins,  Existence of global weak solutions for a 2D viscous shallow water equations and convergence to the quasi-geostrophic model, \textit{Commun. Math. Phys.}, \textbf{238}  (2003), 211-223.


\bibitem{bd7} D. Bresch, B. Desjardins,  On the existence of global weak solutions  to the Navier-Stokes equations for viscous compressible and heat conducting fluids, \textit{J. Math. Pures  Appl.}, \textbf{87}  (2007), 57-90.


\bibitem{bd2} D. Bresch,  B. Desjardins, C. Lin,  On some compressible fluid models: Korteweg, Lubrication, and Shallow water systems, \textit{Commun. Part. Differ.  Equations},  \textbf{28} (2003), 843-868.

 \bibitem{bd} D. Bresch,  B. Desjardins, G. M$\acute{\text{e}}$tivier,  Recent mathematical results and open problems about shallow water equations,  In Analysis and Simulation of Fluid Dynamics, Adv. Math. Fluid Mech., pages 15-31. Birkh\"auser, Basel, 2007.








\bibitem{BN2} D. Bresch, P. Noble, Mathematical derivation of viscous shallow-water equations with zero surface tension, \textit{ Indiana Univ. Math. J.}, \textbf{ 60} (2011),  1137-1169.


















\bibitem{chap}S. Chapman, T.  Cowling, \textit{The Mathematical Theory of Non-Uniform Gases: An Account of the Kinetic Theory of Viscosity, Thermal Conduction and Diffusion in Gases}, Cambridge University Press,  1990.





\bibitem{Geng2} G. Chen, G-Q. Chen, S. Zhu,  Vanishing viscosity limit of the   three-dimensional isentropic  compressible  Navier-Stokes equations with degenerate viscosities and far field vacuum,   preprint, 2019.




\bibitem{chen3}
G-Q. Chen, M.  Perepelitsa,
 Vanishing viscosity solutions of the compressible Euler equations with spherical symmetry and large initial data, \textit{Commun. Math. Phys.}, \textbf{338} (2015),
 771-800.


\bibitem{chen4}
G-Q. Chen, M.  Perepelitsa,
 Vanishing viscosity limit of Navier-Stokes equations to the Euler-equations for compressible fluid flow, \textit{Commun. Pures Appl. Math.}, \textbf{LXIII} (2010),
 1469-1504.

\bibitem{cons} P. Constantin, T. Drivas, H. Nguyen, F. Pasqualotto, Compressible fluids and active potentials, \textit{Ann. Inst. Henri Poincar$\acute{\text{e}}$, Anal. Non Lin$\acute{\text{e}}$aire},
https://doi.org/10.1016/j.anihpc.2019.04.001,  2019.










 \bibitem{guy} C. Dafermos, \textit{Hyperbolic Conservation Laws in Continuum Physics}, Springer-Verlag: Berlin, 2000.


\bibitem{Ding} M. Ding, S. Zhu, Vanishing viscosity limit of the Navier-Stokes equations to the Euler equations
for compressible fluid flow with far field vacuum, \textit{J. Math. Pure Appl.},  \textbf{107} (2016), 288-314.







\bibitem{fu2} E.  Feireisl,  On the motion of a viscous, compressible, and heat conducting fluid, \textit{ Indiana Univ. Math. J.},  \textbf{ 53(6)} (2004), 1705-1738.

\bibitem{fu3} E. Feireisl,  \textit{Dynamics of Viscous Compressible Fluids}, Oxford: Oxford University Press, 2004.

\bibitem{fu1} E. Feireisl, A. Novotn\'{y}, H. Petzeltov\'{a}, On the existence of globally defined weak solutions to the Navier-Stokes equations, \textit{J. Math. Fluid Mech.},  \textbf{3(4)}  (2001), 358-392.









\bibitem{gandi} G.  Galdi, \textit{An Introduction to the Mathematical Theory of the Navier-Stokes Equations}, Springer, New York,  1994.


\bibitem{Geng} Y. Geng, Y. Li, S. Zhu, Vanishing viscosity limit of
the Navier-Stokes equations to the Euler equations for compressible
fluid flow with vacuum,   \textit{Arch. Rational. Mech. Anal.},   \textbf{234} (2019),  727-775.



\bibitem{gp} J. Gerbeau, B. Perthame, Derivation of viscous Saint-Venant system for laminar shallow water; numerical validation, \textit{Disc. Cont. Dyna. Syst.} (B),  \textbf{1} (2001), 89-102.


\bibitem{Germain} P. Germain, P. Lefloch, Finite energy method for compressible fluids: The Navier-Stokes-Korteweg model, \textit{Commun. Pures Appl. Math.}, \textbf{LXIX} (2016),
0003-0061.

\bibitem{gilbarg} D. Gilbarg, The existence and limit behavior of the one-dimensional shock layer, \textit{Amer. J. Math.}, \textbf{73} (1951), 





\bibitem{grassin}M. Grassin, Global smooth solutions to Euler equations for a perfect gas, \textit{Indiana Univ. Math. J.},
  \textbf{47} (1998), 1397-1432.



  \bibitem{OG} O. Gu$\grave{\text{e}}$s, G. M$\acute{\text{e}}$tivier, M.
  Williams, K. Zumbrun, Navier-Stokes regularization of
  multidimensional Euler shocks, \textit{Ann. Sci. $\acute{E}$cole Norm.
  Sup.},  \textbf{39} (2006), 75-175.
  
  
  \bibitem{guo} Z. Guo, H. Li, Z. Xin,  Lagrange structure and dynamics for solutions to the spherically symmetric compressible Navier-Stokes equations, \textit{Commun. Math. Phys.}, \textbf{309} (2012), 371-412.
  


\bibitem{Has} B. Haspot, Global $bmo^{-1}(\mathbb{R}^N)$  radially symmetric solution for compressible Navier-Stokes equations with initial density in $L^\infty(\mathbb{R}^N)$, arXiv: 1901.03143v1, 2019.


 \bibitem{HP} D. Hoff, Uniqueness of weak solutions of  the
 Navier-Stokes equations of multi-dimensional, compressible  flow, \textit{SIAM J. Math. Anal.},  \textbf{37} (2006), 1742-1760.


 \bibitem{HL} D. Hoff, T.  Liu, Inviscid limit for the
 Navier-Stokes equations of compressible isentropic  flow
 with shock data, \textit{Indiana Univ. Math. J.},  \textbf{381} (1989), 861-915.




\bibitem{huang} F. Huang, Z. Wang, Convergence of viscosity solutions for isothermal gas dynamics, \textit{SIAM. J. Math. Anal.}, \textbf{34} (2002), 595-610.







\bibitem{HPWWZ} F.  Huang, R.  Pan, T. Wang, Y. Wang, X.  Zhai, Vanishing viscosity for isentropic Navier-Stokes equations
 with density-dependent viscosity, preprint,  arXiv:1009.3978, 2010.

\bibitem{HX2} X.  Huang, J. Li,  Global glassical and weak solutions to the three-dimensional full compressible Navier-Stokes
system with vacuum and large oscillations,  \textit{Arch. Rational. Mech. Anal.},  \textbf{227} (2018),   995-1059.


\bibitem{HX1} X. Huang, J. Li, Z. Xin, Global well-posedness of classical solutions with large oscillations and vacuum to the three-dimensional isentropic compressible Navier-Stokes equations, \textit{ Commun. Pure  Appl. Math.}, \textbf{65} (2012), 549-585.


 \bibitem{H1} H. Hugoniot, Sur la propagation du movement dans les corps, et special ement dans gaz parfaits,  \textit{J. Ecole Polytechnique},   \textbf{58} (1889), 1-125.






\bibitem{JWX}  Q. Jiu, Y. Wang, Z. Xin,   Global well-posedness of 2D compressible Navier-Stokes equations with large data and vacuum,  \textit{J.  Math. Fluid  Mech.}, \textbf{16} (2014), 483-521. 


 \bibitem{ansgar} A. J$\ddot{\text{u}}$ngel, Global weak solutions to compressible Navier-Stokes equations for quantum fluids, \textit{SIAM. J. Math. Anal.}, \textbf{42} (2010), 1025-1045.




\bibitem{KM} S. Klainerman, A. Majda, Singular limits of quasilinear hyperbolic systems with large parameters and the incompressible limit of compressible fluids, \textit{Commun. Pures Appl. Math.}, \textbf{IV} (1981),
481-524.









 \bibitem{oar} O.  Ladyzenskaja, N.  Ural'ceva, \textit{ Linear and Quasilinear Equations of Parabolic Type}, American Mathematical Society, Providence, RI,  1968.








 \bibitem{hailiang} H. Li, J. Li, Z. Xin, Vanishing of vacuum states and blow-up phenomena of the compressible Navier-Stokes equations, \textit{Commun. Math. Phys.}, \textbf{281} (2008), 401-444.

\bibitem{lz} J. Li, Z. Xin, Global existence of weak solutions to the barotropic compressible Navier-Stokes flows with degenerate viscosities, preprint,  arXiv:1504.06826, 2016.





\bibitem{tlt} T. Li, T. Qin, \textit{Physics and Partial Differential Equations}, Vol. II.,  SIAM: Philadelphia,   Higher Education Press: Beijing, 2014.



\bibitem{sz3} Y. Li, R. Pan, S. Zhu, On classical solutions to 2D Shallow water equations with degenerate viscosities,  \textit{J.  Math. Fluid  Mech.},  \textbf{19} (2017), 151-190.

\bibitem{sz333} Y. Li, R. Pan, S. Zhu, On classical solutions for viscous polytropic fluids with degenerate viscosities  and vacuum,  \textit{Arch. Rational. Mech. Anal.},  \textbf{234} (2019), 1281-1334.




\bibitem {lions}
 P. L. Lions,
 {\sl Mathematical Topics in Fluid Mechanics: Compressible Models},
 Vol. 2. Oxford University Press, New York, 1998.

\bibitem{taiping} T. Liu, Z. Xin, T. Yang, Vacuum states for compressible flow, \textit{Disc. Cont. Dyna. Syst.}, \textbf{4} (1998), 1-32.









\bibitem{amj} A. Majda, \textit{Compressible fluid flow and systems of conservation laws in several space variables}, Applied Mathematical Science \textbf{53}, Spinger-Verlag: New York, Berlin Heidelberg,  1986.











\bibitem{Mar} F. Marche, Derivation of a new two-dimensional viscous shallow water model with varying topography, bottom friction and capillary effects, \textit{European Journal of Mechanics B/Fluids},  \textbf {26} (2007),  49-63.

\bibitem{Masmoudi}N. Masmoudi, Remarks about the inviscid limit of the Navier-Stokes system, \textit{Commun. Math. Phys.},  \textbf{270} (2007), 777-788.



\bibitem{vassu} A. Mellet, A. Vasseur, On the barotropic compressible Navier-Stokes equations,
\textit{Commun. Part. Differ. Equations},  \textbf{32} (2007), 431--452.









\bibitem {R1} W.   Rankine,  On the thermodynamic theory of waves of finite longitudinal disturbance, \textit{Phi. Trans. Royal Soc. London},  \textbf{1960} (1870),  277-288.


\bibitem {R2} L.  Rayleigh,  Aerial plane waves of finite amplitude,  \textit{Proc. Royal Soc. London}, \textbf{84A} (1910), 247-284.




\bibitem{Serre} D. Serre, Solutions classiques globales des $\grave{\text{e}}$quations d'euler
pour un fluide parfait compressible,  \textit{Ann. Ins. Fourier},  \textbf {47} (1997) 139-153.


\bibitem{jm} J. Simon, Compact sets in $L^P(0,T;B)$, \textit{Ann. Mat. Pura. Appl.}, \textbf{ 146} (1987), 65-96.
 \bibitem{stoke}G.  Stokes, On a difficulty in the theory of sound, \textit{Philos. Magazine},  \textbf{33} (1848), 349-356.







\bibitem{vy}A. Vasseur, C. Yu, Global weak solutions to compressible quantum Navier-Stokes equations with damping,
\textit{SIAM J. Math. Anal.},  \textbf{48} (2016), 1489-1511.

\bibitem{vayu} A. Vasseur, C. Yu, Existence of global weak solutions for 3D degenerate compressible Navier-Stokes equations, \textit{Invent. Math.},  \textbf{206} (2016),  935-974.










\bibitem{zz} Z.  Xin, S. Zhu, Global well-posedness of  regular solutions   to the three-dimensional  isentropic  compressible  Navier-Stokes equations  with degenerate viscosities and vacuum, submitted,  arXiv:1806.02383v2, 2018.

\bibitem{zz2} Z.  Xin and S. Zhu, Well-posedness of  three-dimensional isentropic  compressible  Navier-Stokes equations with degenerate viscosities and far field vacuum, submitted,   arXiv:1811.04744, 2018.

\bibitem{ty1} T. Yang, Singular behavior of vacuum states for compressible fluids, \textit{Journal of computational and Applied Mathematics}, \textbf{190} (2002), 1291-1323.

\bibitem{zyj} T. Yang, H. Zhao, A vacuum problem for the one-dimensional compressible Navier-Stokes equations with density-dependent viscisity,  \textit{J. Differential Equations}, \textbf{184} (2002), 163-184.

\bibitem{tyc2} T. Yang, C.  Zhu, Compressible Navier-Stokes equations with degenerate viscosity coefficient and vacuum, \textit{Commun. Math. Phys.}, \textbf{230} (2002), 329-363.








\end{thebibliography}
\end{document}